\patchcmd{\algocf@makecaption@ruled}{\hsize}{\textwidth}{}{} % Caption to stretch full text width
\patchcmd{\@algocf@start}{-1.5em}{0em}{}{} % For // to right margin
\definecolor{lightyellow}{RGB}{255, 255, 197}
\title{Inverse curve problems on del Pezzo surfaces}
\subjclass[2020]{14E08 (14D10, 14J45, 14N10)}
\author[Kaya]{Enis Kaya}
\address{Department of Mathematics, Bilkent University, 06800 Ankara, Turkey}
\email{enis.kaya@bilkent.edu.tr}
\author[McKean]{Stephen McKean}
\address{Department of Mathematics\\ Brigham Young University\\ Provo, UT\\ USA}
\email{mckean@math.byu.edu}
\author[Streeter]{Sam Streeter}
\address{School of Mathematics, University of Bristol, Fry Building, Woodland Road, BS8 1UG, UK}
\email{sam.streeter@bristol.ac.uk}
\author[Uppal]{H.\ Uppal}
\address{School of Mathematics, University of Bristol, Fry Building, Woodland Road, BS8 1UG, UK}
\email{huppal.maths@gmail.com}
\theoremstyle{plain}
\newtheorem{thm}{Theorem}[section]
\newtheorem{lem}[thm]{Lemma}
\newtheorem{cor}[thm]{Corollary}
\newtheorem{prop}[thm]{Proposition}
 \theoremstyle{definition}
\newtheorem{defn}[thm]{Definition}
\newtheorem{rem}[thm]{Remark}
\newtheorem{ex}[thm]{Example}
\newtheorem{notn}[thm]{Notation}
\newtheorem{ques}[thm]{Question}
\newcommand{\thistheoremname}{}
\newtheorem{genericthm}[thm]{\thistheoremname}
\newtheorem*{genericthm*}{\thistheoremname}
\newenvironment{namedthm*}[1]
  {\renewcommand{\thistheoremname}{#1}%
   \begin{genericthm*}}
  {\end{genericthm*}}
\let\originalleft\left
\let\originalright\right
\renewcommand{\left}{\mathopen{}\mathclose\bgroup\originalleft}
\renewcommand{\right}{\aftergroup\egroup\originalright}
\renewcommand{\emptyset}{\varnothing}
\newcommand{\mb}[1]{\mathbb{#1}}
\newcommand{\mf}[1]{\mathfrak{#1}}
\DeclareMathOperator{\PP}{\mathbb{P}}
\DeclareMathOperator{\Aut}{Aut}
\DeclareMathOperator{\Bl}{Bl}
\DeclareMathOperator{\ch}{char}
\DeclareMathOperator{\Frob}{Frob}
\DeclareMathOperator{\Gal}{Gal}
\DeclareMathOperator{\ICP}{ICP}
\DeclareMathOperator{\IGP}{IGP}
\DeclareMathOperator{\Pic}{Pic}
\DeclareMathOperator{\Sp}{Sp}
\DeclareMathOperator{\rank}{rank}
\DeclareMathOperator{\Spec}{Spec}
\DeclareMathOperator{\im}{im}
\newcommand{\Char}{\operatorname{char}}
\newcommand*\Q{\mathbb{Q}}
\newcommand*\F{\mathbb{F}}
\newcommand{\II}{I\!I}
\renewcommand{\epsilon}{\varepsilon}
\DeclareMathOperator{\Sym}{\mathfrak{S}}
\DeclareMathOperator{\Alt}{\mathfrak{A}}
\DeclareMathOperator{\Dih}{\mathfrak{D}}
\begin{document}

\begin{abstract}
We classify the number of $k$-rational lines and conic fibrations on del Pezzo surfaces over a field $k$ in terms of relatively minimal surfaces and establish rational curve analogues of the inverse Galois problem for del Pezzo surfaces. We completely solve these problems in all degrees over all global, local and finite fields and provide new solutions of the inverse Galois problem in characteristic $2$. Our results generalise well-known theorems on cubic surfaces.
\end{abstract}

\maketitle

\begin{center}
    \scshape Contents
\end{center}
\vspace*{-2.75em}
\def\contentsname{\empty}
\setcounter{tocdepth}{1}
{\setlength{\parskip}{3pt}\tiny\tableofcontents}
\vspace*{-2.5em}

\section{Introduction} \label{sec:intro}

One of the jewels of classical algebraic geometry is the \emph{Cayley--Salmon theorem}: any smooth cubic surface over an algebraically closed field contains exactly $27$ lines. Determining the number of lines becomes more subtle when keeping track of fields of definition. Segre \cite{Seg49} proved\footnote{Segre's claim that his proof fails when $\ch{k} = 2$ is false; see \cite[\S3.1]{McK22} for more.} that the number of $k$-rational lines on a cubic surface over a field $k$ is in $\{0, 1, 2, 3, 5, 7, 9, 15,27\}$ and that all are realised over $\mathbb{Q}$. Schl\"afli \cite{Sch58} showed that, over $\mathbb{R}$, only four counts are realised: 3, 7, 15 and 27.

Cubic surfaces are \emph{del Pezzo surfaces} (smooth projective surfaces $X$ with ample anticanonical class $-K_X$, classified by degree $d := K_X^2 \in \{1,\dots,9\}$). We arrive at the focus of this paper: determining the possible numbers of $k$-rational lines on degree-$d$ del Pezzo surfaces over an arbitrary field $k$. We also consider the analogue for conic fibrations on these surfaces. For definitions of lines and conics, see Definition~\ref{def:lines and conics}; they coincide with the classical ones for degree at least $3$ (Remark~\ref{rem:line is line}).

These problems relate to the \emph{inverse Galois problem for del Pezzo surfaces}. The action of $G_k = \Gal(k_s/k)$ on the geometric Picard group $\Pic(\overline{X})$ of a degree-$d$ del Pezzo surface $X/k$ induces a homomorphism $\rho_X: G_k \rightarrow \Aut(\Pic(\overline{X}))$ to a subgroup non-canonically isomorphic to a group $G_d$ (see Lemma~\ref{lem:Gd}). We thus obtain $[H_X] = [\rho_X(G_k)]$ in the set $C(G_d)$ of subgroup conjugacy classes of $G_d$.

Denote by $\mathcal{S}_d(k)$ the set of isomorphism classes of degree-$d$ del Pezzo surfaces over $k$ and by $C(G_d,k)$ the subset of $C(G_d)$ represented by groups isomorphic to $\Gal(L/k)$ for $L/k$ some Galois extension, i.e.\ ``splitting groups permitted by the Galois theory of $k$''.

\begin{namedthm*}{\bf{Problem} $\boldsymbol{\IGP_{d}(k)}$}[Inverse Galois problem for del Pezzo surfaces] \label{prob:igp}
Determine the image of the map
\[
\gamma_d(k): \mathcal{S}_d(k) \rightarrow C(G_d,k), \quad
X \mapsto [H_X].
\]
\end{namedthm*}

Note that $\rho_X$ determines line/conic counts: there is a map $\delta_{d,1}(k): C(G_d,k) \rightarrow \mathbb{Z}_{\geq 0}$ mapping $[H] \in C(G_d,k)$ to the number of $k$-lines on $X \in \mathcal{S}_d(k)$ with $[H_X] = [H]$, were such $X$ to exist, with image in a finite set of ``counts permitted by $G_k$'' $\mathcal{C}_{d,1}(k)$ in a total set of possible counts $\mathcal{C}_{d,1}$. We have an analogous map and set $\delta_{d,2}(k):C(G_d,k) \rightarrow \mathcal{C}_{d,2}(k) \subset \mathcal{C}_{d,2}$ for conic fibrations.

\begin{namedthm*}{\bf{Problem} $\boldsymbol{\ICP_{d,e}(k)}$}[Inverse curve problems for del Pezzo surfaces] \label{prob:icp}
For $e \in \{1,2\}$, determine the image of the map
\[
\delta_{d,e}(k) \circ \gamma_d(k): \mathcal{S}_d(k) \rightarrow \mathcal{C}_{d,e}(k).
\]
\end{namedthm*}

We say that a solution is \emph{surjective} if the image is surjective onto $\mathcal{C}_{d,e}(k)$. We say that we have a \emph{simultaneous} surjective solution to both problems if $\left(\delta_{d,1}(k),\delta_{d,2}(k)\right) \circ \gamma_d(k)$ is surjective. Note that a solution to $\IGP_d(k)$ gives a solution to $\ICP_{d,e}(k)$.

The results above amount to solutions to $\ICP_{3,1}(k)$ for $k$ algebraically closed (where $\mathcal{C}_{3,1}(k) = \{27\}$), $k = \mathbb{Q}$ (where $\mathcal{C}_{3,1}(\mathbb{Q}) = \mathcal{C}_{3,1} = \{0, 1, 2, 3, 5, 7, 9, 15,27\}$) and $k = \mathbb{R}$ (where $\mathcal{C}_{3,1}(\mathbb{R}) = \{3,7,15,27\}$). Over separably closed fields, the analogue of Cayley--Salmon (describing $\mathcal{C}_{d,e}(k)$ for $k$ algebraically closed) is well-known (see Theorem~\ref{thm:geomcounts}), and the analogue of Segre's result (describing $\mathcal{C}_{d,e}$) is fairly easy to establish (see Theorems~\ref{thm:line} and \ref{thm:conic}). Little is known otherwise about the inverse curve problems. We develop techniques for computing rational curve counts and Galois actions to obtain new results. We completely solve the inverse curve problems over all global, local and finite fields and solve the inverse curve problems when the status of the Galois analogue is unknown.

The spirit of this work is close to that of recent advances in enumerative geometry by Kass and Wickelgren \cite{KW} and Larson and Vogt \cite{LV} on counting lines on cubic surfaces and bitangents to plane quartics (equivalent to lines on the corresponding degree-2 del Pezzo surface double-covering the plane with this branch curve) with arithmetic weights.

\subsection{Results}

\subsubsection{Classification by blowup type}
Any del Pezzo surface is relatively minimal (Definition~\ref{def:relmin}) or the blowup of a relatively minimal surface. Further, line and conic counts are determined by Galois action on the surface and blowup locus. Then it is natural to approach the inverse curve problems by classifying del Pezzo surfaces in terms of their \emph{blowup types} (Definition~\ref{def:blowuptype}), meaning the ways in which they can be constructed via blowups of relatively minimal surfaces. This can be thought of as a minimal model program for del Pezzo surfaces over arbitrary fields. We prove that, together with line and conic counts, the \emph{intersection invariant} (Definition~\ref{def:PicDet}) determines blowup type. 

\begin{thm} \label{thm:lci}
For a del Pezzo surface $X$, denote by $L_X$, $C_X$ and $I_X$ the line count, conic count and intersection invariant of $X$, respectively. Then the triple $(L_X,C_X,I_X)$ determines the blowup type of $X$ uniquely.
\end{thm}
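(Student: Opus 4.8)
The plan is to reduce the statement to a finite verification, organised degree by degree, and then to show that the three invariants jointly separate all blowup types. Since any del Pezzo surface of degree $d$ is obtained from a relatively minimal surface by finitely many blowups (bounded by $9-d$, as each blowup drops the degree), there are only finitely many blowup types in each degree and finitely many overall (Definition~\ref{def:blowuptype}). The intersection invariant is extracted from $\Pic(\overline{X})$ with its intersection form and Galois action (Definition~\ref{def:PicDet}), so in particular it records the geometric Picard rank $10-d$ and hence the degree $d$. Thus $I_X$ already pins down $d$, and it suffices to prove that the assignment (blowup type) $\mapsto (L_X,C_X,I_X)$ is injective on blowup types of each fixed degree.

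Within a fixed degree I would compute $L_X$ and $C_X$ directly from the combinatorics of the blowup type. The $k$-rational lines are the Galois-fixed classes of $(-1)$-curves, and the $k$-rational conic fibrations are the Galois-fixed conic classes in $\Pic(\overline{X})$, so both counts are functions of the Galois action recorded by the blowup type: computing them amounts to enumerating fixed classes under the relevant subgroup of $G_d$ (Lemma~\ref{lem:Gd}), taking the geometric counts of Theorem~\ref{thm:geomcounts} as the ambient totals. This produces, for each degree, a finite table of pairs $(L_X,C_X)$ indexed by blowup type.

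If the pairs $(L_X,C_X)$ were already distinct across blowup types we would be finished; in general they are not, and this is precisely where the intersection invariant does the essential work. For each collision---a set of distinct blowup types sharing the same line and conic counts---I would show that $I_X$ takes distinct values, using that $I_X$ encodes the intersection pattern of the Galois-stable configuration in $\Pic(\overline{X})$, which is sensitive to data (such as how the exceptional classes meet, or whether blown-up points lie in special position) that the bare curve counts cannot detect.

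The main obstacle is this last step: proving that $I_X$ resolves \emph{every} residual ambiguity. The difficulty concentrates in low degree, particularly $d\in\{1,2\}$, where the number of blowup types is largest, the configurations of $(-1)$-curves are most intricate, and genuine collisions in $(L_X,C_X)$ are most frequent. The bulk of the proof is therefore a careful but finite case analysis verifying, for each such collision, that the intersection invariants genuinely differ; the conceptual content lies entirely in checking that no two distinct blowup types simultaneously agree on all three invariants.
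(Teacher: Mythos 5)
Your plan has a genuine gap at its final and essential step. You propose that every collision in $(L_X,C_X)$ among distinct blowup types can be resolved by showing that the intersection invariants differ, but this is impossible in several cases, because distinct blowup descriptions can be \emph{isomorphic as surfaces} and then agree on all three invariants (and on every other isomorphism invariant). For example, $\Bl_1 S_{8,I}\cong\Bl_2 S_9$ in degree $7$, $\Bl_1 S_6\cong\Bl_3 S_{8,I}$ in degree $5$, and $\Bl_1 S_5\cong\Bl_5 S_9$ in degree $4$; each pair shares its triple. The theorem is only true once such descriptions are identified, so any proof must contain two ingredients your proposal omits: explicit geometric isomorphisms establishing each coincidence (the paper's Proposition~\ref{prop: coincidence result}), and an argument that no \emph{further} coincidences occur, which the paper deduces from the Manin--Iskovskikh rigidity theorem (Proposition~\ref{prop:ManinIskovskikh}): birational relatively minimal del Pezzo surfaces of degree at most $4$ are isomorphic, so blowups of such surfaces cannot acquire unexpected second descriptions. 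Without these, your finite case analysis cannot terminate: at a collision you have no way of knowing whether to search for a finer invariant (there is none) or to prove an isomorphism.

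Two further points need repair. First, your justification of the degree-by-degree reduction is false: by Definition~\ref{def:PicDet}, $I_X$ is the determinant of the Gram matrix of the \emph{arithmetic} Picard group $\Pic(X)$, a single integer; it records neither the geometric Picard rank $10-d$ nor the degree. Indeed the full triple $(0,2,-4)$ is shared by the relatively minimal surfaces $S_{2,\II}$, $S_{4,\II}$ and $S_{1,\II}$ of degrees $2$, $4$ and $1$, so the triple does not determine $d$; the reduction to fixed degree is legitimate only because $d=K_X^2$ is itself an isomorphism invariant of $X$. Second, your phrase ``the Galois action recorded by the blowup type'' conflates two different things: a blowup type records only the base surface and the degrees of the blown-up points, while many different splitting groups are compatible with a single type (compare the $\#C$ columns in the tables). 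That $(L_X,C_X)$ is nevertheless constant on each blowup type is part of what must be proven --- in the paper this comes from the counting formulas of Lemmas~\ref{lem:lccounts} and~\ref{lem:QuadricCounts} for blowups of $S_9$ and $S_8$, and from the case analysis organised by Algorithm~\ref{alg:class} for blowups of lower-degree relatively minimal surfaces --- not something that can be read off from the definition.
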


The invariant $I_X$ appears to be novel in the del Pezzo setting and is a powerful tool in distinguishing blowup types, otherwise computationally untenable in low degree.

Using Theorem~\ref{thm:lci}, we build tables of all blowup types with line and conic counts (Algorithm~\ref{alg:class}), found in Appendix~\ref{sec:Tables}. One may view this as an extension of Iskovskikh's classification of relatively minimal del Pezzo surfaces over an arbitrary field (Theorem \ref{thm:isk}). With these tables, we attack the inverse curve problems via Algorithm~\ref{alg:ICPalg}.

\subsubsection{Finite fields}
Over finite fields, much is known about the inverse Galois problem: it was solved for $d \geq 2$ by Loughran and Trepalin \cite{LT20}. However, the methods break down in degree $1$ and one encounters additional difficulties (see Remark~\ref{rem:IGPfordP1s}).

Despite this, Banwait, Fit\'e and Loughran \cite{BFL19} determined the possibilities for the trace of Frobenius on del Pezzo surfaces of degree $1$ over finite fields. Trace of Frobenius and curve counts are both ``shadows'' of the Galois action, but the latter admits more possible values. By generalising their computational and theoretical techniques, we completely solve the inverse curve problems over all finite fields.

In Tables~\ref{table:FiniteFieldLines} and \ref{table:FiniteFieldConics} below, the presence of a count in a column means that it is possible exactly when the condition labelling the column is true.

\begin{thm} \label{thm:FiniteFieldLines}
Over the finite field $\mathbb{F}_q$, the inverse line problem $\ICP_{d,1}(\mathbb{F}_q)$ admits a surjective solution for all $q$ when $d \geq 5$. The solutions for $d \leq 4$ are given in Table~\ref{table:FiniteFieldLines}.
\end{thm}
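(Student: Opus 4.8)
The plan is to reduce the problem to a finite combinatorial computation in the groups $G_d$, organised by the tables of Appendix~\ref{sec:Tables}, and then to determine realisability of the relevant conjugacy classes over each $\mathbb{F}_q$. The first step is to record the structural simplification afforded by finite fields: since $G_{\mathbb{F}_q} = \Gal(\overline{\mathbb{F}_q}/\mathbb{F}_q)$ is procyclic, topologically generated by Frobenius $\Frob_q$, the image $\rho_X(G_{\mathbb{F}_q})$ is the cyclic subgroup generated by $\rho_X(\Frob_q)$. Hence $C(G_d,\mathbb{F}_q)$ is precisely the set of conjugacy classes of cyclic subgroups of $G_d$ (every cyclic group is $\Gal(L/\mathbb{F}_q)$ for a suitable $L$), and both this set and the induced target $\mathcal{C}_{d,1}(\mathbb{F}_q) = \delta_{d,1}(\mathbb{F}_q)\bigl(C(G_d,\mathbb{F}_q)\bigr)$ are \emph{independent of $q$}: the count attached to a class $[H]$ is just the number of $H$-fixed exceptional classes, a purely group-theoretic quantity readable off the tables built via Algorithm~\ref{alg:class} and Theorem~\ref{thm:lci}. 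Thus solving $\ICP_{d,1}(\mathbb{F}_q)$ amounts to determining, for each such $q$-independent count, the set of $q$ for which some del Pezzo surface realises a conjugacy class achieving it.

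For $d \geq 5$ I expect a clean uniform answer, with the largest group being $G_5 \cong \mathfrak{S}_5$. Here a degree-$d$ surface is governed by low-degree \'etale-algebra data: a degree-$5$ surface by a five-element Galois set (its conic bundle structures), a degree-$6$ surface by a cubic together with a quadratic algebra, and degrees $7,8,9$ by still simpler data (a quadratic algebra, a ruling swap, a Brauer class, the latter trivial over $\mathbb{F}_q$). Since $\mathbb{F}_q$ admits a unique cyclic extension of every degree, and hence \'etale algebras of every splitting type, every cyclic subgroup of $G_d$ is realised as $\rho_X(\Frob_q)$ for a suitable $X/\mathbb{F}_q$ and every $q$. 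This realisability is contained in the Loughran--Trepalin analysis of $\IGP_d(\mathbb{F}_q)$ for $d \geq 2$ \cite{LT20}; since a solution to $\IGP_d(k)$ yields one to $\ICP_{d,1}(k)$, surjectivity of the inverse line problem follows for all $q$, reducing the claim to a finite check over the conjugacy classes of each small $G_d$.

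For $d \leq 4$ the groups $W(D_5), W(E_6), W(E_7), W(E_8)$ are large and $\IGP_d(\mathbb{F}_q)$ is no longer surjective, so the realisable line counts genuinely depend on $q$. Here I would run Algorithm~\ref{alg:ICPalg}: for each count $L$ in the $q$-independent target, list via the blowup-type tables the cyclic subgroups $H \leq G_d$ with $\delta_{d,1}([H]) = L$, and for each such $H$ determine the set of $q$ for which $H$ arises as a Frobenius image. For $d \in \{2,3,4\}$ this set is extracted from the Loughran--Trepalin solution \cite{LT20} by projecting along $\delta_{d,1}$; the congruence and parity conditions on $q$ that emerge—governing, e.g., the existence of the requisite Galois-stable point configurations or twists—become the column labels of Table~\ref{table:FiniteFieldLines}, and the union over all $H$ realising a given $L$ yields the recorded condition.

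The main obstacle is degree $1$, where $\IGP_1(\mathbb{F}_q)$ is unknown (Remark~\ref{rem:IGPfordP1s}) and the Loughran--Trepalin machinery breaks down. Here I would generalise the computational and theoretical techniques of Banwait--Fit\'e--Loughran \cite{BFL19}: whereas they compute the trace of $\Frob_q$ on $\Pic(\overline{X})$, the line count requires the finer datum of the number of $\Frob_q$-fixed exceptional classes among the $240$ on a degree-$1$ surface, i.e.\ the full cycle structure on the lines rather than a single character value. The plan is to enumerate the conjugacy classes of $W(E_8)$ together with their action on the $240$ exceptional classes, compute the associated counts, and then decide realisability over $\mathbb{F}_q$ by combining Weil-conjecture point-count constraints with explicit anticanonical (Weierstrass-type) models $w^2 = z^3 + a(x,y)z + b(x,y)$ in $\mathbb{P}(1,1,2,3)$ carrying a prescribed Frobenius. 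A census over small $q$ together with a stabilisation argument in $q$ should pin down the remaining congruence conditions, completing the degree-$1$ column of Table~\ref{table:FiniteFieldLines} and hence the solution of $\ICP_{d,1}(\mathbb{F}_q)$ for all $d \leq 4$.
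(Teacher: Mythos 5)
Your structural reductions match the paper's: over $\mathbb{F}_q$ the splitting group is the cyclic group generated by the image of Frobenius, so $C(G_d,\mathbb{F}_q)$ is the $q$-independent set of cyclic conjugacy classes and the count attached to a class is purely group-theoretic; surjectivity for $d\geq 5$ holds for every $q$ (Theorem~\ref{thm:dP5IGP}); and for $d\in\{2,3,4\}$ the columns of Table~\ref{table:FiniteFieldLines} are obtained by pushing the Loughran--Trepalin solution of $\IGP_d(\mathbb{F}_q)$ \cite{LT20} forward along $\delta_{d,1}$. The paper's proof begins with exactly this observation (``it suffices to consider the case $d=1$'').

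The genuine gap is degree $1$, which is the entire mathematical content of the theorem, and there your plan rests on a mechanism that cannot work as stated. Producing explicit anticanonical models ``carrying a prescribed Frobenius'' is precisely $\IGP_1(\mathbb{F}_q)$, which is open (Remark~\ref{rem:IGPfordP1s}): the persistently rank-$1$ classes obstruct every known construction, and no ``census over small $q$ plus stabilisation'' can replace a construction you do not have. The paper never prescribes the Frobenius class; it prescribes only the line count, which is strictly weaker because each count in Theorem~\ref{thm:line} is realised by several blowup types and one may choose the most tractable one. Concretely, the paper's proof (Section~\ref{sec:FiniteFieldsProofs}) runs through the sixteen possible counts and for each: (a) types of the form $\Bl_{d_1,\dots,d_r}S_9$ are handled by the exact existence ranges for blowups of the plane in \cite[\S5.2]{BFL19}; (b) the remaining types are transferred via Bertini twists to blowups of degree-$2$ del Pezzo surfaces, and existence of such a blowup reduces to finding an $\mathbb{F}_q$-point in general position on a degree-$2$ surface $X$, which is established by the inequality $\#X(\mathbb{F}_q)=q^2+a(X)q+1 > N + F(q)$, where $N$ bounds the rational points on the $56$ lines using the orbit type of the Galois action (Urabe's tables \cite{Urabe96}) and $F(q)$ is the Hasse--Weil bound for the genus-$3$ ramification curve; (c) for $q$ below the resulting thresholds, both existence and the non-existence halves of the ``exactly when'' claims are settled by explicit computer search (e.g.\ \cite[Thm.~3.1.3]{Li} is invoked to rule out $24$ lines over $\mathbb{F}_3$). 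Items (b) and (c) are entirely absent from your proposal, and your substitute --- Weil-type point-count constraints on the degree-$1$ surface itself --- cannot do their job: point counts over all extensions determine only the characteristic polynomial of Frobenius on $\Pic(\overline{X})$, not the permutation action on the $240$ exceptional classes, and distinct classes with the same characteristic polynomial can have different line counts. Without the blowdown-to-degree-$2$ bad-locus counting and the twist transfer, the degree-$1$ column of the table cannot be established by the route you describe.
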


\begin{longtable}{|l|l|l|l|l|l|l|l|l|}\caption{Line counts over $\mathbb{F}_q$} \label{table:FiniteFieldLines}\\
\hline
Degree & All $q$ & $q \geq 3$ & $q \geq 4$ & $q \geq 5$ & $q \geq 7$ & $q \geq 9$ & $q \geq 11$ & \makecell[l]{$q = 16$ or \\ $q \geq 19$} \\
\hline
$4$ & \makecell[l]{$0$, $1$, $2$, $4$, \\ $8$} & -- & $16$ & -- & -- & -- & -- & -- \\
\hline
$3$ & \makecell[l]{$0$, $1$, $2$, $3$, \\ $5$, $9$, \\ $15$
} & -- & $7$ & -- & $27$ & -- & -- & -- \\
\hline
$2$ & \makecell[l]{$0$, $2$, $4$, $6$, \\ $8$} & $12$, $20$ & -- & $16$, $32$ & -- & $56$ & -- & -- \\
\hline
$1$ & \makecell[l]{$0$, $2$, $4$, $6$, \\ $12$} & $8$, $14$, $20$ & $24$, $30$ & $40$ & $26$, $72$ & -- & $126$ & $240$ \\
\hline
\end{longtable}

\begin{thm} \label{thm:FiniteFieldConics}
Over the finite field $\mathbb{F}_q$, the inverse conic problem $\ICP_{d,2}(\mathbb{F}_q)$ admits a surjective solution for all $q$ when $d \geq 5$. The solutions for $d \leq 4$ are given in Table~\ref{table:FiniteFieldConics}.
\end{thm}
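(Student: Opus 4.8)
The plan is to reduce $\ICP_{d,2}(\mathbb{F}_q)$ to the realizability analysis already carried out for lines in Theorem~\ref{thm:FiniteFieldLines}, since the two problems differ only in the curve-count map post-composed with $\gamma_d(\mathbb{F}_q)$. First I would record that $G_{\mathbb{F}_q} = \Gal(\overline{\mathbb{F}_q}/\mathbb{F}_q) \cong \widehat{\mathbb{Z}}$ is procyclic, so for every $X \in \mathcal{S}_d(\mathbb{F}_q)$ the image $H_X = \rho_X(G_{\mathbb{F}_q})$ is cyclic, generated by the image of Frobenius, and $[H_X]$ is exactly the Frobenius conjugacy class in $G_d$. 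As $\mathbb{F}_q$ has a unique, cyclic extension of every degree, $C(G_d,\mathbb{F}_q)$ is precisely the set of conjugacy classes of cyclic subgroups of $G_d$, and the image of $\gamma_d(\mathbb{F}_q)$ is the subset of these realized by some degree-$d$ del Pezzo surface over $\mathbb{F}_q$. This realized subset is independent of whether one subsequently counts lines or conics, so $\mathcal{C}_{d,2}(\mathbb{F}_q)$ results from applying $\delta_{d,2}(\mathbb{F}_q)$ — read off from the blowup-type tables of Appendix~\ref{sec:Tables} — to the same classes underlying Theorem~\ref{thm:FiniteFieldLines}, following Algorithm~\ref{alg:ICPalg}.

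For $d \geq 2$ I would determine this realized subset from the solution of $\IGP_d(\mathbb{F}_q)$ due to Loughran and Trepalin \cite{LT20}, which by itself yields a solution to $\ICP_{d,2}(\mathbb{F}_q)$. When $d \geq 5$ the group $G_d$ is small and the relevant configurations of points on $\mathbb{P}^2$ or a quadric can be placed in Galois-stable orbits over any $\mathbb{F}_q$, so every cyclic conjugacy class is realized for all $q$ and $\delta_{d,2}(\mathbb{F}_q) \circ \gamma_d(\mathbb{F}_q)$ is surjective. For $2 \leq d \leq 4$ the same input produces Table~\ref{table:FiniteFieldConics}, the $q$-dependence recording exactly when a given class first acquires a representative; this threshold is governed by Weil-type bounds forcing enough $\mathbb{F}_q$-points into the required orbit structure. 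With the tables in hand this step is essentially bookkeeping.

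The genuine obstacle is $d = 1$, where $\IGP_1(\mathbb{F}_q)$ is open (Remark~\ref{rem:IGPfordP1s}) and $G_1$ is the Weyl group $W(E_8)$, so no inverse Galois solution is available to quote. Here I would generalize the computational and theoretical techniques of Banwait, Fit\'e and Loughran \cite{BFL19}, who handled the trace of Frobenius — a coarser shadow of the Galois action than the conic count. The key simplification is that I need not decide realizability of every conjugacy class of $W(E_8)$, only which \emph{values} of the conic count occur: for each class, necessary conditions for realizability over $\mathbb{F}_q$ arise from requiring the characteristic polynomial of Frobenius on $\Pic(\overline{X})$ to be compatible with the Lefschetz point count and the Weil bounds, and these must be matched by explicit families realizing the admissible classes. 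The hardest part, I expect, is that distinct classes in $W(E_8)$ can share a conic count while differing elsewhere, so separating them by the finer invariants of Theorem~\ref{thm:lci} — especially the intersection invariant $I_X$, through the triple $(L_X, C_X, I_X)$ — is what will confirm that the conic counts in the $d=1$ row of Table~\ref{table:FiniteFieldConics} are attained for precisely the stated ranges of $q$.
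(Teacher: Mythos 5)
Your reduction for $d \geq 2$ matches the paper's: there, too, the theorem is reduced to the case $d = 1$, since Loughran--Trepalin's solution of $\IGP_d(\mathbb{F}_q)$ for $d \geq 2$ settles $\ICP_{d,2}(\mathbb{F}_q)$, and procyclicity of $G_{\mathbb{F}_q}$ means only cyclic conjugacy classes are in play. The genuine gap is in your plan for $d = 1$, which is the entire content of the theorem. You propose deriving necessary conditions from compatibility of the characteristic polynomial of Frobenius with Lefschetz point counts and Weil bounds, ``matched by explicit families realizing the admissible classes'' --- but you give no mechanism for producing those families, and that is where all the work lies. The paper's proof is a count-by-count analysis: each conic count in Theorem~\ref{thm:conic} is traced to the specific blowup types that can produce it; types of the form $\Bl_{d_1,\dots,d_r}S_9$ are handled by citing \cite[\S5.2]{BFL19}; the remaining types are realized as blowups of explicit degree-2 del Pezzo surfaces (whose existence follows from \cite{LT20}) at a rational point in general position, and such a point is shown to exist by comparing $\#X(\mathbb{F}_q) = q^2 + a(X)q + 1$ with an upper bound on the ``bad locus'' --- points on lines, controlled by an orbit-type analysis drawn from Urabe's tables, plus at most $F(q)$ points on the genus-3 ramification divisor via Hasse--Weil. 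Existence is then transferred between paired types by Bertini twists, and the sharp small-$q$ thresholds (both existence and non-existence, e.g.\ that count 24 fails over $\mathbb{F}_2$ and $\mathbb{F}_3$) require explicit computer searches, in one case invoking the classification of trace-5 degree-1 surfaces over $\mathbb{F}_3$ from \cite{Li}. None of this apparatus appears in your sketch, and Weil-type necessary conditions alone cannot supply it: they constrain when a class \emph{can} exist, not when it \emph{does}, and they say nothing about proving non-existence over small fields for classes that pass the numerical tests.

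A second, more local error: you assign the intersection invariant $I_X$ of Theorem~\ref{thm:lci} the role of confirming the $q$-ranges in the $d = 1$ row of Table~\ref{table:FiniteFieldConics}. That invariant distinguishes blowup types from one another --- it is how the tables of Appendix~\ref{sec:Tables} are assembled via Algorithm~\ref{alg:class} --- but it is insensitive to the ground field and carries no information about which $q$ realize a given type. The thresholds come exclusively from the existence and non-existence arguments described above.
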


\begin{longtable}{|l|l|l|l|l|l|l|l|l|}\caption{Conic counts over $\mathbb{F}_q$} \label{table:FiniteFieldConics}\\
\hline
Degree & All $q$ & $q \geq 3$ & $q \geq 4$ & $q \geq 5$ & $q \geq 7$ & $q \geq 9$ & $q \geq 11$ & \makecell[l]{$q = 16$ or \\ $q \geq 19$} \\
\hline
$4$ & $0$, $2$, $4$, $6$ & -- & $10$ & -- & -- & -- & -- & -- \\
\hline
$3$ & \makecell[l]{$0$, $1$, $2$, $3$, \\ $5$, $9$, $15$} & -- & $7$ & -- & $27$ & -- & -- & -- \\
\hline
$2$ & $0$, $2$, $4$, $6$ & $8$, $12$, $14$ & -- & $26$, $60$ & $24$ & $126$ & -- & -- \\
\hline
$1$ & $0$, $2$, $4$, $6$ & \makecell[l]{$12$, $18$, $30$, \\ $36$} & $24$, $90$ & -- & \makecell[l]{$72$, $252$, \\$270$} & -- & $756$ & $2160$ \\
\hline
\end{longtable}

\subsubsection{Infinite fields}

Having completely settled the problems for finite fields, we turn to infinite fields, for which we solve the inverse curve problems for $d \geq 4$.
\begin{thm} \label{thm:icp}
For $k$ an infinite field, the problems $\ICP_{d,e}(k)$ admit surjective solutions for all $d \geq 4$.
\end{thm}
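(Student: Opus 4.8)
The plan is to reduce the inverse curve problems to the inverse Galois problem and then to realise the relevant Galois actions by explicit constructions. Since a surjective solution to $\IGP_d(k)$ yields surjective solutions to both $\ICP_{d,1}(k)$ and $\ICP_{d,2}(k)$ at once, it suffices to show that for every $d \geq 4$ and every class $[H] \in C(G_d,k)$ there is a surface $X \in \mc{S}_d(k)$ with $[H_X] = [H]$; by Theorem~\ref{thm:lci} and the tables of Appendix~\ref{sec:Tables} one may even work count-by-count, realising for each permitted count a single blowup type that attains it. Unwinding the definition, $[H] \in C(G_d,k)$ means that $H$, up to conjugacy in $G_d \cong \Aut(\Pic(\overline{X}))$, is abstractly isomorphic to some $\Gal(L/k)$; fixing such an $L$ produces a homomorphism $\phi \colon G_k \twoheadrightarrow H \hookrightarrow G_d$, and the task is to construct a del Pezzo surface whose representation $\rho_X$ is conjugate to $\phi$. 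I would run the construction (Algorithm~\ref{alg:ICPalg}) degree by degree, building surfaces as blowups of relatively minimal models (Theorem~\ref{thm:isk}) at Galois-stable configurations, or as twists, with the arithmetic data chosen to produce exactly $\phi$.

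First I would dispatch the high degrees $d \in \{9,8,7,6,5\}$, where $G_d$ is small and admits an explicit arithmetic parametrisation. For $d = 9$ the group is trivial and $\PP^2$ suffices. For $d = 7$ one has $G_7 \cong \Z/2$, realised by blowing up either a rational pair or a conjugate pair of points (the latter requiring, and permitted by, a separable quadratic extension); the two degree-$8$ families are handled in the same spirit, the nontrivial involution swapping the two rulings of $\PP^1 \times \PP^1$ coming again from a quadratic \'etale algebra. For $d = 6$, where $G_6 \cong \mathfrak{S}_3 \times \Z/2$ is the symmetry group of the hexagon of lines, I would combine a separable cubic (controlling the $\mathfrak{S}_3$-factor that permutes the three pairs of lines) with an independent quadratic character (controlling the $\Z/2$-factor given by the Cremona involution); over an infinite field these may be chosen freely and independently. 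The case $d = 5$ is cleanest: here $\Aut(\overline{X}_0) \cong \mathfrak{S}_5 \cong G_5$ acts faithfully on $\Pic$, so twists of the split surface are classified by the nonabelian cohomology $H^1(G_k,\mathfrak{S}_5)$ --- that is, by continuous homomorphisms $G_k \to \mathfrak{S}_5$ up to conjugacy --- and $\rho_X$ is literally the twisting homomorphism. Thus every $\phi$ is realised, by a quintic \'etale $k$-algebra with the prescribed action on its five factors.

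The substance lies in degree $4$, where $G_4 \cong W(D_5) \cong (\Z/2)^4 \rtimes \mathfrak{S}_5$ and the geometric automorphism group of a single surface surjects only onto the normal subgroup $(\Z/2)^4 \triangleleft W(D_5)$, so that twisting alone cannot produce the $\mathfrak{S}_5$-part. Here I would use the model of a degree-$4$ del Pezzo surface as a smooth intersection of two quadrics in $\PP^4$, equivalently a pencil of quadrics with five degenerate members. Writing $\phi = (v,\overline{\phi})$ for the semidirect-product coordinates, the projection $\overline{\phi} \colon G_k \to \mathfrak{S}_5$ is realised as moduli variation, exactly as in degree $5$, by a quintic \'etale algebra whose five factors are the five degenerate quadrics; and the twisted cocycle $v \colon G_k \to (\Z/2)^4$, where $G_k$ acts through $\overline{\phi}$, is realised by the square classes attached to the rulings of those degenerate quadrics. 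The plan is therefore to build a pencil over $k$ whose discriminant quintic has the prescribed Galois group and whose ruling-discriminants realise the prescribed class $v$, then to verify that these two pieces assemble into $\phi$ compatibly with the action of $\mathfrak{S}_5$ on the five sign coordinates, the passage from $W(B_5)$ to $W(D_5)$ imposing a single parity constraint that I would meet by a norm (discriminant) condition on the chosen square classes.

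The main obstacle, and the place where infinitude of $k$ is indispensable, is to match the Galois data while keeping the surface a smooth del Pezzo. Prescribing $\rho_X = \phi$ pins the construction down to a positive-dimensional family of configurations --- a Galois-stable point set in the blowup models, or a pencil with prescribed discriminant and ruling data for $d = 4$ --- and smoothness (equivalently \emph{general position}: no unexpected collinearities, conic or cubic degeneracies, and nondegeneracy of the base locus of the pencil) is an open dense condition on such a family. Over an infinite field the family is guaranteed a member satisfying all these open conditions simultaneously, so the resulting surface is a smooth degree-$d$ del Pezzo with $[H_X] = [H]$. This is exactly what breaks down over finite fields, where too few points and too few extensions of each degree obstruct the construction and force the case analysis behind Theorems~\ref{thm:FiniteFieldLines} and~\ref{thm:FiniteFieldConics}. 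This yields the required surjectivity of $\delta_{d,e}(k) \circ \gamma_d(k)$ onto $\mc{C}_{d,e}(k)$ for each $d \in \{4,\dots,9\}$ and both $e \in \{1,2\}$ --- in fact simultaneously, via the surjectivity of $\gamma_d(k)$ established along the way.
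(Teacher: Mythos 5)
Your reduction to $\IGP_d(k)$ and your treatment of degrees $5$ through $9$ are sound (indeed, your classification of twists of the split quintic del Pezzo surface by $H^1(G_k,\Sym_5)$ is a legitimate alternative to the paper's blowup constructions for $d=5$). The gap is in degree $4$, and it sits exactly where the theorem's difficulty lives: characteristic $2$. Your plan encodes the $C_2^4$-part of $\phi$ in ``square classes attached to the rulings of the degenerate quadrics,'' with the passage from $W(B_5)$ to $W(D_5)$ met by a discriminant condition. In characteristic $2$, separable quadratic extensions are Artin--Schreier extensions $k(\phi(\alpha))$ with $\phi(\alpha)$ a root of $x^2+x+\alpha$ (Lemma~\ref{lem:quadchar2}), not radical ones, so square classes do not parametrize them; and the pencil-of-quadrics formalism you invoke (five distinct degenerate members, their rulings, the discriminant quintic) degenerates there as well. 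This is precisely the failure mode the paper flags for \cite{KST89}, whose conic-bundle constructions also assume $\Char(k)\neq 2$. Worse, you are implicitly claiming full surjectivity of $\gamma_4(k)$ for every infinite field, which is strictly stronger than Theorem~\ref{thm:icp} and is not established even in the paper: in characteristic $2$ the status of $\IGP_4(k)$ is left open, and only the curve problems are solved.

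The paper's route shows how much less than full $\IGP_4$ is needed. In characteristic $\neq 2$ it quotes Theorem~\ref{thm:dP4IGP}. In characteristic $2$ it argues count-by-count: every line count in $\{1,2,4,8,16\}$, and every conic count except $0$ and $2$, already arises from blowups of del Pezzo surfaces of degree $\geq 5$, where the inverse Galois problem holds over all fields. For line count $0$, a group-theoretic analysis shows that every candidate splitting group either has an index-$2$ subgroup --- so $k$ has a separable quadratic extension and $\Bl_{2^2}S_{8,I}$ realises the count --- or is one of $C_2^4\rtimes C_5$, $C_2^4\rtimes\Alt_5$; these two groups are then realised by explicit Artin--Schreier conic bundles glued over $\PP^1$ (Proposition~\ref{prop:char2IGP} and Theorem~\ref{thm:ExplicitChar2IGP}), with Proposition~\ref{prop:ExplicitChar2ConicICP} and low-index subgroup arguments in $W(D_4)$, $W(D_5)$ covering the conic counts $2$ and $0$. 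Your proposal contains no characteristic-$2$ substitute for the square-class step and no such reduction to finitely many groups, so as written it proves the theorem only away from characteristic $2$. (Separately, your appeal to ``open dense conditions over an infinite field'' elides a known subtlety: the conic-bundle constructions can output an Iskovskikh surface rather than a del Pezzo surface, requiring an extra transformation --- see the remark following Theorem~\ref{thm:igpconic}.)
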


The main content of Theorem~\ref{thm:icp} is for $d = 4$: $\IGP_4(k)$, thus $\ICP_{4,e}(k)$, is solved for infinite $k$  with $\Char(k) \neq 2$ by Kunyavski\u{\i}, Skorobogatov and Tsfasman \cite[\S6]{KST89}. However, the representation-theoretic methods and conic bundle constructions of \emph{loc.\ cit.}\ break down in characteristic $2$. By careful construction of Artin--Schreier analogues, we solve new cases of the inverse Galois problem and arrive at our solution.

Under modest assumptions on the ground field, we obtain \emph{simultaneous} surjective solutions to the inverse curve problems in all degrees.

\begin{thm} \label{thm:ModestICP}
Let $k$ be an infinite field with separable extensions of degrees $2$ to $8$. Then $\ICP_{d,e}(k)$ has a surjective solution for all $d$ and $e$. If $\Char(k) \neq 2$ and $k$ admits a Galois extension of degree 3, then the problems admit simultaneous surjective solutions.
\end{thm}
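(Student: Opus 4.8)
\emph{Proof strategy.} The plan is to reduce to small degree and then realise the required counts by explicit blowup constructions governed by Theorem~\ref{thm:lci} and the tables of Appendix~\ref{sec:Tables}. For $d \geq 4$ the individual surjective solutions are exactly Theorem~\ref{thm:icp}, and for $d = 4$ simultaneity comes for free from the solution of $\IGP_4(k)$: realising each admissible class $[H]$ by a surface realises its pair $(\delta_{d,1}([H]),\delta_{d,2}([H]))$. The genuine content therefore lies in the large-group degrees $d \in \{1,2,3\}$, where $G_d = W(E_{9-d})$ and full $\IGP_d(k)$ is unavailable; here I would argue directly with counts, using Theorem~\ref{thm:lci} to reduce each target to the realisation over $k$ of a single del Pezzo surface of a prescribed blowup type (the small-group degrees $5 \le d \le 7$ being handled by the same realisation where simultaneity needs checking).

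For the individual surjective solutions I would construct these surfaces by blowing up a relatively minimal surface from Iskovskikh's list (Theorem~\ref{thm:isk})---most often $\PP^2$, but also twisted quadrics and minimal conic bundles---at a Galois-stable set of $9-d$ geometric points whose orbit decomposition matches the blowup type. An orbit of size $m$ is produced from a closed point of degree $m$, which exists precisely when $k$ admits a separable extension of degree $m$; since $9-d \leq 8$, every orbit size lies in $\{2,\dots,8\}$, so the hypothesis supplies every configuration required, the degree-$2$ case additionally furnishing the twisted minimal surfaces used as bases. Because $k$ is infinite, a dimension count places the points in general position, so the blowup remains del Pezzo, and comparison with the tables confirms that these surfaces exhaust $\mathcal{C}_{d,1}(k)$ and $\mathcal{C}_{d,2}(k)$.

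For the simultaneous statement one must realise, by a single surface, every pair of counts permitted by the Galois theory of $k$. Since the tables record both counts for each blowup type, this again reduces to realising one blowup type per admissible pair; the new difficulty is that some pairs are distinguished not by orbit sizes but by the internal structure of an orbit---typically whether a size-$3$ orbit carries a cyclic or a full symmetric Galois action. Realising the cyclic case forces a degree-$3$ Galois, hence $C_3$, extension of $k$, which is precisely the extra hypothesis, while the assumption $\Char(k) \neq 2$ lets us import the quadratic-twist and conic-bundle constructions behind the $d=4$ case of Theorem~\ref{thm:icp}---which degenerate in characteristic $2$---to separate the remaining pairs.

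The hardest step I expect is the genuinely simultaneous control in degree $1$, where $W(E_8)$ acts on the $240$ lines together with all conic fibrations and the configurations involve eight points: here the combinatorics of which orbit structures produce which pairs is most intricate, and the real work is verifying that the cyclic-cubic hypothesis suffices to reach every admissible pair rather than merely every admissible individual count. I would settle this by a finite case analysis over the degree-$1$ rows of the tables, isolating the few pairs inaccessible to symmetric-type actions and realising each by an explicit eight-point configuration incorporating a cyclic cubic orbit.
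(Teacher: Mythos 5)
Your skeleton---reduce via Theorem~\ref{thm:lci} and the tables to realising one blowup type per count or per pair, cite Theorem~\ref{thm:icp} for $d \geq 4$, and realise most counts by blowing up Galois-stable configurations on rational minimal surfaces---matches the paper, but the two steps you treat as routine are exactly where the content lies, and as written they fail. First, you take the relatively minimal bases of degree $\leq 4$ (the minimal conic bundles $S_{4,\II}$, $S_{2,\II}$ and the rank-one surfaces $S_{4,I}$, $S_{2,I}$) as available ``from Iskovskikh's list.'' Theorem~\ref{thm:isk} classifies such surfaces; it does not construct them, and having separable extensions of degrees $2$ to $8$ does not ``furnish'' them. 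This matters already for the non-simultaneous claim: the one line count not reachable from blowups of degree-$\geq 5$ minimal surfaces is $24$ in degree $1$, which forces the type $\Bl_{1^3}S_{4,\II}$ and hence forces the existence of a relatively minimal degree-$4$ conic bundle over $k$. In characteristic $\neq 2$ this comes from Theorem~\ref{thm:dP4IGP}; in characteristic $2$ it is precisely the paper's Artin--Schreier construction (Proposition~\ref{prop:ExplicitChar2ConicICP}), and no soft argument replaces it. Relatedly, ``a dimension count places the points in general position'' is not a proof over a general infinite field: this is Theorem~\ref{thm:gen position P2} and Proposition~\ref{prop:general position dP}, and on the non-rational low-degree bases one additionally needs a rational point plus unirationality, as in Lemma~\ref{lem:rational points in general position}.

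Second, your plan for simultaneity would fail as stated. The pairs left over after exhausting degree-$\geq 5$ bases---$(0,2)$, $(0,4)$, $(8,6)$ in degree $2$; $(2,2)$, $(12,6)$, $(0,6)$, $(6,12)$, $(24,24)$, $(4,4)$ in degree $1$---arise, by Theorem~\ref{thm:lci} and the tables, only from blowups of relatively minimal surfaces of degree $\leq 4$ (indeed $(0,2)$ in degree $2$ is $S_{2,\II}$ itself). No eight-point configuration on $\PP^2$, with or without a cyclic cubic orbit, realises any of them: a Galois-stable configuration on $\PP^2$ yields a $\Bl S_9$-type surface, and these pairs never occur in such rows. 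For the same reason your diagnosis of the degree-$3$ Galois hypothesis is misplaced: a cyclic and an $\Sym_3$ cubic orbit in $\PP^2$ give the same blowup type, hence the same pair of counts. In the paper the hypothesis is used, together with $\Char(k) \neq 2$, to realise the splitting group $C_6 \cong C_2 \times C_3$ for a minimal degree-$4$ conic bundle via Corollary~\ref{cor:igpconic2}; one then obtains $\Bl_{1^3}S_{4,\II}$ by blowing up rational points in general position (available by unirationality), and a Bertini twist produces the outstanding type $\Bl_3 S_{4,\II}$, the one type admitting no rational-point construction, while the remaining problematic types are handled as blowups of unirational degree-$2$ conic del Pezzo surfaces. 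Your final paragraph gestures at importing conic-bundle constructions, but this mechanism---existence of minimal bases with prescribed splitting groups, rational points via unirationality, and the twist---is the proof, and it is absent from the proposal.
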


This generalises a result of Kulkarni and Vemulapalli \cite[Thm.~3.1.1]{KulVem24} on bitangents of plane quartics (equivalent to $\ICP_{2,1}(k)$) by removing characteristic and Galois assumptions. Further, it completely solves the inverse curve problems for local fields. Since the archimedean local fields $\mathbb{R}$ and $\mathbb{C}$ are already settled, we restrict to non-archimedean ones (e.g.,\ the field of $p$-adic numbers $\mathbb{Q}_p$).

\begin{cor} \label{cor:LocalICP}
For $k$ a non-archimedean local field, the problems $\ICP_{d,e}(k)$ admit surjective solutions for all $d$ and $e$. If $\Char(k) \neq 2$, these solutions are simultaneous.
\end{cor}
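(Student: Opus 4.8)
The plan is to deduce Corollary~\ref{cor:LocalICP} from Theorem~\ref{thm:ModestICP} by verifying that every non-archimedean local field $k$ satisfies the hypotheses of that theorem. The two things I must check are: (i) $k$ is infinite and admits separable extensions of every degree from $2$ to $8$; and (ii) when $\Char(k) \neq 2$, that $k$ additionally admits a Galois extension of degree $3$, so that the simultaneous conclusion applies. Since Theorem~\ref{thm:ModestICP} already delivers surjective (respectively simultaneous surjective) solutions under exactly these hypotheses, the entire corollary reduces to these field-theoretic verifications.

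\emph{Existence of separable extensions of all small degrees.} A non-archimedean local field $k$ is either a finite extension of $\mathbb{Q}_p$ (characteristic $0$) or a field of Laurent series $\mathbb{F}_q(\!(t)\!)$ (characteristic $p$); in either case $k$ is infinite. The key structural fact I would invoke is that such $k$ admits separable extensions of every finite degree $n \geq 1$. The cleanest route is through unramified extensions: for each $n$ there is a unique unramified extension of $k$ of degree $n$, obtained from the degree-$n$ extension of the (finite) residue field, and these are separable over $k$ (indeed Galois, with cyclic Galois group). This single observation already produces separable extensions of degrees $2$ through $8$, so hypothesis (i) holds for every non-archimedean local field with no restriction on the characteristic. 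Thus the first sentence of Theorem~\ref{thm:ModestICP} applies verbatim and yields surjective solutions to $\ICP_{d,e}(k)$ for all $d$ and $e$.

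\emph{The degree-$3$ Galois extension.} When $\Char(k) \neq 2$, the unramified extension of degree $3$ constructed above is itself Galois over $k$ with cyclic Galois group of order $3$, since all unramified extensions of local fields are Galois with Galois group isomorphic to the corresponding (cyclic) extension of the residue field. This supplies precisely the degree-$3$ Galois extension demanded by the second sentence of Theorem~\ref{thm:ModestICP}, so the simultaneous surjective conclusion follows in the case $\Char(k) \neq 2$. I would note that we need impose no hypothesis separating the two residue characteristics: whether the residue field has characteristic $2$ or not is irrelevant, as the unramified degree-$3$ extension exists and is cyclic Galois regardless.

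The argument is essentially a routine unwinding of the hypotheses of Theorem~\ref{thm:ModestICP}, and there is no substantive obstacle — the only point requiring care is to confirm that the relevant extensions are genuinely \emph{separable} (and, for the degree-$3$ case, Galois) even in positive characteristic, which is guaranteed because unramified extensions of local fields are always separable and Galois over the base. I would therefore present the proof as a short verification: cite the standard existence and Galois-ness of unramified extensions of all degrees over a non-archimedean local field, observe that this furnishes separable extensions of degrees $2$ to $8$ and, away from characteristic $2$, a cyclic degree-$3$ Galois extension, and conclude by direct appeal to Theorem~\ref{thm:ModestICP}.
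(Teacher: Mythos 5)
Your proposal is correct and coincides with the paper's own (one-line) argument: the paper likewise deduces Corollary~\ref{cor:LocalICP} from Theorem~\ref{thm:ModestICP} by noting that a non-archimedean local field has a cyclic (unramified) Galois extension of every degree, which supplies the separable extensions of degrees $2$ to $8$ and, when $\Char(k) \neq 2$, the degree-$3$ Galois extension. Your write-up simply spells out these standard facts in more detail.
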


\subsubsection{Hilbertian fields}

We can say even more over Hilbertian fields (Definition~\ref{def:hilbertian}), including global fields and function fields in finitely many variables. We show that all blowup types exist, thus solving the inverse curve problems simultaneously.

\begin{thm} \label{thm:hilb}
For all $1 \leq d \leq 9$ and $k$ a Hilbertian field, all blowup types of degree-$d$ del Pezzo surfaces exist. In particular, the inverse curve problems $\ICP_{d,e}(k)$ admit simultaneous surjective solutions.
\end{thm}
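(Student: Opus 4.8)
The plan is to realise each blowup type by specialising the universal family of degree-$d$ del Pezzo surfaces, using the defining property of Hilbertian fields to control the resulting Galois action. The engine is the following consequence of Hilbert's irreducibility theorem: if $\pi \colon \widetilde{\mathcal M} \to \mathcal M$ is a Galois cover of $k$-varieties with group $G$, $\mathcal M$ is $k$-rational, and $H \leq G$ is a subgroup such that $\widetilde{\mathcal M}/H$ has a $k$-point, then a $k$-point of $\widetilde{\mathcal M}/H$ lying in an appropriate Hilbert subset specialises to a $k$-fibre whose decomposition group is exactly $H$ (up to conjugacy). I would apply this to the cover in which $\mathcal M$ parametrises configurations of the $9-d$ points to be blown up (on a relatively minimal base) in general position, $\widetilde{\mathcal M}$ records a marking of the resulting geometric configuration of lines, and the monodromy group is the full Weyl group $G_d$ of Lemma~\ref{lem:Gd}. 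Crucially $\mathcal M$ is rational, so Hilbertianity is available.

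First I would fix, for each blowup type, its base surface and its prescribed Galois image $H \leq G_d$. The bases exist over $k$ by Iskovskikh's classification (Theorem~\ref{thm:isk}) together with the fact that a Hilbertian field is infinite and admits separable extensions of every degree (indeed it realises $\mathfrak{S}_n$, and hence every transitive permutation group, as a Galois group via the general polynomial). Then I would realise image exactly $H$ by exhibiting a $k$-point of $\widetilde{\mathcal M}/H$ and invoking the engine above, after intersecting the resulting Hilbert subset with the dense open locus where the points are in general position and the blown-up surface is del Pezzo of degree $d$ with the prescribed type; since removing a thin set leaves a Hilbert subset nonempty over $k$, this yields the required surface. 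Concretely, this amounts to assembling the blown-up locus orbit by orbit, each Galois orbit being a closed point of prescribed degree whose residue field realises the corresponding transitive quotient of $H$, chosen so that the global action on the line configuration is precisely $H$.

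The step I expect to be the main obstacle is guaranteeing that $\widetilde{\mathcal M}/H$ carries a $k$-point for every blowup-type subgroup $H$ — equivalently, that the prescribed Galois-stable configuration can be assembled over $k$ at all — compatibly with general position. This is where the full force of Hilbertianity, rather than mere infinitude, enters: the blown-up points are defined only over proper separable extensions of $k$ and must fit into a single configuration with exactly the orbit structure dictated by $H$ while avoiding every incidence condition (collinearity, six points on a conic, and so on) that would drop the degree or alter the type. I would handle this by descending through the blowup tower, at each stage choosing the next closed point of prescribed degree so as to extend the partial marking to the next orbit, invoking Hilbert irreducibility to keep the extension Galois-theoretically correct and the configuration general; the low-degree cases $d \in \{1,2\}$, where the incidence constraints are tightest and the admissible bases most varied, will require the most care.

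Finally, the ``in particular'' clause follows formally. Every conjugacy class in $C(G_d,k)$ is the Galois image of some blowup type, so realising all blowup types makes $\gamma_d(k)$ surject onto $C(G_d,k)$, whence composing with $\delta_{d,e}(k)$ gives surjectivity onto $\mathcal C_{d,e}(k)$. Moreover, by Theorem~\ref{thm:lci} a blowup type determines the pair $(L_X,C_X)$, so both counts are attained by a single surface, yielding the simultaneous surjective solution to $\ICP_{d,e}(k)$.
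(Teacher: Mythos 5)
Your overall architecture (moduli space of configurations plus Hilbert irreducibility) matches the part of the paper's proof that produces the relatively minimal bases, but it does not survive contact with the real difficulty, which you correctly locate and then do not resolve. The engine you state needs more than a single $k$-point of $\widetilde{\mathcal M}/H$: to specialise so that the decomposition group is exactly $H$, you need a nonempty Hilbert subset of $\widetilde{\mathcal M}/H$, i.e.\ you need $\widetilde{\mathcal M}/H$ itself to have the Hilbert property. Hilbertianity of $k$ supplies this only when the relevant parameter space is rational (or otherwise known to have non-thin rational points). That is fine for configurations of points in $\mathbb{P}^2$ --- this is essentially how the paper obtains relatively minimal bases with splitting group $G_d$, via the rational Hilbert scheme of anticanonically embedded surfaces, the line cover, and \cite[Prop.~4.6]{JL15} --- but it fails exactly for the blowup types whose base is a non-rational relatively minimal surface of degree at most $4$ ($S_3$, $S_{4,I}$, $S_{4,\II}$, $S_{2,I}$, $S_{2,\II}$, \dots). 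There the space parametrising a degree-$e$ closed point is an open subscheme of $\mathrm{Sym}^e(X)$ for a non-rational $X$, and neither its rationality nor its Hilbert property is known; the paper explicitly records this as an open question (Appendix~\ref{sec:Questions}). Your proposed repair --- iterating Hilbert irreducibility down the blowup tower --- hits the same wall at every stage, since each application of HIT again requires a parameter space with the Hilbert property that you never produce. (A further mismatch: the monodromy of your marking cover is the full $G_d$ only over rational bases; over a base such as $S_{4,\II}$ the splitting group of a blowup is constrained to a fibre product as in Proposition~\ref{prop:galois action on blowups}, so the cover you set up is not the right one there.) The paper's actual solution to this step is a different device: take a rational point $P$ on the base (produced by the moduli-plus-HIT step), pass to a tangent section $C_P$, which is birational to $\mathbb{P}^1$, and exploit that $\mathrm{Sym}^2(C_P)$ and $\mathrm{Sym}^3(C_P)$ are birational to $\mathbb{P}^2$ and $\mathbb{P}^3$; a rational point there avoiding finitely many incidence curves and surfaces and the thin locus of reducible divisors yields an irreducible closed point of degree $2$ or $3$ in general position (Proposition~\ref{prop:HilbertianGenPos}). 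Nothing in your proposal plays this role.

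A second, independent error: you assert that the relatively minimal bases exist over $k$ ``by Iskovskikh's classification together with the fact that a Hilbertian field admits separable extensions of every degree.'' Theorem~\ref{thm:isk} classifies the Picard lattices of relatively minimal del Pezzo surfaces; it is not an existence statement, and separable extensions of all degrees do not by themselves yield, say, a relatively minimal cubic surface or a surface of type $S_{1,I}$ over $k$. Over Hilbertian fields this existence (with maximal splitting group, hence relative minimality, and with a rational point) is itself a nontrivial part of the paper's argument, proved via the moduli-space and HIT machinery together with a fibration argument for the rational point. Your final paragraph, deducing the simultaneous surjective solutions from the existence of all blowup types via Theorem~\ref{thm:lci}, is correct and agrees with the paper.
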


The properties of Hilbertian fields imply the existence of relatively minimal del Pezzo surfaces in each degree, but there remains the problem of determining whether there exist certain closed points in general position (Definition~\ref{def:general position del Pezzo}) on these surfaces. Through consideration of the geometric and arithmetic properties of symmetric powers, we are able to show that such points do in fact exist.

\begin{rem}
The only fields for which the inverse curve problems are not completely solved are infinite non-Hilbertian non-local non-closed fields which fail to have low-degree separable extensions. We also extend Russo's work \cite{Rus02} over $\mathbb{R}$ to real closed fields (Remark~\ref{rem:RealClosedFields}), thus dispensing with virtually all of the fields usually encountered in arithmetic geometry.
\end{rem}

\subsection{Related literature}

We now give a brief summary of the literature.

\textbf{Arbitrary fields:} For $d \geq 5$, Zaitsev \cite[Thms.~1.2,~1.7]{Zai23} showed that $\IGP_d(k)$ has a surjective solution for any field $k$.

\textbf{Finite fields:} Work of Trepalin \cite{Tre20} and Loughran--Trepalin \cite{LT20} solves $\IGP_d(\mathbb{F}_q)$ for all $q$ when $d \in \{2,3,4\}$, following earlier work of Rybakov \cite{Ryb05} and Rybakov--Trepalin \cite{RybTre17}. Banwait, Fit\'e and Loughran \cite[Thm.~1.7]{BFL19} determined the possibilities for the trace of Frobenius acting on the geometric Picard group and proved a quantitative form of $\IGP_d(\mathbb{F}_q)$ for $d \leq 6$ as $q \rightarrow \infty$; later cohomological results of Das \cite{Das20}, Bergvall and Gounelas \cite{BG23} and Bergvall \cite{Ber24} imply quantitative results. For more, we direct the reader to \cite[\S1]{Ber24}.

\textbf{Infinite fields:} A surjective solution to $\text{IGP}_4(k)$ for $k$ infinite with $\ch{k} \neq 2$ is given by Kunyavski\u{\i}, Skorobogatov and Tsfasman \cite[Cor.~6.10]{KST89} (see the final line before \cite[Prop.~6.4]{KST89}). This work also features a surjective solution of the analogous problem for conic bundles. Elsenhans and Jahnel gave a surjective solution to $\IGP_3(\mathbb{Q})$ \cite{EJ15} and partial results for $\IGP_2(k)$ for $k$ infinite with $\Char(k) \neq 2$ \cite{EJ19a,EJ19b}. Kulkarni and Vemulapalli \cite{KulVem24} gave partial results on $\ICP_2(k)$, while Kulkarni \cite{Kul21} established partial results for $\IGP_1(\mathbb{Q})$. Work of Russo \cite{Rus02} implies $\IGP_d(\mathbb{R})$ for all $d \in \{1,\dots,9\}$.

\textbf{Enriched counts:} The last decade has seen a surge of activity in enumerative geometry over non-closed fields, in part due to the \emph{enriched enumerative geometry} program. In enriched enumerative geometry, one uses tools from motivic homotopy theory to give weighted counts of geometric objects, analogous to the signed counts that are characteristic of real enumerative geometry. Kass and Wickelgren gave an enriched count of lines on cubic surfaces \cite{KW}, while Larson and Vogt proved some results on enriching the count of bitangents to plane quartics \cite{LV} (which comes with additional subtleties and limitations due to issues of orientability). 

While our work and enriched enumerative geometry share the goal of counting curves ``arithmetically,'' our methods and results are distinct. For example, the enriched count of lines on a cubic surface $X$ over $\mb{F}_q$ is a sum of quadratic forms over $\mb{F}_q$, with each line $L$ contributing a quadratic form depending on the geometry of the inclusion $L\subset X$ and the trace form of the field of definition of $L$ over $\mb{F}_q$. Over most fields, these trace forms prevent one from deducing anything about the number of $k$-rational lines from the enriched count of lines. A notable exception is $k=\mb{R}$, over which the trace form of any non-trivial extension has signature 0. In this case, the signature of the enriched count recovers the signed count of 3 lines on a real cubic surface (proved by Segre \cite{Seg42} but only first noticed much later by Finashin--Kharlamov and Okonek--Teleman \cite{FK12,OT14}). In the context of our results, this signed count of 3 just implies that a real cubic surface must have at least 3 real lines.

\subsection{Outline}

In Section~\ref{sec:delpezzo} we give background on del Pezzo surfaces. In Section~\ref{sec:classification} we present results for distinguishing blowup types and Algorithm~\ref{alg:class} for constructing tables of blowup types with line and conic counts, also proving Theorem~\ref{thm:lci}. In Section~\ref{sec:inverseproblems} we review previous results and present Algorithm~\ref{alg:ICPalg}. In Section~\ref{sec:finitefields} we prove Theorems~\ref{thm:FiniteFieldLines} and \ref{thm:FiniteFieldConics}. In Section~\ref{sec:infinitefields} we prove Theorems~\ref{thm:icp} and \ref{thm:ModestICP}, and in Section~\ref{sec:hilb} we prove Theorem~\ref{thm:hilb}. Appendix~\ref{sec:Tables} contains our tables for arbitrary fields and special tables for finite fields; Appendix~\ref{sec:general position appendix} contains computations on points in general position, and
Appendix~\ref{sec:Questions} contains open questions.

Our \texttt{Magma}~\cite{Magma} code is available at
\[
\text{\url{https://github.com/HappyUppal/Lines-on-del-Pezzo-surfaces.git}}. 
\] 
We also benefited from \href{https://people.maths.bris.ac.uk/~matyd/GroupNames/index.html}{GroupNames}, Tim Dokchitser's database of groups.

\subsection{Notation and conventions}

\subsubsection*{Algebra}

Unless otherwise specified, we work over an arbitrary field $k$ with separable closure $k_s$ and algebraic closure $\overline{k}$, setting $G_k:=\Gal(k_s/k)$. We denote by $\Sym_n$, $\Alt_n$, $\Dih_n$ and $C_n$ the symmetric and alternating groups on $n$ letters, dihedral group of order $2n$ and cyclic group of order $n \in \mathbb{Z}_{\geq 2}$, respectively. For a prime power $q$, we denote by $\mathbb{F}_q$ the finite field of $q$ elements.

\subsubsection*{Geometry}

We denote by $\Spec R$ the spectrum of a ring $R$ and by $\mathbb{A}^n_R$ and $\mathbb{P}^n_R$ the affine and projective spaces of dimension $n$ over $R$ respectively, omitting the subscript when it is clear from context. A \emph{variety} $X$ over a field $k$ is an integral separated scheme of finite type over $\Spec k$; we denote its $k$-rational points by $X(k)$ and its canonical divisor class by $K_X$. We write $X/k$ when emphasising the base field. Given an extension $L/k$, we write $X_L$ for the base-change $X \times_{\Spec k} \Spec L$, reserving $\overline{X} := X_{\overline{k}}$. \emph{Curves} and \emph{surfaces} are varieties of dimensions $1$ and $2$ respectively. We denote by $C\cdot D$ the intersection product of two divisors on a surface $X$, setting $C^2:= C \cdot C$. We say a variety $X$ is \emph{rational} (resp.\ \emph{unirational}) if there is a birational (resp.\ dominant) map $\PP^{\dim(X)} \dasharrow X$ over $k$. Let $\pi:Y\to X$ be the blowup of a surface $X/k$ in a point $P\in X(k)$. The \emph{exceptional divisor} of $\pi$ is the preimage $E:=\pi^{-1}(P)$. We have $E\cong \mathbb{P}^{1}_{k}$ and $E^2=-1$. The \emph{strict transform} $\tilde{C}\subset Y$ of a curve $C\subset X$ is the closure of $\pi^{-1}(C\setminus\{P\})$, and its \emph{pullback} or \emph{total transform} is $\pi^*C:=\tilde{C}+rE$, where $r$ is the multiplicity of $C$ at $P$ \cite[Prop.~V.3.6]{Har77}.

\subsection*{Acknowledgements}

We thank Julian Demeio, Aaron Landesman, Daniel Loughran, Adam Morgan, Ross Paterson, Alexei Skorobogatov, Tony V\'arilly-Alvarado and Sameera Vemulapalli for helpful discussions and Rosa Winter for sharing code. EK was supported by FWO grant GYN-D9843-G0B1721N.
SM was supported by the Simons Foundation and NSF grants DMS-2202825 and DMS-2502365.
SS and HU were supported by the University of Bristol and the Heilbronn Institute for Mathematical Research.

\section{Del Pezzo surfaces} \label{sec:delpezzo}

This section is devoted to gathering relevant facts about del Pezzo surfaces.

\begin{defn}
A \emph{del Pezzo surface} over a field $k$ is a smooth projective geometrically integral variety $X/k$ of dimension 2 such that the anticanonical class $-K_X$ is ample. The \emph{degree} of $X$ is the intersection product $d:=K_X^2\in\{1,\ldots,9\}$.
\end{defn}

\begin{rem}
For $d\geq 3$, the class $-K_X$ is very ample, inducing an \emph{anticanonical embedding} $X\hookrightarrow\mb{P}^d$. The image is a smooth cubic surface in $\mathbb{P}^3$ for $d=3$ and a smooth intersection of quadrics in $\mathbb{P}^4$ for $d = 4$. For degree $2$ and $1$, the anticanonical morphism is a double cover of $\mathbb{P}^2$ branched over a quartic curve and a rational map to $\mathbb{P}^1$ defined away from one point respectively.
\end{rem}

\begin{rem}
The anticanonical morphism when $d = 2$ and the bianticanonical morphism when $d = 1$ are both double covers: they give rise to the \emph{Geiser and Bertini involutions} respectively, which we will use later.
\end{rem}

The following result classifies del Pezzo surfaces over a separably closed field.

\begin{thm} \cite[Thm.~9.4.4]{Poo17} \label{thm:ksclass} 
Let $X$ be a del Pezzo surface of degree $d$ over a field $k$. Then $X_{k_s}$ is either isomorphic to $\mathbb{P}^1_{k_s} \times \mathbb{P}^1_{k_s}$ (in which case $d = 8$) or $X_{k_s}$ is isomorphic to the blowup of $\mathbb{P}^2_{k_s}$ in $9-d$ points in general position.
\end{thm}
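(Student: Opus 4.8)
The plan is to work over the separably closed field $K := k_s$ and argue by induction on the degree $d = K_X^2$, successively contracting $(-1)$-curves until a relatively minimal surface is reached. Throughout I use that $-K_X$ is ample, so $K_X \cdot C < 0$ for every irreducible curve $C$. Since $K$ is separably closed, every $(-1)$-curve that occurs is rigid and defined over $K$, so Castelnuovo's contractibility criterion applies over $K$ directly and there is no need to pass to $\overline{k}$. The first step is to classify negative curves via adjunction: for an irreducible curve $C$ one has $2p_a(C) - 2 = C^2 + K_X \cdot C$, and ampleness of $-K_X$ forces $K_X \cdot C \le -1$; if in addition $C^2 \le -1$, then $2p_a(C) - 2 \le -2$ yields $p_a(C) = 0$ and $C^2 = K_X \cdot C = -1$. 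Hence every negative curve is a $(-1)$-curve, and $X$ contains no curve of self-intersection $\le -2$.

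Next I treat the base case in which $X$ is relatively minimal, i.e.\ contains no $(-1)$-curve. Here I first record that $X$ is rational: $H^0(X, 2K_X) = 0$ since $2K_X$ is anti-ample, and standard vanishing gives $H^1(X, \mathcal{O}_X) = 0$, so Castelnuovo's rationality criterion ($q = P_2 = 0$) applies. The Enriques classification of relatively minimal rational surfaces then gives $X \cong \mathbb{P}^2_K$ or $X \cong \mathbb{F}_n$ for a Hirzebruch surface with $n \ge 0$, $n \ne 1$ (as $\mathbb{F}_1$ is the blowup of $\mathbb{P}^2$ at a point, hence not minimal). For $n \ge 2$ the surface $\mathbb{F}_n$ carries its negative section of self-intersection $-n \le -2$, contradicting the previous step; so only $\mathbb{P}^2_K$ (degree $9$) and $\mathbb{F}_0 = \mathbb{P}^1_K \times \mathbb{P}^1_K$ (degree $8$) survive.

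For the inductive step, suppose $X$ carries a $(-1)$-curve $E$. By Castelnuovo contractibility there is a birational morphism $\pi : X \to X'$ to a smooth projective surface contracting $E$ to a point $P \in X'(K)$, with $K_X = \pi^* K_{X'} + E$ and hence $K_{X'}^2 = d+1$. I claim $X'$ is again del Pezzo. Indeed $(-K_{X'})^2 = d+1 > 0$, and for an irreducible curve $C' \subset X'$ with strict transform $C$ and $m := \operatorname{mult}_P(C')$, the projection formula together with $E^2 = K_X \cdot E = -1$ and $E \cdot C = m$ gives $-K_{X'}\cdot C' = (-K_X + E)\cdot(C + mE) = -K_X \cdot C + m > 0$; Nakai--Moishezon then shows $-K_{X'}$ is ample \cite[Ch.~V]{Har77}. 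By induction $X'$ is $\mathbb{P}^2_K$, $\mathbb{P}^1_K \times \mathbb{P}^1_K$, or an iterated blowup of $\mathbb{P}^2_K$. Since the blowup of $\mathbb{P}^1 \times \mathbb{P}^1$ at one point is isomorphic to the blowup of $\mathbb{P}^2$ at two points, any non-minimal $X$ is realised as an iterated blowup of $\mathbb{P}^2_K$ in $9-d$ points, and the $\mathbb{P}^1 \times \mathbb{P}^1$ case survives only as the minimal surface of degree $8$.

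It remains to verify that the $9-d$ points lie in general position, which is the converse of the negative-curve analysis: were the points special — three collinear, six on a conic, or the appropriate degenerations in low degree — then the strict transform of the offending line, conic, or cubic would be an irreducible curve of self-intersection $\le -2$ on $X$, contradicting the classification of negative curves; conversely general position is exactly the configuration condition guaranteeing that $-K_X$ is ample on the blowup. I expect the main obstacle to be the rationality input underlying the minimal case: both Castelnuovo's criterion and the Enriques classification must be invoked in characteristic-free form, and in characteristic $p$ one must take care since Kodaira-type vanishing can fail, so that $H^1(X,\mathcal{O}_X)=0$ relies on the known del Pezzo vanishing statements rather than on Kodaira vanishing directly \cite{Poo17}. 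The remaining low-degree bookkeeping for general position is delicate but routine once this negative-curve dictionary is in hand.
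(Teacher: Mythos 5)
The paper offers no proof of this statement at all: it is quoted verbatim from Poonen \cite[Thm.~9.4.4]{Poo17}, so the only comparison available is with the standard argument in the cited source, which your proposal reconstructs faithfully (negative-curve classification via adjunction, contraction of $(-1)$-curves raising $K^2$ by $1$, classification of relatively minimal rational surfaces, and the coincidence $\Bl_1(\mathbb{P}^1\times\mathbb{P}^1)\cong\Bl_2\mathbb{P}^2$). Your adjunction computation, the Nakai--Moishezon verification that the contraction is again del Pezzo, and the translation of general position into the absence of curves of self-intersection $\leq -2$ are all correct, and the characteristic-$p$ caveats you flag at the end are genuine but resolve affirmatively: $H^1(X,\mathcal{O}_X)=0$ holds for del Pezzo surfaces in every characteristic, and Castelnuovo's criterion $q=P_2=0$ is valid in positive characteristic.

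Two points deserve more than the single sentences you give them. First, your opening claim that over $K=k_s$ ``every $(-1)$-curve that occurs is rigid and defined over $K$'' is precisely the content that makes the theorem true over $k_s$ rather than $\overline{k}$, and rigidity alone is not the argument --- a priori an exceptional curve lives on $X_{\overline{k}}$ and could be defined only over an inseparable extension. The correct mechanism (which is what Poonen uses, and which is equivalent to Coombes' separable splitness \cite{Coo88}, invoked elsewhere in this very paper) is that the normal bundle is $\mathcal{O}_{\mathbb{P}^1}(-1)$, so $H^0(N_{E/X})=H^1(N_{E/X})=0$ and the Hilbert scheme of exceptional curves is finite \emph{\'etale} over the base; its points therefore have separable residue fields and the curves descend to $k_s$. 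You name rigidity, which is the right ingredient, but the \'etaleness conclusion should be spelled out, since the entire induction (contractions defined over $K$) rests on it. Second, your final paragraph checks that the $9-d$ blown-up points avoid the classical degeneracies, but you never exclude \emph{infinitely near} points: if some centre lay on an earlier exceptional divisor, the strict transform of that divisor would be a $(-2)$-curve. This is immediate from your own negative-curve dictionary, but it is a separate case from ``three on a line, six on a conic'' and must be stated for the conclusion ``$9-d$ points \emph{of} $\mathbb{P}^2$ in general position'' to be legitimate. With these two repairs the proof is complete and is essentially the proof of the cited theorem.
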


For the definition of general position, see Definition~\ref{def:general position plane}.

\subsection{Rational curves on del Pezzo surfaces}

As described in the introduction, our primary objects of study are lines and conic families on del Pezzo surfaces.

\begin{defn}\label{def:lines and conics}
    Let $X/k$ be a del Pezzo surface. A curve $L\subset X_{k_s}$ is called a \emph{line} (or \emph{exceptional curve}) if $-K_X\cdot L=1$ and $L\cdot L=-1$. A reduced curve $C\subset X_{k_s}$ is called a \emph{conic} if $-K_X\cdot C=2$ and $C^2=0$. If $X$ contains a $k$-rational conic $C/k$, we call it a \emph{conic del Pezzo surface}. 
\end{defn}

\begin{rem} \label{rem:line is line} We observe some consequences of this definition:
\begin{enumerate}
    \item Lines and conics merit their names: they have anticanonical degrees 1 and 2 respectively, thus they correspond, for degree $d\geq 3$, to such curves in $\mb{P}^d$ lying on an anticanonical embedding.
    \item A conic $C$ induces a fibration $\pi:X\to B$ of $X$ into conics over a genus-0 curve $B$ (see e.g.~\cite[Proof of Lem.~3.5]{Str21}), hence a \emph{conic fibration/family} on $X$.
    \item A line $L \subset X_{k_s}$ is $G_k$-invariant if and only if it is defined over $k$ (since it is isolated). Further, a conic $C \subset X_{k_s}$ is $G_k$-invariant if and only if it represents a class in $\Pic(X) \subset \Pic(\overline{X})$. To see this latter point, note that the associated morphism $X \rightarrow \mathbb{P}^1$ is $G_k$-stable as soon as the class of $C$ is.
\end{enumerate}
\end{rem}

As mentioned in the introduction, it is relatively easy to determine the number of ``geometric'' lines and conic families on a del Pezzo surface.

\begin{thm} \cite[Table~3.1]{Der06} \label{thm:geomcounts}
The number of lines and conic families\footnote{Note that the class of a conic is a \textit{ruling}, as defined in \cite[Def.~3.1]{Der06}.} on a degree-$d$ del Pezzo surface over $k_s$ is given in the following table, in which 8q denotes degree-8 del Pezzo surfaces isomorphic to quadrics and 8b denotes blowups of the plane in a point.

\begin{longtable}{ |c|c|c|c|c|c|c|c|c|c|c| }
\caption{Geometric counts on del Pezzo surfaces} \\
\hline
Degree & 9 & 8q & 8b & 7 & 6 & 5 & 4 & 3 & 2 & 1 \\
\hline
Number of lines & 0 & 0 & 1 & 3 & 6 & 10 & 16 & 27 & 56 & 240 \\ 
\hline
Number of conic families & 0 & 2 & 1 & 2 & 3 & 5 & 10 & 27 & 126 & 2160 \\ 
\hline
\end{longtable}
\end{thm}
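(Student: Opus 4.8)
The plan is to reduce to the two geometric cases supplied by Theorem~\ref{thm:ksclass} and then count numerical classes in the Picard lattice. Over $k_s$, a degree-$d$ del Pezzo surface $X$ is either $\mathbb{P}^1_{k_s}\times\mathbb{P}^1_{k_s}$ (the quadric case $\mathrm{8q}$) or $\overline{X}=\Bl_{P_1,\dots,P_r}\mathbb{P}^2_{k_s}$, the blowup of the plane in $r:=9-d$ points in general position. In the latter case $\Pic(\overline{X})$ is free on the hyperplane class $H$ and the exceptional classes $E_1,\dots,E_r$, with intersection form fixed by $H^2=1$, $E_i^2=-1$, $H\cdot E_i=0$, $E_i\cdot E_j=0$ for $i\neq j$, and $K_X=-3H+\sum_{i=1}^r E_i$. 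I would first dispatch the degenerate cases $d=9$ (where $\Pic(\overline{X})=\Z H$) and $\mathrm{8q}$ (where $\Pic(\overline{X})=\Z f_1\oplus\Z f_2$ with $f_1^2=f_2^2=0$, $f_1\cdot f_2=1$, $-K_X=2f_1+2f_2$) directly: in both, the numerical equation for a line has no integer solution, while for conics one reads off $0$ and the two rulings $f_1,f_2$ respectively.

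By Definition~\ref{def:lines and conics}, counting lines and conic families on $\overline{X}$ amounts, up to the geometry-versus-numerics issue addressed below, to counting classes $D=aH-\sum_{i=1}^r b_iE_i$ ($a,b_i\in\Z$) satisfying
\[
\text{(lines)}\quad a^2-\textstyle\sum_i b_i^2=-1,\ \ 3a-\sum_i b_i=1;
\qquad
\text{(conics)}\quad a^2-\textstyle\sum_i b_i^2=0,\ \ 3a-\sum_i b_i=2.
\]
I would organise the enumeration by the value of $a$: for each fixed $a$ the two equations pin down the multiset $\{b_i\}$, so that the count is a sum of binomial coefficients in $r$. For lines this recovers the familiar representatives $E_i$, then $H-E_i-E_j$, then $2H-E_{i_1}-\dots-E_{i_5}$, then $3H-2E_{i_0}-E_{i_1}-\dots-E_{i_6}$, and so on, summing to $0,1,3,6,10,16,27,56,240$ as $r$ runs through $0,\dots,8$; the conic classes $H-E_i$, $2H-\sum_4 E_i$, $3H-2E_i-\sum_5 E_j$, etc.\ sum to $0,1,2,3,5,10,27,126,2160$. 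Equivalently and more structurally, the Weyl group of the root system orthogonal to $K_X$ acts transitively on each set and orbit--stabiliser gives the same counts; the degree-$1$ and degree-$2$ tallies are the only computationally heavy ones and are where I would lean on this symmetry or on the \texttt{Magma} enumeration.

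The main obstacle is not the combinatorics but justifying that these numerical counts equal the geometric ones. For lines I would invoke the standard fact that, because $-K_X$ is ample and the $P_i$ are in general position, every class with $D^2=K_X\cdot D=-1$ is represented by a unique effective divisor, which is moreover a smooth irreducible rational curve; thus such classes biject with exceptional curves. For conics I would use Remark~\ref{rem:line is line}(2): a class $C$ with $C^2=0$ and $-K_X\cdot C=2$ is base-point free with $\dim|C|=1$ (via Riemann--Roch and ampleness of $-K_X$), hence defines a genuine conic fibration over a genus-$0$ curve, and distinct classes yield distinct families. General position is precisely what guarantees that no listed class degenerates (e.g.\ no three $P_i$ collinear, no six on a conic), so that all enumerated classes are realised and the irreducibility claims hold. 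Verifying these non-degeneracy inputs carefully across all degrees, and especially for $d\in\{1,2\}$ where the class lists are longest, is the part requiring the most care.
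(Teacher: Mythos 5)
Your proposal is correct and matches the approach behind the paper's treatment: the paper does not prove Theorem~\ref{thm:geomcounts} itself but cites Derenthal's Table~3.1, and the enumeration you outline --- solving the $D^2$ and $-K_X\cdot D$ conditions for classes $aH-\sum_i b_iE_i$ in the Picard lattice, together with the standard facts that each such numerical class is represented by a unique exceptional curve (lines) or a base-point-free pencil (conics) --- is exactly the computation carried out there and replicated in the paper's own Lemma~\ref{lem:lccounts}. One small correction: your claim that fixing $a$ pins down the multiset $\{b_i\}$ is false for conics (e.g.\ for $a=4$ both $\{2^3,1^4\}$ and $\{3,1^7\}$ solve the equations; these are the starred systems in Table~\ref{table:9ConicOrigins}), but since your stated totals already account for these classes and you defer the heavy degree-$1$ and degree-$2$ tallies to Weyl-group transitivity or machine enumeration, this is a phrasing slip rather than a gap.
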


It is also possible to compute the total set of possibilities for line and conic counts.
\pagebreak
\begin{thm} \label{thm:line}
The complete list of possibilities for the number of $k$-rational lines on a degree-$d$ del Pezzo surface is given in the following table.

\begin{longtable}{ |c|c| }
\caption{Possible line counts on del Pezzo surfaces} \\
\hline
Degree & Possible line counts \\
\hline
9 & 0
\\ 
\hline
8 & {0,1}
\\ 
\hline
7 & {1,3}
\\ 
\hline
6 & {0,2,6}
\\
\hline 
5 & {0,1,2,4,10}
\\
\hline 
4 & {0,1,2,4,8,16}
\\
\hline 
3 & {0,1,2,3,5,7,9,15,27}
\\
\hline
2 & {0,2,4,6,8,12,16,20,32,56}
\\
\hline
1 & {0,2,4,6,8,12,14,20,24,26,30,40,60,72,126,240}
\\
\hline
\end{longtable}
\end{thm}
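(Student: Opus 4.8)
```latex
\textbf{Proof proposal for Theorem~\ref{thm:line}.}

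The plan is to reduce the computation of all possible $k$-line counts to a purely group-theoretic enumeration over the finite group $G_d$ acting on $\Pic(\overline{X})$. The key input is Remark~\ref{rem:line is line}(3): a geometric line $L \subset X_{k_s}$ is defined over $k$ precisely when it is $G_k$-invariant, and since $G_k$ acts through its image $\rho_X(G_k) \cong H \leq G_d$, the number of $k$-lines on $X$ equals the number of $H$-fixed points in the set $\Lambda_d$ of geometric lines. By Theorem~\ref{thm:geomcounts}, $|\Lambda_d|$ is the entry in the ``number of lines'' row (e.g.\ $27$ for $d=3$, $56$ for $d=2$, $240$ for $d=1$), and $G_d$ permutes $\Lambda_d$ via its action on $\Pic(\overline{X})$, preserving the intersection form and the canonical class. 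Thus the complete set $\mathcal{C}_{d,1}$ of possible line counts is contained in the set of values $\{|\Lambda_d^H| : H \leq G_d\}$, ranging over all subgroups $H$ up to conjugacy.

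First I would, for each degree $d$, identify the group $G_d$ concretely: it is the subgroup of the Weyl group preserving $K_X$ and the intersection pairing on $\Pic(\overline{X}) \cong \mathbb{Z}^{10-d}$, which for $d \leq 6$ is the full Weyl group $W(R_d)$ of the associated root system ($R_6 = A_2 \times A_1$, $R_5 = A_4$, $R_4 = D_5$, $R_3 = E_6$, $R_2 = E_7$, $R_1 = E_8$), per Lemma~\ref{lem:Gd}. Next I would realise $\Lambda_d$ explicitly as the set of exceptional classes (those $C$ with $-K_X \cdot C = 1$, $C^2 = -1$) and encode the permutation action of $G_d$ on $\Lambda_d$. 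Then the computation is mechanical: enumerate conjugacy classes of subgroups $H \leq G_d$ and record the fixed-point count $|\Lambda_d^H|$ for each. In \texttt{Magma} this is entirely feasible even for $E_8$ given the available machinery, and the distinct values obtained are exactly the entries listed in each row of the table.

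The genuine content is twofold. The easy direction gives an upper bound: every realised line count arises as some $|\Lambda_d^H|$, so the tabulated list contains all possibilities --- this follows immediately once the $G_d$-action is in place. The harder direction is \emph{realisability}: one must show each value in the table is actually attained by some del Pezzo surface $X/k$ over some field. For this I would invoke the existence results for surfaces with prescribed Galois image --- in low degree the construction of surfaces whose splitting group is any prescribed $H \leq G_d$ is handled by the inverse Galois machinery developed later (Hilbertian fields suffice, by Theorem~\ref{thm:hilb}, to realise every blowup type and hence every subgroup $H$ arising as a Galois group). Since the list here is the \emph{total} set of possibilities $\mathcal{C}_{d,1}$ over all fields, realisability over a suitable field follows once every $H$ occurs as some $\rho_X(G_k)$, which Hilbertianity guarantees.

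The main obstacle I anticipate is not conceptual but computational and bookkeeping-based: in degrees $2$ and $1$ the groups $W(E_7)$ (order $2903040$) and $W(E_8)$ (order $696729600$) have enormous subgroup lattices, so a naive enumeration of all conjugacy classes of subgroups is infeasible. The practical fix is that one only needs the distinct fixed-point counts, not the full lattice: I would compute the permutation character of $G_d$ on $\Lambda_d$, then observe that $|\Lambda_d^H| = \frac{1}{|H|}\sum_{h \in H} \chi(h)$ depends only on the class function restricted to $H$, and exploit that the cycle type of each element on $\Lambda_d$ is determined by its conjugacy class in $G_d$. This reduces the search to tracking which combinations of fixed-point contributions can co-occur within a single subgroup, which can be pruned heavily using the intersection invariant and the constraint that the counts must be compatible with the blowup-type classification of Theorem~\ref{thm:lci}. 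Verifying that no spurious values sneak in, and that each tabulated value is hit, is where care is required.
```
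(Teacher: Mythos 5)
Your reduction and the computational core are essentially the paper's own: Remark~\ref{rem:line is line}(3) turns the $k$-line count into the fixed-point count $|\Lambda_d^H|$ for $H=\rho_X(G_k)\leq G_d$, and the tables in Appendix~\ref{sec:Tables} from which Theorem~\ref{thm:line} is read off are built by Algorithm~\ref{alg:class}, which likewise runs through all subgroup conjugacy classes of $G_d$ and computes counts via the Galois action (organised by blowup type rather than raw subgroups). The upper-bound direction, $\mathcal{C}_{d,1}\subseteq\{|\Lambda_d^H|:H\leq G_d\}$, is therefore sound, as is your character-theoretic shortcut for $W(E_7)$ and $W(E_8)$.

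The genuine gap is in your realisability step. You claim that Theorem~\ref{thm:hilb} realises ``every blowup type and hence every subgroup $H$ arising as a Galois group,'' and that ``Hilbertianity guarantees'' every $H\leq G_d$ occurs as some $\rho_X(G_k)$. This is a non sequitur whose conclusion is an open problem: a single blowup type carries many subgroup conjugacy classes (the $\#C$ column of the tables --- e.g.\ $48797$ classes for $S_{1,I}$ alone), and Theorem~\ref{thm:hilb} produces one surface per type, with a specific (typically maximal) splitting group, not one surface per class. Asserting that every subgroup of $W(E_7)$ or $W(E_8)$ is a splitting group over a Hilbertian field would amount to solving $\IGP_2$ and $\IGP_1$ there, which the paper explicitly presents as unresolved (Remark~\ref{rem:IGPfordP1s}; only the partial results of Elsenhans--Jahnel and Kulkarni are known), and even in degree $3$ it would presuppose that every subgroup of $W(E_6)$ is a Galois group over the field in question. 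The repair --- and the paper's actual route --- is to realise each \emph{count} rather than each $H$: by the classification underlying Theorem~\ref{thm:lci} and Algorithm~\ref{alg:class}, every subgroup conjugacy class of $G_d$ is attached to a blowup type with the same line count, so $\{|\Lambda_d^H|:H\leq G_d\}$ coincides with the set of line counts of blowup types; since all blowup types exist over Hilbertian fields by Theorem~\ref{thm:hilb}, every tabulated value is attained, and for the same reason no spurious fixed-point value can occur. Without this mediation through blowup types, your argument neither certifies that the low-degree values are attained nor excludes spurious values coming from subgroups never realised as splitting groups. (An alternative patch realising every individual $H$ --- base-changing a surface whose lines have full monodromy $G_d$ over a function field to the fixed field of $H$ in its splitting field --- would work, but requires the full-monodromy input you did not supply and is not what Theorem~\ref{thm:hilb} says.)
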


\begin{thm} \label{thm:conic}
The complete list of possibilities for the number of $k$-rational conic families on a degree-$d$ del Pezzo surface is given in the following table.
\begin{longtable}{ |c|c| }
\caption{Possible conic counts on del Pezzo surfaces} \\
\hline
Degree & Possible conic family counts \\
\hline
9 & 0
\\ 
\hline
8 & {0,2}
\\ 
\hline
7 & {0,2}
\\ 
\hline
6 & {0,1,3}
\\
\hline 
5 & {0,1,2,3,5}
\\
\hline 
4 & {0,2,4,6,10}
\\
\hline 
3 & {0,1,2,3,5,7,9,15,27}
\\
\hline
2 & {0,2,4,6,8,12,14,24,26,60,126}
\\
\hline
1 & {0,2,4,6,12,18,24,30,36,72,90,252,270,756,2160}
\\
\hline
\end{longtable}
\end{thm}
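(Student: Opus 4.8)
The plan is to compute, for each degree $d$, the set of possible orbit-sums that the Galois group can produce on the $G_k$-set of geometric conic classes, exactly mirroring the strategy needed for Theorem~\ref{thm:line}. The key structural input is that, by Remark~\ref{rem:line is line}(3), a conic family is defined over $k$ precisely when its class lies in $\Pic(X) \subset \Pic(\overline{X})$, i.e.\ when it is fixed by the image $H_X = \rho_X(G_k)$ of Galois acting on $\Pic(\overline{X})$. Thus the $k$-rational conic count is the number of $H_X$-fixed points on the finite set $\mathcal{F}_d$ of geometric conic classes, whose total size is recorded in Theorem~\ref{thm:geomcounts} (namely $0,2,2,3,5,10,27,126,2160$ for $d=9,\dots,1$). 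Since $H_X$ ranges over (conjugacy classes of) subgroups of the Weyl-type group $G_d \cong \Aut(\Pic(\overline{X}), K_X)$, the complete list of \emph{possible} conic counts is
\[
\mathcal{C}_{d,2} = \bigl\{\, \#\mathcal{F}_d^{H} : H \leq G_d \,\bigr\}.
\]

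\textbf{First I would} realise $G_d$ concretely for each degree as the automorphism group of the root lattice $K_d^\perp \subset \Pic(\overline{X})$: these are $1, C_2, \Sym_2, \Sym_4 \times C_2, \Sym_5 \times \dots$, culminating in the Weyl groups $W(D_5), W(E_6), W(E_7), W(E_8)$ for $d=4,3,2,1$. I would then identify $\mathcal{F}_d$ as an explicit $G_d$-set: each conic class $C$ satisfies $-K_X \cdot C = 2$, $C^2 = 0$, and I would enumerate these classes as integer vectors in the standard basis $\{e_0, e_1, \dots, e_{9-d}\}$ with $-K_X = 3e_0 - \sum e_i$. For the quadric case ($d=8$, type 8q) the two rulings are swapped or fixed by $G_8 = C_2$, giving counts $\{0,2\}$; for $d=7,6,5$ the sets are small enough to list and to run over all subgroup fixed-point counts by hand, reproducing $\{0,2\}$, $\{0,1,3\}$, $\{0,1,2,3,5\}$. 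The degrees $d=3$ and higher rely on the self-duality in degree $3$ (where lines and conics are in bijection via $C \mapsto -K_X - C$, explaining why the degree-$3$ rows of the two theorems coincide), so that computation is inherited directly from Theorem~\ref{thm:line}.

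\textbf{The hard part will be} degrees $2$ and $1$, where $|\mathcal{F}_2| = 126$ and $|\mathcal{F}_1| = 2160$, and $|G_d|$ is enormous ($|W(E_7)| = 2903040$, $|W(E_8)| = 696729600$), so a brute-force loop over all subgroups is infeasible. The right move is to reduce to conjugacy classes of subgroups and to exploit the Geiser and Bertini involutions: in degree $2$ the Geiser involution acts on $\Pic(\overline{X})$ as $-1$ on $K_X^\perp$ and lies centrally in $G_2$, and in degree $1$ the Bertini involution plays the analogous role, which drastically constrains the fixed-point structure and pairs up conic classes. For each degree I would compute $\#\mathcal{F}_d^H$ only as $H$ ranges over a transversal of subgroup conjugacy classes of $G_d$ (the set $C(G_d)$ already introduced), which is a finite and computer-tractable enumeration; indeed this is exactly what Algorithm~\ref{alg:class} carries out and what populates the tables of Appendix~\ref{sec:Tables}. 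I would then simply read off the distinct values of $\#\mathcal{F}_d^H$ to obtain the claimed sets, e.g.\ $\{0,2,4,6,8,12,14,24,26,60,126\}$ in degree $2$ and the fifteen-element set in degree $1$. The computation in degree $1$ is the genuine obstacle and is where the \texttt{Magma} implementation is essential; the mathematical content beyond that is the verification that every listed count is genuinely achieved by some subgroup (surjectivity of the enumeration onto the claimed list), which follows once the full subgroup-class computation is complete.
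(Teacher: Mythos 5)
Your proposal is correct and takes essentially the same route as the paper: Theorem~\ref{thm:conic} is derived there by reading off conic counts from the tables of Appendix~\ref{sec:Tables}, which are populated exactly as you describe --- enumerating subgroup conjugacy classes of $G_d$ (via Algorithm~\ref{alg:class} and the accompanying \texttt{Magma} code) and counting Galois-fixed conic classes for each, with the degree-$3$ row inherited from the line count via $C \mapsto -K_X - C$. The one point to keep straight is that ``achieved by some subgroup'' versus ``achieved by an actual surface over an actual field'' are a priori different claims, but since $\mathcal{C}_{d,2}$ is the total set of counts permitted by subgroups of $G_d$ (realisability over suitable fields being supplied elsewhere, e.g.\ Theorem~\ref{thm:hilb}), your fixed-point enumeration over a transversal of $C(G_d)$ is precisely what the statement requires.
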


\begin{rem}
In the case $d = 3$, note that classes of lines and conic fibrations are in bijection under the map $D \mapsto -K_X - D$.
\end{rem}

\begin{rem}
It is easy to derive these theorems from our tables in Appendix~\ref{sec:Tables}. It is also feasible to compute the list of all possible line-conic count pairs, but it is quite large.
\end{rem}

\subsection{Relatively minimal del Pezzo surfaces} 

Determining the existence of blowup types relies on two ingredients: existence of the underlying relatively minimal surface, and existence of suitable points to blow up. We discuss the first of these here.

\begin{defn} \label{def:relmin}
A surface $X$ over a field $k$ is \emph{relatively minimal} if $X$ does not contain a $G_k$-orbit of pairwise skew exceptional curves.
\end{defn}

\begin{rem}
Relatively minimal surfaces include \emph{minimal} surfaces, i.e.\ relatively minimal surfaces $X/k$ such that any birational map from a surface $Y/k$ to $X$ is a morphism.
\end{rem}

Iskovskikh classified relatively minimal del Pezzo surfaces over arbitrary fields.

\begin{thm}\cite[Cor.,~\S3]{Isk79} \label{thm:isk}
For $X$ a degree-$d$ del Pezzo surface over a field $k$:
\begin{enumerate}[align=left]
\item[$d = 9$:] $X$ is relatively minimal and $\Pic(X) = \langle L \rangle \cong \mathbb{Z}$ with $L^2 = 1$.
\item [$d=8$:] $X$ is relatively minimal if and only if $X \cong Q \subset \mathbb{P}^3$ for some smooth quadric $Q$. Either $\Pic(X) = \langle H \rangle \cong \mathbb{Z}$ with $H^2 = 2$ (non-split) or $\Pic(X) = \langle H,V\rangle \cong \mathbb{Z} \times \mathbb{Z}$ with $H^2 = V^2 = 0$ and $H \cdot V = 1$ (split).

\item[$d=7$:] $X$ is never relatively minimal.

\item[$d \in \{3,5,6\}$:] $X$ is relatively minimal if and only if $\Pic(X) = \langle -K_X \rangle \cong \mathbb{Z}$.

\item[$d \in \{1,2,4\}$:] $X$ is relatively minimal if and only if either $\Pic(X) = \langle -K_X \rangle \cong \mathbb{Z}$ or $\Pic(X) = \langle -K_X,C\rangle \cong \mathbb{Z} \times \mathbb{Z}$ for $C$ a conic class.
\end{enumerate}
\end{thm}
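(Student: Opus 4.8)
The plan is to work entirely inside the geometric Picard lattice $N := \Pic(\overline{X})$, equipped with its intersection form and the distinguished class $K_X$, on which $G_k$ acts through a finite quotient (Theorem~\ref{thm:ksclass} pins down $N$ and its lattice structure in each degree). Since del Pezzo surfaces carry no relevant Brauer or $H^1$ obstruction, Galois descent gives $\Pic(X) = N^{G_k}$, so the problem becomes one about the lattice together with its $G_k$-action. The first step is to restate Definition~\ref{def:relmin} lattice-theoretically: $X$ fails to be relatively minimal precisely when $N$ contains a nonempty $G_k$-stable set $\{E_1,\dots,E_r\}$ of exceptional classes (each with $E_i^2=-1$ and $E_i\cdot K_X=-1$) that are pairwise orthogonal, since such an orbit of skew lines can be contracted by a birational $k$-morphism onto a del Pezzo surface of degree $d+r$ (Castelnuovo's criterion applied $G_k$-equivariantly). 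Here the classes are represented by genuine $(-1)$-curves, whose number in each degree is recorded in Theorem~\ref{thm:geomcounts}.

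Next I would invoke the Iskovskikh--Manin structure theory of relatively minimal geometrically rational surfaces: such a surface has $\rank N^{G_k} \in \{1,2\}$, and in the rank-$2$ case it carries a conic bundle structure, producing a $G_k$-invariant conic class $C$ with $N^{G_k} = \langle -K_X, C\rangle$. The rank-$1$ case is then disposed of cleanly: if $N^{G_k} = \langle D\rangle$ is spanned by a single (necessarily anticanonical-proportional) class, then any putative skew orbit $\{E_1,\dots,E_r\}$ would have $\sum_i E_i \in N^{G_k}$ effective, hence equal to a positive multiple $m(-K_X)$; but then $-r = \big(\sum_i E_i\big)^2 = m^2 K_X^2 = m^2 d > 0$, a contradiction. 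Thus $N^{G_k}$ of rank $1$ is automatically relatively minimal and forces $\Pic(X) = \langle -K_X\rangle$. The entire classification therefore reduces to deciding, degree by degree, which degrees admit a \emph{rank-$2$} (conic bundle) relatively minimal del Pezzo surface.

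The degenerate degrees $d \in \{7,8,9\}$ I would settle by inspection of the at most six geometric lines: in degree $9$ there are none, so $X$ is relatively minimal with $N^{G_k}=\langle L\rangle$ forced by triviality of the form-preserving automorphisms; in degree $8$ the unique line of the one-point blowup is $G_k$-fixed and contractible (never minimal), whereas the quadric has no lines and is minimal, with $N^{G_k}$ of rank $2$ or $1$ according to whether $G_k$ fixes or swaps the two rulings; in degree $7$ the strict transform of the line through the two centres is the distinguished line meeting both others, hence $G_k$-fixed and contractible, so $X$ is never minimal. For the remaining degrees one analyzes the conic bundle: a conic bundle del Pezzo surface of degree $d$ has $8-d$ degenerate geometric fibres, each a pair of crossing $(-1)$-curves summing to the fibre class $C$, and relative minimality first forces $G_k$ to interchange the two components within every degenerate fibre (a fixed component being a contractible $k$-line).

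The main obstacle is precisely this final realizability question, and the subtlety is that controlling the fibre components is \emph{not} sufficient: one must also rule out skew orbits among the remaining horizontal lines. The mechanism is visible already in degree $6$, where the conic class $C = L - E_1$ has degenerate fibres $\{L - E_1 - E_2,\,E_2\}$ and $\{L - E_1 - E_3,\,E_3\}$; the standard quadratic Cremona involution based at the three centres fixes $C$ and interchanges the components within each fibre, yet it also interchanges the \emph{skew} pair $E_1,\,L - E_2 - E_3$, producing a contractible orbit and destroying minimality. One shows analogous obstructions persist for $d \in \{3,5\}$, so no rank-$2$ relatively minimal surface exists there, leaving only $\Pic(X) = \langle -K_X\rangle$; whereas for $d \in \{1,2,4\}$ one exhibits explicit conic bundles (or verifies by a finite computation in the Weyl group acting on $N$) admitting a subgroup $G_k$ with $N^{G_k} = \langle -K_X, C\rangle$ and no orbit of pairwise skew exceptional classes. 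This degree-by-degree interplay between the conic bundle structure and the full exceptional-curve combinatorics is the genuine content; the rank-$1$ and degenerate cases are routine once the lattice reformulation of the first step is in place.
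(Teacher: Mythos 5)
The paper itself offers no proof of Theorem~\ref{thm:isk}: it is cited from Iskovskikh \cite[Cor.,~\S3]{Isk79}, where it is a corollary of his classification of relatively minimal geometrically rational surfaces. So your proposal can only be judged on its own terms. Its skeleton is sound, and partly very good: the lattice reformulation of Definition~\ref{def:relmin}, the rank-one argument (an invariant sum $\sum_i E_i$ would be a rational multiple of $-K_X$, forcing $-r = m^2 d \geq 0$), the inspection of $d \in \{7,8,9\}$, and the appeal to Iskovskikh--Manin structure theory for ``relatively minimal $\Rightarrow$ rank at most $2$, with a conic bundle structure in rank $2$'' (this is the same input Iskovskikh uses to deduce the corollary, so it is a legitimate, if large, black box). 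However, the two steps where you locate ``the genuine content'' both have real gaps.

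First, for $d\in\{1,2,4\}$ the theorem asserts that \emph{every} $X$ with $\Pic(X)=\langle -K_X,C\rangle$ is relatively minimal --- a universal statement over all Galois actions --- whereas your plan to ``exhibit explicit conic bundles\dots admitting a subgroup $G_k$ with $N^{G_k}=\langle -K_X,C\rangle$ and no orbit of pairwise skew exceptional classes'' proves only existence, i.e.\ the wrong quantifier. The fix is the computation you already ran in rank one, now run in rank two: a skew orbit of size $r$ has invariant sum $\sum_i E_i = a(-K_X)+bC$, whence $r = da+2b$ and $-r = da^2+4ab$; eliminating gives $2b(2a+1)=-da(a+1)$, and since $2a+1$ is coprime to $2a(a+1)$ one needs $(2a+1)\mid d$. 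Checking the finitely many resulting $a$ shows there is no solution with $r>0$ when $d\in\{1,2,4\}$, while $d=3,6,5$ admit $r=1,2,4$ respectively. This uniform Diophantine argument is exactly the dichotomy $\{1,2,4\}$ versus $\{3,5,6\}$ that the theorem encodes, and it removes any need for Weyl-group enumeration. Second, ``analogous obstructions persist for $d\in\{3,5\}$'' is asserted rather than proved, and for $d=3$ the analogy with your degree-$6$ mechanism genuinely fails: for $d\in\{5,6\}$ the correct statement is that the set of \emph{sections} of the conic bundle (four lines for $d=5$, two for $d=6$) is pairwise skew and Galois-stable, so any orbit inside it is contractible; but for $d=3$ the sixteen sections are not pairwise skew, and the obstruction is instead that $-K_X-C$ is a Galois-invariant line class (the line--conic bijection $D\mapsto -K_X-D$ that the paper records for cubics), hence a $k$-rational line, hence by itself a contractible orbit. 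Finally, a smaller but genuine error: the claim that del Pezzo surfaces ``carry no relevant Brauer or $H^1$ obstruction'', so that $\Pic(X)=N^{G_k}$, is false --- a nontrivial Severi--Brauer surface has $\Pic(X)=3\mathbb{Z}\subsetneq N^{G_k}$, and an anisotropic quadric has an index-$2$ defect --- and with the literal $\Pic(X)$ the $d=9$ and $d=8$ assertions would be wrong as written. The theorem must be read with the convention $\Pic(X):=\Pic(X_{k_s})^{G_k}$, which is how the paper uses it throughout; state that convention rather than deriving it from a vanishing that does not hold.
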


\begin{notn}
    For $d \in \{3,5,6,9\}$, we denote by $S_d$ a relatively minimal degree-$d$ del Pezzo surface. For $d \in \{1,2,4,8\}$, we denote by $S_{d,I}$ (resp. $S_{d,\II}$) a relatively minimal degree-$d$ del Pezzo surface of Picard rank $1$ (resp. $2$).
\end{notn}

\subsubsection{Degree at least 5}

From Theorem~\ref{thm:isk}, it is easy to see that there exist relatively minimal del Pezzo surfaces of degrees 9 and 8 over any field: $\mathbb{P}^2$ and $\mathbb{P}^1 \times \mathbb{P}^1$ respectively. Further, relatively minimal del Pezzo surfaces of degree $7$ do not exist. The situation is not too much more delicate for degrees 5 and 6, as the following result of Zaitsev shows.

\begin{thm} \cite[Thm.~1.5,~Thm.~1.7]{Zai23} \label{thm:Minimal5and6}
Let $k$ be an arbitrary field. Then there exists a relatively minimal del Pezzo surface of degree $5$ over $k$ if and only if $k$ admits a separable extension of degree $5$. There exists a relatively minimal del Pezzo surface of degree $6$ over $k$ if and only if there exist separable extensions of degrees $2$ and $3$.

\end{thm}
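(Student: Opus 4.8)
The plan is to first translate the geometric statement into a statement about the Galois action on the Picard lattice. By Theorem~\ref{thm:isk}, in degrees $5$ and $6$ a del Pezzo surface $X$ is relatively minimal if and only if $\Pic(X)=\langle -K_X\rangle$, i.e.\ $\rank \Pic(X)=1$. Since for a del Pezzo surface $\Pic(X)=\Pic(\overline X)^{G_k}$, and $\Pic(\overline X)\otimes\Q=\Q(-K_X)\oplus\big(K_X^{\perp}\otimes\Q\big)$ with $-K_X$ always fixed, this is equivalent to $G_k$ acting on the root lattice $K_X^{\perp}$ with no nonzero invariants. The image of $\rho_X$ lands in the Weyl group $W$ of $K_X^{\perp}$, which is $\Sym_5$ for $d=5$ and $\Sym_3\times C_2$ for $d=6$. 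So I would reformulate the theorem as: a relatively minimal surface of the given degree exists if and only if there is a continuous homomorphism $G_k\to W$ with image $H$ having no invariants on $K_X^{\perp}$, and this homomorphism is realised by an actual surface.

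\textbf{Translating ``no invariants'' into transitivity.}
Next I would make the invariant condition explicit. For $d=5$, $K_X^{\perp}\otimes\Q$ is the standard representation of $\Sym_5$, and $\Sym_5$ permutes the five conic‑fibration classes of Theorem~\ref{thm:geomcounts} via the natural action on five points; since the permutation module is $\mathrm{triv}\oplus\mathrm{std}$, one has $(\mathrm{std})^{H}=0$ exactly when $H$ is transitive on these five classes. For $d=6$, $K_X^{\perp}\otimes\Q=A_2\oplus A_1$, where $A_2$ is the standard representation of the $\Sym_3$ permuting the three conic fibrations (on which the order‑$2$ Cremona involution acts trivially) and $A_1=\langle \ell-e_1-e_2-e_3\rangle$ is fixed by $\Sym_3$ and negated by the $C_2$. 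I would check that the $H$‑invariants split as $(A_2)^{\pi_3(H)}\oplus (A_1)^{\pi_2(H)}$, so that vanishing of invariants is equivalent to the two independent conditions: $H$ is transitive on the three conic fibrations (projection to $\Sym_3$ transitive) and $H$ surjects onto the Cremona $C_2$.

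\textbf{Necessity.}
The converse direction is then immediate from the étale‑algebra dictionary. Any $G_k$‑set of Picard data corresponds to an étale $k$‑algebra, and a transitive orbit corresponds to a connected such algebra, i.e.\ a separable field extension of the corresponding degree. Thus, given a relatively minimal $X$: for $d=5$, transitivity on the five conic fibrations produces a separable extension of degree $5$; for $d=6$, transitivity on the three conic fibrations produces a separable cubic, while the surjection onto $C_2$ gives (as the fixed field of its kernel) a separable quadratic. This settles the ``only if'' parts.

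\textbf{Sufficiency: the construction, and the main obstacle.}
The substantial direction is to realise the required homomorphisms by genuine surfaces. For $d=5$ this is clean: the automorphism group scheme of the split degree‑$5$ surface is the constant finite group $\Sym_5$, so its forms are classified by $H^1(k,\Sym_5)=\{\text{degree-}5\text{ étale }k\text{-algebras}\}$, and a separable degree‑$5$ field $L$ yields, via its defining cocycle, a form whose Galois action is transitive on the five classes, hence minimal. The hard part is $d=6$, where the surface is toric and $\underline{\Aut}$ is an extension of $\Sym_3\times C_2$ by $\mathbb{G}_m^2$, so forms are \emph{not} parametrised by étale algebras alone. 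Here I would instead build the map $G_k\to\Sym_3\times C_2$ directly, taking the $\Sym_3$‑factor from the Galois action attached to a separable cubic (automatically transitive) and the $C_2$‑factor from a separable quadratic (automatically onto), and then exhibit an explicit surface realising this action, either as a twist of the split hexagon or by descending a suitable blow‑up of a conic over the cubic algebra twisted by the quadratic datum. The crux of the argument is verifying that such a twist is an honest del Pezzo surface and realises \emph{exactly} the prescribed action on $\Pic(\overline X)$, and confirming that the cubic and quadratic data can be imposed independently so that the invariants in both $A_2$ and $A_1$ vanish simultaneously; this compatibility check is where I expect the real work to lie.
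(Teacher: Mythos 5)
Your reduction of relative minimality to the vanishing of $G_k$-invariants on $K_X^{\perp}$, the translation into transitivity on the conic classes (for $d=5$) and into the two projections to $\Sym_3$ and $C_2$ (for $d=6$), and the necessity argument via the \'etale-algebra dictionary are all sound; this is a lattice-theoretic repackaging of the paper's one-line observation that necessity follows from the Galois action on lines. Your $d=5$ sufficiency is also a genuinely different route from the paper's: you twist the split quintic del Pezzo by a cocycle in $\HH^1(k,\Sym_5)$ coming from the quintic field, using that the automorphism group scheme is the constant $\Sym_5$ mapping isomorphically to $W(A_4)$, whereas the paper constructs the surface explicitly by blowing up a separable degree-$5$ point on a smooth conic in $\mathbb{P}^2$ and contracting the strict transform of the conic on the resulting degree-$4$ surface.

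The genuine gap is exactly where you say you expect the real work to lie: the $d=6$ sufficiency is never carried out, and neither of your two suggested routes is completed. What would close the twisting route is a fact you do not state: the extension $1 \to \mathbb{G}_m^2 \to \Aut(X_{k_s}) \to \Sym_3 \times C_2 \to 1$ \emph{splits}, since the hexagon symmetries lift to honest automorphisms ($\Sym_3$ as coordinate permutations of the blown-up plane, $C_2$ as the standard quadratic Cremona involution based at the three points). Given the splitting, any continuous homomorphism $G_k \to \Sym_3 \times C_2$ is itself a $1$-cocycle valued in $\Aut(X_{k_s})$, descent is effective because the cocycle fixes the ample class $-K_X$, and the twist realises precisely the prescribed action on $\Pic(\overline{X})$. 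Moreover, the ``compatibility check'' you flag as the crux is in fact vacuous: one simply takes the product homomorphism built from the action on the three embeddings of the cubic extension and the quadratic character, whose two projections automatically have the required images. Without the splitting (or a substitute), there is no reason the map $\HH^1(k,\Aut) \to \HH^1(k,\Sym_3 \times C_2)$ hits your class, and your proposal contains no argument here. The paper sidesteps all of this machinery with an elementary construction: blow up $\mathbb{P}^2$ in a separable degree-$2$ point on a line $L$ and a separable degree-$3$ point on a smooth conic $C$; on the resulting degree-$4$ del Pezzo surface the strict transform of $L$ (class $\ell - e_1 - e_2$) and the strict transform of the conic through all five points (class $2\ell - e_1 - \cdots - e_5$) are two skew Galois-stable, hence rational, lines, and contracting them yields the desired relatively minimal degree-$6$ surface. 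If you adopt the splitting argument (or the paper's explicit construction) for $d=6$, your proof is complete.
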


\begin{proof}
Necessity is immediate from consideration of the corresponding Galois actions on lines. We give a sketch of the proof of sufficiency for the sake of detailing explicit constructions. For degree $5$, take a separably closed point of degree $5$ on a smooth conic $C \subset \mathbb{P}^2$ and blow it up to obtain a degree-4 del Pezzo surface: the strict transform of $C$ is a line, which we may blow down to obtain the desired relatively minimal degree-5 del Pezzo surface. For degree 6, take a separably closed point of degree 2 along a line $L\subset \mathbb{P}^2$ and a separable point of degree 3 on a smooth conic $C \subset \mathbb{P}^2$; blowing up, we again obtain a degree-4 del Pezzo surface, now with two skew rational lines (the strict transform of $L$ and the strict transform of the conic $C'$ through all five underlying points). Blowing these lines down, we obtain a relatively minimal degree-6 del Pezzo surface.
\end{proof}

\subsubsection{Degree at most 4}

In degree at most 4, the situation is more complex.

Over a finite field $\mathbb{F}_q$, the existence of relatively minimal del Pezzo surfaces of degree at least $2$ over $\mathbb{F}_q$ for all $q$ is confirmed by the results of Loughran--Trepalin \cite{LT20}. In degree $1$, we are able to establish the existence of all blowup types of del Pezzo surfaces over $\mathbb{F}_q$ for all $q$, including the two relatively minimal kinds, which appears to be novel; see Lemma~\ref{lem:RelMinsOverFq}.

Over a general infinite field, one has the following result.

\begin{thm}
For $k$ any infinite field with $\Char(k) \neq 2$, there exist del Pezzo surfaces of types $S_{4,I}$, $S_{4,\II}$ and $S_{2,\II}$ over $k$ as soon as they are permitted by the Galois theory of $k$.
\end{thm}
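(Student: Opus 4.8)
The plan is to dispatch the two degree-$4$ types by invoking the known solution of $\IGP_4(k)$, and to construct $S_{2,\II}$ directly as a minimal conic bundle, since the latter is not covered by any degree-$4$ input. For the degree-$4$ types, I would first record that a degree-$4$ del Pezzo surface $X$ is relatively minimal exactly when its Galois image $H=\rho_X(G_k)\le W(D_5)$ leaves no orbit of pairwise skew exceptional curves contractible (Definition~\ref{def:relmin}), and that by Theorem~\ref{thm:isk} the two relatively minimal cases are distinguished by $\rank\Pic(X)\in\{1,2\}$, equivalently by whether the invariant subspace $(K_X^{\perp})^{H}$ has dimension $0$ (type $S_{4,I}$) or dimension $1$ and containing a conic class (type $S_{4,\II}$). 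Each such $H$ determines a class $[H]\in C(G_4)$, and ``permitted by the Galois theory of $k$'' means precisely $[H]\in C(G_4,k)$. It then suffices to invoke the surjective solution of $\IGP_4(k)$ for infinite $k$ with $\Char(k)\neq 2$ of Kunyavski\u\i--Skorobogatov--Tsfasman \cite[Cor.~6.10]{KST89}: their intersection-of-quadrics and conic-bundle constructions realise every permitted class, and in particular the two minimal ones, as $\rho_X(G_k)$ for an honest surface $X/k$. I would include a short lemma pinning down the minimal classes explicitly (e.g.\ via the factorisation type of the pencil of singular quadrics), so that the identification with the classes realised in \cite{KST89} is unambiguous.

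For $S_{2,\II}$ I would give a hands-on construction as a conic bundle of degree $2$. Since $\Char(k)\neq 2$ I may diagonalise fibrewise and present the surface, inside a suitable $\mathbb{P}^2$-bundle over $\mathbb{P}^1$, in the shape $a_0(t)\,x_0^2+a_1(t)\,x_1^2+a_2(t)\,x_2^2=0$ with $\deg a_i=2$, so that the discriminant $a_0a_1a_2$ has degree $6$ and the fibration acquires the $8-2=6$ degenerate fibres demanded of a conic bundle of degree $2$. At a simple root $t_0$ of, say, $a_0$, the degenerate fibre splits into two lines that are interchanged by $G_k$ exactly when $-a_1(t_0)a_2(t_0)$ is a non-square in the residue field $k(t_0)$. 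The decisive structural fact is that such a conic bundle has $\rank\Pic(X)=2$, and so is of type $S_{2,\II}$, if and only if in every $G_k$-orbit of degenerate fibres the two components are swapped: otherwise one could choose a single $(-1)$-curve component from each fibre to form a $G_k$-stable family of pairwise skew exceptional curves and contract it, violating Definition~\ref{def:relmin}.

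Given a target class permitted by the Galois theory of $k$ — which prescribes the Galois set of six degenerate fibres together with a component-swapping character — I would realise the six points as the roots of $a_0a_1a_2$, distributing Galois orbits among the $a_i$, and then tune the coefficients, using the infinitude of $k$ and its available quadratic extensions, so that each quantity $-a_i(t_0)a_j(t_0)$ lies in the prescribed square class. A sufficiently generic such choice keeps the six roots distinct with no common zeros, which guarantees that the total space is smooth with reduced fibres and hence a genuine del Pezzo surface of degree $2$, matching the prescribed Galois action on $\Pic(\overline X)$.

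The main obstacle is the $S_{2,\II}$ case, where I must \emph{simultaneously} (i) match the combinatorial Galois datum — the orbit structure of the six fibres together with the swapping character — to a concrete factorisation of $a_0a_1a_2$ with the correct square classes, and (ii) preserve the del Pezzo condition (smoothness, reducedness of fibres, ampleness of $-K_X$) under this constrained choice of coefficients. Checking that both can be arranged at once, for \emph{every} permitted class and over an \emph{arbitrary} infinite field, is where the genuine work lies; it is also exactly where $\Char(k)=2$ must be excluded, since fibrewise diagonalisation and the square-class description of component-swapping both break down there, mirroring the failure of the methods of \cite{KST89} in characteristic $2$.
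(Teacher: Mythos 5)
Your treatment of the two degree-$4$ types is fine: invoking the surjective solution of $\IGP_4(k)$ for infinite $k$ with $\Char(k)\neq 2$ (Theorem~\ref{thm:dP4IGP}, i.e.\ \cite[Cor.~6.10]{KST89}), together with the observation that the conjugacy class $[H_X]$ determines relative minimality and the Picard rank (via Theorem~\ref{thm:isk}), does yield $S_{4,I}$ and $S_{4,\II}$ whenever they are permitted. This is a different route from the paper, which deduces all three types at once from Corollary~\ref{cor:igpconic2} (Elsenhans--Jahnel's Steiner-hexad result, Theorem~\ref{thm:CayleySteiner}): there, the degree-$4$ types are obtained by realising the conic degree-$2$ classes of $\Bl_{1^2}S_{4,I}$ and $\Bl_{1^2}S_{4,\II}$, whose splitting groups are again $M_{4,I}$ and $M_{4,\II}$, and then blowing down. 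Either route works for degree $4$.

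The genuine gap is in your construction of $S_{2,\II}$. A diagonal bundle $a_0(t)x_0^2+a_1(t)x_1^2+a_2(t)x_2^2=0$ with $\deg a_i=2$ has discriminant $a_0a_1a_2$ factoring into three quadratics over $k$, so every degenerate fibre lies over a closed point of degree at most $2$. But permitted classes of type $S_{2,\II}$ include ones whose degenerate fibres lie over points of degree $3$, $5$ or $6$ (compare $S_{2,\II}^{(3,1^3)}$, $S_{2,\II}^{(5,1)}$, $S_{2,\II}^{(3,3)}$ in the finite-field tables); these are structurally unreachable by your ansatz. Redistributing the degrees does not repair this, because the six degenerate fibres must be exactly the roots of $a_0a_1a_2$: a diagonal form forces the Galois image in $\Sym_6$ to preserve the partition of the roots among the three factors, and to get a single orbit of size $6$ you are forced to take $a_0$ irreducible of degree $6$ and $a_1,a_2$ constant, whereupon all six fibres are split by the single square class of $-a_1a_2$ and the sign-change part of the splitting group is at most one diagonal copy of $C_2$, far from the full elementary abelian $2$-group of sign changes available to splitting groups of type $S_{2,\II}$. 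In short, diagonal conic bundles realise only a thin, essentially multiquadratic family among the $1312$ permitted classes, so step (i) of your ``genuine work'' cannot be carried out within your framework; the ansatz itself, not just the tuning, is inadequate. This is exactly the difficulty the paper sidesteps by quoting Corollary~\ref{cor:igpconic2}; note also that one cannot instead quote Theorem~\ref{thm:igpconic} off the shelf, since (as remarked after it) the conic bundle produced there need not be a del Pezzo surface, so some nontrivial input of Elsenhans--Jahnel or Kunyavski\u{\i}--Skorobogatov--Tsfasman type is genuinely needed here.
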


\begin{proof}
This is a direct consequence of Corollary~\ref{cor:igpconic2}.
\end{proof}

\begin{rem}
Note that this leaves open the existence of del Pezzo surfaces of types $S_3$ (relatively minimal cubic surfaces), $S_{2,I}$, $S_{1,I}$ and $S_{1,\II}$.
\end{rem}

In degree $3$, we have the following result, which furnishes a relatively minimal del Pezzo surface of degree $3$ for fields which are not closed under the operation of taking cube roots.

\begin{lem} \label{lem:RelMinCubic}
Let $k$ be a field with $\Char(k) \neq 3$. If there exists $\alpha\in k^\times \setminus k^{\times 3}$, then the following cubic surface in $\mathbb{P}^3$ is relatively minimal:
\[
X_0 + X_1^3 + X_2^3 + \alpha X_3^3 = 0.
\]
\end{lem}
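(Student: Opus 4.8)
The plan is to recognise $X$ as a smooth cubic surface and then to verify relative minimality directly from Definition~\ref{def:relmin}, i.e.\ to show that the $27$ lines on $X$ contain no $G_k$-orbit that is pairwise skew. First I would check smoothness: writing $F = X_0^3 + X_1^3 + X_2^3 + \alpha X_3^3$, the partial derivatives are $3X_0^2, 3X_1^2, 3X_2^2, 3\alpha X_3^2$, which vanish simultaneously only at the origin since $\Char(k) \neq 3$ and $\alpha \neq 0$. Hence $X$ is a smooth cubic surface, i.e.\ a degree-$3$ del Pezzo surface, and its exceptional curves are exactly the $27$ lines. By Definition~\ref{def:relmin} it then suffices to rule out a $G_k$-orbit of pairwise disjoint lines (consistent with the $d=3$ case of Theorem~\ref{thm:isk}, where relative minimality is equivalent to $\Pic(X)=\langle -K_X\rangle$).

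Next I would write the lines down explicitly. Let $\omega$ be a primitive cube root of unity and $\beta$ a cube root of $\alpha$ in $k_s$ (both separable as $\Char(k)\neq 3$). Factoring $X_0^3+X_1^3$ and $X_2^3+\alpha X_3^3$ into linear forms and grouping the variables into the three pairings $\{0,1\}\{2,3\}$, $\{0,2\}\{1,3\}$, $\{0,3\}\{1,2\}$ exhibits all $27$ lines; for the pairing $\{0,1\}\{2,3\}$ these are $\ell_{ij} = \{X_0 + \omega^i X_1 = 0,\ X_2 + \omega^j\beta X_3 = 0\}$ for $i,j \in \{0,1,2\}$, and analogously for the other two pairings, each contributing nine lines.

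The crucial step is to exhibit a single well-chosen Galois element. Since $\alpha \notin k^{\times 3}$ and $\Char(k)\neq 3$, the cubic $x^3 - \alpha$ has no root in $k$, hence is irreducible over $k$ and $[k(\beta):k]=3$; as $[k(\omega):k] \leq 2$ is coprime to $3$, the polynomial stays irreducible over $k(\omega)$, so $k(\omega)(\beta)/k(\omega)$ is a cyclic Kummer extension. Thus there is $\sigma \in G_k$ of order $3$ fixing $\omega$ with $\sigma(\beta) = \omega\beta$. I would then compute its action on the lines: $\sigma$ preserves each pairing and cyclically permutes the index attached to $\beta$, so, having no fixed line, it decomposes the $27$ lines into nine orbits of size $3$. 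A direct intersection computation shows every such triple is concurrent, hence pairwise meeting; for instance $\ell_{i0},\ell_{i1},\ell_{i2}$ all pass through $[1:-\omega^{-i}:0:0]$, and the other pairings yield concurrent triples through $[1:0:-\omega^{-i}:0]$ and $[0:-\omega^{j}:1:0]$.

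Finally, since $\langle\sigma\rangle \subseteq G_k$, every $G_k$-orbit of lines is a union of $\sigma$-orbits and therefore contains two lines that meet; no $G_k$-orbit is pairwise skew, so $X$ is relatively minimal. I expect the main obstacle to be the bookkeeping guaranteeing that $\sigma$ genuinely lies in $G_k$ in all cases, most notably when $\omega \notin k$, where one leans on the coprimality of $[k(\omega):k]$ with $3$ to keep $x^3-\alpha$ irreducible over $k(\omega)$, together with the explicit verification that each of the nine $\sigma$-orbits is a concurrent (hence non-skew) triple. Both are elementary but should be carried out with care.
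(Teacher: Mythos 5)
Your proof is correct, and it takes a genuinely different route from the paper's. The paper disposes of the lemma by citation: Manin's Example~21.9 treats the diagonal cubic over a perfect field containing a primitive cube root of unity, Coombes' theorem (del Pezzo surfaces are separably split) removes perfectness, and a final remark removes the cube-root-of-unity hypothesis since shrinking the ground field only enlarges Galois orbits of lines. You instead argue from scratch: after checking smoothness, you list all $27$ lines via the three pairings of coordinates, pick $\sigma \in G_k$ fixing $\omega$ and acting on the cyclic Kummer extension $k(\omega)(\beta)/k(\omega)$ by $\beta \mapsto \omega\beta$, and verify that the nine $\sigma$-orbits are concurrent triples; since every $G_k$-orbit is a union of $\sigma$-orbits, no orbit is a singleton or pairwise skew, which is exactly what Definition~\ref{def:relmin} requires (ruling out singleton orbits matters, and your ``no fixed line'' observation covers it). In effect you redo Manin's computation in a way that is automatically insensitive to perfectness (you only ever use the separable elements $\omega,\beta$, so working with $G_k = \Gal(k_s/k)$ costs nothing) and to whether $\omega \in k$ (your $\sigma$ fixes $\omega$ by construction), so the two patching steps of the paper's proof disappear; the trade-off is length versus self-containedness. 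One small caveat: you should not insist that $\sigma$ have order $3$ as an element of the profinite group $G_k$ --- torsion lifts of $\tau \in \Gal(k(\omega,\beta)/k)$ need not exist --- only that its image in $\Gal(k(\omega,\beta)/k)$, hence its action on the $27$ lines, has order $3$, which is all your argument uses. (You also silently corrected the statement's typo $X_0$ to $X_0^3$, which is clearly the intended surface.)
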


\begin{proof}
For $k$ perfect containing a cubic root of unity, see \cite[Example~21.9]{Man86}. In view of Coombes' result that del Pezzo surfaces are separably split \cite{Coo88}, one may drop perfectness. Further, one may drop the assumption of a cubic root of unity, as this only enlarges the Galois orbits of lines on the resulting cubic.
\end{proof}

Over Hilbertian fields, all relatively minimal types exist, as we shall see in Section~\ref{sec:hilb}.

\subsection{Points in general position}

We now turn to the second ingredient in blowup types: the existence of points in general position.

\subsubsection{Defining general position}

We begin by defining general position in the plane.

\begin{defn}\label{def:general position plane}
    We say a $G_k$-stable set $\{P_1,\ldots,P_n\}\subseteq\mb{P}^2(k_s)$ of $n \leq 8$ points are in \emph{general position} if no 3 lie on a line, no 6 lie on a conic, and if $n=8$, any cubic through $P_1,\ldots,P_8$ does not have a double point at $P_i$ for some $i$.
\end{defn}

It is classical that if $P_1,\ldots,P_n\in\mb{P}^2$ are in general position, then $\Bl_{P_1,\ldots,P_n}\mb{P}^2$ is a del Pezzo surface. We mimic this definition for other del Pezzo surfaces (as suggested in \cite[Rem.~26.3]{Man86}).

\begin{defn}\label{def:general position del Pezzo}
    Let $X$ be a degree-$d$ del Pezzo surface over $k$ and $\{P_1,\dots,P_n\}\subseteq X(k_s)$ be a $G_k$-stable set of points with $n \leq d-1$. We say that $P_1,\ldots,P_n$ are in \emph{general position} if $\Bl_{P_1,\dots,P_n}X$ is a del Pezzo surface.
\end{defn}

Using Theorem~\ref{thm:ksclass}, we can derive equivalent conditions for Definition~\ref{def:general position del Pezzo}.

\begin{lem}\label{lem:def of gen position}
Let $X$ be a degree-$d$ del Pezzo surface over $k$. The elements of a $G_k$-stable set of points $\{P_1,\ldots,P_n\}\subseteq X(k_s)$ (with $n\leq d-1$) are in general position if and only if the following conditions are both satisfied:
\begin{enumerate}[(i)]
\item For all $1 \leq r \leq \min\{4,n\}$, no $r$ of the $P_i$ lie on a curve of self-intersection $r-2$ and arithmetic genus 0.
\item For $n=d-1$, the $P_i$ do not all lie on an anticanonical curve with a double point at one of them.
\end{enumerate}
\end{lem}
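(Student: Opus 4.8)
The plan is to convert the del Pezzo criterion into a numerical incidence condition via intersection theory on the blow-up, and then identify that condition with (i) and (ii). Since ampleness of $-K$ is a geometric property and the conditions (i), (ii) are $G_k$-stable, I would first base change to $\overline{k}$, so that $\{P_1,\dots,P_n\}$ becomes a set of $n$ distinct closed points and $\overline{Y} := \Bl_{P_1,\dots,P_n}\overline{X}$ has $K_{\overline{Y}}^2 = d-n \geq 1$. Writing $\pi\colon \overline{Y}\to\overline{X}$ with exceptional divisors $E_i$, one has $K_{\overline{Y}} = \pi^*K_{\overline{X}} + \sum_i E_i$, whence $K_{\overline{Y}}^2 = K_{\overline{X}}^2 - n > 0$; so the Nakai--Moishezon criterion reduces del Pezzo-ness to the strict positivity $-K_{\overline{Y}}\cdot\Gamma > 0$ for every irreducible curve $\Gamma$.

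The contracted curves satisfy $-K_{\overline{Y}}\cdot E_i = 1$, so it remains to control the strict transforms $\Gamma = \widetilde{D}$ of irreducible curves $D\subset\overline{X}$. With $m_i := \operatorname{mult}_{P_i}(D)$ one has $\widetilde{D} = \pi^*D - \sum_i m_i E_i$ and therefore $-K_{\overline{Y}}\cdot\widetilde{D} = -K_{\overline{X}}\cdot D - \sum_i m_i$. This yields the clean reformulation
\[
\overline{Y}\text{ is del Pezzo} \iff \sum_i \operatorname{mult}_{P_i}(D) < -K_{\overline{X}}\cdot D \ \text{ for every irreducible } D\subset\overline{X},
\]
which I will call $(\star)$. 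Necessity of (i) and (ii) is then immediate: an irreducible genus-$0$ curve is smooth rational, so it meets each $P_i$ with multiplicity $\leq 1$, and by adjunction the condition $D^2 = r-2$ forces $-K_{\overline{X}}\cdot D = r$; if $r$ of the points lie on it, then $\sum_i m_i = r = -K_{\overline{X}}\cdot D$, contradicting $(\star)$. Likewise a nodal anticanonical curve through all $d-1$ points contributes $\sum_i m_i = 2 + (d-2) = d = (-K_{\overline{X}})^2$, again violating $(\star)$. (For a reducible genus-$0$ curve in (i) one first passes to an irreducible component carrying a disproportionate share of the points.)

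For the converse I would assume $(\star)$ fails and manufacture a violation of (i) or (ii). Choosing an irreducible $D$ with $\sum_i m_i \geq -K_{\overline{X}}\cdot D =: e$ and $e$ minimal, its strict transform $\Gamma$ satisfies $g_a(\Gamma) = g_a(D) - \sum_i\binom{m_i}{2} \geq 0$ together with $-K_{\overline{Y}}\cdot\Gamma \leq 0$; combining adjunction on $\overline{Y}$ with the Hodge index inequality $D^2\,(-K_{\overline{X}})^2 \leq (-K_{\overline{X}}\cdot D)^2$ pins down the arithmetic genus and the profile of the $m_i$. The intended outcome is that a minimal violator is either a genus-$0$ curve with all $m_i\leq 1$ and $\sum_i m_i = e$ (a failure of (i) with $r = e$) or an anticanonical curve with a single double point passing through all $d-1$ points (a failure of (ii)).

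The main obstacle is exactly this reduction: showing that no violation of $(\star)$ arises from a curve of higher arithmetic genus or higher multiplicity beyond the two listed families, and in particular that for (i) it suffices to test curves of anticanonical degree $r \leq \min\{4,n\}$. The bound $r\leq n$ is transparent, since a genus-$0$ violator passes through $r$ distinct points among the $n$ available; the delicate point is forcing $r\leq 4$, which requires squeezing the genus and Hodge-index estimates against the constraint $n\leq d-1$ and bookkeeping over the finitely many numerical types of low-degree rational curves. This is where the real work concentrates, and it is essentially a reproof that the obstructions to $-K$ being ample are governed by the finite list of forbidden low-degree incidences.
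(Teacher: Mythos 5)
Your proposal correctly reduces del Pezzo-ness of the blowup to the positivity condition $(\star)$ via Nakai--Moishezon, and your necessity argument is sound. But the converse---which is the entire content of the lemma---is only announced, not proven: you describe the ``intended outcome'' that a minimal violator of $(\star)$ must belong to one of the two listed families, and you concede that the reduction, ``in particular forcing $r\leq 4$,'' is where the real work lies. That classification is never carried out, so the proof is incomplete. For comparison, the paper does not attempt this classification directly either: it contracts $9-d$ pairwise skew exceptional curves to pass to $\mathbb{P}^2_{k_s}$, invokes the classical planar criterion (Theorem~\ref{thm:ksclass} together with Definition~\ref{def:general position plane}), and matches plane lines, conics and singular cubics with the curves occurring in (i) and (ii); your route would amount to reproving that classical theorem by intersection-theoretic bookkeeping, which is precisely the part you leave open.

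Moreover, the step you flag as delicate is not merely delicate: it cannot be closed, because the statement with the cap $\min\{4,n\}$ is false, and your own framework exposes this. Take $X=\mathbb{P}^2$ (so $d=9$) and let $P_1,\dots,P_6$ be six distinct points on a smooth conic $C$, chosen $G_k$-stably. Every curve on $\mathbb{P}^2$ has self-intersection a perfect square, so the only curves relevant to (i) for $r\leq\min\{4,n\}=4$ are lines (the case $r=3$), and no three of the $P_i$ are collinear since a line meets $C$ in at most two points; condition (ii) is vacuous because $n=6<8$. Yet the blowup contains the strict transform $\widetilde{C}$ with $\widetilde{C}^2=-2$ and $-K\cdot\widetilde{C}=0$, so it is not del Pezzo: this is a violation of $(\star)$ with $\sum_i m_i=6=-K_{\mathbb{P}^2}\cdot C$ that conditions (i)--(ii) do not detect. (Transported through the blowdown, the same configuration shows that the enumeration in the paper's own proof---that a conic through six of the points of $A$ yields $r\leq 4$---cannot be correct as stated.) So the honest endpoint of your approach is that the criterion must allow $r$ to run up to $n$, exactly so that conics through six points, and nodal cubics whose node sits at a contracted point, are detected; no amount of adjunction and Hodge-index estimates can rescue the bound $r\leq 4$.
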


\begin{proof}
All curves on a del Pezzo surface have self-intersection at least $-1$, and self-intersection of the strict transform decreases by at least $1$ when we blow up along a curve (and by at least $4$ when the blowup is at a double point, with the arithmetic genus decreasing by $1$ in this case), so necessity is immediate. 

For sufficiency when $X$ is not relatively minimal of degree $8$, choose a collection of $9-d$ skew $k_s$-lines in $X_{k_s}$ and blow down to $\mathbb{P}^2_{k_s}$. Denote the blowdowns of the skew lines by $Q_1,\dots,Q_{9-d}$, and identify the points $P_1,\dots,P_n$ with their images under blowing down. Set $A := \{P_i\}_{i=1}^n\cup\{Q_j\}_{j=1}^{9-d}$. The points in $A\subset\mb{P}^2(k_s)$ are in general position (according to Definition~\ref{def:general position plane}) if and only if the $P_i$ are in general position on $X$ (according to Definition~\ref{def:general position del Pezzo}). Note that:
\begin{enumerate}
\item The strict transform of a line through three of the points in $A$ is an $r$-curve of arithmetic genus $0$ through $r-2$ of the $P_i$ for some $r \leq 3$.
\item The strict transform of a conic through six of the points in $A$ is an $r$-curve of arithmetic genus $0$ through $r-2$ of the $P_i$ for some $r \leq 4$.
\item In the case $n = d-1$, the strict transform of a cubic through all of the points in $A$ with a double point at one of them is either:
\begin{enumerate}
\item A $(d-3)$-curve of arithmetic genus $0$ through all $d-1$ of the $P_i$ (if the double point is at some $Q_i$), or
\item An anticanonical curve of arithmetic genus $1$ through all $d-1$ of the $P_i$ with a double point at one of them (if the double point is at some $P_i$).
\end{enumerate}
\end{enumerate}
For sufficiency when $X$ is relatively minimal of degree $8$, we may simply blow up at one of the $P_i$ and then argue as in the case $d = 7$. Alternatively, we may argue by using the isomorphism between the blowup of the plane in two points and the blowup of a quadric in one point (see Section~\ref{sec:coincidences}).
\end{proof}

\begin{notn}
    For $S$ a relatively minimal del Pezzo surface, denote by $\Bl_{d_1,\dots,d_r}S$ the blowup of $S$ in closed points $P_1,\dots,P_r$ in general position with $
    \text{deg}(P_i) = d_i$.
\end{notn}

\begin{defn} \label{def:blowuptype}
    We say that a del Pezzo surface $X$ has \emph{blowup type} $\Bl_{d_1,\dots,d_r}S$ if $X \cong \Bl_{d_1,\dots,d_r}S$ for some relatively minimal del Pezzo surface $S$.
\end{defn}

With the above definitions in hand, we begin to consider the question of existence of closed points in general position.

\subsubsection{General position over infinite fields}

Here we discuss closed points in general position over infinite fields; among these are the Hilbertian fields, for which we can say more (see Proposition~\ref{prop:HilbertianGenPos}). We postpone finite fields until Section~\ref{sec:finitefields}.

The existence of \emph{$k$-rational} points in general position is straightforward in most cases.

\begin{lem}\label{lem:rational points in general position}
Let $k$ be an infinite field, and let $X$ be a del Pezzo surface over $k$ of degree $d \geq 2$. Assume that $X$ contains a $k$-rational point; if $d=2$, further assume that the rational point does not lie on four exceptional curves or the ramification locus of $X$. Then there exists a $(d-1)$-tuple of rational points in general position on $X$. 
\end{lem}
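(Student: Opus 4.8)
The plan is to reduce the statement to two facts: that the $k$-rational points of $X$ are Zariski dense, and that the locus of $(d-1)$-tuples in general position is a nonempty open subscheme of $X^{d-1}$ defined over $k$. Granting both, since $X$ is geometrically integral the product $X^{d-1}$ is irreducible, and Zariski density of $X(k)$ in $X$ yields density of $X(k)^{d-1}=(X^{d-1})(k)$ in $X^{d-1}$ (using $\overline{A\times B}=\overline{A}\times\overline{B}$ for irreducible varieties). This dense set of rational tuples must then meet the nonempty open general-position locus. Any such tuple, after also discarding the big diagonal to ensure distinctness, furnishes the desired $(d-1)$-tuple of rational points in general position.

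For the density step I would invoke unirationality. For $d\geq 3$, a del Pezzo surface with a $k$-rational point over an infinite field is unirational by classical work of Manin and Koll\'ar--Mella, and unirationality over an infinite field forces $X(k)$ to be Zariski dense. The degree-$2$ case is the genuine obstacle: here one needs precisely the hypotheses that the given rational point lies neither on four exceptional curves (i.e.\ is not a generalised Eckardt point) nor on the ramification locus of the double cover $X\to\mathbb{P}^2$. These are exactly the hypotheses under which the tangent-conic construction of Salgado--Testa--V\'arilly-Alvarado (building on Manin and Festi--van Luijk) produces a dominant rational map from a rational surface, proving unirationality and hence density. Thus the given point and its hypotheses serve only to seed density; the tuple we eventually produce need not contain it.

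It remains to analyse the general-position locus. By Lemma~\ref{lem:def of gen position}, a $(d-1)$-tuple fails to be in general position precisely when some $r$ of its entries (with $1\leq r\leq\min\{4,d-1\}$) lie on a genus-$0$ curve of self-intersection $r-2$, or when all $d-1$ entries lie on an anticanonical curve singular at one of them. Each is an algebraic incidence condition: by adjunction such a genus-$0$ curve has $-K_X\cdot C=r$, and since $-K_X$ is ample there are only finitely many effective classes of each bounded anticanonical degree (cf.\ the finite counts in Theorem~\ref{thm:geomcounts}), each moving in a linear system of bounded dimension. The failure locus $Z\subseteq X^{d-1}$ is therefore closed, being a finite union of determinantal incidence conditions over the finitely many relevant classes and index subsets. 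As the union of all such curves on $\overline{X}$ is $G_k$-stable, $Z$ descends to a closed subscheme of $X^{d-1}$ over $k$. Finally $Z$ is proper: using Theorem~\ref{thm:ksclass} to realise $\overline{X}$ as a blowup of $\mathbb{P}^2$ (or a quadric in the split degree-$8$ case), one sees at once that $(d-1)$-tuples in general position exist over $\overline{k}$, so $U:=X^{d-1}\setminus Z$ is a nonempty open subvariety defined over $k$, as required.

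The main obstacle is squarely the density step in degree $2$: the open-condition bookkeeping of the previous paragraph is routine, but Zariski density of $X(k)$ genuinely requires the unirationality theorem together with its precise hypotheses on the seed point, which is exactly why the degree-$2$ caveat on exceptional curves and the ramification locus appears in the statement.
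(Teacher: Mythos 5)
Your proposal is correct in substance, and your route through the second half of the argument is genuinely different from the paper's. Both proofs begin identically: unirationality (Manin and Koll\'ar for $d\geq 3$, Salgado--Testa--V\'arilly-Alvarado for $d=2$) plus infinitude of $k$ gives Zariski density of $X(k)$, and you correctly identify that the degree-$2$ hypotheses on the seed point exist solely to feed the unirationality theorem, exactly as in the paper. From there the paper proceeds \emph{inductively}: at each stage the union of the relevant low self-intersection genus-$0$ curves through the previously chosen points is a proper closed subset, so density furnishes the next rational point; the one delicate step is the last point, where the condition on anticanonical curves with a double point is handled by blowing up $P_1,\dots,P_{d-2}$ to a degree-$2$ del Pezzo surface and choosing a rational point off the ramification divisor (and off the finitely many anticanonical curves with a double point at some $P_i$). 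Your product formulation --- rational tuples are dense in $X^{d-1}$, the general-position locus contains a nonempty $k$-open subset, and $X^{d-1}$ is irreducible --- is more uniform and dispenses with that final-step manoeuvre entirely; what it buys is a single clean intersection argument, and what it costs is the bookkeeping that the failure locus sits inside a proper closed $k$-subvariety, which you supply via finiteness of the relevant classes (negative definiteness of the intersection form on $K_X^\perp$ bounds classes of fixed anticanonical degree), determinantal incidence conditions, and Galois-stability of their union.

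One point to tighten, though it does not threaten the strategy. In the paper's conventions curves are integral, so the exact failure locus need not be closed: lying on an \emph{irreducible} member of a given class is not a closed condition, since irreducible curves can degenerate to reducible divisors within the determinantal locus. You should therefore work throughout with the union $Z'$ of the determinantal loci, which merely \emph{contains} the failure locus --- that containment is the only direction your argument uses. Correspondingly, nonemptiness of $X^{d-1}\setminus Z'$ does not follow just from the existence of general-position tuples over $\overline{k}$, since a general-position tuple could a priori lie on a \emph{reducible} effective divisor in one of the relevant classes. Instead, note that each relevant system satisfies $h^0(D)=r$ (by Riemann--Roch, $\chi(D)=1+\tfrac{1}{2}(D^2-D\cdot K_X)=r$, with vanishing higher cohomology) and $h^0(-K_X)=d+1$, that generic points impose independent conditions on any linear system, and hence that a generic tuple over $\overline{k}$ avoids every determinantal locus. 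This is a one-line repair, after which your argument goes through.
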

\begin{proof}
Such $X$ is $k$-unirational (by \cite{Man86} and \cite[Thm.~1.1]{Kol02} for $d\geq 3$, and \cite[Cor.~3.3]{STVA14} for $d=2$). Since $k$ is infinite, unirationality implies that $X(k)$ is dense. It remains to show that density of rational points is sufficient for the result, which we now do by inductively building a tuple.

Let $Z$ be the union of all lines on $X$ and set $U:=X\setminus Z$. Since $Z$ is proper closed, there exists $P \in U(k)$. Note that $P$ is a rational point in general position on $X$.

Now assume that there exist $P_1,\ldots,P_n\in X(k)$ in general position for some $n<d-2$. By definition, $P_1,\ldots,P_n\in U_n(k)$, where $U_n=X\setminus Z_n$ for $Z_n$ the proper closed subset of all lines on $X$, all conics on $X$ passing through one of the $P_i$, and, for each $r\leq n$, all curves on $X$ of self-intersection $r-2$ and arithmetic genus 0. Let $Z_{n+1}$ denote the union of all curves of arithmetic genus 0, self-intersection $n-1$, and passing through the points $P_1,\ldots,P_n$. Set $U_{n+1}:=U_n\cap(X\setminus Z_{n+1})$. Since $Z_{n+1}$ is again a proper closed subset of $X$, we have $U_{n+1}\neq\emptyset$ and thus there exists $P_{n+1}\in U_{n+1}(k)$. Note that $P_1,\ldots,P_n\not\in U_{n+1}(k)$, as these points were removed with the locus $Z_{n+1}$. It follows that $P_1,\ldots,P_{n+1}$ is an $(n+1)$-tuple of points in general position.

We can apply the previous inductive step until $n=d-2$, in which case we must additionally ensure that $P_{n+1}$ is chosen so that there is no anticanonical curve through all of the $P_1,\dots,P_{n+1}$ with a double point at one of them. To see that this may be done, blow up at $P_1,\dots,P_{d-2}$ to obtain a del Pezzo surface $Y/k$ of degree $2$ with dense rational points. Note that there exists a point $P_{d-1} \in U_{d-2}(k)$ and not on the ramification divisor. Moreover, note that for any $P_i$ on the ramification divisor, it suffices to choose $P_{d-1}$ to further avoid the unique anticanonical curve with a double point at $P_i$. We thus obtain a general $(d-1)$-tuple of rational points on $X$.
\end{proof}

For our purposes, we will need to blow up del Pezzo surfaces in sets of not-necessarily-rational points in general position. This means that we need a more general result than Lemma~\ref{lem:rational points in general position}. We first use combinatorial arguments to construct closed points in general position on $\mb{P}^2$ over arbitrary infinite fields.

\begin{defn}
Let $1\leq n\leq 8$. Let $\lambda=(n_1,\ldots,n_8)$ denote the partition $n=\sum_{i=1}^8 i\cdot n_i$, and let $|\lambda|=\sum_{i=1}^8 n_i$ denote the length of the partition. We say that a set of closed points $M=\{P_1,\ldots,P_{|\lambda|}\}\subset\mb{P}^2_k$ \emph{corresponds to the partition} $\lambda$ if there exist disjoint subsets $M_1,\ldots,M_8\subseteq M$ such that
\begin{enumerate}[(i)]
\item $\bigsqcup_{i=1}^8 M_i=M$,
\item $|M_i|=n_i$, and
\item every element of $M_i$ is a closed point of degree $i$.
\end{enumerate}
\end{defn}

\begin{thm}\label{thm:gen position P2}
Let $k$ be an infinite field. Let $1\leq n\leq 8$. Let $\lambda=(n_1,\ldots,n_8)$ be a partition of $n$. If $k$ admits a separable extension of degree $i$ for each $n_i>0$, then there exists a set of closed points in general position in $\mb{P}^2_k$ corresponding to $\lambda$.
\end{thm}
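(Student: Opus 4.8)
The plan is to parametrise all admissible configurations simultaneously by a single rational $k$-variety, built from Weil restrictions, and then to locate a $k$-point in the open locus cutting out general position, using that $k$ is infinite.

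\textbf{Setup.} For each $i$ with $n_i > 0$, the hypothesis furnishes a separable extension $L_i/k$ of degree $i$. Form the $k$-variety
\[
W := \prod_{i\,:\,n_i>0}\left(\Res_{L_i/k}\mb{P}^2\right)^{n_i}.
\]
A $k$-point of $W$ amounts to a choice, for each such $i$, of $n_i$ points of $\mb{P}^2(L_i)$; each $L_i$-point $Q$ determines the closed point of $\mb{P}^2$ given by the image of the corresponding $\Spec L_i \to \mb{P}^2$, whose $G_k$-orbit in $\mb{P}^2(k_s)$ consists of the $i$ conjugates $Q^\sigma$ over the $k$-embeddings $\sigma\colon L_i \hookrightarrow k_s$. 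Thus every $k$-point of $W$ yields a $G_k$-stable set of at most $n=\sum_i i\cdot n_i$ geometric points in $\mb{P}^2(k_s)$, organised into Galois orbits of the prescribed sizes.

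\textbf{The bad locus is a proper closed subset.} Since each $L_i/k$ is separable, $\Res_{L_i/k}\mb{P}^2$ base-changes over $\overline{k}$ to $\prod_\sigma \mb{P}^2$ indexed by the $i$ distinct embeddings, so after collecting all conjugate factors one obtains $W_{\overline{k}} \cong (\mb{P}^2_{\overline{k}})^n$, under which a $\overline{k}$-point of $W$ is an ordered $n$-tuple of geometric points. Let $U_{\overline{k}} \subseteq W_{\overline{k}}$ be the locus on which these $n$ points are pairwise distinct and in general position in the sense of Definition~\ref{def:general position plane} (no three collinear, no six on a conic, and for $n=8$ no cubic through all of them with a double point at one). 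This is a nonempty open subset: each of the finitely many incidence conditions is proper closed --- a $3\times 3$ determinant for each triple, a $6\times 6$ determinant in the quadratic monomials for each sextuple, and the singular-cubic condition for the single $8$-tuple --- by the classical genericity of points in general position in the plane. As distinctness and general position are $G_k$-invariant, the complement of $U_{\overline{k}}$ is a $G_k$-stable closed subset, so $U_{\overline{k}}$ descends to a nonempty open subscheme $U \subseteq W$ defined over $k$.

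\textbf{Finding a $k$-point.} The variety $W$ is geometrically irreducible, being geometrically a product of projective spaces, and each factor $\Res_{L_i/k}\mb{P}^2$ contains the dense open affine $\Res_{L_i/k}\mb{A}^2 \cong \mb{A}^{2i}_k$; hence $W$ contains a dense open subset isomorphic to $\mb{A}^{2n}_k$. Since $k$ is infinite, $k$-points are dense in $\mb{A}^{2n}_k$ and therefore in $W$, so the nonempty open $U$ meets $W(k)$ and $U(k)\neq\emptyset$. Any point of $U(k)$ provides, for each $i$, the desired $n_i$ points of $\mb{P}^2(L_i)$; distinctness of the $i$ conjugates forces each associated closed point to have residue field exactly $L_i$ (hence degree $i$ and separable), pairwise distinctness of all $n$ geometric points makes the resulting closed points pairwise distinct, and the general-position clause is precisely Definition~\ref{def:general position plane}. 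This yields a set $M$ of closed points corresponding to $\lambda$ and in general position, as required.

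\textbf{Main obstacle.} The one genuinely delicate point is that general position \emph{couples} the Galois conjugates of a single closed point --- for instance three conjugate geometric points of one degree-$3$ point may be collinear --- so one cannot simply choose an $L_i$-point of $\mb{P}^2$ off a divisor, because the offending conditions are not regular functions on $\mb{P}^2_{L_i}$. Passing to the Weil restriction is exactly what converts these conjugate-coupled, $G_k$-invariant conditions into honest closed conditions on a rational $k$-variety, after which the infinitude of $k$ closes the argument. The remaining care is the classical but not entirely trivial verification that the general-position locus in $(\mb{P}^2)^n$ is nonempty for every $n\leq 8$, in particular the $n=8$ double-point-cubic clause.
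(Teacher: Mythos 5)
Your proposal is correct, but it takes a genuinely different route from the paper. The paper's proof (Appendix~\ref{sec:general position appendix}) is explicit and constructive: it parametrises points on the cuspidal cubic $\mb{V}(y^3-x^2z)$ by $[1:t:t^3]$, so that collinearity and coconic conditions become conditions on sums of roots of a polynomial; it then manufactures monic separable polynomials with irreducible factors matching $\lambda$ and no three (or six) roots summing to zero, using Lemma~\ref{lem:prescribed coeff} together with ad hoc translations and reciprocal tricks in characteristics $2$ and $3$, and disposes of the $n=8$ singular-cubic clause via Cayley--Bacharach (Lemma~\ref{lem:cayley-bacharach}). Your argument replaces all of this with a parameter space $W=\prod_i(\Res_{L_i/k}\mb{P}^2)^{n_i}$: the key observation, which you correctly identify, is that the Weil restriction converts the conjugate-coupled general-position conditions into honest closed conditions on a rational $k$-variety, after which density of $k$-points over an infinite field finishes the job; your check that pairwise distinctness of the $i$ conjugates forces the residue field to be exactly $L_i$ is the right way to guarantee the closed points have the prescribed degrees. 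What each approach buys: yours is shorter, uniform in all characteristics, and cleanly isolates the only geometric input, namely nonemptiness of the general-position locus in $(\mb{P}^2_{\overline{k}})^n$ for $n\leq 8$ (classical, e.g.\ from the existence of degree-$1$ del Pezzo surfaces over $\overline{k}$, with the determinantal/closedness verifications you sketch being routine); the paper's proof is longer but yields explicit lower bounds on $|k|$ under which the construction still works, which is why the remark following the theorem can assert validity over many finite fields --- something your density argument cannot give, since $k$-points of $\mb{A}^{2n}$ are not dense over $\mb{F}_q$. Two minor points to tighten: carry out the descent over $k_s$ rather than $\overline{k}$ (the $L_i$ already split over $k_s$ by separability, and $G_k$-descent of Galois-stable closed subschemes then applies verbatim even for imperfect $k$), and in characteristic $3$ the singularity condition for the $n=8$ clause needs the value $F(P_i)=0$ added to the vanishing of the partials, which you implicitly cover since the locus remains determinantal and closed either way.
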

\begin{proof}
    See Appendix~\ref{sec:general position appendix}.
\end{proof}

\begin{rem}
Our proof of Theorem~\ref{thm:gen position P2} holds for many finite fields as well. See Appendix~\ref{sec:general position appendix} for sufficient (but not necessarily necessary) lower bounds on $|k|$.
\end{rem}

If $X$ is a rational del Pezzo surface over an infinite field $k$, then we obtain points in general position on $X$ from Theorem~\ref{thm:gen position P2}. This allows us to get points in general position on all blowup types of del Pezzo surfaces of degree $d\geq 5$.

\begin{prop}\label{prop:general position dP}
    Let $k$ be an infinite field. Let $5\leq d\leq 9$ and $1\leq n\leq d-1$. Let $\lambda=(n_1,\ldots,n_8)$ be a partition of $n$, and assume that $k$ admits a separable extension of degree $i$ for each $n_i>0$. Every blowup type of degree-$d$ del Pezzo surface contains a surface $X$ with a set of closed points in general position corresponding to $\lambda$.
\end{prop}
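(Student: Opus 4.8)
The plan is to reduce every case to Theorem~\ref{thm:gen position P2} by realizing a representative of the blowup type as a blowup of the plane (or a quadric) and re-reading ``points in general position on $X$'' as ``one larger configuration in general position downstairs.'' The mechanism is the following translation, immediate from Definition~\ref{def:general position del Pezzo} and Lemma~\ref{lem:def of gen position}: if a representative of the type $\Bl_{d_1,\dots,d_r}S$ arises as $X=\Bl_A\mathbb{P}^2$ for a $G_k$-stable general-position set $A$ of closed points of degrees $d_1,\dots,d_r$, then a $G_k$-stable set $B$ of closed points on $X$ is in general position and corresponds to $\lambda$ if and only if $A\cup\pi(B)$ is in general position in $\mathbb{P}^2$ (where $\pi\colon X\to\mathbb{P}^2$ is the blowdown), i.e.\ corresponds to the partition combining $\mu=(d_1,\dots,d_r)$ and $\lambda$. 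So for any type whose relatively minimal base is $S_9=\mathbb{P}^2$ it suffices to produce a single plane configuration realizing $\mu$ and $\lambda$ at once.

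For the $S_9$-base types this is immediate and is the clean heart of the argument. The combined configuration has total degree $(9-d)+n$, and $1\le (9-d)+n\le (9-d)+(d-1)=8$, so it lies exactly in the range of Theorem~\ref{thm:gen position P2}. The separable extensions needed for the parts of $\lambda$ are given by hypothesis; those needed for the degrees $d_i$ must exist whenever the type is nonempty, since a closed point of degree $d_i$ in the blowup locus has separable residue field (the surface being separably split by Coombes~\cite{Coo88}) and hence forces a separable extension of degree $d_i$. Applying Theorem~\ref{thm:gen position P2} to the combined partition yields $A$ and $B$ simultaneously, and $X=\Bl_A\mathbb{P}^2$ together with the image of $B$ is the desired surface.

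It remains to treat the types whose base is not the plane, namely those built on the quadrics $S_{8,I}$, $S_{8,\II}$ or on $S_5$, $S_6$. For a representative with a $k$-rational point --- automatic for $S_{8,\II}=\mathbb{P}^1\times\mathbb{P}^1$ and for $S_5$ (degree-$5$ del Pezzo surfaces being $k$-rational), and arrangeable for $S_{8,I}$ and $S_6$ by choosing a pointed representative --- the coincidence isomorphisms of Section~\ref{sec:coincidences} (such as $\Bl_1(\mathbb{P}^1\times\mathbb{P}^1)\cong\Bl_{1,1}\mathbb{P}^2$, with an analogous statement turning a rational point of $S_{8,I}$ into a degree-$2$ point of $\mathbb{P}^2$) and the explicit constructions of Theorem~\ref{thm:Minimal5and6} present a blowup of the representative as a blowup of $\mathbb{P}^2$. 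The argument of the second paragraph then applies, the $\lambda$-points descending to extra general-position points of the plane. This reduction to Theorem~\ref{thm:gen position P2} succeeds as long as the auxiliary points spent by the construction, together with those realizing $\lambda$, number at most $8$.

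The main obstacle is precisely this count. Because the base is relatively minimal, passing to a plane model spends several points of $\mathbb{P}^2$, so when $n$ is near its maximum $d-1$ the combined configuration would exceed $8$ points and the intermediate plane blowup would have non-positive degree; the reduction then breaks down (already on $S_5$ with $n=4$, and on the degree-$8$ quadric with $n=7$). In these boundary cases one must argue directly on the rational surface $X$ itself. Here $X$ is rational over the infinite field $k$, so $X^i$ is rational and $\mathrm{Sym}^i X$ is unirational with dense $k$-points; since $k$ admits a separable extension of degree $i$ for each part $i$ of $\lambda$, a sufficiently general $k$-point of $\mathrm{Sym}^i X$ is an irreducible reduced closed point of degree $i$ on $X$. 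One then builds the configuration inductively, at each stage choosing a closed point of the prescribed degree whose entire $G_k$-orbit avoids the finitely many lines, conics and low-self-intersection rational curves determined by the previously chosen points, as dictated by Lemma~\ref{lem:def of gen position}. The delicate part, exactly as in the proof of Theorem~\ref{thm:gen position P2}, is to control a closed point \emph{together with all its Galois conjugates} at once, guaranteeing both the correct degree and that the whole orbit lands in general position; together with the need to secure sufficiently pointed representatives of $S_{8,I}$ and $S_6$, this is where the real work lies.
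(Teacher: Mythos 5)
Your first three paragraphs are essentially the paper's own argument, made more explicit: reduce the type $\Bl_{d_1,\dots,d_r}S$ to realizing the concatenated partition on the relatively minimal base $S$, handle $S=S_9$ by Theorem~\ref{thm:gen position P2}, and handle the quadrics and $S_5$, $S_6$ through pointed representatives, the coincidences of Proposition~\ref{prop: coincidence result}, and the constructions of Theorem~\ref{thm:Minimal5and6}. Up to that point the proposal is a faithful (and welcome, since more detailed) rendering of the paper's proof, and your translation lemma for plane-based types is correct.

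The gap is in your final paragraph, i.e.\ exactly in the boundary cases you isolate. First, your bookkeeping of which cases are stuck is not sharp: for the split quadric the reduction never breaks down, because by Proposition~\ref{prop: coincidence result}(1) presenting a blowup of $S_{8,\II}$ as a blowup of the plane costs \emph{one} auxiliary rational point, not two, and every partition of $7$ has an odd part to which that coincidence can be applied; the genuinely stuck cases are the partitions $\lambda$ with no part equal to $1$ on the non-plane bases, e.g.\ $(7)$, $(2,5)$, $(3,4)$, $(2,2,3)$ on $S_{8,I}$, and $(4)$, $(2,2)$ on $S_5$, together with several partitions on $S_6$. Second, and more seriously, your proposed direct argument for these cases rests on the assertion that, since $\mathrm{Sym}^i X$ has dense $k$-points, ``a sufficiently general $k$-point of $\mathrm{Sym}^i X$ is an irreducible reduced closed point of degree $i$.'' That inference is false as stated: because $X$ is rational over the infinite field $k$, the image of $X^i(k)$ is already Zariski dense in $\mathrm{Sym}^i X$, and every one of those points is totally split; density of $k$-points gives no control whatsoever over residue fields. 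What is actually needed is the stronger statement that the $k$-points of $\mathrm{Sym}^i X$ arising from irreducible degree-$i$ closed points are themselves Zariski dense, so that they can meet the dense open locus of general-position configurations. This can be proved --- fix a separable extension $K/k$ of degree $i$, note that the Weil restriction $R_{K/k}X$ is rational (hence has dense $k$-points) and dominates $\mathrm{Sym}^i X$, and that the points whose residue field is a proper subfield of $K$ lie in the finitely many proper closed subvarieties $R_{K'/k}X$ --- but you neither state nor prove it; you explicitly defer it as ``where the real work lies.'' As submitted, the proposal therefore establishes the proposition only away from these boundary cases. For comparison, the paper's own proof compresses precisely these cases into the assertion that the result ``follows easily from the case $d=9$''; your write-up has the merit of making the difficulty visible, but it does not resolve it.
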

\begin{proof}
    Every del Pezzo surface is birational to some relatively minimal del Pezzo surface, so we may restrict our attention to relatively minimal blowup types. Recall that for $d\geq 5$, any degree-$d$ del Pezzo surface containing a $k$-rational point is rational \cite[Thm.~9.4.8]{Poo17}. Such a surface then contains a set of closed points in general position corresponding to $\lambda$ by Theorem~\ref{thm:gen position P2}. Thus, given a relatively minimal degree-$d$ del Pezzo surface $X$ over $k$, it suffices to find a relatively minimal degree-$d$ del Pezzo surface $Y$ over $k$ with $Y(k)\neq\varnothing$. We will treat each degree in turn.
    
    The case  $d=9$ follows from Theorem~\ref{thm:gen position P2}. The  case $d=8$ follows from the case $d = 9$ as the blowup of a quadric surface in a rational point is isomorphic to the blowup of the plane in two points (see Proposition~\ref{prop: coincidence result}). The case $d=7$ is vacuous, as there are no relatively minimal degree-7 del Pezzo surfaces.
    
    Finally, as discussed in Theorem~\ref{thm:Minimal5and6} and its proof, whenever a relatively minimal del Pezzo surface of degree $d \in \{5,6\}$ exists, one may be constructed via blowup of the plane. Thus the result follows easily from the case $d = 9$.
\end{proof}

For $d\leq 4$, we can no longer rely on rationality for points in general position. We can, however, progress with assumptions on the ground field. We will address Hilbertian fields, finite fields, and local fields in their respective sections.

\section{Classification by blowup type}\label{sec:classification}

\subsection{Intersection invariant}

As we construct new del Pezzo surfaces by blowing up, we must determine whether new blowups are isomorphic to old ones. Where coincidences occur, we will give explicit geometric proofs. It will then remain to prove that no further coincidences occur. By calculating line and conic counts and Picard ranks for certain blowups, we can sometimes distinguish them. We then narrow down to small sets of ``potentially coincident'' surfaces. Given such a pair of surfaces, we may use the additional structure of the intersection form on their Picard groups to distinguish them. For this, we introduce the following invariant.

\begin{defn} \label{def:PicDet}
Let $X$ be a smooth projective surface such that $\Pic(\overline{X})$ is torsion-free and finitely generated (e.g.\ a del Pezzo surface) and $\mathcal{B} = \{B_1,\dots,B_r\}$ be a basis of the Picard group of $X$. Denote by $G_X(\mathcal{B}) \in M_r(\mathbb{Z})$ the Gram matrix associated to $\mathcal{B}$, i.e.\ the matrix with $ij^{\text{th}}$ entry the intersection product $B_i\cdot B_j$, and set $I_X = \det(G_X(\mathcal{B}))$. 
\end{defn}

\begin{rem} In the previous definition, as the notation suggests, $I_X$ is independent of the choice of basis $\mathcal{B}$. Indeed, given a new basis $\mathcal{C} = \{C_1,\dots,C_r\}$ of $\Pic(X)$, there exist integers $a_{ij} \in \mathbb{Z}$, $1 \leq i,j \leq r$, such that $C_i = \sum_{j=1}^r a_{ij}B_j$ for all $i \in \{1,\dots,r\}$ and that $A = (a_{ij}) \in \text{GL}_r(\mathbb{Z})$. Note that $C_i \cdot C_j = \mathbf{v} G_X(\mathcal{B})\mathbf{w}^{\text{T}}$, where $\mathbf{v} = (a_{i1},\dots,a_{ir})$ and $\mathbf{w} = (a_{j1},\dots,a_{jr})$. It readily follows that $G_X(\mathcal{C}) = A G_X(\mathcal{B}) A^{\text{T}}$. Taking determinants, we deduce independence, since $\det(A) = \det(A^{\text{T}}) = \pm 1$. 
\end{rem}

Essentially the same argument shows that $I_X$ is an isomorphism invariant.

\begin{lem}
If $X$ is a smooth projective surface with torsion-free finitely generated geometric Picard group and $Y$ is a surface isomorphic to $X$, then $I_X = I_Y$.
\end{lem}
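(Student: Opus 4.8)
The plan is to reprise the basis-independence argument of the preceding remark, with the change-of-basis matrix replaced by the map on Picard groups induced by an isomorphism. Let $f\colon X\to Y$ be an isomorphism over the ground field. Pullback of divisor classes gives a group isomorphism $f^*\colon \Pic(Y)\to\Pic(X)$. Since $Y\cong X$, the geometric Picard group $\Pic(\overline{Y})$ is again torsion-free and finitely generated, so the (arithmetic) Picard group $\Pic(Y)$ is free of finite rank $r$; thus it admits a basis and $I_Y$ is defined, and $\Pic(X)$ is free of the same rank $r$.

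The one substantive point to record is that $f^*$ is an \emph{isometry} for the intersection pairings: for all $C,D\in\Pic(Y)$ one has $f^*(C)\cdot f^*(D)=C\cdot D$, where the left-hand intersection is taken on $X$ and the right-hand one on $Y$. This is the only step requiring geometric input, and it follows from the projection formula
\[
f_*\!\left(f^*C\cdot f^*D\right)=C\cdot f_*\!\left(f^*D\right)=C\cdot D,
\]
using that an isomorphism satisfies $f_*f^*=\mathrm{id}$ and preserves the degree of a zero-cycle; equivalently, an isomorphism of surfaces carries the defining data of intersection numbers (degrees of line bundles restricted to curves, or Euler characteristics) on $Y$ to the corresponding data on $X$, so it induces an isomorphism of intersection lattices $(\Pic(Y),\cdot)\cong(\Pic(X),\cdot)$.

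With this in hand, the conclusion is formal. Fix a basis $\mathcal{B}=\{B_1,\dots,B_r\}$ of $\Pic(Y)$ and set $f^*\mathcal{B}=\{f^*B_1,\dots,f^*B_r\}$, which is a basis of $\Pic(X)$ because $f^*$ is a group isomorphism. By the isometry property the Gram matrices agree entrywise,
\[
G_X(f^*\mathcal{B})_{ij}=f^*B_i\cdot f^*B_j=B_i\cdot B_j=G_Y(\mathcal{B})_{ij},
\]
so $G_X(f^*\mathcal{B})=G_Y(\mathcal{B})$. Taking determinants and invoking the basis-independence of $I_X$ established in the remark yields
\[
I_X=\det G_X(f^*\mathcal{B})=\det G_Y(\mathcal{B})=I_Y.
\]
I expect the only step needing genuine care to be the isometry property of $f^*$; everything else is linear algebra already carried out for basis-independence. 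Indeed, once one observes that $\det$ of a Gram matrix is an invariant of the isometry class of a lattice, the lemma is immediate from the fact that an isomorphism of surfaces induces an isometry of their Picard lattices.
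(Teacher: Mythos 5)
Your proof is correct and follows essentially the same route as the paper's: both rest on the fact that an isomorphism of surfaces induces an intersection-preserving isomorphism of Picard groups, after which the conclusion is the linear algebra already done for basis-independence. The only cosmetic difference is that you transport a basis through $f^*$ so the Gram matrices coincide on the nose and then invoke the basis-independence remark, whereas the paper writes the induced map as a matrix $A \in \mathrm{GL}_r(\mathbb{Z})$ and uses $G_X = A G_Y A^{\mathrm{T}}$ with $\det(A) = \pm 1$ directly.
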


\begin{proof}
The isomorphism $X \cong Y$ induces an isomorphism $\varphi: \Pic(X) \xrightarrow{\sim} \Pic(Y)$ preserving intersection products. Choosing bases for each Picard group, $\varphi$ is represented by a matrix $A \in \text{GL}_r(\mathbb{Z})$, and one has $v \cdot w = vG_Xw^{\text{T}}$ (surpressing the choice of basis in $\Pic(X)$). By preservation of intersection multiplicities, we deduce that $G_{X} = AG_{Y}A^{\text{T}}$. Since $\det(A) = \pm 1$, we deduce that $I_X = I_Y$.
\end{proof}

Note that $I_X$ takes the following values for $X$ a relatively minimal del Pezzo surface:
\[
I_X =
\begin{cases}
1 \text{ if $d = 9$}, \\
2 \text{ if $d = 8$ and $\rank\Pic(X) = 1$}, \\
d \text{ if $d \leq 6$ and $\rank\Pic(X) = 1$}, \\
-1 \text{ if $d = 8$ and $\rank\Pic(X) = 2$}, \\
-4 \text{ if $d \in \{1,2,4\}$ and $\rank\Pic(X) = 2$}.
\end{cases}
\]

Note also that $I_X$ transforms nicely under blowing up:

\begin{lem}
Let $X$ be a smooth projective surface with torsion-free finitely generated geometric Picard group, and let $Y = \Bl_{d_1,\dots,d_n}X$. %be the blowup in closed points $P_1,\dots,P_n$ such that $\deg(P_i) = d_i$. 
Then $I_Y = (-1)^n I_X \prod_{i=1}^n d_i$.
\end{lem}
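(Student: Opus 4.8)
The plan is to reduce to the case of a single blowup by induction on $n$, and then to compute the Gram matrix of the enlarged Picard group, which turns out to be block-diagonal.

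First I would treat the case $n = 1$: let $\pi \colon Y \to X$ be the blowup of a single closed point $P$ of degree $d$, and let $E$ denote the class of its exceptional divisor. The essential geometric input is that, after base change to $k_s$, the point $P$ splits into $d$ distinct geometric points that are permuted transitively by $G_k$, so that $E_{k_s} = E_1 + \dots + E_d$ is a sum of $d$ pairwise disjoint $(-1)$-curves. Using this, I would record the three intersection identities governing the blowup: $\pi^* C \cdot \pi^* D = C \cdot D$ for all $C, D \in \Pic(X)$ (the projection formula), $\pi^* C \cdot E = 0$ (the exceptional locus is disjoint from pullbacks), and, since intersection numbers are invariant under base change,
\[
E^2 = \sum_{i=1}^d E_i^2 + 2\sum_{i<j} E_i \cdot E_j = -d.
\]
Combined with the standard orthogonal decomposition $\Pic(Y) = \pi^* \Pic(X) \oplus \Z E$, this shows that for any basis $\mathcal{B} = \{B_1, \dots, B_r\}$ of $\Pic(X)$, the set $\{\pi^* B_1, \dots, \pi^* B_r, E\}$ is a basis of $\Pic(Y)$ whose Gram matrix is
\[
G_Y = \begin{pmatrix} G_X(\mathcal{B}) & 0 \\ 0 & -d \end{pmatrix}.
\]
Taking determinants gives $I_Y = -d \cdot I_X$, which is the $n = 1$ case.

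For general $n$, I would write $Y = \Bl_{d_n}(Y')$, where $Y' = \Bl_{d_1, \dots, d_{n-1}} X$: since the blown-up points are distinct and blowing up is local, the point of degree $d_n$ lifts isomorphically to $Y'$, and $Y$ is obtained by blowing it up there. The single-point case yields $I_Y = -d_n \cdot I_{Y'}$, while the inductive hypothesis gives $I_{Y'} = (-1)^{n-1} I_X \prod_{i=1}^{n-1} d_i$; multiplying these produces $I_Y = (-1)^n I_X \prod_{i=1}^n d_i$, as desired.

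The main obstacle is the self-intersection computation $E^2 = -d$ for a non-rational closed point. In contrast to the rational case recorded in the conventions (where $E \cong \PP^1_k$ and $E^2 = -1$), here one must pass to $k_s$, use transitivity of the $G_k$-action on the $d$ geometric components to see both that $E$ is $G_k$-invariant and hence $k$-rational and that the cross-terms $E_i \cdot E_j$ vanish, and confirm that the arithmetic Picard group decomposes orthogonally as $\pi^*\Pic(X) \oplus \Z E$ so that the block-diagonal form above is genuinely attained. Once these facts are in place, the determinant calculation and the induction are routine.
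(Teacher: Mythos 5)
Your proof is correct and takes essentially the same approach as the paper: the paper simply writes down the basis $\{\pi^*B_1,\dots,\pi^*B_r,E_1,\dots,E_n\}$ of $\Pic(Y)$ all at once and observes that the Gram matrix is block diagonal with blocks $G_X$ and $-\mathrm{diag}(d_1,\dots,d_n)$, which is exactly your computation. Your only differences are cosmetic --- packaging the argument as an induction on $n$ and spelling out the verification $E^2=-d$ (via the Galois orbit of $d$ pairwise disjoint $(-1)$-curves) that the paper leaves implicit.
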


\begin{proof}
Let $\{B_1,\dots,B_r\}$ be a basis of $\Pic(X)$. Note that
\[
\Pic(Y) = \langle B'_1,\dots,B'_r,E_1,\dots,E_n\rangle \cong \mathbb{Z}^{r+n},
\]
where $B_i'$ is the pullback of $B_i$ and the $E_j$ are the (Galois orbits of) exceptional divisors over closed points in the blowup locus. Then the intersection matrix $M_{Y}$ of $\Pic(Y)$ with respect to this presentation is a block diagonal matrix with blocks the intersection matrix $M_X$ of $X$ and $-\text{diag}(d_1,\dots,d_n)$. Then $I_Y = (-1)^nI_X\prod_{i=1}^n d_i$.
\end{proof}

\begin{cor} \label{cor:PicDet}
Let $X$ and $Y$ be two relatively minimal del Pezzo surfaces over a field $k$, and suppose that two blowups $X' = \Bl_{d_1,\dots,d_r}X$, $Y' = \Bl_{e_1,\dots,e_s}Y$ satisfy
\[(\rank\Pic(X'),L_{X'},C_{X'}) = (\rank\Pic(Y'),L_{Y'},C_{Y'})\]
for $L_X$ (respectively, $C_X$) the line (respectively, conic) count for $X$. The surfaces $X'$ and $Y'$ are not isomorphic in the case that
\[(-1)^rI_X\prod_{i=1}^rd_i \neq (-1)^s I_Y\prod_{j=1}^s e_j.\]

\end{cor}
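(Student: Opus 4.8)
The plan is to read this corollary as a direct packaging of the two immediately preceding lemmas—the isomorphism-invariance of $I_X$ and its transformation under blowup—via a single contrapositive argument. Since both $X$ and $Y$ are relatively minimal del Pezzo surfaces, their geometric Picard groups are torsion-free and finitely generated, so the hypotheses of both lemmas are met and the invariant $I$ is well-defined on $X$, $Y$, and all their blowups.

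First I would apply the blowup-transformation lemma to each surface. Because $X' = \Bl_{d_1,\dots,d_r}X$ is the blowup of $X$ in $r$ closed points of degrees $d_1,\dots,d_r$ in general position, that lemma yields $I_{X'} = (-1)^r I_X \prod_{i=1}^r d_i$, and symmetrically $I_{Y'} = (-1)^s I_Y \prod_{j=1}^s e_j$. With these two identities in hand, the inequality assumed in the statement, namely $(-1)^r I_X \prod_{i=1}^r d_i \neq (-1)^s I_Y \prod_{j=1}^s e_j$, is literally the assertion that $I_{X'} \neq I_{Y'}$.

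Next I would invoke the isomorphism-invariance lemma in its contrapositive form: that lemma guarantees that an isomorphism $X' \cong Y'$ would force $I_{X'} = I_{Y'}$. Hence $I_{X'} \neq I_{Y'}$ rules out any such isomorphism, so $X'$ and $Y'$ are not isomorphic, which is exactly the desired conclusion.

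There is no genuine obstacle here, as the substantive work was already carried out in proving the two lemmas; the one point worth flagging is that the triple-equality hypothesis $(\rank\Pic(X'),L_{X'},C_{X'}) = (\rank\Pic(Y'),L_{Y'},C_{Y'})$ is logically inessential to the implication, since non-isomorphism follows from the inequality of intersection invariants alone. Its role is purely contextual: it isolates the hard case in which Picard rank, line count and conic count all coincide and therefore fail to separate $X'$ from $Y'$, so that $I$ serves as the decisive invariant of last resort in the classification of blowup types.
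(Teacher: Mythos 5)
Your proposal is correct and matches the paper's intent exactly: the paper states this corollary without proof precisely because it follows immediately from the two preceding lemmas, which is the argument you give (apply the blowup-transformation lemma to compute $I_{X'}$ and $I_{Y'}$, then invoke isomorphism-invariance of $I$ contrapositively). Your observation that the triple-equality hypothesis is logically inessential and serves only to flag the case where the coarser invariants fail to distinguish the surfaces is also accurate.
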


\subsection{Counting lines and conics on blowups} \label{sec:CountingLinesAndConics}

We now give formulae for counting lines and conic fibrations on blowups of del Pezzo surfaces of degrees 8 and 9.

\begin{notn}
For $X/k$ a del Pezzo surface, we denote by $L_X$ and $C_X$ the number of $k$-rational lines and $k$-rational conic families on $X$ respectively.
\end{notn}

In principle we require counts on blowups of lower-degree del Pezzo surfaces, but we will see that these counts offer us enough information to distinguish all blowup types up to isomorphism, at which point we are able to run through a list of possible Galois actions to compute the remaining counts.

\begin{notn}
Let $(P_1,\dots,P_r)$ be a tuple of separable closed points on a del Pezzo surface. Denote by $\mathbf{d} = (d_1,\dots,d_r)$ the tuple of their degrees, i.e.\ $d_i = \deg(P_i)$.
\begin{itemize}
    \item Let $\mathbf{d}(n)$ be the number of sub-multisets of $\{d_1,\dots,d_r\}$ with sum $n$.
    \item Let $\mathbf{d}(m;n)$ be the number of pairs of disjoint sub-multisets of $\{d_1,\dots,d_r\}$ with sums $m$ and $n$.
\end{itemize}
\end{notn}

In Tables~\ref{table:9LineOrigins},~\ref{table:9ConicOrigins},~\ref{table:8LineOrigins}, we assign numerical labels for each source of curves. When positive, these numbers coincide with the degree of the associated complete linear system. We use one or more asterisks to differentiate sources associated to the same linear system.

\begin{lem} \label{lem:lccounts}
Let $\{P_1,\dots,P_r\} \in \mathbb{P}^2(k_s)$ be $r \leq 8$ points in general position and set $X = \Bl_{P_1,\dots,P_r}\mathbb{P}^2_{k_s}$. Denote by $[n \mid m_1,\dots,m_r]$ the system $|\mathcal{O}(n) - \sum_{i=1}^rm_iP_i|$ of curves in $\mathcal{O}(n)$ with a singularity of order at least $m_i$ at each $P_i$, indicating repeated entries via exponents, e.g.\ $[2\mid1,1] = [2 \mid 1^2]$. The origins of the lines and conics on $X_{k_s}$ in $\mathbb{P}^2$, together with $k$-rational counts, are described by Tables~\ref{table:9LineOrigins} and \ref{table:9ConicOrigins} respectively. In both cases, Geiser duals and Bertini duals are identified.
\end{lem}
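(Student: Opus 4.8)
The plan is to split the statement into a \emph{geometric} classification of lines and conics on $\overline{X}$ and an \emph{arithmetic} count of those fixed by $G_k$. For the geometric part, I would fix the standard basis $H, E_1, \dots, E_r$ of $\Pic(\overline{X})$ supplied by Theorem~\ref{thm:ksclass}, with $H^2 = 1$, $E_i^2 = -1$, $H \cdot E_i = 0 = E_i \cdot E_j$ for $i \neq j$, and $-K_X = 3H - \sum_i E_i$. A line is a class $D = aH - \sum_i b_i E_i$ satisfying $D^2 = -1$ and $-K_X \cdot D = 1$, i.e.\ $a^2 - \sum_i b_i^2 = -1$ and $3a - \sum_i b_i = 1$; a conic class satisfies instead $a^2 = \sum_i b_i^2$ and $3a - \sum_i b_i = 2$. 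First I would solve these two finite Diophantine systems for $r \leq 8$, recovering the familiar list of multiplicity patterns ($E_i$; $H - E_i - E_j$; $2H - \sum_{5} E$; $3H - 2E_i - \sum_{6} E$; and so on up to sextics $6H - 3E_i - 2\sum_{j \neq i} E_j$ in degree $1$, with the analogous list for conics). Each pattern is exactly the strict transform of the linear system $[a \mid b_1, \dots, b_r]$, which produces the ``origin in $\mathbb{P}^2$'' column of the tables.

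The step I expect to require the most care is verifying that, under general position, every class on this list is represented by a genuine irreducible curve of the advertised type, and that distinct classes give distinct curves. Here I would invoke Lemma~\ref{lem:def of gen position}: the conditions that no $r$ of the points lie on a genus-$0$ curve of self-intersection $r-2$, together with the anticanonical non-degeneracy condition, are precisely what rule out the reducible or non-reduced degenerations of the systems $[a \mid b_1, \dots, b_r]$, so each is a single effective class realised by an irreducible curve with the prescribed multiplicities (and, for conics, by a base-point-free pencil defining the fibration). In degrees $2$ and $1$ I would then record the action of the Geiser and Bertini involutions, which act on $\Pic(\overline{X})$ by $D \mapsto -K_X - D$ and $D \mapsto -2K_X - D$ respectively; these pair up the sources (e.g.\ $E_i$ with the cubic $3H - 2E_i - \sum_{j\neq i} E_j$ under Geiser). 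Since each involution is defined over $k$ and commutes with $G_k$, dual sources carry identical rational counts, which justifies identifying them in the tables.

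For the arithmetic count I would use that a line or conic is $k$-rational if and only if its class is $G_k$-invariant (Remark~\ref{rem:line is line}). As $G_k$ fixes $H$ and permutes $E_1, \dots, E_r$ according to the decomposition of the geometric points into the closed points $P_1, \dots, P_r$ of degrees $d_1, \dots, d_r$, a class $aH - \sum_i b_i E_i$ is $G_k$-invariant precisely when, for each multiplicity value $m$, the set of points carrying multiplicity $m$ is a union of complete Galois orbits, equivalently a union of closed points, i.e.\ a sub-multiset of $\{d_1, \dots, d_r\}$ whose degree-sum equals the number of points at that level. A source with a single nontrivial multiplicity stratum on an $n$-point subset is therefore $G_k$-invariant in exactly $\mathbf{d}(n)$ ways, whereas a source with two distinct positive strata (the multiple-point locus and the simple-point locus of the higher-degree systems) requires two \emph{disjoint} stable sub-multisets and is counted by $\mathbf{d}(m;n)$. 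Reading these formulae off source by source yields the count columns of Tables~\ref{table:9LineOrigins} and \ref{table:9ConicOrigins}.

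Finally, I would cross-check the output against Theorem~\ref{thm:geomcounts}: the geometric multiplicities of all listed sources must sum to the known totals (for instance $240$ lines and $2160$ conic families in degree $1$), and specialising $\mathbf{d}$ to the split case $d_i = 1$ must return these same totals via $\mathbf{d}(n) = \binom{r}{n}$ and $\mathbf{d}(m;n) = \binom{r}{m}\binom{r-m}{n}$. Agreement confirms both that the enumeration of sources is complete and that the combinatorial formulae are correct.
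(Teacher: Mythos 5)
Your proposal is correct and follows essentially the same route as the paper: the paper enumerates the sources by solving the simultaneous conditions (dimension of the planar system, arithmetic genus, self-intersection of the strict transform) and cites Derenthal for the computations, which is adjunction-equivalent to your lattice equations $D^2=-1$, $-K_X\cdot D=1$ (resp.\ $D^2=0$, $-K_X\cdot D=2$) in the basis $H,E_1,\dots,E_r$; the Galois-invariance combinatorics producing $\mathbf{d}(n)$ and $\mathbf{d}(m;n)$ (with the factors $\mathbf{d}(8)$ arising as the degenerate case $\mathbf{d}(m;n)=\mathbf{d}(8)\mathbf{d}(m)$ when $m+n=8$) is exactly what underlies the count columns. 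Your additions --- the effectivity/irreducibility check via Lemma~\ref{lem:def of gen position} and the cross-check against Theorem~\ref{thm:geomcounts} --- are sensible supplements rather than a different method.

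One slip to fix: your stated involution formulas $D\mapsto -K_X-D$ (Geiser) and $D\mapsto -2K_X-D$ (Bertini) are valid only on \emph{line} classes. The involutions fix $K_X$ and act by $-1$ on $K_X^{\perp}$, so on all of $\Pic(\overline{X})$ the Geiser action is $D\mapsto (-K_X\cdot D)(-K_X)-D$ and the Bertini action is $D\mapsto 2(-K_X\cdot D)(-K_X)-D$. For conic classes the duals are therefore $-2K_X-D$ and $-4K_X-D$; applying your formula verbatim to, say, the conic source $[1\mid 1^1]$ on a degree-$2$ surface gives $2H-\sum_{i\neq 1}E_i$, which has anticanonical degree $0$ and is not a conic class, whereas $-2K_X-D = 5H - E_1 - 2\sum_{i\neq 1}E_i$ correctly recovers the dual source $[5\mid 2^6,1^1]$ in Table~\ref{table:9ConicOrigins}. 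Your conclusion that dual sources carry identical rational counts is unaffected, since it only uses that the involutions are $k$-morphisms commuting with $G_k$, but the explicit dual identifications in the conic tables require the corrected formula.
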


\begin{longtable}{|l|l|l|l|l|l|} \caption{Origin of lines on the blowup of $S_9$} \label{table:9LineOrigins}  \\
\hline
Number & System & $k$-rational count & Geiser & Bertini \\
\hline
0 & (Exceptional divisors) & $\mathbf{d}(1)$ & 3 & 6 \\
\hline
1 & $[1|1^2]$ & $\mathbf{d}(2)$ & 2 & 5 \\
\hline
2 & $[2|1^5]$ & $\mathbf{d}(5)$ & 1 & 4 \\
\hline
3 & $[3|2^1,1^6]$ & $\mathbf{d}(1;6)$ & 0 & 3 \\
\hline
4 & $[4|2^3,1^5]$ & $\mathbf{d}(8)\mathbf{d}(3)$ & -- & 2 \\
\hline
5 & $[5|2^6,1^2]$ & $\mathbf{d}(8)\mathbf{d}(2)$ & -- & 1 \\
\hline
6 & $[6|3^1,2^7]$ & $\mathbf{d}(8)\mathbf{d}(1)$ & -- & 0 \\
\hline
\end{longtable}

\begin{longtable}{|l|l|l|l|l|l|} \caption{Origin of conics on the blowup of $S_9$} \label{table:9ConicOrigins}  \\
\hline
Number & System & $k$-rational count & Geiser & Bertini \\
\hline
1 & $[1|1^1]$ & $\mathbf{d}(1)$ & 5 & 11 \\
\hline
2 & $[2|1^4]$ & $\mathbf{d}(4)$ & 4 & 10 \\
\hline
3 & $[3|2^1,1^5]$ & $\mathbf{d}(1;5)$ & 3 & 9 \\
\hline
4 & $[4|2^3,1^4]$ & $\mathbf{d}(3;4)$ & 2 & 8 \\
\hline
5 & $[5|2^6,1^1]$ & $\mathbf{d}(1;6)$ & 1 & 7 \\
\hline
7 & $[7|4^1,3^1,2^6]$ & $\mathbf{d}(8)\mathbf{d}(1;1)$ & -- & 5 \\
\hline
8 & $[8|4^1,3^4,2^3]$ & $\mathbf{d}(8)\mathbf{d}(1;3)$ & -- & 4 \\
\hline
9 & $[9|4^2,3^5,2^1]$ & $\mathbf{d}(8)\mathbf{d}(1;2)$ & -- & 3 \\
\hline
10 & $[10|4^4,3^4]$ & $\mathbf{d}(8)\mathbf{d}(4)$ & -- & 2 \\
\hline
11 & $[11|4^7,3^1]$ & $\mathbf{d}(8)\mathbf{d}(1)$ & -- & 1 \\
\hline
4* & $[4|3^1,1^7]$ & $\mathbf{d}(8)\mathbf{d}(1)$ & -- & 8* \\
\hline
5* & $[5|3^1,2^3,1^4]$ & $\mathbf{d}(8)\mathbf{d}(1;3)$ & -- & 7* \\
\hline
6* & $[6|3^2,2^4,1^2]$ & $\mathbf{d}(8)\mathbf{d}(2;2)$ & -- & 6* \\
\hline
7* & $[7|3^4,2^3,1^1]$ & $\mathbf{d}(8)\mathbf{d}(1;3)$ & -- & 5* \\
\hline
8* & $[8|3^7,1^1]$ & $\mathbf{d}(8)\mathbf{d}(1)$ & -- & 4* \\
\hline
\end{longtable}

\begin{proof}
We have the following equalities:

\begin{enumerate}
\item For the dimension of the linear system, we have
\[
\dim\left|\mathcal{O}(n) - \sum_{i=1}^r m_i P_i\right| = \frac{n(n+3)}{2} - \sum_{i=1}^r\binom{m_i+1}{2}.
\]
\item For $C \sim \mathcal{O}(n)$ and $\widetilde{C}$ the strict transform of $C$ on $\Bl_{P_1,\dots,P_r}
\mathbb{P}^2$,
\[
\begin{aligned}
p_a(\widetilde{C}) = \binom{n-1}{2} - \sum_{i=1}^r\binom{m_i}{2}, \quad
\widetilde{C}^2 = n^2 - \sum_{i=1}^r m_i^2. 
\end{aligned}
\]
\end{enumerate}
Then lines are given by solutions of
\[
\frac{n(n+3)}{2} - \sum_{i=1}^r\binom{m_i+1}{2} = \binom{n-1}{2} - \sum_{i=1}^r\binom{m_i}{2} = n^2 - \sum_{i=1}^r m_i^2 + 1 = 0,
\]
while conic families are given by solutions to
\[\frac{n(n+3)}{2} - \sum_{i=1}^r\binom{m_i+1}{2} -1  = \binom{n-1}{2} - \sum_{i=1}^r\binom{m_i}{2} = n^2 - \sum_{i=1}^r m_i^2 = 0.
\]
Relevant computations can be found in Derenthal's thesis \cite[Ch.~3]{Der06}.
\end{proof}

\begin{cor} \label{cor:PlaneCounts}
We have the following formulas for $X = \Bl_{d_1,\dots,d_r}S_9$:
\[
\begin{aligned}
L_X & = \mathbf{d}(1) + \mathbf{d}(2) + \mathbf{d}(5) + \mathbf{d}(1;6) + \mathbf{d}(8)(\mathbf{d}(1) + \mathbf{d}(2) + \mathbf{d}(3)), \\
C_X & = \mathbf{d}(1) + \mathbf{d}(4) + \mathbf{d}(1;5) + \mathbf{d}(1;6) + \mathbf{d}(3;4) \\
& + \mathbf{d}(8)(3\mathbf{d}(1) + \mathbf{d}(4) + \mathbf{d}(1;1) + \mathbf{d}(1;2) +  3\mathbf{d}(1;3) +  \mathbf{d}(2;2)).
\end{aligned}
\]
\end{cor}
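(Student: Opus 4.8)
The plan is to obtain both formulas by summing the $k$-rational counts tabulated in Lemma~\ref{lem:lccounts}. The content of that lemma is that every line (respectively conic) on $X_{k_s} = \Bl_{P_1,\dots,P_r}\mathbb{P}^2_{k_s}$ belongs to exactly one of the source classes recorded in Table~\ref{table:9LineOrigins} (respectively Table~\ref{table:9ConicOrigins}), and that the number of $G_k$-invariant --- equivalently $k$-rational, by Remark~\ref{rem:line is line} --- curves in each such class is given by the corresponding $\mathbf{d}(\cdot)$ expression. Since the listed source classes are pairwise distinct divisor classes, they partition the full set of lines (respectively conics), so the total $k$-rational count is the sum of the per-source counts. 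First I would add the entries of the ``$k$-rational count'' column of Table~\ref{table:9LineOrigins}, namely $\mathbf{d}(1) + \mathbf{d}(2) + \mathbf{d}(5) + \mathbf{d}(1;6) + \mathbf{d}(8)\mathbf{d}(3) + \mathbf{d}(8)\mathbf{d}(2) + \mathbf{d}(8)\mathbf{d}(1)$, and collect the terms carrying the factor $\mathbf{d}(8)$ to recover the stated formula for $L_X$. The same procedure applied to Table~\ref{table:9ConicOrigins} yields $C_X$: the multiplicities $3\mathbf{d}(1)$ and $3\mathbf{d}(1;3)$ there arise from the three sources $11, 4^*, 8^*$ and $8, 5^*, 7^*$ respectively, all sharing the $\mathbf{d}(8)$ prefactor.

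The main point requiring care is to confirm that one sums \emph{all} listed sources rather than identifying dual pairs. Although the Geiser and Bertini involutions pair up the sources (the ``Geiser''/``Bertini'' columns), these involutions send a line to a genuinely distinct line lying in the dual class, so both members of each dual pair must be counted. The role of the duality is only in deriving the $\mathbf{d}(\cdot)$ counts for the high-degree sources; these carry the prefactor $\mathbf{d}(8)$ and so are active exactly when $\sum_i d_i = 8$, i.e.\ when $X$ has degree $1$ and the Bertini involution exists. In all higher degrees $\mathbf{d}(8) = 0$ and these terms vanish, consistently with the absence of a Bertini involution.

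Finally, I would verify the formulas against Theorem~\ref{thm:geomcounts} by specialising to $r$ split points, i.e.\ all $d_i = 1$. For $r = 6, 7, 8$ the formula for $L_X$ should return $27, 56, 240$ and the formula for $C_X$ should return $27, 126, 2160$, matching the degree-$3, 2, 1$ entries of Table~\ref{thm:geomcounts}; this simultaneously confirms completeness (no source omitted) and non-redundancy (no curve counted twice). I do not expect any genuine obstacle beyond this bookkeeping, since the underlying geometric enumeration and the translation of $G_k$-invariance into the combinatorial quantities $\mathbf{d}(n)$ and $\mathbf{d}(m;n)$ are already supplied by Lemma~\ref{lem:lccounts}.
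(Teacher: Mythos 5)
Your proposal is correct and matches the paper's intended argument: the corollary is stated without a separate proof precisely because it follows by summing the ``$k$-rational count'' columns of Tables~\ref{table:9LineOrigins} and~\ref{table:9ConicOrigins} from Lemma~\ref{lem:lccounts}, exactly as you do, with the $3\mathbf{d}(1)$ and $3\mathbf{d}(1;3)$ multiplicities arising from the triples of sources you identify. Your additional observations --- that the sources are pairwise distinct divisor classes (so no curve is double-counted across dual pairs) and the specialisation check against Theorem~\ref{thm:geomcounts} for $r=6,7,8$ split points --- are sound bookkeeping consistent with the paper.
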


\begin{lem} \label{lem:QuadricCounts}
Let $S_8/k$ be a degree-$8$ del Pezzo surface of quadric type, and set $\delta = \rank \Pic (S_8) - 1$. Let $X/k$ be the blowup of $S_8$ in points $P_1,\dots,P_r \in S_8(k_s)$ in general position. Let us denote by $[a,b \mid m_1,\dots,m_r]$ the system $|\mathcal{O}(a,b) - \sum_{i=1}^r m_iP_i|$. The origins of the lines on $X_{k_s}$ in $S_8$, together with their $k$-rational counts, are described by Table~\ref{table:8LineOrigins}, and the origins of the conic families are described in Table~\ref{table:8ConicOrigins}. In both cases, Geiser duals and Bertini duals are identified.
\end{lem}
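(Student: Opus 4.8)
The plan is to follow the template of Lemma~\ref{lem:lccounts}, replacing $\mb{P}^2$ by the quadric and carrying along the extra Galois symmetry between the two rulings, which is exactly what $\delta$ records. First I would pass to the separable closure. By Theorem~\ref{thm:ksclass}, $(S_8)_{k_s}\cong\mb{P}^1_{k_s}\times\mb{P}^1_{k_s}$, so writing $F_1,F_2$ for the two rulings we have
\[
\Pic(\overline{X})=\langle F_1,F_2,E_1,\dots,E_r\rangle,\quad F_1^2=F_2^2=0,\ F_1\cdot F_2=1,\ E_i\cdot E_j=-\delta_{ij},\ F_i\cdot E_j=0,
\]
and $-K_X=2F_1+2F_2-\sum_{i=1}^r E_i$. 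For a class $D=aF_1+bF_2-\sum_i m_iE_i$ corresponding to $[a,b\mid m_1,\dots,m_r]$, a direct computation gives $-K_X\cdot D=2a+2b-\sum_i m_i$ and $D^2=2ab-\sum_i m_i^2$, while adjunction yields $p_a(\widetilde D)=(a-1)(b-1)-\sum_i\binom{m_i}{2}$.

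By Definition~\ref{def:lines and conics}, the geometric lines are the classes with $-K_X\cdot D=1$ and $D^2=-1$, and the geometric conics are those with $-K_X\cdot D=2$ and $D^2=0$ (both forcing $p_a=0$). I would then solve the two Diophantine systems
\[
2a+2b-\textstyle\sum_i m_i=1,\ \ 2ab-\sum_i m_i^2=-1 \qquad\text{and}\qquad 2a+2b-\sum_i m_i=2,\ \ 2ab-\sum_i m_i^2=0.
\]
Since the $P_i$ are in general position, $\overline{X}$ is a del Pezzo surface with only finitely many lines and conic classes by Theorem~\ref{thm:geomcounts}; as the $m_i\geq 0$ apart from the exceptional divisors $E_i$ (the case $a=b=0$), a finite search organised by the values of $(a,b)$ recovers them all. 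Each solution class is effective and cut out by the linear system displayed in its row, matching the rows of Tables~\ref{table:8LineOrigins} and~\ref{table:8ConicOrigins}; the relevant dimension and effectivity computations are as in \cite[Ch.~3]{Der06}, and the geometric totals are checked against Theorem~\ref{thm:geomcounts}.

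The $k$-rational counts then follow from Remark~\ref{rem:line is line}: a line is $k$-rational exactly when its class is $G_k$-invariant, and likewise for a conic family. The group $G_k$ permutes the $E_i$ within each closed point in orbits of size $\deg P_i$, which produces the combinatorial factors $\mathbf{d}(\cdot)$ and $\mathbf{d}(\cdot\,;\cdot)$ precisely as in Lemma~\ref{lem:lccounts}; in addition, $G_k$ fixes $F_1$ and $F_2$ individually when $\delta=1$ (split) and interchanges them when $\delta=0$ (non-split). For a source symmetric in the two rulings ($a=b$) the possible swap is immaterial and the count depends only on the degrees of the points through $\mathbf{d}$, whereas for a source with $a\neq b$ invariance can hold only when the rulings are fixed, so such sources contribute only when $\delta=1$ and therefore carry a factor recording this. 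In particular $F_1$ and $F_2$ are themselves conic classes contributing two rational conic families exactly in the split case, in agreement with Theorem~\ref{thm:conic}; combining the $E_i$-bookkeeping with the ruling bookkeeping yields the tabulated counts.

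Finally, in degree $2$ ($r=6$) and degree $1$ ($r=7$) I would record the Geiser and Bertini involutions acting on $\Pic(\overline{X})$ by $D\mapsto -2K_X-D$ and $D\mapsto -3K_X-D$, and verify that they permute the line (respectively conic) classes so as to pair the sources as in the ``Geiser'' and ``Bertini'' columns; being defined over $k$, these involutions preserve $k$-rationality, so dual sources share their counts and only one representative of each pair must be computed. The main obstacle is the completeness and bookkeeping of the enumeration: confirming that every $(-1)$-class and every conic class is accounted for by exactly one tabulated source (up to the recorded duality), and threading the $\delta$-dependence correctly through the asymmetric sources so that the totals reconcile both with the geometric counts of Theorem~\ref{thm:geomcounts} and with the split/non-split dichotomy.
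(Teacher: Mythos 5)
Your proposal is correct and follows essentially the same route as the paper: both mirror the plane case (Lemma~\ref{lem:lccounts}) by writing classes as $aF_1+bF_2-\sum_i m_iE_i$ in the quadric's Picard lattice and enumerating lines and conics by solving a two-equation numerical system, then layering the Galois bookkeeping on top. The only cosmetic difference is that you solve the pair $\left(-K_X\cdot D,\ D^2\right)$ where the paper solves the pair (dimension of the linear system, $\widetilde{C}^2$); by adjunction these systems are equivalent, and your extra detail on the $\mathbf{d}$-factors, the $2\delta$ factor for ruling-asymmetric sources, and the Geiser/Bertini pairing only makes explicit what the paper's terse proof leaves implicit.
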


\begin{longtable}{|l|l|l|l|l|} \caption{Origin of lines on the blowup of $S_8$} \label{table:8LineOrigins}  \\
\hline
Number & System & $k$-rational count & Geiser & Bertini \\
\hline
0 & (Exceptional divisors) & $\mathbf{d}(1)$ & 4 & 8 \\
\hline
1 & $[1,0|1^1]$, $[0,1|1^1]$ & $2\delta\mathbf{d}(1)$ & 3 & 7 \\
\hline
2 & $[1,1|1^3]$ & $\mathbf{d}(3)$ & 2 & 6 \\
\hline
3 & $[2,1|1^5]$, $[1,2|1^5]$ & $2\delta\mathbf{d}(5)$ & 1 & 5 \\
\hline
4 & $[2,2|2^1,1^5]$ & $\mathbf{d}(1;5)$ & 0 & 4 \\
\hline
4* & $[3,1|1^7]$, $[1,3|1^7]$ & $2\delta
\mathbf{d}(7)$ & -- & 4* \\
\hline
5 & $[3,2|2^2,1^5]$, $[2,3|2^2,1^5]$ & $2\delta\mathbf{d}(7)\mathbf{d}(2)$ & -- & 3 \\
\hline
6 & $[3,3|1^3,2^4]$ & $\mathbf{d}(7)\mathbf{d}(3)$ & -- & 2 \\
\hline
7 & $[4,3|2^6,1^1]$, $[3,4|2^6,1^1]$ & $2\delta\mathbf{d}(7)\mathbf{d}(1)$ & -- & 1 \\
\hline
8 & $[4,4|3^1,2^6]$ & $\mathbf{d}(7)\mathbf{d}(1)$ & -- & 0 \\
\hline
\end{longtable}
\pagebreak
\begin{longtable}{|l|l|l|l|l|} \caption{Origin of conics on the blowup of $S_8$} \label{table:8ConicOrigins}  \\
\hline
Number & System & $k$-rational count & Geiser & Bertini \\
\hline
1 & $[1,0]$, $[0,1]$ & $2\delta$ & 7 & 15 \\
\hline
2 & $[1,1|1^2]$ & $\mathbf{d}(2)$ & 6 & 14 \\
\hline
3 & $[2,1|1^4]$, $[1,2|1^4]$ & $2\delta\mathbf{d}(4)$ & 5 & 13 \\
\hline
4 & $[2,2|2^1,1^4]$ & $\mathbf{d}(1;4)$ & 4 & 12 \\
\hline
5 & $[3,2|2^2,1^4]$, $[2,3|2^2,1^4]$ & $2\delta\mathbf{d}(2;4)$ & 3 & 11 \\
\hline
6 & $[3,3|1^2,2^4]$ & $\mathbf{d}(2;4)$ & 2 & 10 \\
\hline
7 & $[4,3|2^6]$, $[3,4|2^6]$ & $2\delta\mathbf{d}(6)$ & 1 & 9 \\
\hline
8 & $[4,4|3^2,2^3,1^2]$ & $\mathbf{d}(7)\mathbf{d}(2;2)$ & -- & 8 \\
\hline
9 & $[5,4|4^1,2^6]$, $[4,5|4^1,2^6]$ & $2\delta\mathbf{d}(7)\mathbf{d}(1)$ & -- & 7 \\
\hline
10 & $[5,5|4^1,3^2,2^4]$ & $\mathbf{d}(7)\mathbf{d}(1;2)$ & -- & 6 \\
\hline
11 & $[6,5|4^1,3^4,2^2]$, $[5,6|4^1,3^4.2^2]$ & $2\delta\mathbf{d}(7)\mathbf{d}(2;4)$ & -- & 5 \\
\hline
12 & $[6,6|4^1,3^4,2^1]$ & $\mathbf{d}(7)\mathbf{d}(1;2)$ & -- & 4 \\
\hline
13 & $[2,1|1^4]$, $[1,2|1^4]$ & $2\delta\mathbf{d}(4)$ & -- & 3 \\
\hline
14 & $[7,7|4^5,3^2]$ & $\mathbf{d}(7)\mathbf{d}(2)$ & -- & 2 \\
\hline
15 & $[8,7|4^7]$, $[7,8|4^7]$ & $2\delta\mathbf{d}(7)$ & -- & 1 \\
\hline
6* & $[3,3|3^1,2^1,1^5]$ & $\mathbf{d}(7)\mathbf{d}(1;1)$ & -- & 10* \\
\hline
6** & $[4,2|2^3,1^4]$, $[2,4|2^3,1^4]$ & $2\delta\mathbf{d}(7)\mathbf{d}(3)$ & -- & 10** \\
\hline
7* & $[4,3|3^1,2^3,1^3]$, $[3,4|3^1,2^3,1^3]$ & $2\delta\mathbf{d}(7)\mathbf{d}(1;3)$ & -- & 9* \\
\hline
8* & $[5,3|3^1,2^5,1^1]$, $[3,5|3^1,2^5,1^1]$ & $\mathbf{d}(7)\mathbf{d}(1;1)$ & -- & 8* \\
\hline
9* & $[5,4|3^3,2^3,1^1]$, $[4,5|3^3,2^3,1^1]$ & $2\delta\mathbf{d}(7)\mathbf{d}(1;3)$ & -- & 9* \\
\hline
10* & $[5,5|3^5,2^1,1^1]$ & $\mathbf{d}(7)\mathbf{d}(1;1)$ & -- & 6* \\
\hline
10** & $[6,4|3^4,2^3]$, $[4,6|3^4,2^3]$ & $2\delta\mathbf{d}(7)\mathbf{d}(3)$ & -- & 6** \\
\hline
\end{longtable}

\begin{proof}
We replicate our proof when blowing up from the plane. For a quadric $Q$ over a separably closed ground field, we have $\Pic(Q) = \Pic(\mathbb{P}^1 \times \mathbb{P}^1) = \langle \mathcal{O}(1,0), \mathcal{O}(0,1) \rangle \cong \mathbb{Z} \times \mathbb{Z}$. We have the following formulas:
\[
\begin{aligned}
D & := \dim\left|\mathcal{O}(a,b) - \sum_{i=1}^r m_i P_i\right| = ab + a + b - \sum_{i=1}^r \binom{m_i + 1}{2}, \\
I & :=\widetilde{C}^2 = 2ab - \sum_{i=1}^r m_i^2,
\end{aligned}
\]
where $\widetilde{C}$ is the strict transform of a curve in $|\mathcal{O}(a,b) - \sum_{i=1}^r m_i P_i|$.
Solving for $D = 0$ and $I = -1$ gives the lines, while solving for $D = 1$ and $I = 0$ gives the conics.
\end{proof}

\begin{cor}
We have the following formulas for $X = \Bl_{d_1,\dots,d_r}S_8$:
\[
\begin{aligned}
L_X & = \mathbf{d}(1) + \mathbf{d}(3) + \mathbf{d}(1;5) + 2\delta(\mathbf{d}(1) + \mathbf{d}(5)) \\
    & + \mathbf{d}(7)(\mathbf{d}(1) + \mathbf{d}(3) + 2\delta(1 + \mathbf{d}(1) + \mathbf{d}(2))), \\
C_X & = \mathbf{d}(2) + \mathbf{d}(1;4) + \mathbf{d}(2;4) + 2\delta
    (1 + \mathbf{d}(4) + 2\mathbf{d}(6) + \mathbf{d}(2;4)) \\
    & + \mathbf{d}(7)
    (\mathbf{d}(2) + 2\mathbf{d}(1;1)+ 2\mathbf{d}(1;2) + \mathbf{d}(2;2))\\
    & + 2\delta\mathbf{d}(7)
    (1 + 2\mathbf{d}(1) + 3\mathbf{d}(3) + \mathbf{d}(1;1) + \mathbf{d}(1;2) + 2\mathbf{d}(1;3)).
\end{aligned}
\]
\end{cor}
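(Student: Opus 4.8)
The plan is to derive this corollary directly from Lemma~\ref{lem:QuadricCounts}: since Tables~\ref{table:8LineOrigins} and~\ref{table:8ConicOrigins} enumerate \emph{all} origins of lines and conic families on $X_{k_s}$ together with the number of $k$-rational curves each origin contributes, the totals $L_X$ and $C_X$ are obtained simply by summing the ``$k$-rational count'' columns and collecting like terms. Thus no new geometric input is needed; the content is entirely combinatorial.

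First I would record that each geometric line (resp.\ conic family) corresponds to exactly one row of the relevant table. This is where the clause ``Geiser duals and Bertini duals are identified'' in Lemma~\ref{lem:QuadricCounts} is used: the Geiser involution (in degree $2$) and the Bertini involution (in degree $1$) permute the classes of lines and conics, and the tables are organised so that a class and its dual are recorded once, in a single row, rather than split across two rows. Granting this, summing the count column tallies every $k$-rational class exactly once, and the corollary reduces to an identity between the sum of the table entries and the displayed closed form.

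Next I would carry out the summation by grouping the summands according to their common factors. The rows fall into four families: those with no extra factor, contributing the ``base'' terms $\mathbf{d}(1)+\mathbf{d}(3)+\mathbf{d}(1;5)$ and $\mathbf{d}(2)+\mathbf{d}(1;4)+\mathbf{d}(2;4)$; those carrying a factor $2\delta$, which accounts for the doubling of the paired systems $[a,b\mid\cdots]$ and $[b,a\mid\cdots]$ present precisely when $S_8$ is split; those carrying a factor $\mathbf{d}(7)$, which vanishes unless the total blown-up degree $\sum_i d_i$ equals $7$, i.e.\ unless $X$ has degree $1$, thereby activating exactly the ``extra'' degree-$1$ Bertini-dual origins; and those carrying the combined factor $2\delta\,\mathbf{d}(7)$. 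Factoring $\mathbf{d}(7)$ out of the last two families yields the grouped form in the statement. For the lines this is transparent: rows $0,2,4$ give the base part, rows $1,3$ give $2\delta(\mathbf{d}(1)+\mathbf{d}(5))$, rows $6,8$ give $\mathbf{d}(7)(\mathbf{d}(1)+\mathbf{d}(3))$, and rows $4^\ast,5,7$ give $2\delta\,\mathbf{d}(7)(1+\mathbf{d}(1)+\mathbf{d}(2))$, which reassemble into the displayed $L_X$.

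The hard part will be the conic computation rather than the line computation: Table~\ref{table:8ConicOrigins} has far more rows (including several starred Bertini-dual systems), and it is easy to double-count a Geiser/Bertini pair or to mis-assign a $2\delta$ or $\mathbf{d}(7)$ factor. To guard against this I would run the standard sanity check of specialising to a split $S_8$ with all blown-up points $k$-rational and verifying that the formulas reproduce the geometric counts of Theorem~\ref{thm:geomcounts}: namely $56$ lines and $126$ conic families when $\sum_i d_i = 6$ (degree $2$, so $\mathbf{d}(7)=0$ and $\delta=1$), and $240$ lines and $2160$ conic families when $\sum_i d_i = 7$ (degree $1$, so $\mathbf{d}(7)=1$). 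Agreement with these totals confirms simultaneously that each class is counted exactly once and that the factor bookkeeping is correct, which completes the proof.
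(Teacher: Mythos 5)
Your strategy --- sum the ``$k$-rational count'' column of Tables~\ref{table:8LineOrigins} and~\ref{table:8ConicOrigins} and regroup by the factors $2\delta$, $\mathbf{d}(7)$ and $2\delta\,\mathbf{d}(7)$ --- is exactly the paper's implicit proof: the corollary is stated immediately after Lemma~\ref{lem:QuadricCounts} with no further argument, and for the line count your row-by-row bookkeeping (rows $0,2,4$; rows $1,3$; rows $6,8$; rows $4^{*},5,7$) is correct and reproduces $L_X$ verbatim.

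The conic half, however, contains a genuine gap, located precisely where you deferred to a sanity check. The printed Table~\ref{table:8ConicOrigins} does \emph{not} sum to the stated $C_X$: collecting its rows gives
\[
\begin{aligned}
&\mathbf{d}(2)+\mathbf{d}(1;4)+\mathbf{d}(2;4)
+2\delta\bigl(1+2\mathbf{d}(4)+\mathbf{d}(6)+\mathbf{d}(2;4)\bigr)
+\mathbf{d}(7)\bigl(\mathbf{d}(2)+3\mathbf{d}(1;1)+2\mathbf{d}(1;2)+\mathbf{d}(2;2)\bigr)\\
&\quad+2\delta\,\mathbf{d}(7)\bigl(1+\mathbf{d}(1)+2\mathbf{d}(3)+2\mathbf{d}(1;3)+\mathbf{d}(2;4)\bigr),
\end{aligned}
\]
which differs from the corollary in every group except the base. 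Your own specialisation test then \emph{fails} rather than confirms: for a split $S_8$ blown up in six rational points the printed table yields $154$, not $126$, and for seven rational points it yields $2090$, not $2160$. The source of the discrepancy is that the printed table is erroneous: row 13 is a verbatim duplicate of row 3 (it should be the Bertini-dual family $[7,6\mid 4^3,3^4]$, $[6,7\mid 4^3,3^4]$ with count $2\delta\,\mathbf{d}(7)\mathbf{d}(3)$); the Geiser-self-dual family $[3,1\mid 1^6]$, $[1,3\mid 1^6]$ (count $2\delta\,\mathbf{d}(6)$) and the family $[7,5\mid 4,3^6]$, $[5,7\mid 4,3^6]$ (count $2\delta\,\mathbf{d}(7)\mathbf{d}(1)$) are missing entirely; and row $8^{*}$, which lists both orientations, lacks its factor $2\delta$. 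Consequently the purely combinatorial reduction you propose cannot terminate in the stated formula: to prove the corollary one must re-run the geometric enumeration underlying the lemma (solve $D=1$, $I=0$ over all bidegrees and multiplicity patterns, organised by the Geiser and Bertini involutions $C\mapsto 2(-K)-C$ and $C\mapsto 4(-K)-C$), which recovers the corrected list of families; summing \emph{that} list gives exactly the displayed $C_X$ and passes both checks. In short, your opening claim that ``no new geometric input is needed'' is the step that breaks down.

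One smaller inaccuracy: ``Geiser duals and Bertini duals are identified'' does not mean that a class and its dual are merged into a single row. Dual families occupy distinct rows, and the Geiser/Bertini columns cross-reference them (e.g.\ rows 0 and 4 of Table~\ref{table:8LineOrigins} are Geiser duals of one another). This misreading happens to be harmless for your arithmetic, since each row still carries exactly one family, but the correct reason summing once per row is valid is that the rows partition the set of geometric families.
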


\subsection{Coincidence of blowups} \label{sec:coincidences}
In this section, we note the following isomorphisms of blowups between blowups of relatively minimal del Pezzo surfaces.

\begin{prop}\label{prop: coincidence result}
We have the following coincidences of blowups:
\begin{enumerate}
    \item $\Bl_{2n-1} S_{8,\II} \cong \Bl_{2n-1,1}S_9$ for $n \in \{1,2,3,4\}$.
    \item $\Bl_1 S_{8,I} \cong \Bl_2 S_9$.
    \item $\Bl_1 S_6 \cong \Bl_3 S_{8,I}$.
    \item $\Bl_1 S_5 \cong \Bl_5 S_9$.
    \item $\Bl_2 S_5 \cong \Bl_5 S_{8,I}$.
\end{enumerate}
\end{prop}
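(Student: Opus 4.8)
The plan is to establish each coincidence by producing explicit blowdown maps on both sides and identifying the resulting surfaces, using the fact that blowing down a $G_k$-orbit of pairwise skew exceptional curves on a del Pezzo surface yields another del Pezzo surface. The key principle throughout is that a surface $X$ admits a morphism to a relatively minimal surface $S$ realising $X$ as $\Bl_{\mathbf{d}}S$ precisely when $X$ contains an appropriate configuration of skew lines defined over $k$ (as Galois orbits), and that the same surface may admit several such configurations leading to different relatively minimal models. Each coincidence amounts to exhibiting two distinct such configurations on a common surface.

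First I would handle the base cases (2) and (3), which are the plane--quadric dictionary. For (2), recall the classical isomorphism $\Bl_P(\mathbb{P}^1\times\mathbb{P}^1)\cong\Bl_{P_1,P_2}\mathbb{P}^2$ obtained by blowing up a rational point on the quadric and observing that the two rulings through $P$ give two skew lines whose blowdown is $\mathbb{P}^2$: here $S_{8,I}$ is the quadric of Picard rank $1$ over $k$ but becomes $\mathbb{P}^1\times\mathbb{P}^1$ after base change, and the degree-$2$ point on $S_9=\mathbb{P}^2$ arises as the Galois-conjugate pair $\{P_1,P_2\}$ (the two rulings being swapped by $G_k$ exactly when the quadric is non-split). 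For (3), I would use that a degree-$6$ del Pezzo surface $S_6$ is $\Bl_{P_1,P_2,P_3}\mathbb{P}^2$ geometrically, with the relevant Galois action making it relatively minimal; blowing up a rational point and then blowing down a suitable skew triple recovers a quadric blown up in a degree-$3$ point. The cleanest route is to verify these on the level of the geometric Picard lattice with its $G_k$-action, checking that the two blowdown structures are compatible.

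Next I would treat (1), (4) and (5) by the same mechanism but tracking Galois orbits carefully. For (1), on $\Bl_{2n-1}S_{8,\mathrm{II}}$ (blowup of the \emph{split} quadric $\mathbb{P}^1\times\mathbb{P}^1$ in a separable point of odd degree $2n-1$) one of the two rulings supplies a rational line; blowing it down sends the split quadric to $\mathbb{P}^2$ and converts the degree-$(2n-1)$ point together with the ruling data into a degree-$(2n-1)$ point plus one additional rational blown-up point on $\mathbb{P}^2$, giving $\Bl_{2n-1,1}S_9$; the parity constraint $n\in\{1,2,3,4\}$ is exactly the bound $2n-1\leq 7=d-1$ coming from Definition~\ref{def:general position del Pezzo}. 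For (4), since a relatively minimal $S_5$ is constructed (as in the proof of Theorem~\ref{thm:Minimal5and6}) by blowing up a degree-$5$ point on a conic in $\mathbb{P}^2$ and blowing down, I would reverse-engineer: blowing up a rational point on $S_5$ and then blowing down the strict transform of the conic exhibits the surface as $\Bl_5 S_9$. Case (5) combines (4) with the plane--quadric swap of (2): starting from $\Bl_2 S_5$, the composite of the two elementary transformations converts the two blown-up points and the degree-$5$ model into a blowup of $S_{8,I}$ at a degree-$5$ point.

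The main obstacle I expect is the careful bookkeeping of \emph{Galois orbit structure} rather than any single geometric step: each blowdown must be of a $G_k$-stable set of pairwise skew exceptional curves, so I must verify at each stage that the configuration I wish to contract is genuinely defined over $k$ (i.e.\ Galois-invariant as a set) and that the points being blown up land in general position in the sense of Definition~\ref{def:general position del Pezzo}. In particular the parity and degree constraints in (1) and the non-split versus split distinction between $S_{8,I}$ and $S_{8,\mathrm{II}}$ must be matched precisely against the induced $G_k$-action on the exceptional divisors. A useful consistency check, which I would invoke to confirm each claimed coincidence and rule out sign errors, is that the intersection invariant $I_X$ of Definition~\ref{def:PicDet} must agree on the two sides: by the blowup formula $I_{\Bl_{d_1,\dots,d_n}X}=(-1)^nI_X\prod_i d_i$, one computes, for instance in (1), that $(-1)I_{S_{8,\mathrm{II}}}(2n-1)=(-1)(-1)(2n-1)=2n-1$ while $(-1)^2 I_{S_9}(2n-1)(1)=2n-1$, and similarly in the remaining cases, providing a numerical certificate that accompanies each explicit geometric identification.
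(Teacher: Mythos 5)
Your overall strategy --- exhibiting two Galois-stable configurations of pairwise skew exceptional curves on a common surface and contracting each, with the classical plane--quadric dictionary as the engine --- is the paper's strategy, and your treatments of (2), (3) and (4) are in substance the paper's own (at roughly the paper's level of sketchiness). However, there is a genuine gap in (1) for $n \geq 2$. Your mechanism, ``one of the two rulings supplies a rational line; blowing it down sends the split quadric to $\mathbb{P}^2$,'' does not work: the strict transforms of the fibres of a ruling through the $2n-1$ conjugate points form a single Galois orbit of size $2n-1$, not a rational line, and contracting any one rational line on the degree-$(9-2n)$ surface $\Bl_{2n-1}S_{8,\II}$ yields a surface of degree $10-2n$, which is $\mathbb{P}^2$ only when $n=1$. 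To realise the isomorphism from your side you must contract $2n$ skew lines at once, e.g.\ the orbit of fibre transforms of one ruling together with the strict transform of the unique curve of bidegree $(1,n-1)$ through the blown-up point --- a rational line precisely because $\dim|\mathcal{O}(1,n-1)| = 2n-1$ is odd. This is where oddness genuinely enters (indeed $\Bl_2 S_{8,\II} \not\cong \Bl_{2,1}S_9$, as their conic counts $3$ and $1$ differ), so your claim that the constraint is ``exactly the bound $2n-1 \leq 7$'' conflates the range of $n$ with the parity hypothesis. The paper instead works from the $\Bl_{2n-1,1}S_9$ side, contracting the Galois orbit of strict transforms of the $2n-1$ lines joining the rational point to the conjugates of the odd-degree point, with the converse read off from the line-origin table of Lemma~\ref{lem:QuadricCounts}.

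There is a second gap in (5): ``the composite of the two elementary transformations'' is not an argument. Cases (2) and (4) each require a rational centre in an appropriate position, and since an isomorphism of the degree-$3$ surfaces themselves must be produced, there is no formal way to compose them without exhibiting the curves being contracted. The key step, which the paper supplies and your sketch omits, is that on $\Bl_5 S_{8,I}$ the strict transforms of the \emph{unique} curves of classes $\mathcal{O}(2,1)$ and $\mathcal{O}(1,2)$ through the quintic point are skew lines (intersection $(2,1)\cdot(1,2) - 5 = 0$) forming a Galois-conjugate pair --- conjugate exactly because $G_k$ swaps the rulings of the non-split quadric --- and contracting this pair yields $S_5$ with a general-position quadratic point. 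Finally, your intersection-invariant check is a sound consistency test but cannot substitute for these identifications: $I_X$ does not even separate $S_{8,\II}$ from $\Bl_1 S_9$ (both have $I_X = -1$; one must use, say, parity of the intersection form to see that the contraction in (1) is the quadric), so each coincidence really does require naming the contracted curves as above.
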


\begin{proof}
We deal with each case in turn.
\begin{enumerate}
    \item
    Blowing up the projective plane in a rational point $P$ and a closed point $Q$ of degree $2n-1$ with underlying geometric points $Q_1,\dots,Q_{2n-1}$ such that the set of points $\{P,Q_1,\dots,Q_{2n-1}\}$ is in general position, note that the strict transforms of the lines joining $P$ and each $Q_i$ form an orbit of $2n-1$ pairwise skew lines; contracting these, one obtains $S_{8,\II}$. The converse direction follows from the description of the origins of lines given in Lemma~\ref{lem:QuadricCounts} readily imply the converse.
    \item
    The proof is essentially the same as the previous case: blowing up a conjugate pair of points in the plane, the strict transform of the line between them is an exceptional curve which may be blown down; the two exceptional divisors of this blowup map down to the (conjugate) fibres of the geometric rulings through the point contracted onto.
    \item
    Blowing up $S_{8,I}$ in a general-position cubic point, the strict transform of the unique plane section containing this point is an exceptional curve which one may contract to obtain $S_6$; the six lines on the blowdown correspond to the three pairs of conic fibres through the blowup points, and so the converse and relative minimality are easily seen.
    \item
    As discussed in the proof of Theorem~\ref{thm:Minimal5and6}, blowing up the plane in a general-position quintic point, the strict transform of the unique conic containing this point is a line; contracting it, we obtain $S_5$.
    \item
    Suppose that we have a general-position quintic point on $S_{8,I}$. It follows from Lemma~\ref{lem:QuadricCounts} that the strict transforms of the unique curves with classes $\mathcal{O}(2,1)$ and $\mathcal{O}(1,2)$ through the blowup locus on the base-change to $k_s$ form a conjugate pair of lines; blowing them down, we obtain $S_5$ with a general-position quadratic point, and the converse is easily seen. \qedhere
\end{enumerate}
\end{proof}

We also have a result of Manin and Iskovskikh, which implies that there are no blowup coincidences involving relatively minimal del Pezzo surfaces of degree at most $4$.

\begin{prop} \label{prop:ManinIskovskikh} Let $k$ be a field. Consider relatively minimal del Pezzo surfaces $X/k$ of degree $1,2,3$, and also $4$ under the condition that $X(k)$ is empty. If two such surfaces of the same degree are birational, then they are isomorphic.
\end{prop}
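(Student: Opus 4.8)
The plan is to deduce the statement from the \emph{birational rigidity} of these surfaces, which is the substance of the work of Manin \cite{Man86} and Iskovskikh \cite{Isk79} on birational transformations of minimal rational surfaces. Concretely, I would show that each surface $X$ under consideration is rigid, in the sense that any birational map to a minimal surface of the same type is an isomorphism; the proposition is then immediate. First I would record, via Theorem~\ref{thm:isk}, that a relatively minimal del Pezzo surface of degree $d \le 4$ has Picard rank $1$ (so $\Pic(X) = \langle -K_X\rangle$ and $X$ is minimal) or Picard rank $2$ (a minimal conic bundle). The hypotheses are chosen precisely to place us in the non-rational, rigid regime: for $d = 4$ the condition $X(k) = \emptyset$ forces $X$ to be non-rational, while for $d \le 3$ non-rationality of a relatively minimal surface is automatic.

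The engine of the argument is the Noether--Fano--Iskovskikh inequality. Given a birational map $\varphi \colon X \dashrightarrow Y$ that is not an isomorphism, I would resolve it through a common smooth model, pull a very ample linear system $\mathcal{M}_Y$ on $Y$ back to a $G_k$-stable mobile system $\mathcal{M}_X$ on $X$, and write $\mathcal{M}_X \equiv -n K_X$ for some $n > 0$. The inequality then produces a base point of $\mathcal{M}_X$ (possibly infinitely near, and defined over $k_s$ as a Galois orbit) whose multiplicity exceeds $n$: a \emph{maximal singularity}. The remaining task is to show that the low anticanonical degree $K_X^2 = d \le 4$ admits no such maximal singularity, contradicting the assumption on $\varphi$ and forcing it to be an isomorphism.

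For the Picard-rank-$1$ surfaces this follows Manin's analysis of cubic surfaces almost verbatim: extracting a maximal singularity and performing the associated elementary link would yield either a surface of strictly larger anticanonical degree or a Galois-stable collection of points in sufficiently general position, neither of which can occur under $d \le 4$ and (for $d = 4$) $X(k) = \emptyset$. For the conic-bundle types occurring in degrees $1, 2, 4$, the candidate non-isomorphic links are instead fibrewise elementary transformations, which require blowing up a closed point lying on a fibre; here I would invoke Iskovskikh's rigidity criterion for conic bundles, showing that the discriminant data of these del Pezzo fibrations---together with the absence of the requisite low-degree closed points when $X(k) = \emptyset$---prevents any such transformation from connecting two non-isomorphic minimal models. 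Although the classical statements are usually phrased over perfect fields, one drops perfectness exactly as in Lemma~\ref{lem:RelMinCubic}, using Coombes' theorem \cite{Coo88} that del Pezzo surfaces are separably split.

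I expect the conic-bundle case, and the degree-$4$ threshold in particular, to be the main obstacle. Degree $4$ is precisely the borderline at which rationality can intervene: a degree-$4$ del Pezzo surface carrying a rational point may be rational and hence fail to be rigid, so the hypothesis $X(k) = \emptyset$ is doing essential work in both excluding the rational base points that would feed a maximal singularity and guaranteeing non-rationality. Verifying that the bounded discriminant of a degree-$d$ del Pezzo conic bundle with $d \le 4$ leaves no room for a nontrivial fibrewise link is the delicate part, for which I would lean on the detailed case analyses in \cite{Man86} and \cite{Isk79}.
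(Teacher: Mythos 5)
Your top-level route is the same as the paper's: the proof there is a two-line citation of the classical rigidity theorems --- \cite[Thm.~5.8]{Man67} for degrees $1,2,3$ and \cite[Thm.~3.3]{Isk70} for degree-$4$ surfaces with $X(k) = \emptyset$ (see also the introduction of \cite{Sko86}) --- with perfectness removed via Coombes \cite{Coo88}, which is exactly the reduction you close with. Your sketch of the machinery behind those citations (Noether--Fano, maximal singularities, elementary links, the separate treatment of the Picard-rank-$2$ conic-bundle types in degrees $1,2,4$) is the correct mechanism, and deferring the case analysis to Manin and Iskovskikh is legitimate.

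However, one step as you state it would fail. You propose to show that ``the low anticanonical degree $K_X^2 = d \le 4$ admits no such maximal singularity,'' so that every birational map between such surfaces is forced to be an isomorphism. That is superrigidity, and it is false for these surfaces: they are birationally \emph{rigid} but not \emph{superrigid}. A rank-one degree-$2$ del Pezzo surface with a rational point in general position carries a non-regular birational involution (blow up the point, apply the Bertini involution of the resulting degree-$1$ surface, blow down), and minimal cubic surfaces with rational or quadratic points likewise carry Geiser- and Bertini-type involutions; each of these maps is fed by a genuine maximal singularity. For the same reason, your assertion that a ``Galois-stable collection of points in sufficiently general position'' cannot occur under $d \le 4$ is wrong --- such orbits exist in abundance whenever $X$ has low-degree closed points, and only the degree-$4$ hypothesis $X(k) = \emptyset$ restricts them. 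What the classical argument actually does is \emph{untwist}: every elementary link centered at such an orbit is a self-link returning to a surface isomorphic to $X$, so the minimal model in the birational class is unique up to isomorphism even though $\mathrm{Bir}(X) \neq \Aut(X)$. With ``no maximal singularities'' replaced by this untwisting step, your outline coincides with the proofs of the results the paper cites.
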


\begin{proof}
If the degree is $1,2$ or $3$, this follows from \cite[Thm.~5.8]{Man67}. The remaining case follows from \cite[Thm.~3.3]{Isk70}). See also the introduction of \cite{Sko86}. Perfectness in all cases is unnecessary due to \cite{Coo88}.
\end{proof}

\subsection{Algorithm 1}
We now detail an algorithm for classifying del Pezzo surfaces up to blowup type.

\begin{algorithm}[H] 
\label{alg:class}
\caption{\bf Classifying del Pezzo surfaces up to blowup type}

\KwIn{A degree $d \in \{1,\dots,9\}$}
\KwOut{Tables $T$ in Appendix~\ref{sec:GenFieldTables}}

\begin{enumerate}
\item Create an empty list $C$, an empty multiset $S$ and an empty table $T$ with columns labelled ``Type'', ``$(L_X,C_X)$'',``$I_X$'',``$G_X$'' and ``$\#C$''.
\item Add to $C$ each conjugacy class of $G_d$ with a representative.
\item For each blowup type of a del Pezzo surface of degree $d$, add to $S$ the triple consisting of the blowup type and its intersection invariant and Picard rank.
\item Sort the multiset $S$ into sub-multisets consisting of types $S'$ with equal intersection invariant and Picard rank.
\item For each sub-multiset $S'$ in $S$:
    \begin{enumerate}
    \item If $S'$ has size $1$, then remove $S'$ from $S$ and add to $T$ a corresponding row for the blowup type.
    \item Otherwise, pass to the next sub-multiset.
    \end{enumerate}
\item For each remaining sub-multiset $S'$ in $S$:
    \begin{enumerate}
        \item If all elements in $S'$ are blow-ups of a del Pezzo surface of degree $8$ or $9$, compute line and conic counts; use these to distinguish and use the coincidence result (Proposition~\ref{prop: coincidence result}) to check for isomorphisms. Add corresponding rows to $T$ with line and conic counts and remove the sub-multiset.
        \item Otherwise, distinguish by comparing possible conic counts. Remove $S'$ and add corresponding rows to $T$.
    \end{enumerate}
    \item For each class $c$ in $C$, compute the intersection invariant and Picard rank.
        \begin{enumerate}
            \item If this pair corresponds to only one row, add $1$ to that row's $\#C$ entry and add teh representative to the $G_X$ column. If that row does not have line and conic counts, compute them via Galois action.
            \item If this pair corresponds to multiple rows, compute the line and conic count to determine the correct row, and add the line and conic counts,  and $1$ to the $\#C$ entry, in that row and the representative to the $G_X$ entry.
        \end{enumerate}
\end{enumerate}
\end{algorithm}

\begin{rem}
We make some observations on Algorithm~\ref{alg:class}.
\begin{enumerate}
    \item Note that Algorithm~\ref{alg:class} can easily be adjusted to store all subgroup conjugacy classes for each blowup type, not just the number of them; however, this necessitates notation for these classes and is infeasible to represent for low degree, when the number of conjugacy classes is incredibly large.
    \item That the above algorithm runs successfully, in particular at Step (6)(b), is not obvious, but it amounts to distinguishing the two types $X = \Bl_{6,2}S_9$ and $Y = \Bl_{3}S_{4,\II}$: by Corollary~\ref{cor:PlaneCounts}, we have $C_X = 0$, while $C_Y > 0$ as $Y$ is the blowup of a surface with a rational conic family.
    \item We deduce that line and conic count is determined by blowup type. This can be seen more directly via an adaptation of the arguments of Section~\ref{sec:CountingLinesAndConics}.
\end{enumerate}
\end{rem}

\section{Inverse problems}\label{sec:inverseproblems}

We now discuss the inverse problems. Recall the following Galois representation.

\begin{defn} \label{def:galrep}
For $X$ a smooth projective surface over a field $k$, we denote by $\rho_X: G_k \rightarrow \Aut(\Pic(\overline{X}))$ the associated Galois representation on the geometric Picard group. The image $H_X := \rho_X(G_k)$ of $\rho_X$ is called the \emph{splitting group} of $X$.
\end{defn}

For $X/k$ a del Pezzo surface, $H_X$ is contained in the following subgroup $G_d$.

\begin{lem} \label{lem:Gd}
For $1 \leq d \leq 9$ and $X/k$ a degree-$d$ del Pezzo surface, let $G_d \leq \Aut(\Pic(\overline{X}))$ denote the subgroup of automorphisms fixing the canonical class $K_{\overline{X}}$ and preserving intersection products. We then have
\[
G_d \cong
\begin{cases}
0, & d=9, \\
C_2, & d=8, \\
W(R_{9-d}), & d \leq 7,
\end{cases}
\]
where $W(R_{9-d})$ is the Weyl group of a root system $R_{9-d}$ as in Table~\ref{table:root systems}.
\end{lem}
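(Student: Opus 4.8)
The plan is to identify $G_d$ with a classical group attached to the orthogonal lattice $\Pic(\overline{X})$, so that the isomorphism type can be read off degree by degree. The key observation is that, for a del Pezzo surface of degree $d$, the geometric Picard group carries the intersection form and a distinguished element $K_{\overline X}$, and that $G_d$ is by definition the group of lattice automorphisms fixing $K_{\overline X}$ and preserving the form. I would first dispose of the high-degree cases directly: for $d=9$ we have $\Pic(\overline X)\cong\Z$ generated by a class of self-intersection $1$, so the only isometry is the identity and $G_9=0$; for $d=8$ (quadric type) $\Pic(\overline X)\cong\Z^2$ with the hyperbolic form, and the isometries fixing $K_{\overline X}$ are generated by the single swap of the two rulings, giving $C_2$. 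These are small enough to check by hand.

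For $d\le 7$ I would pass to the orthogonal complement $K_{\overline X}^\perp$ inside $\Pic(\overline X)$. The essential input is Theorem~\ref{thm:ksclass}: over $k_s$ the surface is (for $d\le 7$) a blowup of $\PP^2$ in $9-d$ points, so $\Pic(\overline X)$ has the standard basis $\{\ell,e_1,\dots,e_{9-d}\}$ with $\ell^2=1$, $e_i^2=-1$, $\ell\cdot e_i=0$, and $K_{\overline X}=-3\ell+\sum e_i$. The classical computation (going back to Manin) shows that the primitive sublattice $K_{\overline X}^\perp$, equipped with the negative of the intersection form, is isomorphic to the root lattice $R_{9-d}$ recorded in Table~\ref{table:root systems} (the $A$--$D$--$E$ series $A_1\times\varnothing$, $A_2\times A_1$, $A_4$, $D_5$, $E_6$, $E_7$, $E_8$ for $d=7,\dots,1$). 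I would cite this root-lattice identification as the standard structural fact and then argue that any isometry of $\Pic(\overline X)$ fixing $K_{\overline X}$ restricts to an isometry of $K_{\overline X}^\perp\cong R_{9-d}$, and conversely every isometry of the root lattice extends uniquely (since $\Pic(\overline X)=\Z K_{\overline X}\oplus K_{\overline X}^\perp$ up to finite index, and $K_{\overline X}$ is fixed). This gives a group isomorphism $G_d\cong\mathrm{O}(R_{9-d})$.

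The final step is to identify $\mathrm{O}(R_{9-d})$ with the Weyl group $W(R_{9-d})$. Here the plan is to invoke the general theory of root lattices: for each of the relevant irreducible (or product) root systems, the full isometry group of the root lattice equals $W(R)\rtimes \Aut(\text{Dynkin diagram})$, and in our cases the relevant group is exactly $W(R_{9-d})$ because either the Dynkin diagram has no nontrivial automorphisms ($E_7$, $E_8$, and $A_1$) or the diagram automorphism is already realised inside the Weyl group, or — where it is not, as for $A_2$, $A_4$, $D_5$, $E_6$ — one checks that the geometric constraint of fixing $K_{\overline X}$ together with the effective-cone structure rules out the extra outer automorphism. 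I expect the main obstacle to be precisely this last point: pinning down that $G_d$ is the Weyl group and not a larger group containing diagram automorphisms. The cleanest route is to note that $G_d$ is generated by the reflections in classes of lines (the $(-1)$-curves), which are exactly the roots of $R_{9-d}$, so $G_d$ contains $W(R_{9-d})$; combined with the upper bound $G_d\hookrightarrow \mathrm{O}(R_{9-d})$ and the coincidence $\mathrm{O}(R_{9-d})=W(R_{9-d})$ for the root systems in play (verifiable case by case, e.g.\ via the known orders of these Weyl groups), one obtains the claimed equality.
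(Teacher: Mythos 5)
Your treatment of $d=9$ and $d=8$ is fine, and identifying the root system $R_{9-d}$ inside $K_{\overline{X}}^\perp$ is the right starting point, but the passage from the root lattice to the Weyl group — which is the entire content of the lemma for $d\le 7$ — contains a genuine gap, in fact two incompatible claims. First, the asserted isomorphism $G_d\cong\mathrm{O}(R_{9-d})$ is false: $\Z K_{\overline{X}}\oplus K_{\overline{X}}^\perp$ sits inside the unimodular lattice $\Pic(\overline{X})$ with index $d$ (both summands have discriminant $\pm d$), so an isometry of $K_{\overline{X}}^\perp$, extended by the identity on $\Z K_{\overline{X}}$, preserves only the rational span; it preserves the integral lattice $\Pic(\overline{X})$ exactly when it acts compatibly on the glue group $\Pic(\overline{X})/(\Z K_{\overline{X}}\oplus K_{\overline{X}}^\perp)$. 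Restriction therefore gives only an injection $G_d\hookrightarrow\mathrm{O}(K_{\overline{X}}^\perp)$, not an isomorphism. Second, the coincidence $\mathrm{O}(R_{9-d})=W(R_{9-d})$ you invoke as the "cleanest route" fails in precisely the cases that matter: $\mathrm{O}(E_6)=W(E_6)\times\{\pm\mathrm{id}\}$, $\mathrm{O}(A_4)=\Sym_5\times C_2$, $\mathrm{O}(D_5)$ contains all signed permutations (twice the order of $W(D_5)$), and $\mathrm{O}(A_2\oplus A_1)=\Dih_6\times C_2$. So for $d\in\{3,4,5,6\}$ your first claim, if true, would actually disprove the lemma. (For $d=7$ there is the additional wrinkle that $K_{\overline{X}}^\perp$ has rank $2$ while $A_1$ has rank $1$, so $K_{\overline{X}}^\perp$ is not the root lattice at all; it merely contains it.)

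Your fallback for the bad cases — that "fixing $K_{\overline{X}}$ together with the effective-cone structure rules out the extra outer automorphism" — cannot close this gap. Fixing $K_{\overline{X}}$ is already built into the definition of $G_d$, so it cannot be used a second time; and the effective cone gives no constraint, because for $d\le 7$ it is generated by the $(-1)$-classes, which are characterised lattice-theoretically ($D^2=D\cdot K_{\overline{X}}=-1$) and hence permuted by \emph{every} element of $G_d$: anything fixing $K_{\overline{X}}$ and the form automatically preserves the effective cone. What actually excludes the extra automorphisms is integrality: for a cubic surface, $-\mathrm{id}$ on $E_6$ acts as $-1\neq+1$ on the discriminant group $E_6^*/E_6\cong\Z/3\Z$, hence does not extend to an isometry of $\Pic(\overline{X})$ fixing $K_{\overline{X}}$; this is why $G_3=W(E_6)$ rather than $W(E_6)\times C_2$, and the analogous computation is needed for $d=4,5,6$. (A small additional slip: the reflections fixing $K_{\overline{X}}$ are those in roots, i.e.\ norm $-2$ classes in $K_{\overline{X}}^\perp$, not in the $(-1)$-classes of lines; reflection in a $(-1)$-class does not fix $K_{\overline{X}}$.) To repair the argument you must either carry out the discriminant-group computation case by case, or prove directly that the stabiliser of $K_{\overline{X}}$ in $\mathrm{O}(\Pic(\overline{X}))$ is generated by reflections in roots — which is exactly the content of \cite[Thm.~23.9(ii)]{Man86}, the result the paper itself simply cites for $d\le 7$, handling $d\ge 8$ by inspection as you do.
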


\begin{proof}
For $d \geq 8$, the result is trivial; for $d \leq 7$, see \cite[Thm.~23.9(ii)]{Man86}. 
\end{proof}

\begin{longtable}{|l|l|l|l|l|l|l|l|}
    \caption{Root systems and their Weyl groups}\label{table:root systems}
    \\ \hline
    $d$ & 7 & 6 & 5 & 4 & 3 & 2 & 1 \\
    \hline
    $R_{9-d}$ & $A_1$ & $A_2+A_1$ & $A_4$ & $D_5$ & $E_6$ & $E_7$ & $E_8$\\
    \hline
    $W(R_{9-d})$ & $C_2$ & $\Dih_6$ & $\Sym_5$ & $C_2^4\rtimes\Sym_5$ & $W(E_6)$ & $W(E_7)$ & $W(E_8)$ \\
    \hline
\end{longtable}

\subsubsection{Galois action on blowups}

Let $X$ be a degree-$d$ del Pezzo surface over $k$. By the fundamental theorem of Galois theory, $H_X \cong\Gal(\ell_X/k)$ for some field extension $\ell_X/k$.

Now suppose that $Z=\{P_1,\ldots,P_n\}\subseteq X(k_s)$ is a $G_k$-stable set of points in general position, so that $Y:=\Bl_Z X$ is a del Pezzo surface over $k$; form $\rho_Y$ and $\ell_Y/k$ as before. In addition to the homomorphisms $\rho_X$ and $\rho_Y$, we have another homomorphism
\[\rho_Z:G_k\to\Sym_n\]
via the action of $G_k$ on $P_1,\ldots,P_n$. Again, $\rho_Z(G_k) \cong\Gal(\ell_Z/k)$ for some extension $\ell_Z/k$.

\begin{prop}\label{prop:galois action on blowups}
    In the setting above, we have isomorphisms
    \[\Gal(\ell_Y/k)\cong\Gal(\ell_X\ell_Z/k)\cong\Gal(\ell_X/k)\times_{\Gal(\ell_X\cap\ell_Z/k)}\Gal(\ell_Z/k),\]
    where $\ell_X\ell_Z$ denotes the compositum of the fields $\ell_X$ and $\ell_Z$ as subfields of $k_s$.
\end{prop}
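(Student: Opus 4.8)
The plan is to understand the three Galois actions $\rho_X$, $\rho_Y$, $\rho_Z$ in terms of the fields they cut out and then identify $\Gal(\ell_Y/k)$ via a fiber product. The key geometric input is the relationship between $\Pic(\overline{Y})$ and $\Pic(\overline{X})$: since $Y = \Bl_Z X$ with $Z = \{P_1,\dots,P_n\}$ a $G_k$-stable set, blowing up gives
\[
\Pic(\overline{Y}) \cong \pi^*\Pic(\overline{X}) \oplus \bigoplus_{i=1}^n \Z E_i,
\]
where $E_i$ is the exceptional divisor over $P_i$. The first step is to observe that $G_k$ acts on this decomposition compatibly: the action on the $\pi^*\Pic(\overline{X})$ summand is exactly $\rho_X$ (since $\pi^*$ is $G_k$-equivariant and an isomorphism onto its image), while the action on $\bigoplus_i \Z E_i$ permutes the $E_i$ exactly as $G_k$ permutes the $P_i$, i.e.\ is governed by $\rho_Z$. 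The whole action $\rho_Y$ therefore factors through the image of $(\rho_X, \rho_Z)$ in $\Aut(\Pic(\overline X)) \times \Sym_n$.

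**Identifying the splitting field.** The second step is to translate this into fields. By definition $\ker\rho_X$ is the absolute Galois group of $\ell_X$, and likewise $\ker\rho_Z$ corresponds to $\ell_Z$; concretely $\ell_Z$ is the field generated by coordinates describing the points $P_i$ (equivalently, the splitting field of the action on $Z$). Since $\rho_Y$ is determined by the pair $(\rho_X,\rho_Z)$ as above, we have $\ker\rho_Y = \ker\rho_X \cap \ker\rho_Z$. Taking fixed fields, this intersection of kernels corresponds to the compositum, giving $\ell_Y = \ell_X\ell_Z$ and hence $\Gal(\ell_Y/k) \cong \Gal(\ell_X\ell_Z/k)$. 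One direction of the kernel equality is immediate ($\rho_Y$ factors through $(\rho_X,\rho_Z)$, so $\ker\rho_X \cap \ker\rho_Z \subseteq \ker\rho_Y$); the reverse inclusion requires checking that an element acting trivially on all of $\Pic(\overline Y)$ must act trivially on both summands, which is clear since the two summands are recovered intrinsically (the $E_i$ are the classes of self-intersection $-1$ orthogonal to $\pi^*\Pic(\overline X)$, and $\pi^*\Pic(\overline X)$ is their orthogonal complement in $\Pic(\overline Y)$).

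**The fiber product.** The final step is the standard Galois-theoretic identity
\[
\Gal(\ell_X\ell_Z/k) \cong \Gal(\ell_X/k) \times_{\Gal(\ell_X\cap\ell_Z/k)} \Gal(\ell_Z/k),
\]
valid whenever $\ell_X,\ell_Z$ are Galois over $k$: restriction to the two subfields gives an injection of $\Gal(\ell_X\ell_Z/k)$ into the product $\Gal(\ell_X/k)\times\Gal(\ell_Z/k)$ whose image is precisely the pairs agreeing on $\ell_X\cap\ell_Z$. Both $\ell_X$ and $\ell_Z$ are Galois over $k$ since $\rho_X$ and $\rho_Z$ are homomorphisms out of $G_k$ with normal kernels, so this applies directly.

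**Main obstacle.** I expect the only real subtlety to be the reverse inclusion $\ker\rho_Y \subseteq \ker\rho_X\cap\ker\rho_Z$, i.e.\ verifying that the $G_k$-action on $\Pic(\overline Y)$ genuinely sees \emph{both} the action on $\Pic(\overline X)$ and the permutation of the $P_i$ with no collapsing. This hinges on the intrinsic, $G_k$-stable, direct-sum description of $\Pic(\overline Y)$ and on the fact that distinct points in general position yield distinct, $G_k$-permuted exceptional classes; once the decomposition is established as $G_k$-equivariant, everything else is formal Galois theory. A minor point worth flagging is the compatibility of "splitting field of $\rho_Z$" with the field of definition of the $0$-cycle $Z$, but this is exactly the fundamental theorem of Galois theory applied to $\rho_Z$.
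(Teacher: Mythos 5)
Your proposal is correct and follows essentially the same route as the paper's proof: decompose $\Pic(\overline{Y})$ as $\Pic(\overline{X})$ plus the span of the exceptional classes, observe that $\rho_Y$ factors through $(\rho_X,\rho_Z)$ so that $\ker\rho_Y=\ker\rho_X\cap\ker\rho_Z$ (the paper phrases your "no collapsing" step as injectivity of the product of the two inclusions into $\Aut(\Pic(\overline{Y}))$, since they act on disjoint parts of a basis), and then conclude via fixed fields and the standard fiber-product description of $\Gal(\ell_X\ell_Z/k)$. Your intrinsic-recovery argument for the reverse kernel inclusion is slightly more than necessary, but the substance matches the paper.
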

\begin{proof}
    The second isomorphism is a standard result in Galois theory, so it suffices to address the first. Let $E_i$ be the exceptional divisor associated to $P_i$. Then
    \[\Pic{\overline{Y}}\cong\Pic{\overline{X}}\oplus\langle E_1,\ldots,E_n\rangle.\]
    Further, for the canonical inclusions $\iota_X:\Aut(\Pic{\overline{X}})\hookrightarrow\Aut(\Pic{\overline{Y}})$ and $\iota_Z:\Sym_n\hookrightarrow\Aut(\Pic{\overline{Y}})$, the homomorphism $\rho_Y$ factors as
    \[\rho_Y:G_k\xrightarrow{(\rho_X,\rho_Z)}\Aut(\Pic{\overline{X}})\oplus\Sym_n\xrightarrow{\iota_X \iota_Z}\Aut(\Pic{\overline{Y}}).\]
    Note that the product $\iota_X \iota_Z$ is injective, since the images of $\iota_X$ and $\iota_Z$ act on disjoint subsets of a basis of $\Pic{\overline{Y}}$. Together, this implies that
    \[
        \ker(\rho_Y)=\ker(\rho_X)\cap\ker(\rho_Z)
        \cong\Gal(k_s/\ell_X)\cap\Gal(k_s/\ell_Z)
        \cong\Gal(k_s/\ell_X\ell_Z),
    \]
    and it follows that $\im(\rho_Y)\cong\Gal(\ell_X\ell_Z/k)$.
\end{proof}

\subsection{Inverse Galois problem for del Pezzo surfaces} \label{sec:IGP}

We henceforth identify $G_d$ with $W(R_{9-d})$ for $d \leq 7$. The isomorphism of Lemma~\ref{lem:Gd} is well-defined up to conjugation. Consequently, we obtain a map
\[
    \gamma_d(k): \mathcal{S}_d(k)  \rightarrow C(G_d,k), \quad X \mapsto [H_X],
\]
where $\mathcal{S}_d(k)$ denotes the set of isomorphism classes of degree-$d$ del Pezzo surfaces over $k$ and $C(G_d,k)$ denotes the subset of the set of conjugacy classes of subgroups $C(G_d)$ of $G_d$ represented by Galois groups over $k$. Given a class $[H] \in C(G_d,k)$, we say that $X/k \in \mathcal{S}_d(k)$ \emph{realises $[H]$} if $[H_X] = [H]$.

Recall that $\IGP_d(k)$ asks whether the map $\gamma_d(k)$ is surjective.

\begin{rem}
While conjugate subgroups of $G_d$ are isomorphic, the converse does not hold in general, and so multiple conjugacy classes may be represented by the same group.
\end{rem}

\subsection{Inverse curve problems}

Since the Galois action sends lines to lines, knowledge of the Galois action determines the line count on a del Pezzo surface. There is a partial converse: given full knowledge of the Galois action on lines (i.e.\ not just the number of size-$1$ orbits, but all orbits), we can recover the Galois action, in particular the Galois action on higher-degree rational curves, from the following result.

\begin{lem}\label{lem:basis for pic}
    \cite[Prop.~25.1]{Man86} Let $X/k$ be a del Pezzo surface of degree $d$. If $d=8$, assume that $X_{k_s} \not\cong \mathbb{P}^1_{k_s} \times \mathbb{P}^1_{k_s}$. Then there exists a basis $\{L_0,\ldots,L_{9-d}\}$ of $\Pic{\overline{X}}$, with $L_1,\ldots,L_{9-d}$ exceptional curves and $-K_X=3L_0-\sum_{i=1}^{9-d}L_i$.
\end{lem}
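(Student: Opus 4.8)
The plan is to work over the separable closure and invoke the structure theorem for del Pezzo surfaces. Since del Pezzo surfaces are separably split (Coombes' theorem, as used in the proof of Lemma~\ref{lem:RelMinCubic}), the natural map $\Pic(X_{k_s})\to\Pic(\overline{X})$ is an isomorphism, so it suffices to exhibit the claimed basis over $k_s$. By Theorem~\ref{thm:ksclass}, together with the hypothesis excluding $\mathbb{P}^1_{k_s}\times\mathbb{P}^1_{k_s}$ in degree $8$, there is a blowup morphism $\pi\colon X_{k_s}\to\mathbb{P}^2_{k_s}$ at $9-d$ points $Q_1,\dots,Q_{9-d}$ in general position.

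First I would set $L_0:=\pi^\ast H$, where $H$ is the hyperplane class on $\mathbb{P}^2_{k_s}$, and $L_i:=E_i$, the exceptional divisor over $Q_i$, for $1\leq i\leq 9-d$. The standard description of the Picard group of a blowup of the plane gives that $\{L_0,L_1,\dots,L_{9-d}\}$ is a $\mathbb{Z}$-basis of $\Pic(X_{k_s})\cong\mathbb{Z}^{10-d}$, with intersection products $L_0^2=1$, $L_0\cdot L_i=0$, and $L_i\cdot L_j=-\delta_{ij}$ for $i,j\geq 1$.

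Next I would compute the canonical class. The blowup formula gives $K_{X_{k_s}}=\pi^\ast K_{\mathbb{P}^2}+\sum_{i=1}^{9-d}E_i$, and since $K_{\mathbb{P}^2}=-3H$ this yields $-K_X=3L_0-\sum_{i=1}^{9-d}L_i$, as required. It then remains to check that each $L_i$ with $i\geq 1$ is an exceptional curve in the sense of Definition~\ref{def:lines and conics}: one has $L_i^2=-1$ directly, and $-K_X\cdot L_i=(3L_0-\sum_{j}L_j)\cdot L_i=-L_i^2=1$ using the intersection products above.

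The argument is essentially a bookkeeping exercise once the classification theorem is in hand; the only genuine inputs are Theorem~\ref{thm:ksclass}, which realises $X_{k_s}$ as a blowup of the plane, and separable splitting, which lets us pass from $\Pic(X_{k_s})$ to $\Pic(\overline{X})$. The degenerate cases $d=9$ (where $X_{k_s}=\mathbb{P}^2_{k_s}$, the basis is $\{L_0\}$, and $-K_X=3L_0$, with no exceptional curves to verify) and $d=8$ (the blowup of the plane in a single point) are handled uniformly by the same formulas. I do not anticipate a substantive obstacle beyond correctly invoking the structure theorem and the separable-splitting identification of Picard groups.
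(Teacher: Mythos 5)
Your proof is correct: the paper gives no argument of its own for this lemma (it simply cites \cite[Prop.~25.1]{Man86}), and your derivation---realising $X_{k_s}$ as a blowup of $\mathbb{P}^2_{k_s}$ via Theorem~\ref{thm:ksclass}, taking $L_0=\pi^*H$ and $L_i=E_i$, and computing the canonical class by the blowup formula---is exactly the standard argument behind that citation. The only cosmetic remark is that invoking Coombes' theorem separately is redundant: Theorem~\ref{thm:ksclass} is already stated over $k_s$, and since blowups commute with base change, the basis you exhibit over $k_s$ automatically base-changes to a basis of $\Pic(\overline{X})$.
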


An explicit instance of this determination is given in the following proposition.

\begin{prop}\label{prop:detecting rational conics}
    \cite[Prop.~5.2]{FLS18} Let $X/k$ be a del Pezzo surface of degree $d\leq 7$. Then $X$ admits a $k$-rational conic fibration if and only if there exist two (possibly identical) non-empty Galois orbits of lines $\mf{L}_1$ and $\mf{L}_2$ satisfying $(\mf{L}_1+\mf{L}_2)^2=0$.
\end{prop}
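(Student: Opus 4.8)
The plan is to characterize $k$-rational conic fibrations on $X$ via their divisor classes and then translate this into the combinatorial condition on Galois orbits of lines. By Remark~\ref{rem:line is line}(3), a conic $C \subseteq X_{k_s}$ is $G_k$-invariant if and only if its class lies in $\Pic(X) \subseteq \Pic(\overline{X})$, and such a class gives rise to a $k$-rational conic fibration. Hence the statement reduces to showing that a $G_k$-invariant conic class exists if and only if there are two (possibly equal) non-empty Galois orbits of lines $\mathfrak{L}_1, \mathfrak{L}_2$ with $(\mathfrak{L}_1 + \mathfrak{L}_2)^2 = 0$, where I abuse notation by writing $\mathfrak{L}_j$ also for the sum of the lines in the orbit.

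First I would recall the numerical characterization of conic classes. A conic $C$ satisfies $-K_X \cdot C = 2$ and $C^2 = 0$; I would want to show that any such class can be written as a sum of two line classes. The cleanest route uses Lemma~\ref{lem:basis for pic} to fix a geometric basis $\{L_0, L_1, \ldots, L_{9-d}\}$ with the $L_i$ exceptional and $-K_X = 3L_0 - \sum_{i=1}^{9-d} L_i$. In this basis one can enumerate the line classes (those $D$ with $D^2 = -1$ and $-K_X \cdot D = 1$) and conic classes, and verify directly that every conic class $C$ decomposes as $C = L + L'$ for two line classes $L, L'$ with $L \cdot L' = 1$. Indeed $C^2 = L^2 + 2 L \cdot L' + L'^2 = -1 + 2(L \cdot L') - 1 = 0$ forces $L \cdot L' = 1$, and conversely any two lines meeting in one point sum to a conic class since $-K_X \cdot (L + L') = 2$ and $(L+L')^2 = 0$. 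This is the standard fact that conic fibrations on a del Pezzo surface arise from pencils spanned by pairs of meeting lines.

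Next I would assemble the two directions. For the forward direction, suppose $X$ admits a $k$-rational conic fibration, so there is a $G_k$-invariant conic class $C \in \Pic(X)$. Write $C = L + L'$ with $L, L'$ lines, $L \cdot L' = 1$. Since $C$ is fixed by $G_k$, the Galois group permutes the finite set of such decompositions of $C$ into pairs of meeting lines; I would take $\mathfrak{L}_1$ to be the orbit of $L$ and $\mathfrak{L}_2$ the orbit of $L'$, and then argue that the Galois-averaged sum of the lines appearing in these decompositions is a positive rational multiple of $C$, so that $(\mathfrak{L}_1 + \mathfrak{L}_2)^2$ is a positive multiple of $C^2 = 0$, hence zero. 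For the converse, given orbits $\mathfrak{L}_1, \mathfrak{L}_2$ with $(\mathfrak{L}_1 + \mathfrak{L}_2)^2 = 0$, the class $D := \mathfrak{L}_1 + \mathfrak{L}_2$ (sum over all lines in both orbits) is $G_k$-invariant, hence lies in $\Pic(X)$, and satisfies $D^2 = 0$ and $-K_X \cdot D > 0$; I would then show $D$ is a positive multiple of a conic class, producing a $G_k$-invariant conic class and thus a $k$-rational conic fibration by Remark~\ref{rem:line is line}(2)--(3).

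The main obstacle I anticipate is the converse direction: from $(\mathfrak{L}_1 + \mathfrak{L}_2)^2 = 0$ one only gets a nef class $D$ of self-intersection $0$, and I must verify that $D$ (or a suitable rational multiple) is genuinely the class of a conic rather than some other effective class with $D^2 = 0$, and that the induced fibration has genus-$0$ base. Controlling this requires a careful numerical analysis: expanding $(\mathfrak{L}_1 + \mathfrak{L}_2)^2 = \mathfrak{L}_1^2 + 2\,\mathfrak{L}_1 \cdot \mathfrak{L}_2 + \mathfrak{L}_2^2 = 0$ where each $\mathfrak{L}_j^2 = -|\mathfrak{L}_j| + (\text{pairwise intersections within the orbit})$, and using that distinct lines on a del Pezzo surface meet in $0$ or $1$ points, to pin down the combinatorics forcing $D$ to be proportional to a conic class. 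I expect the assumption that $\mathfrak{L}_1, \mathfrak{L}_2$ are single Galois orbits (rather than arbitrary sets of lines) to be exactly what rigidifies this, and I would lean on Proposition~\ref{prop:galois action on blowups} and the explicit line/conic tables of Section~\ref{sec:CountingLinesAndConics} to handle the bookkeeping degree by degree if a uniform argument proves elusive.
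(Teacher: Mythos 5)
The paper itself does not prove this proposition: it is imported verbatim from \cite[Prop.~5.2]{FLS18} (with the remark following it explaining why the implication becomes an equivalence under the paper's definition of conic fibration). So your attempt can only be measured against what a correct argument requires. Your overall skeleton is the right one: reduce to producing a $G_k$-invariant conic class via Remark~\ref{rem:line is line}(3), use that every conic class on a degree-$d\leq 7$ surface decomposes as $L+L'$ with $L\cdot L'=1$ (equivalently, every conic fibration has a singular fibre; there are exactly $8-d\geq 1$ of them, which is cleaner than enumerating classes), and your forward direction is sound: since the partner of $\sigma(L)$ in its fibre is $\sigma(L')$, the two orbits pair up into singular fibres and $\mf{L}_1+\mf{L}_2$ is a positive integer multiple of the fibre class, hence isotropic.

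The genuine gap is in the converse, precisely at the point you pass over with ``one only gets a nef class $D$.'' Nefness of $D=\mf{L}_1+\mf{L}_2$ is not automatic; proving it is essentially the whole content. A priori a line $L\in\mf{L}_1$ could have $D\cdot L<0$. What rescues this is Galois transitivity plus lattice theory, neither of which appears in your plan: $d_1:=D\cdot L$ is constant on $\mf{L}_1$ (because $D$ is invariant), is an integer, and is $\geq -1$; likewise $d_2$; and $n_1d_1+n_2d_2=D^2=0$ with $n_i=|\mf{L}_i|$. The only non-nef configuration is (say) $d_1=-1$, $d_2\geq 1$, which forces $\mf{L}_1\cdot\mf{L}_2=0$, $\mf{L}_1^2=-n_1$ and $\mf{L}_2^2=n_1>0$; this is killed by the Hodge index theorem (Cauchy--Schwarz in the negative definite lattice $\mf{L}_2^{\perp}$, applied to $\mf{L}_1$ and the projection of $-K_X$, gives $n_2^2\geq n_1^2+dn_1$, while $\mf{L}_2^2=n_2(e_2-1)=n_1$ forces $n_1\geq n_2$, a contradiction). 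Once $d_1=d_2=0$, integrality of the orbit-averaged intersection numbers pins the configuration: either each orbit is a disjoint union of incident pairs of lines, or the two orbits are internally disjoint and matched bijectively by incidence. In both cases $D$ is a disjoint union of singular conics; any two of these are linearly equivalent because their difference is an isotropic vector in the negative definite lattice $K_X^{\perp}$, and Galois permutes them, so their common class is the invariant conic class you need. Two further defects: your planned bookkeeping rests on the claim that ``distinct lines on a del Pezzo surface meet in $0$ or $1$ points,'' which is false exactly in the hardest degrees ($d\leq 2$): on a degree-$2$ surface a line meets its Geiser dual twice, and on a degree-$1$ surface its Bertini dual three times (only the orbit-averaged numbers $d_i$ matter, which is why the argument above survives). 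And the fallback via Proposition~\ref{prop:galois action on blowups} and the tables of Section~\ref{sec:CountingLinesAndConics} cannot substitute for this, since those concern blowup presentations defined over $k$ and counts attached to blowup types, not an arbitrary Galois action on a fixed geometric line configuration.
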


\begin{rem}
    Our definition of conic fibration differs from the one in \cite{FLS18} in that we do not require the base curve of the associated projective morphism to contain a rational point. This is why \cite[Prop.~5.2]{FLS18} gives an equivalence in our setting but only a one-way implication in their setting.
\end{rem}

In particular, for each $e \in \mathbb{Z}_{\geq 1}$, we have a well-defined map
\[
\delta_{d,e}(k): C(G_d,k) \rightarrow \mathcal{C}_{d,e}(k) \subset \mathbb{Z}_{\geq 0}
\]
which associates to a class $[H] \in C_d(G,k)$ the number of families of degree-$e$ rational curves on $X \in \mathcal{S}_d(k)$ with $[H_X] = [H]$ when such $X$ exists. That $\mathcal{C}_{d,e}(k)$ is a finite set follows from the fact that $\Pic(\overline{X})$ is finitely generated and $-K_X$ is ample.

Recall that $\ICP_{d,e}(k)$ asks whether $\delta_{d,e}(k)$ is surjective. In view of Lemma~\ref{lem:basis for pic}, a surjective solution of $\text{IGP}_{d}(k)$ over $k$ implies a surjective solution of $\text{ICP}_{d,e}$ for all $e \geq 1$.

\subsection{Previous results} \label{sec:igpresults}

We now review what is known about these inverse problems.

\subsubsection{Degree at least 5}

In high degree, the inverse Galois problem is settled.
\begin{thm} \label{thm:dP5IGP}
For $d \geq 5$ and any field $k$, $\IGP_d(k)$ admits a surjective solution.
\end{thm}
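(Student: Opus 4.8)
The goal is to show that $\gamma_d(k)$ is surjective for all $d \geq 5$ and all fields $k$, i.e.\ every conjugacy class $[H] \in C(G_d,k)$ permitted by the Galois theory of $k$ is realised by some degree-$d$ del Pezzo surface. The most efficient route is to invoke Zaitsev's theorem, cited earlier in the excerpt as \cite[Thms.~1.2,~1.7]{Zai23}, which asserts precisely this surjectivity for $d \geq 5$ over arbitrary $k$. The plan is therefore to reduce our problem to Zaitsev's statement, checking that the formulations agree, and to fill in the structural facts that make the reduction transparent.

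First I would recall the classification of relatively minimal surfaces in these degrees: by Theorem~\ref{thm:isk}, for $d = 9$ the only surface is $\mathbb{P}^2$ (so $G_9 = 0$ and there is nothing to prove), for $d = 8$ the relatively minimal surfaces are quadrics with $G_8 = C_2$, and for $d \in \{5,6,7\}$ relative minimality forces $\Pic(X) = \langle -K_X\rangle$. Since every del Pezzo surface is either relatively minimal or a blowup of one (Theorem~\ref{thm:ksclass} together with the minimal model picture), and blowups are controlled by Galois action on the blowup locus via Proposition~\ref{prop:galois action on blowups}, the realisable conjugacy classes are built from the relatively minimal building blocks together with permutation actions on points. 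The key point is that in degree $d \geq 5$ the Weyl groups $G_d$ are small ($0$, $C_2$, $\Dih_6$, $\Sym_5$ from Table~\ref{table:root systems}), so every subgroup conjugacy class can be engineered by prescribing a suitable Galois action, provided the field admits the requisite separable extensions.

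The main step is then to produce, for each $[H] \in C(G_d,k)$, an explicit surface realising it. I would proceed degree by degree. The cases $d = 9$ and $d = 8$ are immediate from the structure of $G_d$. For $d = 7$, every surface is a blowup of $\mathbb{P}^2$ or a quadric in one or two points, and the permutation action recovered from $\rho_Z$ in Proposition~\ref{prop:galois action on blowups} gives full control. For $d \in \{5,6\}$ the subtlety is exactly the existence of the relatively minimal surfaces, which Theorem~\ref{thm:Minimal5and6} ties to the existence of separable extensions of degrees $5$ (for $d=5$) and $2,3$ (for $d=6$); but the condition that $[H] \in C(G_d,k)$ already requires $k$ to admit a Galois realisation of the corresponding group, so the needed extensions are automatically present whenever the class is permitted. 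This compatibility between ``permitted by the Galois theory of $k$'' and ``the requisite extensions exist'' is the crux, and I expect it to be the main obstacle: one must verify that every subgroup of $W(R_{9-d})$ arising as a splitting group is, whenever realisable as a Galois group over $k$, actually attained by an honest surface, rather than merely being combinatorially consistent.

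The cleanest presentation is to cite Zaitsev directly for the existence assertion and use the material above only to confirm that his hypotheses match $C(G_d,k)$. Concretely, I would state that Zaitsev constructs, for each admissible splitting group, a degree-$d$ del Pezzo surface with that Galois action, which is exactly the surjectivity of $\gamma_d(k)$ onto $C(G_d,k)$; the explicit blowup constructions in the proof of Theorem~\ref{thm:Minimal5and6} and the coincidence results of Proposition~\ref{prop: coincidence result} then serve to identify the resulting surfaces with our blowup types if a self-contained verification is desired. Thus the proof is essentially a citation together with a dictionary check, and the genuinely nontrivial content lives in Zaitsev's constructions, which we do not reproduce.
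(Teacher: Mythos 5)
Your proposal is correct, but it takes a genuinely different route from the paper. Your primary argument is a citation: the paper itself acknowledges (in its related-literature discussion) that Zaitsev \cite[Thms.~1.2,~1.7]{Zai23} proved exactly this surjectivity for $d \geq 5$ over every field, so deferring to \emph{loc.\ cit.}\ with a check that his formulation matches $C(G_d,k)$ is a legitimate proof. The paper instead gives a short self-contained construction, degree by degree: for $d=9$ the unique class is realised by $\mathbb{P}^2$; for $d=8$ the trivial action is realised by $\Bl_1\mathbb{P}^2$ or the split quadric, and the $C_2$-action by the Weil restriction $R_{L/k}(\mathbb{P}^1_L)$ of a separable quadratic extension $L/k$; for $d=7$ the two classes are realised by $\Bl_{1,1}\mathbb{P}^2$ and $\Bl_2\mathbb{P}^2$; and for $d\in\{5,6\}$ it reduces to $d=9$ via the explicit blowup/blowdown constructions in the proof of Theorem~\ref{thm:Minimal5and6} (closed points on conics and lines in the plane). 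The paper's route buys explicit models that are reused throughout (the Weil restriction, the blowup types feeding the tables and later counting arguments); yours buys brevity. One caution: your supporting sketch would not stand on its own if the citation were removed. Declaring $d=8$ ``immediate from the structure of $G_d$'' skips the actual construction of a non-split quadric, and for $d\in\{5,6\}$ your compatibility claim (``permitted implies the requisite extensions exist'') only secures the existence of \emph{some} relatively minimal surface; realising a \emph{given} class $[H]$ --- a specific subgroup of $\Sym_5$ or of $\Dih_6$ --- still requires producing closed points whose Galois action on geometric points is exactly $H$ (equivalently, a suitable \'etale algebra) and verifying general position over arbitrary fields, including small finite ones. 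That verification is precisely the content of the paper's reduction to the plane, so your sketch identifies the crux correctly but does not discharge it.
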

\begin{proof}
Let $X$ be a degree-$d$ del Pezzo surface over $k$. We separate into degrees.
\begin{itemize}
\item[$d=9$:] Here, $\Pic(\overline{X}) \cong \mathbb{Z}$ has trivial $G_k$-action. This is always realised by $\mathbb{P}^2$.
\item[$d=8$:] First, assume that $\overline{X} \cong \Bl_1 S_9$. Then $\Pic(\overline{X})$ has trivial action. This is realised by $\Bl_1\mathbb{P}^2$.
Now assume that $\overline{X} \cong \mathbb{P}^1_{k_s} \times \mathbb{P}^1_{k_s}$. Then $\Pic(\overline{X})$ has Galois action by $C_2$ if and only if $X$ is non-split and trivial action otherwise. The split quadric surface $\mathbb{P}^1 \times \mathbb{P}^1$ exists over any field. The existence of a non-split quadric is equivalent to the existence of a degree-$2$ separable extension $L/k$; given such an extension, a non-split quadric is realised by the Weil restriction $R_{L/k}(\mathbb{P}^1_L)$.
\item[$d=7$:] The possible Galois actions are trivial and $C_2$: the trivial action is realised by $\Bl_{1,1}\mathbb{P}^2$, while the $C_2$-action requires a degree-$2$ separable extension $L/k$ and is realised by $\Bl_2 \mathbb{P}^2$, where the blown-up degree-$2$ point is defined over $L$.
\end{itemize}
For the cases $d = 5,6$, see the proof of Theorem~\ref{thm:Minimal5and6}, which reduces them to $d=9$. \qedhere
\end{proof}

\subsubsection{Degree 4}

The next result is due to Kunyavski\u{\i}, Skorobogatov and Tsfasman and was later proved independently by Elsenhans and Jahnel \cite[Cor.~7.2]{EJ19b}.

\begin{thm}\cite[Cor.~6.10]{KST89} \label{thm:dP4IGP}
$\IGP_4(k)$ has a surjective solution for any infinite field $k$ with $\Char(k) \neq 2$.
\end{thm}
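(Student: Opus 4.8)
The plan is to realise every subgroup conjugacy class $[H] \in C(G_4,k)$ that is permitted by the Galois theory of $k$ as the splitting group of some degree-$4$ del Pezzo surface over $k$. Since $G_4 \cong W(D_5) \cong C_2^4 \rtimes \Sym_5$, the first step is purely group-theoretic: enumerate the conjugacy classes of subgroups of $W(D_5)$ and, for each, record the isomorphism type of the quotient that must appear as a Galois group $\Gal(L/k)$. The key structural input is that a degree-$4$ del Pezzo surface is an intersection of two quadrics in $\mathbb{P}^4$, and its geometry is governed by the associated pencil of quadrics: the Galois action on $\Pic(\overline X)$ is controlled by the action on the five singular members of the pencil (equivalently, the five degeneracy points in $\mathbb{P}^1$) together with a quadratic twisting datum recording which of the two rulings on each singular quadric is rational. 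This is precisely the source of the $C_2^4 \rtimes \Sym_5$ structure: the $\Sym_5$ permutes the five degenerate fibres and the $C_2^4$ (the even-weight subgroup of $C_2^5$) records the ruling choices.

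The second and main step is the construction: given a target class $[H]$, I would write down an explicit pencil of quadrics whose splitting field realises the required $\Sym_5$-part (via a separable degree-$5$ resolvent, available since $\IGP$ for $\Sym_n$ and its subgroups is classically solvable, and over infinite fields one has enough room to choose the five branch points with the prescribed Galois action) and then twist by suitable quadratic extensions to realise the $C_2^4$-part. Concretely, one takes a diagonal pencil $\sum_{i=0}^4 (t - a_i) X_i^2 = 0$ where the $a_i$ carry the desired $\Sym_5$-action, and adjusts the leading coefficients by square classes in $k^\times/(k^\times)^2$ to control the ruling twists; here infinitude of $k$ guarantees that the resulting intersection is smooth (a general-position/Bertini argument) and that enough independent square classes and separable extensions exist. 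This is the step I expect to be the main obstacle, because the compatibility between the $\Sym_5$-action on branch points and the $C_2^4$-twisting is constrained by the semidirect-product structure (the reducibility and parity conditions cut $C_2^5$ down to $C_2^4$), so not every naive combination is realisable, and one must verify that the conjugacy classes genuinely permitted by $k$ are exactly those one can hit by this construction.

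The final step is bookkeeping: confirm that every class in $C(G_4,k)$ is covered, with no over- or under-counting, so that $\gamma_4(k)$ is surjective. The characteristic hypothesis $\Char(k) \neq 2$ enters in two essential places — the pencil-of-quadrics normal form (diagonalisation of quadratic forms requires $2$ to be invertible) and the identification of quadratic twists with $k^\times/(k^\times)^2$ via Kummer theory — which is exactly why the argument of \cite{KST89} breaks down in characteristic $2$ and must later be replaced by Artin--Schreier analogues, as advertised in the discussion preceding Theorem~\ref{thm:icp}. I would lean entirely on the representation-theoretic and conic-bundle machinery of \cite[\S6]{KST89} for the construction rather than redeveloping it, treating this theorem as a citation-driven recollection whose role here is to seed the $d=4$ base case for the inverse curve problems; once $\IGP_4(k)$ is known, Lemma~\ref{lem:basis for pic} immediately upgrades it to surjective solutions of $\ICP_{4,e}(k)$ for both $e \in \{1,2\}$.
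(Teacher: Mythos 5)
The paper does not actually prove this statement: Theorem~\ref{thm:dP4IGP} is quoted outright from \cite[Cor.~6.10]{KST89} (proved independently in \cite[Cor.~7.2]{EJ19b}), with the only commentary being that the proof goes via the conic-bundle result reproduced as Theorem~\ref{thm:igpconic}. Since you too end by deferring to the machinery of \cite[\S6]{KST89} and correctly identify both where $\Char(k)\neq 2$ enters and the downstream role of the theorem, your treatment agrees with the paper's at the level that matters; your proposal is acceptable as a citation-driven recollection.

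That said, the mechanism you sketch is not the cited proof, and it is worth recording why. The diagonal-pencil-plus-square-class-twisting picture is essentially the Flynn--Skorobogatov description \cite{Fly09,Sko10}, which the paper mentions only in a remark as a \emph{possible alternative} strategy, explicitly not carried out; the genuine route in \cite{KST89} constructs conic bundles with prescribed splitting group in $W(D_n)$ (Theorem~\ref{thm:igpconic}) and then passes to degree-$4$ del Pezzo surfaces, with the caveat, noted in the paper, that the intermediate surface may be an Iskovskikh surface requiring a further transformation. Moreover, as literally written your pencil $\sum_{i=0}^4 (t-a_i)X_i^2=0$ is only defined over $k$ when the $a_i$ are $k$-rational, which trivialises the $\Sym_5$-part; for conjugate $a_i$ one needs the trace-form/Weil-restriction version, and realising every \emph{conjugacy} class of subgroups of $W(D_5)$ (not merely every isomorphism type) from such normal forms is precisely the unverified compatibility you flag as the main obstacle --- so your sketch, taken on its own, would not close. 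Finally, your aside that the $\Sym_5$-part is available ``since IGP for $\Sym_n$ and its subgroups is classically solvable'' is a slip: no such solvability holds over a general field (consider real closed fields), and none is needed, since by definition $C(G_4,k)$ contains only classes represented by actual Galois groups over $k$, so one starts from a given extension $L/k$ rather than manufacturing one.
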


The proof goes via a related result on conic bundles (Theorem~\ref{thm:igpconic}).

\begin{rem}
A possible alternative strategy to prove Theorem~\ref{thm:dP4IGP} for all infinite fields is to adapt a description of degree-4 del Pezzo surfaces over fields of characteristic zero developed by Flynn \cite{Fly09}, later explained by Skorobogatov \cite{Sko10} and employed recently by Morgan and Skorobogatov \cite{MS24}, which says that any del Pezzo surface of degree $4$ is the Galois twist of one with a line, which is then rational.
\end{rem}

\subsubsection{Degree 3}

For degree $3$, we have the following results of Elsenhans and Jahnel.

\begin{thm}\cite[Thm.~0.1]{EJ15} \label{thm:dP3IGPQ}
$\IGP_3(\mathbb{Q})$ has a surjective solution.
\end{thm}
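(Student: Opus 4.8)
The goal is to show that $\IGP_3(\Q)$ has a surjective solution, meaning that every subgroup conjugacy class $[H] \in C(W(E_6), \Q)$ that can arise as a Galois group over $\Q$ is realised as the splitting group $[H_X]$ of some smooth cubic surface $X/\Q$. Since $C(W(E_6),\Q) = C(W(E_6))$ (every finite group is a Galois group over $\Q$ by Shafarevich-type results, or at least every subgroup of $W(E_6)$ occurs, a point one must check), the content is to hit every conjugacy class of subgroups of $W(E_6)$.

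\textbf{Plan of attack.} The natural strategy, following the method of Elsenhans and Jahnel \cite{EJ15}, is to exploit that $\Q$ is Hilbertian together with explicit families of cubic surfaces. First I would recall that a smooth cubic surface over $\Q$ is determined by its $27$ lines, on which $G_\Q$ acts through a subgroup of $\Aut(\Pic(\overline X)) \cong W(E_6)$ fixing $K_X$ and preserving intersections (Lemma~\ref{lem:Gd}); the splitting group $H_X$ is exactly the image $\rho_X(G_\Q)$. The task is to produce, for each of the finitely many conjugacy classes $[H]$ of subgroups of $W(E_6)$, a cubic surface whose splitting field has Galois group in that class. The Hilbertian nature of $\Q$ means it suffices to construct a \emph{generic} cubic surface (or a suitable subfamily) whose splitting group over the function field of the parameter space is the full group $H$ one wishes to realise, and then specialise: by Hilbert irreducibility there is a $\Q$-point of the base at which the splitting group is preserved. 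Thus the problem reduces to a \emph{geometric} realisation problem over function fields, one class at a time.

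\textbf{Key steps.} First, enumerate the conjugacy classes of subgroups of $W(E_6)$ (order $51840$); this is a finite group-theoretic computation, feasible in \texttt{Magma}. Second, for each class $[H]$, identify a suitable parameter space of cubic surfaces on which the monodromy representation into $W(E_6)$ has image exactly $H$. The cleanest source is the full universal family of smooth cubics, whose geometric monodromy is all of $W(E_6)$ (the classical Jordan--Dickson result), giving the top class; for smaller $H$ one passes to Galois twists, or to subfamilies cut out by imposing rationality of certain configurations of lines (for example, fixing a rational line, a rational pair, a rational tritangent plane, or a rational Eckardt structure), each of which cuts the monodromy down to the stabiliser subgroup of the imposed configuration. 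Third, over such a subfamily one checks the geometric monodromy equals $H$ and then invokes Hilbert irreducibility to descend to a rational specialisation realising $[H_X] = [H]$. Finally, one must verify that the specialised surface is still smooth (a dense open condition) and that no accidental degeneration of the splitting field occurs — this is again handled by the openness of the good locus and Hilbertianity.

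\textbf{Main obstacle.} The hardest part is the bookkeeping and uniformity: $W(E_6)$ has a large number of conjugacy classes of subgroups, and for each one must exhibit an explicit subfamily of cubics whose monodromy is precisely that subgroup, \emph{neither too large nor too small}. Realising the maximal and geometrically natural subgroups (stabilisers of lines, double-sixes, Eckardt points, and the like) is conceptually clear, but covering the more exotic intermediate subgroups requires either clever explicit equations or an inductive/twisting argument showing that every subgroup arises as the monodromy of some rationally-defined geometric configuration. I expect the genuine difficulty to lie in proving that the construction is \emph{exhaustive} — that one has hit every conjugacy class and not merely a large collection — which is ultimately verified by matching the group-theoretic list against the constructed families, with the smoothness and irreducibility checks being routine applications of Bertini-type arguments and Hilbert irreducibility.
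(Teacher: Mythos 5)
The paper does not prove this statement at all: it is quoted verbatim from Elsenhans--Jahnel \cite{EJ15}, and the only commentary offered is that their proof is specific to $\mathbb{Q}$ because it proceeds by an explicit $\mathbb{Q}$-rational point search on certain moduli spaces (quotients of the moduli space of marked cubic surfaces by the various subgroups $U \leq W(E_6)$, with non-lifting conditions guaranteeing the splitting group is not a proper subgroup of $U$). Your Hilbert-irreducibility sketch does correctly handle the top class: the universal family of smooth cubics has monodromy all of $W(E_6)$, the parameter space is $\mathbb{Q}$-rational, and $\mathbb{Q}$ is Hilbertian, so the full group is realised and specialisation preserves the generic group outside a thin set. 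But as a proof of surjectivity onto all of $C(W(E_6),\mathbb{Q})$, the proposal has a genuine gap rather than a routine bookkeeping problem.

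The gap is your second key step: ``for each class $[H]$, identify a suitable parameter space of cubic surfaces on which the monodromy representation into $W(E_6)$ has image exactly $H$.'' This is the entire difficulty, and the devices you offer do not deliver it. Stabilisers of geometric configurations (a rational line, a double-six, a tritangent plane, an Eckardt structure) yield only a short list of mostly maximal subgroups, nowhere near the $350$ conjugacy classes of subgroups of $W(E_6)$; and passing to Galois twists replaces the problem by the problem of finding $\mathbb{Q}$-points on a twisted base. For an arbitrary subgroup $H$, the natural base is the quotient $\mathcal{M}/H$ of the moduli space of marked cubic surfaces, and no rationality or unirationality theorem is available for these quotients --- so Hilbert irreducibility has nothing to bite on, since HIT requires the base to be $\mathbb{Q}$-rational, or at least requires knowing that its rational points are not thin. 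This is precisely why Elsenhans and Jahnel abandon a uniform geometric construction and instead carry out a computational search for suitable $\mathbb{Q}$-points on each quotient, and why no analogue of the theorem is known over other Hilbertian fields. Separately, your parenthetical justification that $C(W(E_6),\mathbb{Q}) = C(W(E_6))$ because ``every finite group is a Galois group over $\mathbb{Q}$ by Shafarevich-type results'' is false as stated: Shafarevich's theorem covers solvable groups, and the inverse Galois problem over $\mathbb{Q}$ is open. The equality is in fact a \emph{consequence} of the Elsenhans--Jahnel construction (each class is realised by a surface, hence by a Galois extension), not an a priori input one may assume.
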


The proof of Theorem~\ref{thm:dP3IGPQ} is specific to $\mathbb{Q}$ since it involves a $\mathbb{Q}$-rational point search on certain moduli spaces.

\begin{thm} \cite[Thms.~6.1 \& 7.1]{EJ19b}
Let $k$ be an infinite field with $\ch{k}\neq 2$.
Denote the (maximal) subgroups of $W(E_6)$ of indices $36$ and $27$ by $U_{\text{ds}}$ and $U_{1}$, respectively. Then for any class $[H]$ lying in $U_{\text{ds}} \cap C(W(E_6),k)$ or $U_{1} \cap C(W(E_6),k)$, there exists a degree-3 del Pezzo surface $X/k$ with $[H_X] = [H]$.
\end{thm}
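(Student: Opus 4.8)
The plan is to realize the two designated conjugacy classes by explicit del Pezzo surfaces of degree $3$, i.e.\ smooth cubic surfaces, exploiting the geometric meaning of the maximal subgroups $U_{\mathrm{ds}}$ and $U_1$ of $W(E_6)$. First I would recall what these indices encode geometrically. The group $W(E_6)$ acts on the $27$ lines of the geometric cubic surface, and the index-$27$ subgroup $U_1$ is precisely the stabilizer of one of the $27$ lines; hence a class $[H] \subseteq U_1$ corresponds to a splitting group fixing a line, which means the surface has a $k$-rational line. Likewise, $W(E_6)$ acts on the $36$ \emph{double-sixes} (equivalently, the $36$ ways of realizing $\overline{X}$ as a blowup of $\PP^2$ in six points), and the index-$36$ subgroup $U_{\mathrm{ds}}$ is the stabilizer of one double-six; a class $[H]\subseteq U_{\mathrm{ds}}$ corresponds to a surface on which a double-six, hence a blowing-down structure $\overline{X}\cong\Bl_{P_1,\dots,P_6}\PP^2$, is Galois-stable. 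The strategy in each case is to reduce from the abstract problem about $W(E_6)$ to a concrete blowup construction to which the results on points in general position apply.

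For the class contained in $U_1$, I would argue as follows. A cubic surface with a $k$-rational line $L$ admits, upon blowing down $L$, a degree-$4$ del Pezzo surface, and conversely blowing up a degree-$4$ del Pezzo surface at a suitable rational point recovers the cubic. More directly, since $[H]\subseteq U_1$ means $G_k$ stabilizes the line $L$, the residual Galois action on the remaining configuration is governed by the stabilizer $U_1$, which is itself (up to the action on the complementary lines) a Weyl-type group of smaller rank. The key point is that every class $[H]\in U_1\cap C(W(E_6),k)$ is realizable by the Galois theory of $k$ — that is, $[H]\cong\Gal(\ell/k)$ for some extension — so I would build an explicit cubic by blowing up $\PP^2$ in a $G_k$-stable configuration of six points whose splitting field induces exactly the prescribed action inside $U_1$; existence of the requisite closed points in general position follows from Theorem~\ref{thm:gen position P2} together with the assumption that $k$ is infinite. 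The hypotheses $\Char(k)\neq 2$ and $k$ infinite are needed to guarantee both the general-position points (over infinite fields) and, where the construction passes through degree-$4$ del Pezzo surfaces, the applicability of Theorem~\ref{thm:dP4IGP}.

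For the class contained in $U_{\mathrm{ds}}$, the stabilizer of a double-six, the surface carries a Galois-stable blowdown structure $\overline{X}\cong\Bl_{P_1,\dots,P_6}\PP^2$, so $H_X$ is contained in the image of $\Aut$ preserving this structure — this image is naturally a subgroup related to $C_2^4\rtimes \Sym_5$-type data but here is the full $U_{\mathrm{ds}}\cong (C_2)^5\rtimes \Sym_6$ (or the appropriate permutation-plus-sign group acting on the six exceptional curves). By Proposition~\ref{prop:galois action on blowups}, the Galois action on $\Pic(\overline{X})$ for such a blowup of $\PP^2$ is determined by the action $\rho_Z$ on the six blown-up points, which factors through $\Sym_6$, together with any extension data. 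I would therefore prescribe a $G_k$-stable set of six closed points in $\PP^2$ (in general position, obtainable via Theorem~\ref{thm:gen position P2} since $k$ is infinite) whose degree partition and splitting field induce the desired class in $U_{\mathrm{ds}}$, and verify via Proposition~\ref{prop:galois action on blowups} that $[H_X]=[H]$.

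The main obstacle I anticipate is the bookkeeping to match an arbitrary prescribed conjugacy class $[H]\in U_{\mathrm{ds}}\cap C(W(E_6),k)$ (respectively $[H]\in U_1\cap C(W(E_6),k)$) with a concrete Galois action arising from an actual point configuration: one must show that \emph{every} such class permitted by the Galois theory of $k$ is hit, not merely that some nontrivial classes are. For $U_{\mathrm{ds}}$ this amounts to realizing every permissible subgroup of the double-six stabilizer as the Galois action on six points in general position, which requires controlling both the permutation part (in $\Sym_6$, manageable via prescribed closed points of given degrees) and any sign/exceptional-curve-swapping part; ensuring the latter is compatible with an honest point configuration rather than an abstract group element is the delicate step. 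The characteristic hypothesis $\Char(k)\neq 2$ enters precisely here, as in Theorem~\ref{thm:dP4IGP}, since the conic-bundle and representation-theoretic constructions one borrows from \cite{KST89} degenerate in characteristic $2$.
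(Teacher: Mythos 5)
First, a point of comparison: the paper does not prove this statement at all --- it is imported verbatim from Elsenhans--Jahnel \cite[Thms.~6.1 \& 7.1]{EJ19b}, whose proof rests on explicit families of cubic surfaces constructed precisely because no naive blowup construction suffices. So your proposal has to stand on its own, and it has a genuine gap at its core. Your plan is to realize an arbitrary permitted class $[H] \leq U_{\text{ds}}$ (resp.\ $U_1$) as the splitting group of $\Bl_{P_1,\dots,P_6}\mathbb{P}^2$ for a suitable Galois-stable six-point configuration. But by the paper's own Proposition~\ref{prop:galois action on blowups} (applied with $X = \mathbb{P}^2$, where $\rho_X$ is trivial), the splitting group of any blowup of $\mathbb{P}^2$ over $k$ lies in the copy of $\Sym_6$ permuting the six exceptional classes, a group of order $720$. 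Meanwhile $U_{\text{ds}}$, the stabilizer of a double-six, has order $51840/36 = 1440$ and is isomorphic to $\Sym_6 \times C_2$ (your identification $U_{\text{ds}} \cong C_2^5 \rtimes \Sym_6$ is false already on order grounds: that group has order $23040$), and $U_1$, the stabilizer of a line, has order $51840/27 = 1920$ and is isomorphic to $W(D_5)$. So no blowup of $\mathbb{P}^2$ over $k$ can realize $U_{\text{ds}}$ itself, $U_1$ itself, or any class containing the involution swapping the two sixes: for such a class the double-six is Galois-stable but the blowdown structure is not, the surface admits no set of six Galois-stable skew exceptional curves, and it is not a blowup of the plane over $k$ (indeed it need not be $k$-rational). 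This is not ``delicate bookkeeping,'' as your final paragraph hopes; it is an outright obstruction, and overcoming it is the entire content of Elsenhans--Jahnel's theorem.

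What a correct argument needs, and what your sketch only gestures at, is different machinery in each case. For $U_1 \cong W(D_5) = G_4$: contracting the rational line identifies cubic surfaces with a rational line with blowups of degree-$4$ del Pezzo surfaces at a rational point in general position, so one reduces to $\IGP_4(k)$ (Theorem~\ref{thm:dP4IGP}, where $\Char(k) \neq 2$ and $k$ infinite genuinely enter via the conic-bundle constructions of \cite{KST89}) \emph{plus} the existence of a rational point in general position on those surfaces --- a nontrivial verification your proposal omits entirely, and one that Theorem~\ref{thm:gen position P2} (which lives on $\mathbb{P}^2$, not on a possibly irrational degree-$4$ surface) does not supply. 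For $U_{\text{ds}}$, one must construct twisted forms in which the six points are only defined over a quadratic extension interchanging the two blowdown structures; Elsenhans--Jahnel do this via explicit families, and nothing in your proposal produces such surfaces. As written, your construction proves at most the (much weaker, and essentially classical) realizability of classes conjugate into $\Sym_6$.
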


The subscripts ds and 1 indicate that surfaces with splitting group in these classes are exactly those containing a Galois-invariant double-six and rational line respectively.

\subsubsection{Degree 2}

In \cite{EJ19a,EJ19b}, Elsenhans and Jahnel described a construction of plane quartics with prescribed Galois action on the $28$ bitangents in certain cases. Using the relationship between plane quartics and del Pezzo surfaces of degree $2$ (see e.g.\ \cite[Thm.~III.3.5.2]{Kol96}), they established partial results for the inverse Galois problem for degree-$2$ del Pezzo surfaces, as summarised in the following theorem.

\begin{thm} \label{thm:CayleySteiner} \cite[Thm.~3.2]{EJ19a}, \cite[Cor.~5.3]{EJ19b},
Let $k$ be an infinite field with $\ch{k}\neq 2$. Consider the quotient map
\[\pi\colon W(E_7)\twoheadrightarrow W(E_7)/Z(W(E_7))\cong\Sp_6(\F_2),\]
and denote the (maximal) subgroups of $\Sp_6(\F_2)$ of indices $63$ and $36$ by $U_{63}$ and $U_{36}$, respectively. Then for any class $[H]$ in $\pi^{-1}(U_{63}) \cap C(W(E_7),k)$ or $\pi^{-1}(U_{36}) \cap C(W(E_7),k)$, there exists a degree-$2$ del Pezzo surface $X/k$ with $[H_X] = [H]$.
\end{thm}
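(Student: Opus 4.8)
The plan is to reduce the whole problem to the classical geometry of plane quartics and their $28$ bitangents, and then to realise the prescribed Galois actions by two explicit constructions — a \emph{Cayley octad} construction for $U_{36}$ and a \emph{Steiner complex} construction for $U_{63}$ — followed by a twisting argument that descends from the full stabiliser to each permitted subgroup.

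First I would set up the reduction. Since $\Char(k)\neq 2$, the anticanonical morphism realises a degree-$2$ del Pezzo surface $X$ as the double cover of $\mathbb{P}^2$ branched over a smooth plane quartic $C$, and the Geiser involution acts as $-1$ on $\Pic(\overline{X})$, generating the centre $Z(W(E_7))$. The $56$ lines on $\overline{X}$ fall into $28$ Geiser-conjugate pairs, each pair lying over a bitangent of $C$; hence the composite $\pi\circ\rho_X\colon G_k\to\Sp_6(\F_2)$ is exactly the Galois action on the $28$ bitangents. The two maximal subgroups have transparent geometric meaning, which I would pin down group-theoretically inside $\Sp_6(\F_2)$: $U_{36}\cong\Sym_8$ is the stabiliser of an even theta characteristic, equivalently of a \emph{Cayley octad} — eight points of $\mathbb{P}^3$ cut out by a net of quadrics, whose $\binom{8}{2}=28$ pairs index the bitangents; while $U_{63}\cong 2^5\rtimes\Sym_6$ is the stabiliser of a nonzero $2$-torsion class, equivalently of a \emph{Steiner complex} of six syzygetic pairs of bitangents.

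For the $U_{36}$ case, given a class $[H]\subseteq\pi^{-1}(U_{36})$ lying in $C(W(E_7),k)$, its image $\overline{H}\subseteq\Sym_8$ together with a Galois extension $\ell/k$ realising $H$ (which exists precisely because $[H]\in C(W(E_7),k)$) determines a $G_k$-stable octad, i.e.\ a degree-$8$ closed subscheme of $\mathbb{P}^3$ over $k$ on which $G_k$ permutes the eight geometric points through $\overline{H}$. The net of quadrics through this octad has discriminant curve a plane quartic $C/k$ whose bitangents are indexed $G_k$-equivariantly by the $28$ pairs, and its double cover is the sought del Pezzo surface. For the $U_{63}$ case I would instead use the normal form afforded by a rational Steiner complex: a rational nonzero $2$-torsion class lets one write $C$ in a syzygetic form built from conics defined over the appropriate subextensions, with the residual $2^5\rtimes\Sym_6$-action encoded by their fields of definition.

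The remaining, and most delicate, point is to realise \emph{exactly} the class $[H]$ rather than merely its image $[\overline{H}]$ in $\Sp_6(\F_2)$. The possible lifts of $\overline{H}$ to $W(E_7)$ differ by whether they contain $Z(W(E_7))$, corresponding to whether a Galois element negates all line classes (passing to the Geiser dual); one selects the correct lift by a quadratic twist of the double cover, adjusting the class of the branch data in $k^\times/k^{\times2}$, for which $\Char(k)\neq2$ is essential. I expect the main obstacle to be controlling the \emph{field of definition} throughout: one must choose the octad (respectively the Steiner conics) with coefficients in $k$ so that the splitting field of the bitangents is precisely $\ell$ and the induced permutation action is precisely $\overline{H}\hookrightarrow\Sym_8$ (respectively into $2^5\rtimes\Sym_6$), not merely some subgroup. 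Infinitude of $k$ enters to avoid the finitely many degenerate parameter loci — singular quartics, coincident octad points, and non-general Steiner data — thereby guaranteeing a smooth member carrying the prescribed action.
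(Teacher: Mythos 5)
First, a point of context: the paper does not prove this statement at all. It is imported verbatim from Elsenhans--Jahnel, namely \cite[Thm.~3.2]{EJ19a} (the Cayley octad case, $U_{36}$) and \cite[Cor.~5.3]{EJ19b} (the Steiner case, $U_{63}$), and no argument follows it in the text. So your proposal is, in effect, a reconstruction of those two cited proofs rather than of anything in the paper. Your dictionary is accurate: $W(E_7)\cong Z(W(E_7))\times\Sp_6(\F_2)$ with the Geiser involution generating the centre, the $56$ lines falling into $28$ Geiser pairs over the bitangents, $U_{36}\cong\Sym_8$ the stabiliser of an even theta characteristic (equivalently a Cayley octad), and $U_{63}\cong 2^5\rtimes\Sym_6$ the stabiliser of a nonzero $2$-torsion class (equivalently a Steiner complex). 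Your final twisting step is also sound in principle: since the extension is a direct product, the lifts of $\overline{H}=\pi(H)$ are either the full preimage or graphs of quadratic characters of $\overline{H}$, and Geiser quadratic twists (available because $\Char(k)\neq 2$) move among exactly these, using the quadratic extensions guaranteed by $[H]\in C(W(E_7),k)$.

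There is, however, a genuine gap at the central step of the $U_{36}$ case. You assert that the Galois data ``determines a $G_k$-stable octad'' and then invoke ``the net of quadrics through this octad''. Eight points of $\mathbb{P}^3$ in general position lie on only a \emph{pencil} of quadrics: the space of quadric forms is $10$-dimensional and eight general points impose independent conditions. A net through the eight points exists if and only if the octad is \emph{self-associated}, which is a nontrivial closed condition; so one cannot start from an arbitrary $G_k$-stable $8$-point subscheme realising $\overline{H}$ and expect a net, let alone a smooth discriminant quartic. Producing a self-associated octad over $k$ whose geometric points carry exactly the prescribed permutation action, with splitting field exactly $\ell$, is precisely the technical content of \cite{EJ19a}, and your sketch assumes it rather than supplies it; genericity arguments (infinitude of $k$) only help once one has a parameter space of such octads with the correct action, which is the thing to be constructed. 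The same criticism applies to the $U_{63}$ case: exhibiting a smooth quartic over $k$ in the syzygetic/determinantal normal form attached to a rational $2$-torsion class, with the residual $2^5\rtimes\Sym_6$-action prescribed on the nose, is the substance of \cite{EJ19b} and is not addressed by your appeal to ``fields of definition''. In short, the architecture of your plan matches the cited proofs, but the two existence statements it rests on are essentially the theorems themselves.
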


We obtain the following important corollary.

\begin{cor} \label{cor:igpconic2}
For $k$ an infinite field with $\Char(k) \neq 2$, the inverse Galois problem for conic degree-2 del Pezzo surfaces admits a surjective solution.
\end{cor}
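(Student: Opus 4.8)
The plan is to deduce Corollary~\ref{cor:igpconic2} from Theorem~\ref{thm:CayleySteiner} by showing that every splitting group arising from a conic degree-2 del Pezzo surface is subconjugate to $\pi^{-1}(U_{63})$. By Remark~\ref{rem:line is line}(3), a degree-2 del Pezzo surface $X/k$ is a conic del Pezzo surface precisely when its splitting group $H_X \leq G_2 \cong W(E_7)$ fixes a conic class in $\Pic(\overline{X})$. Thus the conjugacy classes I must realise are exactly those $[H] \in C(W(E_7),k)$ for which $H$ fixes some conic class, and it will suffice to show that every such class lies in $\pi^{-1}(U_{63}) \cap C(W(E_7),k)$ and that the surface produced by Theorem~\ref{thm:CayleySteiner} is again a conic del Pezzo surface.

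First I would pin down the action of the Geiser involution on conic classes. The Geiser involution is the central element $-1 \in W(E_7)$; it fixes $K_X$ and negates $K_X^\perp$, so that $D \mapsto (D\cdot K_X)K_X - D$. The assignment $C \mapsto C + K_X$ identifies the $126$ conic classes (those with $C^2=0$ and $-K_X\cdot C = 2$) $W(E_7)$-equivariantly with the $126$ roots of $E_7$, i.e.\ the classes in $K_X^\perp$ of self-intersection $-2$: indeed $(C+K_X)^2 = -2$ and $(C+K_X)\cdot K_X = 0$. Under this identification the Geiser involution becomes $\alpha \mapsto -\alpha$, so it acts freely on conic classes, pairing them into $63$ pairs $\{C,\,-2K_X - C\}$ corresponding to the $63$ root-lines $\{\alpha,-\alpha\}$.

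Next I would exploit transitivity. Since $W(E_7)$ acts transitively on the roots of $E_7$, it acts transitively on the $63$ root-lines, and this action factors through $\Sp_6(\F_2) = W(E_7)/\langle -1\rangle$, realising the point stabiliser of a root-line as the maximal index-$63$ subgroup $U_{63}$. Consequently, if $H$ fixes a conic class $C$, then $H$ stabilises the pair $\{C,\,-2K_X - C\}$, so $H$ is subconjugate to $\pi^{-1}(U_{63})$; that is, $[H] \in \pi^{-1}(U_{63}) \cap C(W(E_7),k)$. Theorem~\ref{thm:CayleySteiner} then furnishes a degree-2 del Pezzo surface $X$ with $[H_X] = [H]$, and since $H_X$ is conjugate to $H$ it again fixes a conic class, whence $X$ is a conic del Pezzo surface realising $[H]$. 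As every conic-type class permitted by the Galois theory of $k$ is of this form, this establishes surjectivity.

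The main obstacle is the precise identification of $U_{63}$ with the stabiliser of a root-line (equivalently, of a conic pair): one must verify that the transitive index-$63$ action of $\Sp_6(\F_2)$ on root-lines --- equivalently on the $63$ nonzero vectors of its defining symplectic $\F_2$-space, equivalently on the $63$ Steiner complexes of the $28$ bitangents --- has point stabiliser the maximal subgroup named $U_{63}$ in Theorem~\ref{thm:CayleySteiner}. This rests on the classical combinatorics of $W(E_7)$, and is the one place where I would appeal to the known subgroup structure of $\Sp_6(\F_2)$ rather than a direct computation.
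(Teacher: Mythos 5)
Your proposal is correct, and it follows the same overall reduction as the paper: both arguments funnel through Theorem~\ref{thm:CayleySteiner} by showing that the splitting group of any conic degree-2 del Pezzo surface is subconjugate to $\pi^{-1}(U_{63})$. The difference lies in how that inclusion is established. The paper simply cites Elsenhans--Jahnel \cite[Prop.~2.3(d), Lem.~2.11]{EJ19b}: a $k$-rational conic bundle forces a Galois-invariant Steiner hexad on the branch quartic, and $U_{63}$ is the stabiliser of such a hexad. You instead prove the inclusion directly in the Picard lattice: the $W(E_7)$-equivariant bijection $C \mapsto C + K_X$ between the $126$ conic classes and the $126$ roots of $E_7$, the identification of the Geiser involution with $-1 \in W(E_7)$ (so that Geiser-pairs of conic classes correspond to the $63$ root-lines), and Witt-type transitivity of $\Sp_6(\F_2)$ on the $63$ nonzero symplectic vectors, whose point stabiliser is the index-$63$ maximal subgroup. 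Your computations ($(C+K_X)^2 = -2$, $(C+K_X)\cdot K_X = 0$, Geiser acting as $\alpha \mapsto -\alpha$) are all correct, and the one fact you defer to the subgroup structure of $\Sp_6(\F_2)$ --- that any index-$63$ subgroup is the unique class of maximal subgroups, hence conjugate to $U_{63}$ --- is a legitimate appeal, since the $63$ Steiner complexes, the $63$ root-lines and the $63$ nonzero vectors of $\F_2^6$ are classically identified. What your route buys is self-containedness (modulo ATLAS facts) and a transparent explanation of \emph{why} the index-$63$ subgroup is the relevant one; it also makes explicit the final verification, left implicit in the paper, that the surface produced by Theorem~\ref{thm:CayleySteiner} is itself a conic del Pezzo surface because its splitting group fixes a conic class. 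What the paper's citation buys is brevity and a direct link to the bitangent geometry underlying the Elsenhans--Jahnel construction.
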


\begin{proof}
Theorem~\ref{thm:CayleySteiner} solves the inverse Galois problem for degree-2 del Pezzo surfaces whose branch curve has a Galois-invariant \emph{Steiner hexad} among its 28 bitangents; according to \cite[Prop.~2.3(d)]{EJ19b} and \cite[Lem.~2.11]{EJ19b}, any Galois action resulting in a $k$-rational conic bundle implies a Steiner hexad for the branch curve.
\end{proof}

Related work of Kulkarni and Vemulapalli \cite{KulVem24} leads to the following result.

\begin{thm} \cite[Thm.~3.1.1]{KulVem24}
Let $k$ be an infinite field with $\ch{k} \not\in \{2,3\}$. If $k$ admits degree-$d$ Galois extensions for $2 \leq d \leq 8$, then $\ICP_{2,1}(k)$ has a surjective solution.
\end{thm}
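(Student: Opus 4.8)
The plan is to bypass the bitangent combinatorics underlying the original argument and instead realise every possible line count directly, by blowing up the \emph{split} plane in seven closed points of prescribed degrees. Since any count attained by an actual surface automatically lies in $\mathcal{C}_{2,1}(k)$, and every element of $\mathcal{C}_{2,1}(k)$ is among the ten values listed for degree $2$ in Theorem~\ref{thm:line}, we have $\im\!\left(\delta_{2,1}(k)\circ\gamma_2(k)\right)\subseteq\mathcal{C}_{2,1}(k)\subseteq\{0,2,4,6,8,12,16,20,32,56\}$. Hence it suffices to exhibit, for \emph{each} of these ten values, a degree-$2$ del Pezzo surface over $k$ realising it; all three sets then coincide and $\ICP_{2,1}(k)$ is surjective.

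Fix $X=\Bl_{d_1,\dots,d_r}\mathbb{P}^2$ with $\sum_i d_i=7$, which is a degree-$2$ del Pezzo surface whenever the underlying points are in general position. Because the base $\mathbb{P}^2$ is split, Proposition~\ref{prop:galois action on blowups} shows the Galois action on $\Pic(\overline{X})$ is governed entirely by the action on the seven blown-up geometric points, so by Corollary~\ref{cor:PlaneCounts} the line count depends only on the multiset of degrees $\mathbf{d}=(d_1,\dots,d_r)$. Specialising that corollary to $r=7$ (whence $\mathbf{d}(8)=0$) gives
\[
L_X=\mathbf{d}(1)+\mathbf{d}(2)+\mathbf{d}(5)+\mathbf{d}(1;6).
\]
A direct evaluation then matches each target count to a partition of $7$: the partitions $1^7,\ 2\,1^5,\ 3\,1^4,\ 2^2 1^3,\ 4\,1^3,\ 2^3 1,\ 5\,1^2,\ 3\,2^2,\ 3^2 1,\ 4\,3$ yield line counts $56,32,20,16,12,8,6,4,2,0$ respectively. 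Each partition involves parts of size at most $5$, so realising the corresponding configuration requires only separable extensions of degrees $\le 5$, all of which exist by hypothesis. Theorem~\ref{thm:gen position P2} then supplies closed points of exactly these degrees in general position in $\mathbb{P}^2_k$, and blowing them up produces a degree-$2$ del Pezzo surface with the prescribed count.

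This realises all ten values and hence proves the theorem. I would emphasise that the argument consumes only the existence of separable extensions of degrees $2,3,4,5$ and makes no use of the characteristic hypothesis or of the extensions being Galois; it therefore recovers the stated result a fortiori and already points toward the strengthening recorded in Theorem~\ref{thm:ModestICP}. The main obstacle is not the count-matching above, which is a finite check, but the input Theorem~\ref{thm:gen position P2}: one must guarantee that closed points of prescribed degree can be placed in general position over the given field, and the genuine work lies in the combinatorial--geometric construction carried out in Appendix~\ref{sec:general position appendix}. (Were one instead to remain faithful to the bitangent method of Kulkarni--Vemulapalli, the corresponding obstacle would be controlling the Galois orbit structure on the $28$ bitangents, which is precisely what the general-position approach circumvents.)
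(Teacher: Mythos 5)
Your proof is correct. The finite check is right: specialising Corollary~\ref{cor:PlaneCounts} to $\sum_i d_i=7$ (so $\mathbf{d}(8)=0$), the partitions $1^7$, $2\,1^5$, $3\,1^4$, $2^2 1^3$, $4\,1^3$, $2^3 1$, $5\,1^2$, $3\,2^2$, $3^2 1$, $4\,3$ give line counts $56,32,20,16,12,8,6,4,2,0$, in agreement with the degree-$2$ table in Appendix~\ref{sec:Tables}; Theorem~\ref{thm:gen position P2} supplies the needed general-position points; and the sandwich $\im\left(\delta_{2,1}(k)\circ\gamma_2(k)\right)\subseteq\mathcal{C}_{2,1}(k)\subseteq\mathcal{C}_{2,1}$ makes realising all ten values sufficient. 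Be aware, though, of how this sits relative to the paper: the paper does not prove this statement at all — it quotes it from Kulkarni--Vemulapalli \cite{KulVem24}, whose argument runs through plane quartics and their $28$ bitangents, which is exactly where the hypotheses $\Char(k)\notin\{2,3\}$ and Galois (rather than merely separable) extensions of degrees up to $8$ are consumed. Your route is instead precisely the strategy the paper deploys to prove its strengthening, Theorem~\ref{thm:ModestICP}: existence of closed points of prescribed degrees in general position on $\mathbb{P}^2$ (Theorem~\ref{thm:gen position P2}, via Proposition~\ref{prop:general position dP}) combined with the blowup count formulas. The comparison is then: the bitangent construction of \emph{loc.\ cit.}\ gives information about plane quartics directly but needs the stronger field hypotheses, whereas the blowup approach needs only separable extensions of degrees $2$ through $5$ and no characteristic assumption, so it proves the cited statement a fortiori — which is exactly what the paper means when it says Theorem~\ref{thm:ModestICP} generalises the Kulkarni--Vemulapalli result by removing the characteristic and Galois assumptions. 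In short, you have not reproduced the cited proof; you have independently rediscovered the paper's own improvement of it, restricted to $d=2$, $e=1$.
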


\subsubsection{Degree 1}

Kulkarni \cite{Kul21} provided a partial solution to the inverse Galois problem for del Pezzo surfaces of degree one over $\mathbb{Q}$.

\begin{thm} \label{thm:kulkarni} \cite[Thm.~1.0.1]{Kul21}
    For all $[H] \in C(W(D_8),\mathbb{Q}) \subset C(W(E_8),\mathbb{Q})$, there exists a degree-$1$ del Pezzo surface $X/\Q$ with $[H_X] =[H]$.
\end{thm}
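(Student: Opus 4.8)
The plan is to realise the isomorphism $W(D_8)\cong C_2^7\rtimes\Sym_8$ geometrically and then exploit the rich Galois theory of $\mathbb{Q}$ to descend to each admissible conjugacy class. The first task is to pin down a family of degree-$1$ del Pezzo surfaces whose splitting group is forced into $W(D_8)$ rather than the full $W(E_8)$ of Lemma~\ref{lem:Gd}. The natural candidate is the Weierstrass-type family
\[
X_{a,b,c}\colon w^2=(z-a)(z-b)(z-c)\subset\mathbb{P}(1,1,2,3),\qquad a+b+c=0,
\]
with $a,b,c\in\mathbb{Q}[x,y]_2$ binary quadratics, so that the elliptic fibration $X\to\mathbb{P}^1_{[x:y]}$ carries full rational $2$-torsion via the sections $z=a,z=b,z=c$. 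Recalling that $K_X^\perp\subset\Pic(\overline{X})$ is the $E_8$ root lattice, whose $240$ roots (the classes of exceptional curves) split as $112+128$ according to whether they are integral or half-integral in an orthonormal frame $e_1,\dots,e_8$, I would verify by direct computation that the distinguished $2$-torsion structure is preserved exactly by the signed permutations with even sign product, i.e.\ that $\rho_X(G_{\mathbb{Q}})\subseteq W(D_8)$, and that a generic member of the family attains equality.

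Second, I would identify the geometric origin of the two factors of $C_2^7\rtimes\Sym_8$. The $\Sym_8$ factor arises from the Galois action on the eight geometric points blown up from the plane, matching the permutation homomorphism $\rho_Z\colon G_{\mathbb{Q}}\to\Sym_8$ appearing in Proposition~\ref{prop:galois action on blowups}. The even-sign subgroup $C_2^7\subset(\pm1)^8$ arises from independent quadratic ambiguities attached to each point by the double-cover structure $w\mapsto-w$, whose fibrewise incarnation is the Bertini involution acting as $-\mathrm{id}$ on $K_X^\perp$; the global parity relation is precisely what distinguishes $W(D_8)$ from the hyperoctahedral group $W(B_8)$, since an odd sign change carries a half-integral $E_8$-root out of the root system. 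Proposition~\ref{prop:galois action on blowups} then expresses $\rho_X$ as a fibre product of the permutation action with the sign data, so that prescribing the two pieces (subject to the parity constraint) prescribes the image in $W(D_8)$.

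Finally, given $[H]\in C(W(D_8),\mathbb{Q})$, I would write $H\cong\Gal(L/\mathbb{Q})$ and decompose $L$ according to the semidirect product: a subextension carrying the image of $H$ in $\Sym_8$ and a $2$-elementary part carrying the sign data. Since $\mathbb{Q}$ is Hilbertian (Definition~\ref{def:hilbertian}) and every subgroup of $\Sym_8$ occurs as a Galois group over $\mathbb{Q}$, I would choose eight Galois-conjugate points realising the $\Sym_8$-part and realise the independent sign characters using quadratic extensions ramified over disjoint sets of primes, arranging the required even-parity compatibility. A Hilbert-irreducibility argument then keeps the eight points in general position (Definition~\ref{def:general position del Pezzo}), so that $X_{a,b,c}$ is a genuine degree-$1$ del Pezzo surface with $H_X$ in the class $[H]$.

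The main obstacle, I expect, is the first step together with the surjectivity bookkeeping in the third: one must rigorously establish that the constrained family has splitting group landing in (and generically equal to) $W(D_8)$ through the $112+128$ decomposition, and simultaneously that the permutation data and the sign data can be prescribed \emph{independently} while respecting the single even-parity relation defining $D_8\subset B_8$ — all without destroying general position. Controlling this parity constraint, rather than the bare realisability of the two pieces (which the Hilbertianity of $\mathbb{Q}$ supplies freely), is where the real work lies.
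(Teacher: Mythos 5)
There is a genuine gap, and it sits at the very first step: your family contains no del Pezzo surfaces at all. Since $z$ has weight $2$, you are forced to take $a,b,c\in\mathbb{Q}[x,y]_2$, and then the differences $a-b$, $b-c$, $c-a$ are nonzero binary quadratics, which always vanish somewhere on $\mathbb{P}^1(\overline{\mathbb{Q}})$. At a root $[x_0:y_0]$ of $a-b$, all four partials of $w^2-(z-a)(z-b)(z-c)$ vanish at the point $(z,w)=(a(x_0,y_0),0)$, so the surface in $\mathbb{P}(1,1,2,3)$ has an $A_1$ singularity there: these are exactly the Weierstrass singularities under the nodes of six $I_2$ fibres. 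This is not an artefact of the parametrisation but a conceptual obstruction: on a genuine degree-$1$ del Pezzo surface every member of $\lvert -K_X\rvert$ is irreducible (each component $F_i$ has $-K_X\cdot F_i\geq 1$ by ampleness, while $-K_X\cdot(-K_X)=1$), so the anticanonical elliptic fibration of the blowup has all fibres irreducible and, by Shioda's theory, Mordell--Weil group isomorphic to $E_8$, in particular torsion-free. Hence \emph{full rational $2$-torsion never occurs} on a degree-$1$ del Pezzo surface; your smoothness requirement and your $2$-torsion requirement are mutually exclusive, and no member of the family can be repaired.

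Even setting this aside, the group theory is misidentified. The subgroup of $W(E_8)$ preserving six $I_2$ fibres together with a full level-$2$ structure is contained in the stabiliser of an $A_1^6$ configuration, of order in the low thousands --- nowhere near $\lvert W(D_8)\rvert = 2^7\cdot 8! = 5{,}160{,}960$ --- so the family could not realise more than a sliver of $C(W(D_8),\mathbb{Q})$. In fact $W(D_8)\subset W(E_8)$ has index $135$ and is the stabiliser of a single nonzero isotropic vector of $E_8/2E_8$, not of a $2$-torsion basis; this is why it is the natural ambient group for \emph{conic} del Pezzo surfaces (Lemma~\ref{lem:WDn}, Corollary~\ref{cor:ConicdP1IGPQ}). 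Your attribution of the $C_2^7$ factor also fails: the Bertini involution acts as $-1$ on $K_X^\perp$, i.e.\ as the \emph{central} element of $W(E_8)$, a single $C_2$; and blowups of $\mathbb{P}^2$ in eight points have splitting group inside the permutation copy of $\Sym_8$ by Proposition~\ref{prop:galois action on blowups}, so step 3 has no geometric mechanism producing nontrivial sign parts (your appeal to ``every subgroup of $\Sym_8$ occurs over $\mathbb{Q}$'' is also unnecessary, since the statement already restricts to classes in $C(W(D_8),\mathbb{Q})$). Finally, for comparison: the paper does not prove this theorem but quotes \cite{Kul21}, whose actual construction --- dissected in the proof of Lemma~\ref{lem:KulkarniExtensionZero} --- produces the surfaces via families of genus-$4$ branch curves of the bianticanonical double cover over an affine base, a mechanism entirely different from an elliptic pencil with prescribed $2$-torsion.
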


In particular, we deduce the following result.

\begin{cor} \label{cor:ConicdP1IGPQ}
Over $\mathbb{Q}$, the inverse Galois problem for conic del Pezzo surfaces has a surjective solution. 
\end{cor}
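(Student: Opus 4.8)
The plan is to reduce the statement to Kulkarni's realisation result (Theorem~\ref{thm:kulkarni}) by a lattice-theoretic description of which splitting groups can occur for \emph{conic} del Pezzo surfaces of degree $1$. Write $G_1 = W(E_8)$ as in Lemma~\ref{lem:Gd}. By Remark~\ref{rem:line is line}(3), a conic class in $\Pic(\overline X)$ is $G_k$-invariant (equivalently $H_X$-invariant, since the $G_k$-action on $\Pic(\overline X)$ factors through $H_X$) if and only if the associated conic fibration $X\to\mathbb{P}^1$ is defined over $k$; as $\mathbb{P}^1$ has $k$-points, such a fibration has $k$-rational fibres, so $X$ is a conic del Pezzo surface, and conversely a $k$-rational conic has $G_k$-invariant class. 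Hence a degree-$1$ del Pezzo surface is a conic del Pezzo surface precisely when $H_X$ fixes some conic class. Since $W(E_8)$ permutes the conic classes transitively, the splitting groups arising from conic del Pezzo surfaces are exactly the subgroups of $W(E_8)$ conjugate into the stabiliser of a single fixed conic class $C_0$.

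The technical core is to identify this stabiliser. Because $K_X^2 = 1$, the lattice $\Pic(\overline X)$ splits as the orthogonal direct sum $\mathbb{Z} K_X \perp K_X^\perp$ with $K_X^\perp \cong E_8$, and $W(E_8)$ acts trivially on $\mathbb{Z} K_X$ and as the Weyl group on $E_8$. Writing a conic class as $C = aK_X + v$ with $v\in K_X^\perp$, the conditions $C\cdot K_X = -2$ and $C^2 = 0$ force $a = -2$ and $v^2 = -4$; thus conic classes correspond bijectively to the $2160$ vectors of norm $4$ in $E_8$ (matching the count in Theorem~\ref{thm:geomcounts}), and fixing $C$ is the same as fixing $v$. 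I would then realise $E_8 \supset D_8$ in coordinates, with $D_8$ the roots $\pm e_i \pm e_j$, and take $v = 2e_8$: the roots of $E_8$ orthogonal to $v$ are exactly the $\pm e_i \pm e_j$ with $1\le i<j\le 7$, a copy of $D_7$. Since the pointwise stabiliser of a vector in a finite reflection group is generated by the reflections in the roots orthogonal to it, $\operatorname{Stab}(C_0) = W(D_7)$ (confirmed by the order count $|W(D_7)| = 2^6\cdot 7! = |W(E_8)|/2160$), and this $W(D_7)$ lies in $W(D_8)$ via the standard chain $D_7 \subset D_8 \subset E_8$.

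I would then conclude as follows. Every splitting group of a conic del Pezzo surface of degree $1$ is conjugate into $W(D_7)\subseteq W(D_8)$, so its conjugacy class lies in $C(W(D_8),\mathbb{Q})\subseteq C(W(E_8),\mathbb{Q})$. By Theorem~\ref{thm:kulkarni}, each class in $C(W(D_8),\mathbb{Q})$ is realised by some degree-$1$ del Pezzo surface $X/\mathbb{Q}$; when the class comes from a conic surface, $H_X$ fixes a conic class and hence the realising $X$ is itself a conic del Pezzo surface (by the first paragraph). This exhibits every conic-type class in $C(W(E_8),\mathbb{Q})$ as $[H_X]$ for a conic del Pezzo surface over $\mathbb{Q}$, which is exactly the asserted surjective solution.

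The main obstacle is the stabiliser computation, together with one genuine subtlety: $W(D_8)$ does \emph{not} itself fix any conic class. Indeed $D_8$ spans $E_8\otimes\mathbb{Q}$, so the $W(D_8)$-fixed space in $\Pic(\overline X)\otimes\mathbb{Q}$ is only $\mathbb{Q} K_X$, which contains no conic class; the relevant stabiliser is therefore the strictly smaller group $W(D_7)$. What makes the argument succeed is precisely that Kulkarni's theorem realises \emph{all} of $C(W(D_8),\mathbb{Q})$, a set that contains the classes conjugate into $W(D_7)$. In writing this up I would take particular care to justify the transitivity of $W(E_8)$ on norm-$4$ vectors (equivalently on conic classes, via Theorem~\ref{thm:geomcounts}) and the standardness of the inclusion $D_7\subset D_8$, so that the $D_7$ cut out by $C_0$ genuinely embeds in a copy of $D_8$ inside $E_8$.
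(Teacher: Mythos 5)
Your proposal is correct, and its skeleton coincides with the paper's two-line proof: the splitting group of a conic degree-$1$ del Pezzo surface is conjugate into $W(D_7)\leq W(D_8)\leq W(E_8)$, and Theorem~\ref{thm:kulkarni} realises every class in $C(W(D_8),\mathbb{Q})$, with the realising surface automatically conic because its splitting group fixes a conic class. The difference lies in how you obtain the containment $H_X\leq W(D_7)$: the paper simply cites Lemma~\ref{lem:WDn} (i.e.\ \cite[Cor.~2.13]{KST89}), which says the stabiliser of $K_Y$ and the fibre class of a conic bundle with $n$ singular fibres is $W(D_n)$, here with $n=8-d=7$; you instead re-derive this in degree $1$ by pure lattice geometry, writing $C=-2K_X+v$ to identify conic classes with the $2160$ norm-$4$ vectors of $K_X^\perp\cong E_8$, applying Steinberg's theorem to see that the stabiliser of $v=2e_8$ is the Weyl group of the orthogonal root system $D_7$, and using the order count $|W(E_8)|/|W(D_7)|=2160$ to get transitivity. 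Your route buys self-containedness (no appeal to the conic-bundle formalism) together with the explicit check that the chain $D_7\subset D_8\subset E_8$ is realised in coordinates, and your observation that $W(D_8)$ itself fixes no conic class is correct and worth making — it explains why the relevant stabiliser is strictly smaller than the group Kulkarni works with, and why realising all of $C(W(D_8),\mathbb{Q})$ is precisely enough. Two minor points to tighten in a final write-up: the base of the fibration attached to a $G_k$-invariant conic class is a priori only a genus-$0$ curve, and one should say it is $\mathbb{P}^1_k$ here because a degree-$1$ del Pezzo surface always has a rational point (the anticanonical base point); and transitivity on norm-$4$ vectors should be presented as the conclusion of the orbit--stabiliser count rather than invoked before it, which your parenthetical essentially already does.
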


\begin{proof}
For $X/k$ a del Pezzo surface of degree $1$ with a conic bundle, we have $H_X \leq W(D_7) \leq W(D_8) \leq W(E_8)$ by Lemma~\ref{lem:WDn}.
\end{proof}

\begin{lem} \label{lem:KulkarniExtensionZero}
Theorem~\ref{thm:kulkarni} extends to any infinite field $k$ with $\Char(k) = 0$.
\end{lem}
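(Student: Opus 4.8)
The plan is to re-run the construction behind Theorem~\ref{thm:kulkarni} verbatim over $k$ in place of $\mathbb{Q}$. Fix a class $[H]\in C(W(D_8),k)\subseteq C(W(E_8),k)$; by the definition of $C(W(D_8),k)$ there is a Galois extension $L/k$ with $\Gal(L/k)\cong H$ realising $[H]$ in the prescribed conjugacy class inside $W(D_8)$. The key observation is that Kulkarni's argument \cite{Kul21} takes precisely such a Galois extension as input and returns an explicit degree-$1$ del Pezzo surface whose splitting field over the base has Galois group $H$. The only essential use of $\mathbb{Q}$ is to certify that the relevant subgroups of $W(D_8)$ arise as Galois groups; over $k$ this role is played directly by the hypothesis $[H]\in C(W(D_8),k)$, so no inverse-Galois input specific to $\mathbb{Q}$ is required.

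First I would isolate the field-theoretic inputs of the construction and verify that they survive over an arbitrary infinite field of characteristic $0$. Writing $W(D_8)\cong C_2^{7}\rtimes\Sym_8$ inside the hyperoctahedral group $C_2\wr\Sym_8$, the factors beyond the permutation action of $\Sym_8$ are governed by quadratic (square-root) data; in characteristic $0$ every such datum is separable, so the ``sign-change'' part of the $W(D_8)$-action is genuinely realised by separable quadratic subextensions of $L$. This is exactly where the characteristic hypothesis is used, in the same spirit as the obstructions encountered elsewhere in this paper in characteristic $2$. The remaining requirements---smoothness of the surface and the general position of the auxiliary blow-up data---are open conditions, hence satisfiable over any infinite $k$ just as over $\mathbb{Q}$.

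The main obstacle is to confirm that every step of \cite{Kul21} which is genuinely arithmetic over $\mathbb{Q}$ (for instance any appeal to the Hilbertianity of $\mathbb{Q}$ or to an explicit rational-point search) serves only to exhibit $H$ as a Galois group, and can therefore be bypassed once $L/k$ is in hand. Should some step resist this reduction, I would instead argue intrinsically with the machinery developed above: realise $X$ as a blowup so that, by Proposition~\ref{prop:galois action on blowups}, $H_X$ is the fibre product of the Galois action on the blown-down relatively minimal surface with the permutation action $\rho_Z$ on the blown-up points. One cannot here blow up $\mathbb{P}^2$ alone, since that yields only actions contained in $\Sym_8$; instead one blows up a relatively minimal surface whose geometric Picard group already carries the $C_2^{7}$-part of the $W(D_8)$-action, supplying the points in general position through Theorem~\ref{thm:gen position P2}, whose separability hypotheses hold automatically in characteristic $0$ and are met for the specific $H$ by the subextensions of $L$.
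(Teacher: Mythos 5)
Your overall strategy---audit Kulkarni's proof of \cite[Thm.~1.0.1]{Kul21} for every place the ground field is used and check that each surviving hypothesis holds over an infinite field of characteristic $0$---is exactly the approach the paper takes. But your audit misses the one step that actually forces the hypothesis $\Char(k)=0$, and your substitute reasoning for it is invalid. You claim the characteristic assumption is consumed by separability of the quadratic (``sign-change'') data realising the $C_2$-factors of $W(D_8)$; that step only requires $\Char(k)\neq 2$ (it is Kulkarni's Construction~2.1.1, characterising quadratic extensions as radical). The genuine characteristic-$0$ input is elsewhere: Kulkarni's Construction~3.1.1 produces a family of genus-$4$ curves over an affine base, and while the locus of smooth members is indeed open (Lemma~3.1.3 in \emph{loc.\ cit.}), openness buys you nothing unless that locus is \emph{non-empty}. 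Your sentence ``smoothness \ldots\ [is an] open condition, hence satisfiable over any infinite $k$'' conflates the two: over an infinite field, a non-empty open subset of an irreducible base has dense rational points, but non-emptiness must be established separately. Kulkarni does this by exhibiting a smooth member over $\mathbb{F}_{101}$ and \emph{lifting} it to characteristic $0$---an argument that certifies non-emptiness precisely for $\Char(k)=0$ and for no other characteristic. This is why the paper's statement is restricted to characteristic $0$, and why the subsequent remark in the paper notes that extending to $\Char(k)\notin\{2,3\}$ would require showing directly that Construction~3.1.1 yields at least one smooth curve. Your proposal, as written, would ``prove'' the stronger statement for all infinite fields of characteristic $\neq 2$, which should have been a warning sign.

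Your fallback argument does not repair this. Blowing up points supplied by Theorem~\ref{thm:gen position P2} lives on (blowups of) $\mathbb{P}^2$, so by Proposition~\ref{prop:galois action on blowups} the resulting splitting groups are fibre products involving permutation actions $T_n\leq\Sym_n$ on the blown-up points; to reach general subgroups of $W(D_8)$ you would need to start from a relatively minimal surface of low degree whose Picard lattice already carries the $C_2^7$-part of the action---but the existence of such surfaces over a general infinite field is precisely part of what is at stake (it is open even for types $S_{2,I}$, $S_{1,I}$, $S_{1,\II}$, as the paper notes), so this is circular. The correct proof is the careful enumeration the paper gives: $\Char(k)\neq 2$ enters in Construction~2.1.1, infiniteness of $k$ in Lemma~2.2.1 (Zariski-density of rational points in affine space), $\Char(k)=0$ in Lemma~3.1.3 via the lifting from $\mathbb{F}_{101}$, and the remainder of Kulkarni's argument goes through verbatim.
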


\begin{proof}
Following the arguments in the proof of \cite[Thm.~1.0.1]{Kul21}, it is easily seen that the properties of the ground field $k$ are invoked in precisely the following ways:
\begin{enumerate}
\item Construction 2.1.1 makes use of the fact that $\Char(k) \neq 2$ in order to characterise quadratic extensions as radical.
\item Lemma 2.2.1 uses that $k$ is infinite in order to ensure Zariski-density of rational points in affine space.
\item Lemma 3.1.3 is the point at which $k$ must be assumed infinite with $\Char(k) = 0$: Construction 3.1.1 gives a procedure for generating genus-4 curves over an affine base, and the locus of smooth curves is open, but one must show that it is non-empty. Example 3.1.2 does this by lifting a smooth curve from $\mathbb{F}_{101}$.
\item The remainder of the proof goes through for $k$ infinite with $\Char(k) \neq 2$. \qedhere
\end{enumerate}
\end{proof}

\begin{cor} \label{cor:KulkarniExtensionZero}
Over any infinite field $k$ with $\Char(k) = 0$, the inverse Galois problem for conic del Pezzo surfaces has a surjective solution. 
\end{cor}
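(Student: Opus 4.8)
The plan is to observe that Corollary~\ref{cor:KulkarniExtensionZero} stands to Lemma~\ref{lem:KulkarniExtensionZero} exactly as Corollary~\ref{cor:ConicdP1IGPQ} stands to Theorem~\ref{thm:kulkarni}, so that the proof over $\mathbb{Q}$ transfers verbatim once $\mathbb{Q}$ is replaced by an arbitrary infinite field $k$ of characteristic $0$. First I would isolate the purely group-theoretic input, which is field-independent: by Lemma~\ref{lem:WDn}, a degree-$1$ del Pezzo surface $X/k$ admits a $k$-rational conic fibration precisely when its splitting group $H_X$ is conjugate in $G_1 = W(E_8)$ into the chain $W(D_7) \leq W(D_8) \leq W(E_8)$. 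Via Proposition~\ref{prop:detecting rational conics}, this is equivalent to the existence of Galois orbits of lines $\mf{L}_1, \mf{L}_2$ with $(\mf{L}_1 + \mf{L}_2)^2 = 0$, a condition depending only on the action of $H_X$ on the geometric Picard lattice and hence insensitive to the ground field.

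It follows that every conjugacy class realised by a conic degree-$1$ del Pezzo surface lies in the image of the natural map $C(W(D_8),k) \to C(W(E_8),k)$. To establish surjectivity it therefore suffices to realise each such class by an actual surface over $k$, and this is exactly what Lemma~\ref{lem:KulkarniExtensionZero} supplies: for every $[H] \in C(W(D_8),k) \subset C(W(E_8),k)$ there is a degree-$1$ del Pezzo surface $X/k$ with $[H_X] = [H]$. For the relevant classes $H_X$ is conjugate into $W(D_7)$, so Proposition~\ref{prop:detecting rational conics} guarantees that $X$ carries a $k$-rational conic and is thus genuinely a conic del Pezzo surface. Hence the restriction of $\gamma_1(k)$ to conic del Pezzo surfaces is surjective onto the admissible classes.

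I expect no real obstacle, since the entire arithmetic content has already been absorbed into Lemma~\ref{lem:KulkarniExtensionZero}, whose proof pinpoints where infinitude of $k$ and $\Char(k) = 0$ enter Kulkarni's construction. The only step needing minor care is the bookkeeping with subgroup conjugacy classes -- checking that the containment $W(D_7) \leq W(D_8)$ and the identification of conic bundles with splitting groups lying in $W(D_7)$ interact compatibly with conjugation in $W(E_8)$. As this is identical over $k$ and over $\mathbb{Q}$, I would conclude simply by noting that the proof of Corollary~\ref{cor:ConicdP1IGPQ} applies \emph{mutatis mutandis} with Lemma~\ref{lem:KulkarniExtensionZero} in place of Theorem~\ref{thm:kulkarni}.
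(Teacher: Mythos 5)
Your proposal is correct and takes essentially the same route as the paper: Corollary~\ref{cor:KulkarniExtensionZero} is obtained from Lemma~\ref{lem:KulkarniExtensionZero} exactly as Corollary~\ref{cor:ConicdP1IGPQ} is obtained from Theorem~\ref{thm:kulkarni}, via the containments $H_X \leq W(D_7) \leq W(D_8) \leq W(E_8)$ of Lemma~\ref{lem:WDn}. Your additional check via Proposition~\ref{prop:detecting rational conics} that the realising surfaces genuinely carry a $k$-rational conic fibration (the converse direction, which depends only on the splitting group's action on the Picard lattice) merely makes explicit a step the paper leaves implicit, and is a sound piece of care.
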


\begin{rem}
Corollary~\ref{cor:KulkarniExtensionZero} can be extended to infinite fields $k$ with $\Char(k) \not\in \{2,3\}$ if one can show that \cite[Construction~3.1.1]{Kul21} yields at least one smooth curve.
\end{rem}

\subsubsection{Conic bundles}

We briefly discuss a related inverse problem solved by Kunyavski\u{\i}, Skorobogatov and Tsfasman mentioned above.

\begin{lem}\cite[Cor.~2.13]{KST89} \label{lem:WDn}
Let $\phi: Y \rightarrow C$ be a (standard) conic bundle with $n \leq 7$ singular fibres. Then the subgroup of $\Aut(\Pic(\overline{Y}))$ fixing $K_Y$ and the class of fibres of $\phi$ is isomorphic to $W(D_n)$.
\end{lem}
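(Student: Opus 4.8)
The statement is purely lattice-theoretic: the geometry of $\phi$ serves only to pin down the abstract isometry type of $\Pic(\overline{Y})$ together with the two distinguished classes $K_Y$ and $f:=[\text{fibre}]$. The plan is to identify the saturated orthogonal complement $M^\perp$ of $M:=\langle K_Y,f\rangle$ with the (negative-definite) root lattice of $D_n$, and then to compute the group of isometries of $\Pic(\overline{Y})$ fixing $M$ pointwise via the discriminant form. First I would record the structure of $\Pic(\overline{Y})$ over $\overline{k}$. A standard conic bundle with $n$ singular fibres is obtained from a Hirzebruch surface by $n$ blow-ups, so $\Pic(\overline{Y})$ is the odd unimodular lattice of signature $(1,n+1)$, and each reducible fibre splits as $F_i=F_i'+F_i''$ with $F_i',F_i''$ disjoint-from-each-other $(-1)$-curves meeting in one point and $F_i'+F_i''=f$. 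Adjunction gives $f^2=0$, $K_Y\cdot f=-2$, $(F_i')^2=-1$ and $K_Y\cdot F_i'=-1$, whence $K_Y^2=8-n$ and $\det M=K_Y^2f^2-(K_Y\cdot f)^2=-4$.

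Next I would exhibit the $D_n$ root system explicitly. Writing $e_i:=[F_i']$ and setting $\epsilon_i:=e_i-\tfrac12 f\in\Pic(\overline Y)\otimes\mathbb{Q}$, a direct computation gives $\epsilon_i\cdot\epsilon_j=-\delta_{ij}$ and $\epsilon_i\cdot f=\epsilon_i\cdot K_Y=0$, so the $\epsilon_i$ form an orthonormal (up to sign) $\mathbb{Q}$-basis of $M^\perp\otimes\mathbb{Q}$. The integral classes
\[
\epsilon_i-\epsilon_j=e_i-e_j,\qquad \epsilon_i+\epsilon_j=e_i+e_j-f \quad (i\neq j)
\]
lie in $\Pic(\overline Y)$, have self-intersection $-2$, and are orthogonal to both $K_Y$ and $f$; they are exactly the roots $\pm\epsilon_i\pm\epsilon_j$ of $D_n$. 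They span a copy of the $D_n$ root lattice $Q(D_n)\subseteq M^\perp$, and since $|\det Q(D_n)|=4=|\det M^\perp|$ (the latter because $M$ embeds primitively into a unimodular lattice), the index $[M^\perp:Q(D_n)]$ is $1$ or $2$. A short check that the relevant half-integral glue vectors (the ``vector'' class $\epsilon_1$ and, for $n$ even, the spinor classes $\tfrac12\sum_i\epsilon_i$) do \emph{not} lie in $\Pic(\overline Y)$ — they would force $\tfrac12 f$ or $\tfrac12\sum e_i$ to be integral — rules out the index-$2$ overlattices, so $M^\perp\cong -Q(D_n)$ and in particular $M$ is primitive.

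Finally I would compute the group $G$ of isometries of $\Pic(\overline Y)$ fixing $K_Y$ and $f$. Restriction gives a homomorphism $G\to \mathrm{O}(M^\perp)$; it is injective because any $g\in G$ fixing $M^\perp$ also fixes $M$, hence the finite-index sublattice $M\oplus M^\perp$ pointwise, forcing $g=\mathrm{id}$. For surjectivity onto $W(D_n)$: each reflection in a $D_n$-root $r\in M^\perp$ fixes $r^\perp\supseteq M$, hence defines an element of $G$, so $W(D_n)\subseteq G|_{M^\perp}$. Conversely, because $g$ fixes $M$ pointwise, the induced isometry $g|_{M^\perp}$ acts trivially on the discriminant group $M^\perp{}^\vee/M^\perp\cong M^\vee/M$ (the gluing between the two discriminant forms being $g$-equivariant by Nikulin's theory of primitive embeddings in unimodular lattices). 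The kernel of $\mathrm{O}(Q(D_n))\to \mathrm{O}\big(\mathrm{disc}\,Q(D_n)\big)$ is exactly $W(D_n)$: the Weyl group acts trivially on the discriminant, while the extra generator of $\mathrm{O}(Q(D_n))$ beyond $W(D_n)$ — a single sign change $\epsilon_1\mapsto-\epsilon_1$ (and, for $n=4$, the triality automorphisms) — permutes the nontrivial discriminant classes nontrivially. Combining the two inclusions yields $G\cong W(D_n)$.

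I expect the main obstacle to be the second paragraph: correctly identifying $M^\perp$ as the root lattice rather than one of its index-$2$ overlattices $\mathbb{Z}^n$ or $D_n^\pm$, which is precisely the non-integrality check on the glue vectors and simultaneously the verification that $M$ is primitive. The discriminant-kernel computation in the last step requires some care across the parity of $n$ (and the triality case $n=4$ among the relevant range $1\le n\le 7$), but it is a standard fact about root lattices; the very small cases $n\le 2$ are handled by the coincidences $D_2\cong A_1^{\times2}$, $D_3\cong A_3$ together with the appropriate conventions.
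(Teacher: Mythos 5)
The paper does not prove this lemma at all --- it is imported verbatim from \cite[Cor.~2.13]{KST89} --- so your blind proposal is not competing with an internal argument but supplying a self-contained proof, and as far as I can check it is a correct one. The intersection numbers are right ($e_i\cdot f=0$, $K_Y\cdot f=-2$, $K_Y^2=8-n$, $\det M=-4$); the glue-vector check does rule out the index-two overlattices $\mathbb{Z}^n$ and $D_n^+$ (pairing the candidate half-integral classes against $F_1''$ produces $\tfrac12$, and $\tfrac12 f$ is excluded since $D^2+K_Y\cdot D$ is even for integral $D$, or by pairing with a section, which exists over $\overline{k}$ by Tsen); the restriction map is injective for the finite-index reason you give; reflections in the $(-2)$-roots land in $G$; the Nikulin gluing argument correctly forces $g|_{M^\perp}$ into the discriminant kernel; and the kernel computation $\ker\left(\mathrm{O}(Q(D_n))\to\mathrm{O}(\mathrm{disc})\right)=W(D_n)$ checks out in every case $1\leq n\leq 7$, including triality at $n=4$ and the degenerate $n\leq 2$ coincidences. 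The route is genuinely different from the classical one in \cite{KST89}, which is more elementary: an isometry fixing $K_Y$ and $f$ permutes the finite set of classes $x$ with $x^2=-1$, $x\cdot K_Y=-1$, $x\cdot f=0$ --- exactly the $2n$ fibre-component classes $e_i$ and $f-e_i$ --- so $G$ embeds in the group of signed permutations of $\epsilon_1,\dots,\epsilon_n$, and a single integrality computation (e.g.\ on $\Bl_{1,1}\mathbb{P}^2$ one finds a lone swap $e_1\leftrightarrow f-e_1$ would send $E_1$ to $\tfrac12(\ell+E_1)-E_2\notin\Pic$) forces an even number of sign changes, giving $W(D_n)$ directly with no discriminant forms and no parity/triality case analysis. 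The two proofs hinge on the same integrality phenomenon; yours packages it through discriminant-kernel machinery, which is heavier but uniform in $n$ and makes the primitivity of $M$ and the identification $M^\perp\cong -Q(D_n)$ explicit, whereas the direct route is shorter but hides these facts. Two cosmetic points: the phrase ``disjoint-from-each-other $(-1)$-curves meeting in one point'' conflates two statements ($F_i'\cdot F_j'=0$ for $i\neq j$, while $F_i'\cdot F_i''=1$); and once $|\det M^\perp|=4=|\det Q(D_n)|$ is in hand the index is forced to be $1$, not ``$1$ or $2$'' --- your guarded phrasing is needed only because primitivity of $M$ is established afterwards, though it also follows immediately by pairing $aK_Y+bf$ against $e_1$ and a section.
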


One may then define the \emph{splitting group} of $Y$ to be the image of $G_k$ in $W(D_n)$.

\begin{thm}\cite[Thm.~6.3]{KST89} \label{thm:igpconic}
The inverse Galois problem for conic bundles with $n \leq 7$ singular fibres over any infinite field $k$ with $\ch{k} \neq 2$ has a surjective solution: given any subgroup $G \leq W(D_n)$, there exists a conic bundle $Y \rightarrow C$ with splitting group conjugate to $G$ if and only if $G$ is a Galois group over $k$.
\end{thm}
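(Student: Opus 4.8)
The plan is to prove both directions of the stated equivalence, the forward (``only if'') direction being immediate and the reverse (``if'') direction carrying all the content. Recall first that $W(D_n)\cong C_2^{n-1}\rtimes\Sym_n$, realised as the group of signed permutations of $\{1,\dots,n\}$ with an even number of sign changes; write $p:W(D_n)\twoheadrightarrow\Sym_n$ for the projection forgetting signs. For the forward direction, the splitting group of a conic bundle $Y$ is by definition the image of $\rho_Y:G_k\to W(D_n)$; letting $L=k_s^{\ker\rho_Y}$, the fundamental theorem of Galois theory identifies this image with $\Gal(L/k)$, so it is a Galois group over $k$. It therefore remains to realise an arbitrary $G\le W(D_n)$ with $G\cong\Gal(L/k)$ as a splitting group.

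Next I would set up the geometric dictionary. Over $k_s$, a standard conic bundle $\overline{Y}\to\mathbb{P}^1$ has exactly $n$ singular fibres $F_1,\dots,F_n$, each a pair $F_i=C_i^+\cup C_i^-$ of $(-1)$-curves meeting in a point, with $[C_i^+]+[C_i^-]=[F]$ the fibre class. Choosing a basis of $\Pic(\overline{Y})$ adapted to $[F]$, a section, and the classes $[C_i^+]-[C_i^-]$ shows, as in Lemma~\ref{lem:WDn}, that the subgroup of $\Aut(\Pic(\overline{Y}))$ fixing $K_Y$ and $[F]$ is exactly $W(D_n)$, acting by permuting the indices $i$ and flipping the signs of the $[C_i^+]-[C_i^-]$ subject to the global even-parity constraint (this is the $D_n$-versus-$B_n$ distinction). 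Thus to prescribe $\rho_Y$ it suffices to prescribe two pieces of data: (a) the $G_k$-action on the set $\{F_1,\dots,F_n\}$, an $\Sym_n$-valued datum, which I realise by the locations of the singular fibres as a Galois-stable set of $n$ points on $\mathbb{P}^1$; and (b) for each $i$, whether the relevant Galois elements interchange $C_i^+$ and $C_i^-$, a sign datum governed by the splitting field of the degenerate conic over the residue field at $F_i$.

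The construction then proceeds as follows. Given $G\cong\Gal(L/k)$ together with a fixed embedding $G\hookrightarrow W(D_n)$, I first realise $p(G)\le\Sym_n$ as a Galois action on $n$ points: since $G\cap C_2^{n-1}\trianglelefteq G$, the quotient $p(G)$ is itself a Galois group over $k$, so there is a separable étale $k$-algebra of dimension $n$ whose $n$ geometric points carry the prescribed $p(G)$-action, and I place these points on $\mathbb{P}^1$ as the discriminant locus. I then write $Y\subset\mathbb{P}^2\times\mathbb{P}^1$ as the vanishing of a ternary quadratic form over $k(t)$ whose fibrewise discriminant is, up to squares, the degree-$n$ polynomial $f(t)\in k[t]$ cutting out these points; since $\Char k\neq 2$ I may take this form diagonal, and I tune its coefficients so that at each root of $f$ the splitting type of the degenerate conic matches the sign datum coming from $L$. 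Infiniteness of $k$ lets me choose the remaining coefficients generically, so that the total space is smooth and the only degenerate fibres occur at the simple roots of $f$, yielding a genuine standard conic bundle with exactly $n$ singular fibres.

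The main obstacle, and the heart of the argument, is to guarantee that the splitting group of the assembled $Y$ is \emph{exactly} $G$ rather than some larger subgroup of $W(D_n)$. This requires the sign characters in (b) to be coordinated with the permutation action in (a) so that their combined image realises the chosen embedding $G\hookrightarrow W(D_n)$ on the nose, respecting the even-parity constraint; controlling this compatibility amounts to solving an embedding problem for the sign part simultaneously across all fibres, and is precisely where the representation theory of $W(D_n)$ on its weight lattice enters. It is also exactly here that $\Char k\neq 2$ is essential: the dictionary between sign data and quadratic (Kummer) extensions used to prescribe the degenerate fibres has no clean analogue in characteristic $2$, where one would be forced into Artin--Schreier extensions, which is the source of the breakdown noted before the statement.
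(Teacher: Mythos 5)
There is a genuine gap, and you have in effect flagged it yourself: your final paragraph identifies ``the heart of the argument'' --- coordinating the sign data with the permutation data so that the splitting group is exactly the prescribed embedding $G\hookrightarrow W(D_n)$ --- and then does not carry it out, deferring to an unspecified ``embedding problem'' and an unexplained appeal to the representation theory of $W(D_n)$. Worse, the framework you set up cannot be completed as stated: realising $p(G)\leq \Sym_n$ by an \'etale algebra of dimension $n$ and then separately prescribing ``for each $i$'' a quadratic sign datum treats the sign part as a tuple of independent fibrewise characters, which cannot encode elements of $G$ that simultaneously permute fibres and flip signs (already in $W(D_2)$ the element $e_1\mapsto e_2\mapsto -e_1$ of order $4$ is invisible to any per-fibre character of a stabiliser). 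Since the permutation and sign parts of $\rho_Y$ are correlated, the correct datum is not a pair (degree-$n$ algebra, signs) but the Galois descent of the $G$-set of $2n$ signed vectors $\{\pm e_1,\dots,\pm e_n\}$: this is a degree-$2n$ \'etale algebra $\widetilde{K}$ with involution over the degree-$n$ algebra $K$ cutting out the discriminant, and because $G$ acts faithfully on the signed vectors, matching the components of the degenerate fibres with $\Spec\widetilde{K}$ pins the splitting group down to exactly $G$ --- which your sketch never does; as written you have no argument excluding a proper subgroup of, or a group merely mapping onto, the intended image. Note also that there is no embedding problem to solve: $L$ with $\Gal(L/k)\cong G$ is given, and all required data lives inside $L$ by descent.

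Concretely, the missing mechanism (which is how \cite{KST89} proceed; the present paper only cites their Theorem~6.3 and gives no proof) is: write $\widetilde{K}=K[\sqrt{\beta}]$ for some $\beta\in K^\times$, using $\Char(k)\neq 2$; observe that the constraint $G\leq W(D_n)$ (triviality of the product-of-signs character) translates into $N_{K/k}(\beta)\in k^{\times 2}$, and that this norm condition is precisely the reciprocity condition allowing a ternary quadratic form over $k(t)$ --- equivalently a quaternion algebra --- whose residues are $\beta$ at the zero locus of $f$ and trivial elsewhere, including at infinity. One then interpolates $\beta$ by a polynomial via $K\cong k[t]/(f)$, and uses infiniteness of $k$ for smoothness, relative minimality, and the absence of extra degenerate fibres. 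Your sketch reproduces the outer frame of this argument (discriminant points for the permutation part, fibre-splitting fields for the sign part, genericity, and the observation that characteristic $2$ breaks the Kummer dictionary), but without the $\widetilde{K}$-descent and the norm/reciprocity step it does not constitute a proof: the even-parity constraint is not something to be ``ensured'' by tuning, it is automatic from $G\leq W(D_n)$ and is exactly what makes the construction possible.
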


\begin{rem}
It is not automatic that the conic bundles with prescribed Galois action obtained from Theorem~\ref{thm:igpconic} are del Pezzo surfaces. Indeed, the proof of \emph{loc.\ cit.}\ notes that, in degree $4$, the obtained surface may be an Iskovskikh surface, but that one can make an additional transformation to obtain a del Pezzo surface.
\end{rem}

Despite the previous remark, the inverse Galois problem for conic del Pezzo surfaces of degree $ d \geq $ over $k$ infinite with $\Char(k) \neq 2$ has a surjective solution (Corollary~\ref{cor:igpconic2}).

\subsubsection{Finite fields}

We now summarise previous work over finite fields.

\begin{rem}
Over finite fields, the inverse Galois problem for del Pezzo surfaces in the literature takes on a superficially different form: since $\Gal(\overline{\mathbb{F}}_q/\mathbb{F}_q)$ is generated by the Frobenius automorphism, $\rho_X(G_{\mathbb{F}_q})$ from Definition~\ref{def:galrep} is determined by the image of Frobenius, which is well-defined in the associated Weyl group up to conjugacy. Thus, the inverse Galois problem for del Pezzo surfaces over finite fields is traditionally nominally concerned with which conjugacy classes of \emph{elements} can be obtained as the image of Frobenius. However, since conjugacy classes of elements are in bijection with conjugacy classes of cyclic subgroups, this is equivalent to the version of the problem which we state.
\end{rem}

Work of Trepalin \cite{Tre17,Tre20} and Loughran--Trepalin \cite{LT20} gives the next theorem.

\begin{thm}
$\IGP_d(\mathbb{F}_q)$ admits a surjective solution for all finite fields $\mathbb{F}_q$ and all $d \geq 2$.
\end{thm}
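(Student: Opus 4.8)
The plan is to assemble the claimed surjective solution to $\IGP_d(\F_q)$ for all $d \geq 2$ degree by degree, citing the relevant literature. First I would dispose of the high-degree cases $d \geq 5$, which are not special to finite fields: Theorem~\ref{thm:dP5IGP} already gives a surjective solution of $\IGP_d(k)$ for all fields $k$ and all $d \geq 5$, so in particular for $k = \F_q$. This reduces the statement to the three cases $d \in \{2,3,4\}$.

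For these remaining degrees, the strategy is to invoke the cited work directly. For $d = 3$, I would appeal to Trepalin's solution of the inverse Galois problem for cubic surfaces over finite fields \cite{Tre17}, and for $d = 4$ (and a uniform treatment of $d \in \{2,3,4\}$) to Loughran--Trepalin \cite{LT20}; the case $d = 2$ is handled by \cite{Tre20,LT20}. The key observation needed to match these references to our formulation is the remark immediately preceding the theorem: over $\F_q$ the Galois group $G_{\F_q} = \Gal(\overline{\F}_q/\F_q)$ is topologically generated by Frobenius, so the splitting group $H_X = \rho_X(G_{\F_q})$ is the cyclic group generated by the image of Frobenius in $G_d$, and its conjugacy class is recorded by the conjugacy class of that element. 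Thus the ``trace of Frobenius / Frobenius conjugacy class'' formulation used in the literature is equivalent to asking for surjectivity of $\gamma_d(\F_q)$ onto $C(G_d, \F_q)$.

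The one genuine point requiring care is the identification of $C(G_d, \F_q)$, the target of $\gamma_d(\F_q)$. Over a finite field every finite cyclic group arises as a Galois group (since $\F_q$ has a unique extension of each degree, with cyclic Galois group), so the classes ``permitted by the Galois theory of $\F_q$'' are exactly the conjugacy classes of \emph{cyclic} subgroups of $G_d$ — equivalently, via the bijection noted in the remark, the conjugacy classes of elements of $G_d$. Hence surjectivity of $\gamma_d(\F_q)$ is precisely the assertion that every conjugacy class of elements of $W(R_{9-d})$ is realised as a Frobenius class on some degree-$d$ del Pezzo surface over $\F_q$, which is exactly what \cite{Tre17,Tre20,LT20} establish for $d \in \{2,3,4\}$ and all $q$.

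The main obstacle, and the reason the theorem is stated with the hypothesis $d \geq 2$, is that these references stop short of degree $1$: the Weyl group $W(E_8)$ is substantially larger and the constructive/geometric methods that work in degrees $2$ through $4$ break down in degree $1$ (as the excerpt flags in Remark~\ref{rem:IGPfordP1s}). I would therefore not attempt degree $1$ here and would simply restrict the statement to $d \geq 2$, assembling the proof as the disjunction of the $d \geq 5$ case (Theorem~\ref{thm:dP5IGP}) with the three cited results for $d \in \{2,3,4\}$, after translating each into the cyclic-subgroup-conjugacy-class formulation above.
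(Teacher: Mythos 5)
Your proposal is correct and matches the paper's treatment essentially verbatim: the paper likewise disposes of $d \geq 5$ via its Theorem~\ref{thm:dP5IGP}, attributes $d \in \{2,3,4\}$ to Trepalin \cite{Tre17,Tre20} and Loughran--Trepalin \cite{LT20}, and justifies the translation via the remark immediately preceding the theorem, which observes that $\rho_X(G_{\mathbb{F}_q})$ is determined by the conjugacy class of the image of Frobenius and that conjugacy classes of elements biject with conjugacy classes of cyclic subgroups. Your identification of $C(G_d,\mathbb{F}_q)$ with the cyclic-subgroup classes (since $\mathbb{F}_q$ has a unique, cyclic extension of each degree) is exactly the point the paper relies on, so nothing is missing.
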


We do not state specific results here, which necessitates a welter of notation for conjugacy classes; rather, we direct the interested reader to consult these papers or  the tables in Section~\ref{sec:TablesFinite} for finite fields. We will say more about the methods behind these results in Section~\ref{sec:finitefields}. A generalisation of these results to degree $1$, as stressed in \cite[\S7]{Tre20}, is a great challenge.

\subsubsection{Real closed fields}

Work of Russo \cite{Rus02} solves $\IGP_d(\mathbb{R})$.

\begin{thm} \label{thm:Russo}
For all degrees $d$, $\IGP_d(\mathbb{R})$ admits a surjective solution.
\end{thm}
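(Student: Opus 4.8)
The plan is to prove Theorem~\ref{thm:Russo} by a complete, explicit enumeration of the possible Galois actions over $\mathbb{R}$ and a matching construction realising each one. Since $G_\mathbb{R} = \Gal(\mathbb{C}/\mathbb{R}) \cong C_2$, the splitting group $H_X = \rho_X(G_\mathbb{R})$ is always cyclic of order $1$ or $2$, so $C(G_d,\mathbb{R})$ consists precisely of those conjugacy classes in $G_d$ represented by an element of order dividing $2$ (i.e.\ the identity together with the conjugacy classes of involutions). The problem thus reduces to the purely group-theoretic task of listing the involution conjugacy classes in each Weyl group $W(R_{9-d})$ of Table~\ref{table:root systems}, and then exhibiting, for each such class $[\sigma]$, a real del Pezzo surface $X/\mathbb{R}$ whose complex conjugation acts as $\sigma$ on $\Pic(\overline{X})$.

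First I would dispose of the high-degree cases $d \geq 5$ using Theorem~\ref{thm:dP5IGP}, which already gives surjectivity of $\IGP_d(k)$ for \emph{any} field $k$; over $\mathbb{R}$ the relevant separable extensions of degree $2$ (and, for $d \leq 6$, degree $3$) are supplied by $\mathbb{C}/\mathbb{R}$ and by the splitting fields of real cubics, so the constructions in that proof apply verbatim. The genuine content lies in degrees $d \in \{1,2,3,4\}$, where one must realise each involution class in $W(D_5)$, $W(E_6)$, $W(E_7)$ and $W(E_8)$. My approach here follows the strategy already laid out in the excerpt: via Lemma~\ref{lem:basis for pic} every such surface is, geometrically, a blowup of $\mathbb{P}^2_\mathbb{C}$ in $9-d$ points, so I would produce real surfaces by blowing up $\mathbb{P}^2_\mathbb{R}$ along $G_\mathbb{R}$-stable configurations of points in general position. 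By Proposition~\ref{prop:galois action on blowups}, the Galois action on $\Pic(\overline{X})$ is controlled by the permutation action $\rho_Z$ on the blown-up points together with the action on the base; over $\mathbb{R}$ a $G_\mathbb{R}$-stable configuration is simply a set of real points together with conjugate pairs of complex points, so the realisable permutations are exactly the involutions of $\mathfrak{S}_{9-d}$ (products of disjoint transpositions), supplemented by the conjugation actions available on relatively minimal surfaces of lower Picard rank (e.g.\ non-split quadrics, the minimal conic bundles of Theorem~\ref{thm:igpconic}, and the minimal cubic of Lemma~\ref{lem:RelMinCubic} with $\alpha < 0$). I would invoke Theorem~\ref{thm:lci} to confirm that the blowup types so produced are pairwise distinct and exhaust the list of possibilities.

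The main obstacle is matching the combinatorics of real point configurations to the full list of involution conjugacy classes in the larger Weyl groups. An involution of $W(E_7)$ or $W(E_8)$ need not be realised merely by permuting exceptional curves as a product of transpositions: conjugation can also negate roots, corresponding geometrically to the Geiser and Bertini involutions and to actions on minimal (non-rational) models rather than blowups of the plane. Concretely, the number of involution classes grows quickly (for instance $W(E_7)$ and $W(E_8)$ have several classes of involutions distinguished by the dimension of their $(-1)$-eigenspace, equivalently by their trace on $\Pic(\overline{X})$), and one must check that each is genuinely \emph{Galois-realisable over $\mathbb{R}$} and not merely abstractly present in the group. I would organise this by computing, for each involution class, the induced signature data (the real line and conic counts, which over $\mathbb{R}$ are recovered from the fixed lattice) and then exhibiting a concrete real model—using Weil restrictions, twists by the quadratic extension $\mathbb{C}/\mathbb{R}$, and the explicit relatively minimal surfaces of Theorems~\ref{thm:isk} and \ref{thm:Minimal5and6}. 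Since $\mathbb{R}$ admits Galois extensions of every needed small degree that $C_2$ permits, no arithmetic obstruction survives, and the verification reduces to a finite, if delicate, case check that I would delegate to the \texttt{Magma} computation referenced in the paper; this is precisely the content of Russo's classification \cite{Rus02}, which I would cite for the exhaustive eigenvalue/trace bookkeeping while supplying the geometric realisations above.
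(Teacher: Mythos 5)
Your framing is right---since $G_{\mathbb{R}} \cong C_2$, surjectivity amounts to realising the trivial class and each involution class in $C(G_d,\mathbb{R})$---and your high-degree reduction matches the paper's: $d \geq 5$ is Theorem~\ref{thm:dP5IGP} and $d = 4$ follows from Theorem~\ref{thm:dP4IGP} (valid over $\mathbb{R}$ as an infinite field of characteristic $0$). One factual slip first: there is no degree-$3$ separable extension of $\mathbb{R}$; the splitting field of a real cubic is $\mathbb{R}$ or $\mathbb{C}$, never cubic over $\mathbb{R}$. This happens to be harmless, because Theorem~\ref{thm:dP5IGP} is stated for arbitrary fields and no class in $C(G_d,\mathbb{R})$ requires such an extension, but the parenthetical as written is false.

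The genuine gap is in degrees $d \leq 3$. Your supplied constructions (blowups of $\mathbb{P}^2_{\mathbb{R}}$ in $G_{\mathbb{R}}$-stable configurations) only reach involutions acting by disjoint transpositions on exceptional classes; you correctly note that the hard involution classes in $W(E_6)$, $W(E_7)$, $W(E_8)$ negate roots (Geiser/Bertini-type actions on minimal models), but you then resolve them by ``delegating'' to Russo's bookkeeping and the \texttt{Magma} computation. That does not close the argument: as the paper is careful to point out immediately after the theorem, Russo's tables in \cite{Rus02} classify the \emph{possible} actions but do not directly prove existence of all listed surfaces---and the classes your constructions miss are exactly the ones where existence is nontrivial (the paper's \texttt{Magma} code concerns the blowup-type tables, not real realisations). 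The paper fills this hole with a formal reduction (Remark~\ref{rem:RealClosedFields}): for $d = 3$, since $|G_{\mathbb{R}}| = 2$ acts on an odd number ($27$) of lines, some line is Galois-stable, hence defined over $\mathbb{R}$ and contractible, reducing $\IGP_3(\mathbb{R})$ to $\IGP_4(\mathbb{R})$; for $d \leq 2$, every involution class either fixes a line (so its surfaces are blowdowns, handled by induction up to degree $4$) or, by Russo's twist observation, its Geiser/Bertini quadratic twist class fixes a line---and since twisting by $\mathbb{C}/\mathbb{R}$ is a formal involution on classes, realising the twisted class realises the original. Your proposal mentions ``twists by $\mathbb{C}/\mathbb{R}$'' in passing but never articulates this two-case mechanism (stable line versus twisted stable line), and without it the surjectivity---i.e.\ existence---half of the theorem is not actually proved.
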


Relevant tables are found in \cite{Rus02}: see,  in \emph{loc.\ cit.}, Proposition~1.2 for $d = 8$ relatively minimal, Corollary~2.4 for $d \geq 5$ except the previous case, Corollary~3.2 for $d = 4$, Corollary~3.3 for $d = 2$ and Corollary~5.3 for $d =1$. While Russo does not directly show existence of all listed surfaces, it follows from his descriptions of their twists (see the next remark). 

\begin{rem} \label{rem:RealClosedFields}
One may also consider \emph{real closed fields}, i.e.\ fields $k$ with $ 1\neq G_k$ finite; by the Artin--Schreier theorem \cite[Cor.~9.3]{Lang}, we then have $G_k \cong C_2$. Examples aside from $\mathbb{R}$ include the field of definable numbers and the Levi--Civita field. It is easily seen that Russo's result generalises into the real closed setting. Indeed, by combining Theorem~\ref{thm:dP5IGP} and Theorem~\ref{thm:dP4IGP}, it is immediate for $d \geq 4$. Further, since $G_k \cong C_2$ and a degree-3 del Pezzo surface has 27 geometric lines, at least one is $G_k$-stable and may be blown down, so $\IGP_3(k)$ reduces to $\IGP_4(k)$. For $d \leq 2$, Russo observes (see \cite[Cors.~4.3,~4.4,~5.3]{Rus02}) that any surface with no rational lines has a quadratic twist with a rational line, which ultimately gives the same reduction and relies only on the formal properties of such twists (see Section~\ref{sec:twists} for these twists in the finite field setting).
\end{rem}

\subsection{Algorithm 2} \label{sec:ICPalg}

We now detail our approach to the inverse curve problems. Specifically, we explain how to handle each individual possible count.

\begin{algorithm}[H] \label{alg:ICPalg}
\label{}
\caption{\bf Solving the inverse curve problems}

\KwIn{Integers $d \in \{1,\dots,9\}$ and $e \in \{1,2\}$ and an integer $n \in \mathcal{C}_{d,e}$.}
\KwOut{Either a surjective solution for the count $n$ or a list of ``ugly'' groups.}

\begin{enumerate}
\item Create an empty list $L$ and empty lists $G$, $B$ and $U$ of ``good'', ``bad'' and ``ugly'' groups.
\item By consulting the degree-$d$ table, add all types $T$ associated to the count $n$ to $L$.
\item For each blowup type $T$ in $L$, find representatives for all of its associated conjugacy classes:
    \begin{enumerate}
        \item If $T$ is associated to a relatively minimal surface of degree $5$ or higher, add these representatives to the list $G$ of good groups.
        \item Otherwise, add them to $B$.
    \end{enumerate}
\item If $B$ is empty, then declare that the subproblem admits a surjective solution for $n$. Otherwise, continue.
\item For each $H \in B$, compute all quotient groups of $H$:
    \begin{enumerate}
        \item If a quotient of $H$ lies in $G$, then remove $H$ from $B$.
        \item Otherwise, remove $H$ from $B$ and add $H$ to $U$.
    \end{enumerate}
\item If $U$ is empty, then declare that the subproblem admits a surjective solution for $n$. Otherwise, output $U$.
\end{enumerate}
\end{algorithm}

At the termination of Algorithm~\ref{alg:ICPalg}, we have either reached a surjective solution for the count $n$ or else $U$ contains groups corresponding to the count $n$ which are associated to blowing up a relatively minimal surface of degree at most $4$ and do not imply the existence of a blowup from degree at least $5$ achieving the same count.

We may then employ additional strategies. For example, if $\ch{k} \neq 2$ and $d \geq 2$ and each group in $U$ is ``potentially conic'' (in the sense that the associated surfaces either possess conic fibrations or do after blowing up), then we can attempt to apply Corollary~\ref{cor:igpconic2}; if $\ch{k} = 2$, then we can attempt an explicit Artin--Schreier version of their construction. We can similarly attempt to employ any of the results in Section~\ref{sec:igpresults} under additional hypotheses on the field $k$.

\section{Finite fields} \label{sec:finitefields}
In this section we explain the construction of more detailed versions of our tables for the finite field case and prove Theorem~\ref{thm:FiniteFieldLines} and Theorem~\ref{thm:FiniteFieldConics}.

\subsection{Frobenius}
Any finite extension $k/\mathbb{F}_q$ satisfies $k \cong \mathbb{F}_{q^n}$ for some $n \in \mathbb{Z}_{\geq 1}$, and we have $\Gal(\mathbb{F}_{q^n}/\mathbb{F}_q) = \langle \Frob_q \rangle \cong C_n$, where $\Frob_q$ is the \emph{Frobenius automorphism} which is defined by $\Frob_q(x) = x^q$. In fact, $\Frob_q$ is a topological generator of $G_{\mathbb{F}_q}$.

Given a del Pezzo surface $X/\mathbb{F}_q$, the \emph{trace of Frobenius} $a(X)$ on $X$ is the trace of $\Frob_q$ acting on $\Pic(\overline{X}) \cong \mathbb{Z}^{10 - d}$. One has
\[
\#X(\mathbb{F}_q) = q^2 + a(X)q + 1
\]
(see, e.g.\ \cite[Thm.~27.1]{Man67}).
It is also relatively easy to see that, for a relatively minimal conic bundle $X \rightarrow \mathbb{P}^1_{\mathbb{F}_q}$ with $r$ singular fibres, we have
\[
a(X) = 2 - r.
\]
Further, by an appeal to class field theory, it can be shown that such a conic bundle has singular fibres over an even number of closed points (see \cite[Lem.~2.6]{BFL19}).

One may capture more data via the \emph{Frame symbol} \cite[Eq.~3.2]{Frame51} (see also \cite[\S2]{Urabe96}).

\begin{defn}
Let $X/\mathbb{F}_q$ be a del Pezzo surface. The \emph{Frame symbol} associated to $X$ is a symbol $\prod_{n \geq 1} n^{a_n}$, where $\psi(t) = \prod_{n \geq 1} (t^n - 1)^{a_n} \in \mathbb{F}_q[t]$ is the characteristic polynomial of $\Frob_q$ acting on $\Pic(\overline{X})$.  The exponent $a_1$ of $1$ in the Frame symbol for $X$ is thus $a(X)$.
\end{defn}

\subsection{Cyclic conjugacy classes}

Since $G_{\mathbb{F}_q}$ acts on $\Pic(\overline{X})$ via a cyclic subgroup of the  group $G_d$, there are drastically fewer conjugacy classes to consider when it comes to the inverse Galois problem. The number $\mathfrak{C}_d$ of cyclic conjugacy classes (equivalently, conjugacy classes of elements) is given in the following table.

\begin{longtable}{|l|l|l|l|l|l|l|l|l|l|}\caption{Number of cyclic conjugacy classes in $G_d$}\\
\hline
d & 9 & 8 & 7 & 6 & 5 & 4 & 3 & 2 & 1 \\
\hline
$\mathfrak{C}_d$ & 1 & 2 & 2 & 6 & 7 & 18 & 25 & 60 & 112 \\
\hline
\end{longtable}

These relatively manageable numbers make it feasible to produce more detailed versions of our tables with a row for each conjugacy class and more attendant data.

We are aided by the long-standing interest in the conjugacy classes of Weyl groups, going back to work of Frame on $W(E_n)$ for $n \in \{6,7,8\}$ \cite{Frame51}, followed by Carter \cite{Carter72}. In \cite[Table~1]{Man86}, Manin, building on earlier work of Swinnerton-Dyer \cite{SD67}, produced a version for $W(E_6)$ with data pertinent to the arithmetic of the associated cubic surfaces such as the trace of Frobenius. Urabe \cite{Urabe96} produced analogous tables for $W(E_7)$ and $W(E_8)$. A revised version of Manin's table was constructed by Banwait, Fité and Loughran \cite[Table~7.1]{BFL19}.

By calculation of invariants, we identify each row in Urabe's tables with a blowup type and so produce our tables, which we use to prove Theorem~\ref{thm:FiniteFieldLines} and Theorem~\ref{thm:FiniteFieldConics}. Our proofs will occasionally make use of the \emph{orbit type}, meaning the number of Galois orbits of lines partitioned by configurations, as given in Urabe's tables.

\begin{rem}
During the construction of our tables, we noticed an error in Urabe's table for $W(E_7)$: the first row is not in fact of index $0$, i.e.\ it does not correspond to a relatively minimal del Pezzo surface of degree $2$. It instead corresponds to $\Bl_{2^3}S_{8,I}$, which has index $6$.
\end{rem}

\subsection{Geiser and Bertini twists} \label{sec:twists}
Let $X/\mathbb{F}_q$ be a del Pezzo surface of degree $d$. When $d = 2$, the Geiser involution on $X$ (swapping the two sheets of $X$ as a double cover of $\mathbb{P}^2$) gives rise to a quadratic twist known as its \emph{Geiser twist} (see \cite[\S4.1.2]{BFL19}). Similarly, when $d = 1$, we may associate to $X$ a \emph{Bertini twist}. Geiser twists were used by Trepalin \cite{Tre20} and Loughran and Trepalin \cite{LT20} in order to verify the existence of certain types of del Pezzo surfaces of degree $2$. These twists satisfy the following properties.

\begin{lem}
Let $X/\mathbb{F}_q$ be a del Pezzo surface of degree $d \leq 2$, and denote by $X'$ its Geiser/Bertini twist. 
\begin{enumerate}
    \item \cite[Lem.~4.4]{Tre20} If $d = 2$ and $X$ has an $\mathbb{F}_q$-rational line, then $X'$ does not.  
    \item \cite[Rem.~3.3]{Tre17} The Frame symbol of $X'$ is obtained from the Frame symbol of $X$ via the following sequence of operations on the Frame symbol $\prod_{n \geq 1} n^{a_n}$:
    \begin{enumerate}
        \item Replace $1^{a_1}$ by $1^{a_1 - 1}$.
        \item Replace each $n^{a_n}$ with $n$ odd by $n^{-a_n} (2n)^{a_n}$ and simplify to a new symbol $\prod_{n \geq 1} n^{b_n}$.
        \item Replace $1^{b_1}$ by $1^{b_1 + 1}$.
    \end{enumerate}
    \item We have $a(X') = 2 - a(X)$.
\end{enumerate}
\end{lem}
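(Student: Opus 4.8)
The plan is to deduce part (3) from the Frame symbol transformation already recorded in part (2), combined with the Definition's identity that $a(X)$ equals the exponent $a_1$ of $1$ in the Frame symbol of $X$. Since $a(X')$ is likewise the exponent of $1$ in the Frame symbol of $X'$, and since part (2) tells us exactly how the Frame symbol of $X'$ is obtained from that of $X$, it will suffice to track how the exponent of $1$ evolves under the three operations (a)--(c).

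First I would confirm the identity $a(X) = a_1$, as recorded after the Definition. The characteristic polynomial of $\Frob_q$ is $\psi(t) = \prod_{n \geq 1}(t^n - 1)^{a_n}$, so $\operatorname{tr}(\Frob_q)$ is the sum of all its roots with multiplicity; the sum of the roots of $t^n - 1$ is $1$ when $n = 1$ and $0$ when $n \geq 2$, whence $\operatorname{tr}(\Frob_q) = a_1$. Then I would run the bookkeeping. Starting from exponent $a_1$, operation (a) lowers it to $a_1 - 1$. In operation (b) the only odd index that can affect the exponent of $1$ is $n = 1$, since every newly produced factor has the form $(2n)^{\bullet}$ with $2n \geq 2$; thus $1^{a_1 - 1} \mapsto 1^{-(a_1 - 1)} 2^{a_1 - 1}$, giving intermediate exponent $b_1 = 1 - a_1$. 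Operation (c) raises it to $b_1 + 1 = 2 - a_1$. As the resulting symbol is the Frame symbol of $X'$, its exponent of $1$ is $a(X')$, so $a(X') = 2 - a_1 = 2 - a(X)$.

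I expect the only genuine subtlety --- and hence the main obstacle --- to be verifying that nothing other than the $n = 1$ factor contributes to the exponent of $1$ during the ``simplify'' clause of (b). This is immediate once one observes that all factors created in (b) carry even index $2n \geq 2$, and all untouched even-index factors already have index $\geq 2$, so none can merge into a power of $1$.

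For a conceptual cross-check, I would also note that the Geiser (resp.\ Bertini) involution acts on $\Pic(\overline{X})$ as the central element $-1 \in W(E_{9-d})$, fixing $K_{\overline{X}}$ and negating its orthogonal complement $K_X^\perp$. Twisting by this involution therefore replaces $\Frob_q$ by $-\Frob_q$ on $K_X^\perp$ while preserving it on $\mathbb{Q}K_{\overline{X}}$. Taking traces over the decomposition $\Pic(\overline{X}) \otimes \mathbb{Q} = \mathbb{Q}K_{\overline{X}} \oplus K_X^\perp$, and using that $\Frob_q$ acts as $+1$ on the $K_{\overline{X}}$-line so that its trace on $K_X^\perp$ is $a(X) - 1$, gives $a(X') = 1 - (a(X) - 1) = 2 - a(X)$, in agreement with the Frame symbol computation. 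Either route completes the proof.
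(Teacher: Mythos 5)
Your proposal is correct and takes essentially the same route as the paper: the paper likewise deduces (3) directly from (2) via the identity $a_1 = a(X)$, which is exactly the exponent bookkeeping you carry out. Your conceptual cross-check (the Geiser/Bertini involution acting by $-1$ on $K_{\overline{X}}^{\perp}$, so that traces give $a(X') = 1 - (a(X)-1) = 2 - a(X)$) is also precisely the mechanism the paper invokes to establish (2).
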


\begin{proof}
(1) is proved by Trepalin in \cite[Lem.~4.4]{Tre20}. For (2), note that the Geiser/Bertini involution acts by $-1$ on $K_{\overline{X}}^{\perp} \subset \Pic\left(\overline{X}\right)$, hence, for $\psi_X$ and $\psi_{X'}$ the characteristic polynomials of Frobenius for $X$ and $X'$ respectively, we have $\psi_{X'}(t) = \psi_X(-t)$, from which the result follows. Note that (3) follows immediately from (2) since $a_1 = a(X)$.
\end{proof}

\begin{rem} \label{rem:IGPfordP1s}
We note, as Trepalin does in \cite[\S7]{Tre20}, that the Bertini twist exhibits pathologies which the Geiser twist does not. In particular:
\begin{enumerate}
\item The $60$ classes of degree-$2$ del Pezzo surfaces split into $30$ pairs of twists, while some degree-$1$ del Pezzo surfaces are in the same class as their Bertini twist.
\item The twist of any degree-$2$ del Pezzo surface of Picard rank $1$ is of Picard rank at least $2$, while the twists of the $30$ classes of degree-$1$ del Pezzo surfaces of Picard rank $1$ include $5$ pairs of twists and $7$ classes with twist in the same class. It is the existence of these ``persistently rank-$1$'' del Pezzo surfaces of degree $1$ that puts $\IGP_1(\mathbb{F}_q)$ out of current reach.
\end{enumerate}
\end{rem}
\subsection{Proofs of Theorems~\ref{thm:FiniteFieldLines} and \ref{thm:FiniteFieldConics}} \label{sec:FiniteFieldsProofs}

In the following proves, we will make use of the Hasse--Weil bound for the number of $\mathbb{F}_q$-rational points on the ramification divisor of the anticanonical morphism of a degree-2 del Pezzo surface over $\mathbb{F}_q$: as such a curve is of genus $3$, it has at most $F(q)$ such points, where
\[
F(q):=\begin{cases}
q+1+6\sqrt{q} \quad \text{if $q$ is odd,} \\
2(q+1) \qquad \, \, \, \, \text{if $q$ is even.}
\end{cases}
\]
We also employ the following additional notation:
\begin{itemize}
    \item For $\sum_{i=1}^r a_i n_i = 8-d$ and $d \in \{1,2,4\}$, we denote by $S_{d,\II}^{(n_1^{a_1},\dots,n_r^{a_r})}$ a degree-$d$ relatively minimal del Pezzo surface which is a conic bundle with exactly $a_i$ singular fibres over closed points of degree $n_i$ for all $1 \leq i \leq r$.
    
    \item For $\sum_{i=1}^r a_i n_i = 5$, we denote by $S_{4,\II}^{(n_1^{a_1},\dots,n_r^{a_r})}$ a degree-4 minimal del Pezzo surface of Picard rank $1$ whose blowup at a rational point in general position (when one exists) is  a conic bundle with exactly $a_i$ singular fibres over closed points of degree $n_i$ for all $1 \leq i \leq r$. In particular, among the 18 types I--XVIII of minimal del Pezzo surfaces of degree $4$, the 6 which correspond to cyclic subgroups are given in the following table:
    \begin{center}
    \begin{tabular}{ | m{2cm} | m{1cm} | m{1cm} |m{1cm} |m{1cm} |m{1cm} |} 
      \hline
      \ \ \ \textbf{Type} & I & II & IV & V & XVIII \\ 
      \hline
      \textbf{del Pezzo} & $S_{4,I}^{(3,2)}$ & $S_{4,\II}^{(3,1)}$ & $S_{4,\II}^{(1^4)}$ & $S_{4,\II}^{(2^2)}$ & $S_{4,I}^{(4,1)}$ \\ 
      \hline
    \end{tabular}
    \end{center}
\end{itemize}

\begin{proof}[Proof of Theorem~\ref{thm:FiniteFieldLines}]

It suffices to consider the case $d = 1$. We consider each of the possible line counts from Theorem~\ref{thm:line}:
\[
0,2,4,6,8,12,14,20,24,26,30,40,60,72,126,240.
\]
\begin{enumerate}
    \item[0:] This count is possible for all $q$. Indeed, $\Bl_8 S_9$ always exists by \cite[\S5.2,~$a=1$]{BFL19} and has $(L_X,C_X) = (0,0)$. 
    
    \item[2:] This count is possible for all $q$. Indeed, $\Bl_{7,1} S_9$ always exists by \cite[\S5.2,~$a=2$]{BFL19} and has $(L_X,C_X) = (0,0)$. 
    
    \item[4:] This count is possible for all $q$. Consider type 44, which is $\Bl_1 S_{2,\II}^{(4,2)}$. It is clearly enough to show that $X = S_{2,\II}^{(4,2)}$ (a type 6 degree-2 del Pezzo surface) has a rational point not on any line and not on the ramification divisor. Note that $\#X(\mathbb{F}_q) = q^2 + 2q + 1$ and $(L_X,C_X) = (0,2)$. The orbit type of $X$ is $4^2 8^2 8^2 8^2$. Note that the $4^2$ corresponds to the singular fibre above the closed point of degree $2$ and its Geiser dual; in particular, it contributes no rational points, and the other orbits are too large to contribute rational points. Then there are no rational points on the lines of $X$. Then it suffices to note that $q^2 + 2q + 1 > F(q)$ for all $q$. 
    
    \item[6:] This count is possible for all $q$. It is enough to construct $\Bl_{5,2,1} S_9$ explicitly. First, choose a Galois orbit of $5$ points $P_1,\dots,P_5$ on a smooth plane conic $C/\mathbb{F}_q \subset \mathbb{P}^2_{\mathbb{F}_q}$, which always exists (take $xz = y^2$ when $2 \nmid q$ and $xz = y^2 + y$ when $q = 2$). Now choose a line $L/\mathbb{F}_q \subset \mathbb{P}^2_{\mathbb{F}_q}$ and a Galois orbit of two points $Q_1,Q_2$ on $L$ (this always exists). Finally, choose a rational point $R$ in $\mathbb{P}^2_{\mathbb{F}_q} \setminus L \cup C$. This is possible since $\#\mathbb{P}^2(\mathbb{F}_q) = q^2 + q + 1$, while $\#L(\mathbb{F}_q) = \#C(\mathbb{F}_q) = q + 1$ and $q^2 + q + 1 - 2(q+1) = q^2 - q - 1 > 0$ for all prime powers $q$. The blowup of $\mathbb{P}^2_{\mathbb{F}_q}$ furnishes the sought surface. Indeed, no three are collinear, no six are on a common conic, and the points are not all on a singular cubic, as can be seen by considering intersection numbers of the containing curves.  
    
    \item[8:] This count is possible exactly for $q \geq 3$. It can only arise from $\Bl_{2^3,1} S_{8,I}$ (type 98), which has $(L_X,C_X) = (8,24)$ and $\Bl_{6,1^2} S_9$ (type 107), which has $(L_X,C_X) = (8,12)$. The former is the blowup of a type-1 degree-2 del Pezzo surface, which only exists for $q \geq 3$ by \cite[Thm.~1.2]{LT20}. The latter only exists for $q \geq 3$ by \cite[\S5.2,~$a=3$]{BFL19}. Thus we conclude. 
    
    \item[12:] This count is possible for all $q$. It can only arise from $\Bl_{1^3} S_{4,I}$ (types 70, 73 and 74), $\Bl_{3,2^2,1}S_9$ (type 99) or $\Bl_{3^2,1^2} S_9$ (type 100). First, we show that it is realisable via $\Bl_{3^2,1^2} S_9$ for $q \geq 3$ (combined with the bound for the blowdown to a type-54 degree-2 del Pezzo surface, this shows that type 100 exists exactly when $q \geq 3$). Note that this surface is the blowup of a type-54 degree-2 del Pezzo surface $X = \Bl_{3^2,1} S_9$. We have $\#X(\mathbb{F}_q) = q^2 + 2q + 1$ and $(L_X,C_X) = (2,6)$. Further, the orbit type is $1^2 3^6 3^{12}$. By consideration of the origin of the $56$ lines in the plane, it is clear that the orbits $3^{12}$ represent pairwise disjoint lines, so there are at most $2(q+1) + 6 + F(q)$ points in the bad locus. Note that $q^2 - 7 > F(q)$ for $q \geq 7$. On the other hand, it can be checked computationally that it occurs for $q \in \{3,4,5\}$. Computer search reveals that the following degree-1 del Pezzo surface over $\mathbb{F}_2$ has exactly $12$ lines:
    \[
    w^2 + w(x^3 + y^3 + xy^2) = x^4z + xyz^2 + z^3 + y^2 z^2.
    \]
    Then we conclude. 

    \item[14:] This count is possible exactly when $q \geq 3$. It can only arise from $\Bl_{4,2,1^2} S_9$ (type 101). We first show that this exists for $q \geq 7$. It is equivalent to show the same for its Bertini twist, $\Bl_{2,1} S_{4,I}^{(2,1^3)}$. In turn, it is equivalent to show that a type-34 degree-2 del Pezzo surface $X = \Bl_{2}S_{4,I}^{(2,1^3)}$ has a rational point in general position for $q \geq 7$. 
    Such $X$ exists for $q \geq 3$. Note that $\#X(\mathbb{F}_q) = q^2 -2q + 1$ and $(L_X,C_X) = (0,0)$. Further, the orbit type is $2^2 2^6 4^{10}$. Since $C_X = 0$, the conjugate pairs of lines contain no rational points. Then there are at most $F(q)$ points in the bad locus. Note that $q^2 - 2q + 1 > F(q)$ for $q \geq 7$. Computation reveals that this also exists for $q \in \{3,4,5\}$, but not $q = 2$. 
    
    \item[20:] This line count is possible exactly when $q \geq 3$. The only way it can arise is as $\Bl_{5,1^3} S_9$ (type 102); the existence bound follows from \cite[\S5.2,~$a=4$]{BFL19}. 
    
    \item[24:] This line count is possible exactly when $q \geq 4$. It can only arise from $\Bl_{1^3} S_{4,\II}$. We first show that this exists for $q \geq 8$. It suffices to show that the type-27 degree-2 del Pezzo surface $X = \Bl_{1^2}S_{4,\II}^{(2^2)}$ has a rational point in general position for $q \geq 8$. Such $X$ only exists for $q \geq 3$. We have $\#X(\mathbb{F}_q) = q^2 + 4q + 1$ and $(L_X,C_X) = (8,6)$.  The orbit type is $1^8 4^{12}$. The $8$ rational lines contain at most $8q$ rational points: let us now show this. he two exceptional divisors contain exactly $2q + 2$ rational points. There are two fibrations on $S_{4,\II}$: denote the fibre of the $i$th fibration through a point $R$ by $C_i(R)$, and let $P$, $Q$ be the two blowup points. The (strict transforms of) the curves $C_1(P), C_2(Q)$ form a Geiser-dual pair and contribute at most $2q$ more rational points; the same is true for $C_2(P)$ and $C_1(Q)$. The final two rational lines come from $T_P$, an anticanonical curve with a double point at $P$ through $Q$, and $T_Q$, the same but with $P$ and $Q$ playing the opposite roles. Each of these contributes at most $2q$ more points (one can slightly improve this bound to $8q - 2$, but it will not change the outcome). The orbits of size $4$ all have the configuration of pairs of singular fibres from the conic fibration, hence they contain no rational points. Then there are at most $8q + F(q)$ points in the bad locus. Note that $q^2 - 4q + 1 > F(q)$ for $q \geq 8$. Now we investigate $q \leq 8$. Since $\Bl_{1^3}S_{4,\II}$ only exists for $q \geq 3$, we know that a line count of 24 is not possible over $\mathbb{F}_2$. Moreover, the only type of $\Bl_{1^3}S_{4,\II}$ that could possibly exist over $\mathbb{F}_3$ is type 68, which has trace 5. It is shown in \cite[Thm.~3.1.3]{Li} that there is just one degree-1 del Pezzo surface with trace 5 over $\mathbb{F}_3$:
    \[
    w^2 = z^3 + (2x^4 + x^2y^2 + 2y^4)z + (x^6 + 2x^4y^2 + y^6).
    \]
    Computer search shows that this surface has no rational lines, hence there is no degree-1 del Pezzo surface with 24 lines over $\mathbb{F}_3$. It remains to search over $\mathbb{F}_4$, $\mathbb{F}_5$ and $\mathbb{F}_7$.

Over $\mathbb{F}_4$, the following surface has exactly $24$ lines:
\[
w^2 + wy^3 = x^5y + xy^5 + z^3.
\]  
Over $\mathbb{F}_5$, the following surface has $24$ lines:
\[
x^6 + 4x^2y^4 + z^3 + 3zy^4 + w^2 + wy^3 + 4y^6.
\]
Over $\mathbb{F}_7$, the following surface has $24$ lines:
\[
x^5y + 5x^3y^3 + 2xy^5 + z^3 + w^2 + wy^3 = 0.
\]

    \item[26:] This count can only arise from $\Bl_{2^3,1^2} S_9$ (type 95). We claim that it exists exactly when $q \geq 7$. First, we show its existence for $q \geq 7$. It is enough to show existence of the Bertini twist, $\Bl_{2,1} S_{4,\II}^{(1^4)}$ (type 69), for $q \geq 7$. In turn, it is enough to determine when the type-28 degree-2 del Pezzo surface $X = \Bl_{2} S_{4,\II}^{(1^4)}$ has a rational point in general position. Such $X$ only exists for $q \geq 5$. We have $\#X(\mathbb{F}_q) = q^2 -2q + 1$ and $(L_X,C_X) = (0,4)$. The orbit type is $2^4 2^8 2^{16}$. The singular fibres of a chosen conic bundle structure lie over four rational points and a double point. Then we see that the $2^8$ corresponds to $8$ intersecting pairs of conjugate lines. The remaining size-2 orbits consist either of disjoint lines or Geiser duals; in either case, we get no more rational points off the ramification divisor, so there are at most $8 + F(q)$ points in the bad locus. Then note that $q^2 - 2q - 7 > F(q)$ for $q \geq 7$. Computational search reveals that it does not exist for $q \leq 5$. 
    
    \item[30:] This count is possible exactly when $q \geq 4$. It can only arise from $\Bl_{3,2,1^3} S_9$ (type 96). We first show that this exists for $q \geq 7$. It is enough to verify existence of its Bertini twist $\Bl_1 S_{2,I}^5$ (type 48) for $q \geq 7$. In turn, enough to show that the type-12 degree-2 del Pezzo surface $X = S_{2,I}^5$ has a rational point in general position for $q \geq 7$. This surface only exists for $q \geq 3$. We have $\#X(\mathbb{F}_q) = q^2 - 3q + 1$ and $(L_X,C_X) = (0,0)$. The orbit type is $2^{10} 6^6$. Since there are no rational lines and no conjugate pairs of lines which meet away from the ramification curve, hence there are at most $F(q)$ rational points in the bad locus. We see that $q^2 - 3q+ 1 > F(q)$ for $q \geq 7$. A computational search reveals existence for $q = 4$ and $q = 5$ and non-existence for $q = 2$ and $q = 3$. 
    
    \item[40:] This count is only possible for $q \geq 5$. The only way it can arise is from $\Bl_{4,1^4} S_9$ (type 97), which only exists for $q \geq 5$ by \cite[\S5.2,~$a=5$]{BFL19}. 
    
    \item[60:] This count is possible exactly when $q \geq 7$. It can only arise from $\Bl_{2^2,1^4} S_9$ (type 93). First we show that this exists when $q \geq 8$. It is enough to show the same for the Bertini twist $\Bl_1 S^{(1^6)}_{2,\II}$ (type 38). In turn, it is enough to show that the type-2 degree-2 del Pezzo surface $X = S^{(1^6)}_{2,\II}$ has a rational point in general position for $q \geq 8$. This exists only for $q \geq 5$. We have $\#X(\mathbb{F}_q) = q^2 - 4q + 1$ and $(L_X,C_X) = (0,2)$. The orbit type is $2^{12} 2^{16}$. It is clear that the $2^12$ corresponds to the $6$ singular fibres of the conic fibration and its dual; these contribute at most $12$ rational points to the bad locus. Since the other conjugate pairs are not of the same configuration, they cannot meet in a point away from the ramification curve. Then there are at most $12 + F(q)$ points in the bad locus. Then we note that $q^2 - 4q - 11 > F(q)$ for $q \geq 8$. Computer search reveals existence when $q = 7$ and non-existence when $q \leq 5$. 
    
    \item[72:] This count is possible exactly when $q \geq 7$: it arises only from $\Bl_{3,1^5}S_9$ (type 94), which exists exactly when $q \geq 7$ by \cite[\S5.2,~$a=6$]{BFL19}.
    
    \item[126:] This count is possible exactly when $q \geq 11$: it arises only from $\Bl_{2,1^6}S_9$ (type 92), which exists exactly when $q \geq 11$ by \cite[\S5.2,~$a=7$]{BFL19}.
    
    \item[240:] This count is possible exactly when $q = 16$ or $q \geq 19$: it arises only from $\Bl_{1^8}S_9$, which exists exactly in this range by \cite[\S5.2,~$a=9$]{BFL19}. \qedhere
\end{enumerate}

\end{proof}

\begin{proof}[Proof of Theorem~\ref{thm:FiniteFieldConics}]
It suffices to consider the case $d=1$. We consider each of the possible conic counts from Theorem~\ref{thm:conic}:
\[
0,2,4,6,12,18,24,30,36,72,90,252,270,756,2160.
\]
\begin{enumerate}
    \item[0:] This count is possible for all $q$. Indeed, $\Bl_8 S_9$ always exists by \cite[\S5,2,~$a=1$]{BFL19} and has $(L_X,C_X) = (0,0)$. 
    
    \item[2:] This conic count is possible for all $q$. First we show that it is possible for $q \geq 3$. Note that $\Bl_{2,1}S_{4,I}^{(2,1^3)}$ (type 76) has conic count 2, so it suffices to show that its Bertini twist $\Bl_{4,2,1^2} S_9$ (type 101) exists for $q \geq 3$. Consider its blowdown, the type-55 degree-2 del Pezzo surface $X=\Bl_{4,2,1}S_9$, which exists for $q \geq 3$. We have $\#X(\mathbb{F}_q) = q^2 + 2q + 1$ and $(L_X,C_X) = (4,4)$. Here, we are blowing up the plane in a rational point $P$, a pair of conjugate points $Q_1,Q_2$ and a $4$-tuple of conjugate points $R_1,\dots,R_4$. The $56$ geometric lines on $X$ correspond to the 7 exceptional divisors, the strict transforms of the $21$ lines joining these points and the Geiser duals. It is easily seen that, among the 21 lines, there is only one orbit of lines which all contain a rational point: namely, letting $R_{i+1}$ be the image of $R_i$ under Frobenius, the lines $L_{R_1R_3}$ and $L_{R_2R_4}$. Altogether, we deduce that there are at most $4(q+1) + 2 + F(q)$ points in the bad locus on $X$. Since $q^2 -2q - 1 > F(q)$ for $q \geq 7$, we verify existence of $X$ for $q \geq 7$. Computational search reveals that $\Bl_{4,2,1^2} S_9$ also exists for $q \in \{3,4,5\}$. It remains to show that it is possible over $\mathbb{F}_2$. Computer search reveals that the following surface is of type 5 over $\mathbb{F}_2$:
    \[
    x^5y + x^3yz + xyz^2 + xy^2w + z^3 + w^2 + y^6 = 0.
    \]
    
    \item[4:] This count is possible for all $q$. Indeed, $\Bl_{7,1} S_9$ always exists by \cite[\S5,2,~$a=2$]{BFL19} and has $(L_X,C_X) = (2,4)$. 
    
    \item[6:] This count is possible for all $q$. Let us first show that it is possible for $q \geq 7$. It is enough to show that $\Bl_{1^3}S_{4,I}$ (type 74) exists for $q \geq 7$. In turn, it is enough to show that the type-33 degree-2 del Pezzo surface $X = \Bl_{1^2}S_{4,I}^{(3,2)}$ has a rational point in general position for $q \geq 7$. Note that $\#X(\mathbb{F}_q) = q^2 + 3q + 1$ and $(L_X,C_X) = (4,2)$. The orbit type is $1^4 4^2 4^2 6^2 12^2$. There are at most $4(q+1)$ points on the $4$ rational lines. Further, at most one $4^2$ contributes points, hence there are at most $4q + 6$ points on lines, hence at most $4q + 6 + F(q)$ points in the bad locus. Note that $q^2 - q - 5 > F(q)$ for $q \geq 7$. It now suffices to note, via computer search, that $\Bl_{5,2,1}S_9$, which has conic count $6$, exists for $2 \leq q \leq 7$. 
    
    \item[12:] This count is possible exactly when $q \geq 3$. Note that $\Bl_{6,1^2}S_9$ (type 107) has $(L_X,C_X) = (8,12)$ and exists for $q \geq 3$ by \cite[\S5,2,~$a=3$]{BFL19}. It remains to show that conic count 12 is impossible over $\mathbb{F}_2$. Considering restrictions from blowing up degree-2 del Pezzo surfaces, we see that it can only arise from type 71, which has 6 lines and trace 2. Further, the twist contains 2 lines over $\mathbb{F}_2$ and $240$ lines over $\mathbb{F}_{64}$, and these properties distinguish type 71. Computational search reveals that these properties are not simultaneously satisfied, hence we conclude. 
    
    \item[18:] This count is possible exactly when $q \geq 3$. It can only arise from $\Bl_{3,2^2,1}S_9$ (type 99) which has Bertini twist $\Bl_1 S_{2,\II}^{(3,1^3)}$ (type 40). First we show that a type-4 degree-2 del Pezzo surface $X = S_{2,\II}^{(3,1^3)}$ has a rational point in general position for $q \geq 7$. Such $X$ only exists for $q \geq 3$. We have $\#X(\mathbb{F}_q) = q^2-q+1$ and $(L_X,C_X) = (0,2)$. Further, the orbit type is $2^4 2^6 6^2 6^4$. The $2^6$ corresponds to singular points on the three singular fibres over rational points and their Geiser duals. Then there are at most $6 + F(q)$ points in the bad locus. Note that $q^2 - q - 5 > F(q)$ for $q \geq 7$. Computer search reveals that $\Bl_{3,2^2,1}S_9$ exists for $q \in \{3,4,5\}$ and does not exist when $q = 2$. 
    
    \item[24:] This count is possible exactly when $q \geq 4$. First we show that it is possible for $q \geq 4$. It is enough to show that $\Bl_{2^4}S_9$ (type 98) exists in this range (computer search reveals that it does not exist for $q \leq 3$). Since $\Bl_{2^4}S_9 \cong \Bl_{2^3,1}S_{8,I}$, it is enough to show that the type-1 degree-2 del Pezzo surface $X=\Bl_{2^3}S_{8,I}$ has a rational point in general position when $q \geq 4$. We have $\#X(\mathbb{F}_q) = q^2 + 1$ and $(L_X,C_X) = (0,6)$. Further, the orbit type is $2^4 2^{24}$. Since there are more than 4 orbits of disjoint pairs of lines, it is clear that the lines contain at most $4$ rational points. Note that $q^2 - 3 > F(q)$ when $q \geq 4$. On the other hand, computer search shows that $\Bl_{2^4}S_9$ does not exist for $q \leq 3$. Now we show non-existence of the other types (types 66, 67 and 68, all $\Bl_{1^3}S_{4,\II}$) with 24 conics when $q \leq 3$. This is dealt with in the argument for line count 24. 
    
    \item[30:] This count is possible exactly when $q \geq 3$. It can only arise from $\Bl_{4,2,1^2} S_9$ (type 101) and $\Bl_{5,1^3} S_9$ (type 102). The former is the blowup of a type-55 degree-2 del Pezzo surface, which only exists for $q \geq 3$, while the latter only exists for $q \geq 3$ by \cite[\S5,2,~$a=4$]{BFL19}. 
    
    \item[36:] This count is possible exactly when $q \geq 3$. It can only arise from $\Bl_{3^2,1^2}S_9$ (type 100), which has twist type 16, which is the blowup of a type-54 degree-2 del Pezzo surface $X = \Bl_{3^2,1}S_9$, which only exists for $q \geq 3$. We have $\#X(\mathbb{F}_q) = q^2 + 2q + 1$ and $(L_X,C_X) = (2,6)$. The orbit type is $1^2 3^6 3^{12}$. There are at most $2(q+1)$ points on the rational lines. It is not too hard to see that the singular fibres of one of the conic fibrations lie above two cubic points, accounting for $3^2$ in each case, ultimately accounting for the $3^{12}$. It is also easy to see that the remaining $3^6$ corresponds to conjugate triples of disjoint lines, hence there are at most $2q+2 + F(q)$ points in the bad locus. Note that $q^2 - 1 > F(q)$ for $q \geq 4$. A computer search reveals existence for $q = 3$. 
    
    \item[72:] This count is possible exactly when $q \geq 7$. It can only arise from type 95. We showed in the argument for $L_X = 26$ that this exists exactly when $q \geq 7$. 
    
    \item[90:] This count is possibly exactly when $q \geq 4$. It can only arise from $\Bl_{3,2,1^3}S_9$ (type 96) and $\Bl_{4,1^4}S_9$ (type 97). We showed above in the argument for $L_X = 30$ that the former exists exactly when $q \geq 4$, and the latter exists exactly when $q \geq 5$ by \cite[\S5,2,~$a=5$]{BFL19}. 
    
    \item[252:] This count is possible exactly when $q \geq 7$. It can only arise from $\Bl_{2^2,1^4}S_9$ (type 93), whose Bertini twist has type 38, which is the blowup of a type-2 degree-2 del Pezzo surface $X = S_{2,\II}^{(1^6)}$, which only exists for $q \geq 5$. We have $\#X(\mathbb{F}_q) = q^2 - 4q + 1$ and $(L_X,C_X) = (0,2)$. The orbit type is $2^{12} 2^{16}$. The $2^{12}$ clearly arises from the singular fibres of the conic fibration and its dual, and so the bad locus contains at most $12 + F(q)$ points. Note that $q^2 -4q - 11 > F(q)$ for $q \geq 8$. A computational search reveals existence for $q = 7$ and non-existence for $q \leq 5$. 
    
    \item[270:] This count is possible exactly when $q \geq 7$. It can only arise from $\Bl_{3,1^5}S_9$ (type 94), which exists exactly when $q \geq 7$ by \cite[\S5,2,~$a=6$]{BFL19}. 
    
    \item[756:] This count is possible exactly when $q \geq 11$. It can only arise from $\Bl_{2,1^6}S_9$ (type 92), which exists exactly when $q \geq 11$ by \cite[\S5,2,~$a=7$]{BFL19}. 
    
    \item[2160:] This count is possible exactly when $q \geq 16$ or $q \geq 19$. It can only arise from $\Bl_{1^8} S_9$ (type 91), which exists exactly when $q = 16$ or $q \geq 19$ by \cite[\S5,2,~$a=9$]{BFL19}. \qedhere
\end{enumerate}
\end{proof}

\subsection{Existence of relatively minimal del Pezzo surfaces over finite fields}

From the table for degree-1 del Pezzo surfaces and the calculations above, we deduce the following result.

\begin{lem} \label{lem:RelMinsOverFq}
Let $\mathbb{F}_q$ be any finite field.
\begin{enumerate}
    \item For all $d \in \{1,\dots,9\} \setminus \{7\}$, a surface of blowup type $S_{d,I}$ exists over $\mathbb{F}_q$.
    \item For $d \in \{1,2,4\}$, a surface of blowup type $S_{d,\II}$ exists over $\mathbb{F}_q$.
\end{enumerate} 
\end{lem}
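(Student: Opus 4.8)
The cases $d \geq 2$ follow quickly from results already in hand, so the genuine content of the lemma lies in degree $1$. I would organise the argument by degree. For $d = 9$ the plane $\mathbb{P}^2_{\mathbb{F}_q}$ realises the (rank-$1$) relatively minimal type. For $d = 8$, the non-split quadric $S_{8,I}$ is obtained over every $\mathbb{F}_q$ as the Weil restriction $R_{\mathbb{F}_{q^2}/\mathbb{F}_q}(\mathbb{P}^1_{\mathbb{F}_{q^2}})$, using that $\mathbb{F}_{q^2}/\mathbb{F}_q$ is a degree-$2$ separable extension (the Galois action swaps the two geometric rulings, forcing Picard rank $1$). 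For $d \in \{5,6\}$, since $\mathbb{F}_q$ admits separable extensions of every degree, Theorem~\ref{thm:Minimal5and6} produces $S_5$ and $S_6$. For $d \in \{2,3,4\}$, the relatively minimal types—$S_{d,I}$ in all three degrees, together with $S_{d,\II}$ for $d \in \{2,4\}$—exist over every $\mathbb{F}_q$ by the solution of $\IGP_d(\mathbb{F}_q)$ for $d \geq 2$ due to Loughran--Trepalin \cite{LT20}. This settles parts (1) and (2) in every degree except $d = 1$.

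The remaining, genuinely new, content is the existence of $S_{1,I}$ (Picard rank $1$) and $S_{1,\II}$ (Picard rank $2$) over all $\mathbb{F}_q$. This is exactly the regime in which $\IGP_1(\mathbb{F}_q)$ is open, the obstruction being the persistently rank-$1$ surfaces described in Remark~\ref{rem:IGPfordP1s}. The key simplification is that the lemma asks for far less: a single surface of each relatively minimal type over each field, not the realisation of every conjugacy class. My plan is to pin these types down in the degree-$1$ table by their invariants. Using Iskovskikh's description of the Picard lattices in Theorem~\ref{thm:isk}, one checks directly that $S_{1,I}$ has $(L_X,C_X) = (0,0)$ (in $\Pic(X)=\mathbb{Z}(-K_X)$ a line class $m(-K_X)$ would have self-intersection $m^2 \geq 0 \neq -1$, and a conic class would force $m=0$), and that $S_{1,\II}$, a conic bundle with $7$ singular fibres, has $(L_X,C_X) = (0,2)$ (solving $-K_X \cdot D = 2$, $D^2 = 0$ in $\langle -K_X, C\rangle$ yields exactly the two classes $C$ and $4(-K_X) - C$). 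Thus $S_{1,I}$ is the set of rank-$1$ rows with count $(0,0)$ and $S_{1,\II}$ the set of rank-$2$ conic-bundle rows with count $(0,2)$.

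For each such type, existence over $\mathbb{F}_q$ is recorded in the degree-$1$ table, and I would obtain those entries exactly as in the proofs of Theorems~\ref{thm:FiniteFieldLines} and \ref{thm:FiniteFieldConics}: for the finitely many small $q$ one exhibits explicit witnesses (or rules out a type) by computer search, and for large $q$ one realises the required Frobenius conjugacy class directly and verifies genericity via the Hasse--Weil bound $F(q)$. Reading off the union of the resulting existence ranges, one checks that for every prime power $q$ at least one type $S_{1,I}$ and at least one type $S_{1,\II}$ occurs, which is all the lemma asserts. I expect the main obstacle to be precisely this degree-$1$ step: because a relatively minimal degree-$1$ surface is not a blow-up, the convenient ``blow down to a base surface and find a rational point in general position'' constructions used throughout Section~\ref{sec:finitefields} are unavailable, so one must realise the Frobenius action on $\Pic(\overline{X})$ intrinsically and fall back on explicit equations over the small fields—this is what makes the claim novel and is where the care is needed.
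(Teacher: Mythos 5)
Your reduction for $d \geq 2$ is correct and matches the paper, which simply cites Loughran--Trepalin for all $d \geq 2$ (your more explicit treatment of $d \geq 5$ via $\mathbb{P}^2$, Weil restriction and Theorem~\ref{thm:Minimal5and6} is also fine). The gap is in degree $1$, which you rightly identify as the entire content of the lemma but do not actually prove. Your plan there --- ``realise the required Frobenius conjugacy class directly'' for large $q$, explicit equations for small $q$ --- is not a method: realising a prescribed Frobenius action on $\Pic(\overline{X})$ for a \emph{relatively minimal} degree-$1$ surface is precisely what nobody knows how to do in general (this is why $\IGP_1(\mathbb{F}_q)$ is open, cf.\ Remark~\ref{rem:IGPfordP1s}); the Hasse--Weil genericity argument you invoke only serves to find rational points in general position to blow up, so it says nothing about constructing a surface that is not a blowup; and explicit equations can only ever cover finitely many $q$.

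The idea you are missing is the Bertini twist as an existence-transfer mechanism. The paper observes that the relatively minimal class $S_{1,I}^{13}$ (type 20 in the degree-$1$ finite-field table) is the Bertini twist of the non-minimal type $\Bl_{7,1}S_9$ (type 111); since the latter exists for all $q$ by \cite[\S5.2,~$a=2$]{BFL19}, twisting produces a surface of type $S_{1,I}$ over every $\mathbb{F}_q$. Likewise, for $q \geq 3$ the class $S_{1,\II}^{(3,2,1^2)}$ (type 2) is the twist of $\Bl_{4,3,1}S_9$ (type 105), and over $\mathbb{F}_2$ the proof of Theorem~\ref{thm:FiniteFieldConics} (conic count $2$) exhibits an explicit equation of type $S_{1,\II}^{(6,1)}$; together these give $S_{1,\II}$ over all $q$. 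Note that the obstacle you name --- that the blow-down-and-find-a-point constructions are unavailable for relatively minimal surfaces --- is exactly what the twist circumvents: one runs those constructions on the non-minimal twist and then twists back. Note also that only \emph{some} rank-$1$ classes have non-minimal Bertini twists (the ``persistently rank-$1$'' ones of Remark~\ref{rem:IGPfordP1s} do not), so the class must be chosen, as the paper does, with this property in mind; without this step your proposal does not go through.
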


\begin{proof}
For $d \geq 2$, both results follow from \cite{LT20}. Further, since $S_{1,I}^{13}$ (a type-20 degree-1 del Pezzo surface) is the twist of $\Bl_{7,1}S_9$ (type 111) and the latter exists for all $q$ by \cite[\S5.2,~$a=2$]{BFL19}, so does the former, hence $S_{1,I}$ exists for all $q$. Then it only remains to verify the existence of $S_{1,\II}$. In the $C_X = 2$ case of the proof of Theorem~\ref{thm:FiniteFieldConics}, we saw that $S_{1,\II}^{(6,1)}$ exists over $\mathbb{F}_2$; since the twist of $S_{1,\II}^{(3,2,1^2)}$ (type 2), namely $\Bl_{4,3,1}S_9$ (type 105) exists for $q \geq 3$, we deduce that $S_{1,\II}$ always exists.
\end{proof}

\section{Infinite fields}\label{sec:infinitefields}

In this section we use our algorithms to prove Theorems~\ref{thm:icp} and \ref{thm:ModestICP}.

We will see that this comes down to constructing conic bundles with three prescribed Galois groups in characteristic $2$. Accordingly, we begin with the following well-known result from Artin--Schreier theory (see \cite[\href{https://stacks.math.columbia.edu/tag/09I7}{Tag 09I7}]{stacks-project} for more).

\begin{lem} \label{lem:quadchar2}
Every separable extension of degree 2 of a field $k$ of characteristic $2$ is of the form $k(\phi(\alpha))$ for some $\alpha \in k^\times$, where $\phi(\alpha)$ is a root of the polynomial $x^2 + x + \alpha$. 
\end{lem}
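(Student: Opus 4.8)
The plan is to exploit the classification of quadratic polynomials in characteristic $2$ by separability, reducing a general minimal polynomial to Artin--Schreier form by a single linear change of variable. First I would fix a separable quadratic extension $L/k$ and choose a primitive element $\beta \in L \setminus k$, so that $L = k(\beta)$ and $\beta$ satisfies a monic minimal polynomial $f(x) = x^2 + bx + c$ with $b,c \in k$.

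Next I would observe that separability pins down the linear coefficient. Since $\Char(k) = 2$, the formal derivative of $f$ is $f'(x) = 2x + b = b$, a constant, so $L/k$ is separable (equivalently $\gcd(f,f') = 1$) precisely when $b \neq 0$. With $b \in k^\times$ in hand, the substitution $\gamma := \beta/b$ is the key manoeuvre: dividing the relation $\beta^2 + b\beta + c = 0$ by $b^2$ yields $\gamma^2 + \gamma + \alpha = 0$ with $\alpha := c/b^2 \in k$. Thus $\gamma$ is a root of the Artin--Schreier polynomial $x^2 + x + \alpha$, and since $\gamma$ differs from $\beta$ by the nonzero scalar $b^{-1} \in k$ we retain $L = k(\beta) = k(\gamma) = k(\phi(\alpha))$.

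Finally I would check that $\alpha$ is nonzero, so that it genuinely lies in $k^\times$ as claimed. If $\alpha = 0$ then $\gamma^2 + \gamma = \gamma(\gamma + 1) = 0$, forcing $\gamma \in \{0,1\} \subseteq k$ and contradicting $[k(\gamma):k] = 2$; equivalently $c = b^2\alpha \neq 0$. There is no serious obstacle here, as the entire argument is a short manipulation; the only point requiring genuine care is the characteristic-$2$ separability criterion $b \neq 0$, which is exactly what distinguishes Artin--Schreier extensions from the inseparable Kummer-type extensions $x^2 + c$ that one would otherwise use in odd characteristic.
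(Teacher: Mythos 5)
Your proof is correct and follows essentially the same route as the paper's: take the minimal quadratic $x^2+bx+c$, note that separability in characteristic $2$ forces $b\neq 0$, and rescale by dividing by $b^2$ and substituting $x/b$ to reach Artin--Schreier form. Your additional check that $\alpha\neq 0$ (so that $\alpha\in k^\times$ as stated) is a small but welcome point of care that the paper's proof leaves implicit.
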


\begin{proof}
Any separable quadratic extension of $k$ arises from adjoining a root of some irreducible quadratic polynomial $ax^2 + bx + c$ where $a \in k^\times$ and, for separability, $b \in k^\times$. Without loss of generality, we may assume $a = 1$; dividing by $b^2$ and considering the resulting expression as a polynomial in $x/b$, we obtain the result.
\end{proof}

\subsection{Proof of Theorem~\ref{thm:icp}}

As a preliminary for the proof of Theorem~\ref{thm:icp}, we will solve the inverse Galois problem for degree-4 del Pezzo surfaces for two specific groups over infinite fields of characteristic 2, beginning with the following proposition.

\begin{prop} \label{prop:char2IGP}
Let $G \leq W(D_5)$ be isomorphic to $C_2^4 \rtimes G_5$, where $G_5 \in \{C_5,A_5\}$, and suppose that $k$ is an infinite field of characteristic $2$ with a Galois extension $L/k$ such that $\Gal(L/k) \cong G$. Then there exists a quintic polynomial $f(x) \in k[x]$ and a Galois extension $L'/k$ such that $\Gal(L'/k) \cong G$ and $L' = k(\phi(\alpha_1),\dots,\phi(\alpha_4))$, where the $\alpha_i$ are four distinct roots of $f(x)$ and $\phi(\alpha)$ is a root of the polynomial $x^2 + x + \alpha$.
\end{prop}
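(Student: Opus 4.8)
The plan is to realise $G$ as the Galois group of an explicit Artin--Schreier tower built on top of a degree-$5$ extension, mirroring the characteristic-$\neq 2$ constructions of Kunyavski\u{\i}--Skorobogatov--Tsfasman and Elsenhans--Jahnel but replacing square roots by the Artin--Schreier operator $\wp\colon x\mapsto x^2+x$. First I would pass to the fixed field $M:=L^{C_2^4}$, so that $\Gal(M/k)\cong G_5$. Writing $H\leq G_5$ for the stabiliser of a point in the degree-$5$ action ($H=1$ if $G_5=C_5$, and $H=A_4$ if $G_5=A_5$) and $K:=M^{\widetilde H}$ for $\widetilde H\leq G$ the preimage of $H$, the extension $K/k$ has degree $5$ and, since $H$ is maximal in $G_5$, no proper intermediate subfield; a core computation (the core of $\widetilde H$ is exactly $C_2^4$, as $G_5$ is simple or of prime order) shows its Galois closure is $M$. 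Thus the minimal polynomial $f$ of a primitive element of $K/k$ is an irreducible quintic, automatically separable as it has odd degree, with splitting field $M$ and with $G_5$ acting on its roots $\alpha_1,\dots,\alpha_5$ as on $G_5/H$.

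Next I would construct $L'$ from these roots. The two driving facts are the additivity $\wp(u)+\wp(v)=\wp(u+v)$, which gives $\phi(\alpha)+\phi(\beta)=\phi(\alpha+\beta)$ up to $\F_2$, and the Artin--Schreier correspondence between elementary abelian $2$-extensions of $M$ and $\F_2$-subspaces of $M/\wp(M)$. Choosing the roots $\phi(\alpha_i)$ so that $\sum_{i=1}^5\phi(\alpha_i)=0$ and setting $L':=k(\phi(\alpha_1),\dots,\phi(\alpha_4))$, one has $\alpha_i=\wp(\phi(\alpha_i))\in L'$ and, once $\sum_i\alpha_i=0$, also $\alpha_5,\phi(\alpha_5)\in L'$, so $M\subseteq L'$. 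Now consider the $G_5$-equivariant map $\F_2^5\to M/\wp(M)$, $e_i\mapsto[\alpha_i]$. Over $\F_2$ the permutation module splits as $\F_2^5=\langle\mathbf 1\rangle\oplus W$, where $\mathbf 1=(1,\dots,1)$ and $W=\{\sum\epsilon_i=0\}$ is the sum-zero module, which is irreducible (indeed $\F_2[C_5]\cong\F_2\times\F_{16}$, while for $A_5\cong\mathrm{SL}_2(\F_4)$ the module $W$ is the natural $4$-dimensional one) and self-dual, and is exactly the $C_2^4\trianglelefteq G$. Because $\Hom(G_5,\F_2)=0$ for $G_5\in\{C_5,A_5\}$, every degree-$\leq 2$ subextension of $M/k$ is trivial, whence $\wp(M)\cap k=\wp(k)$; consequently $\mathbf 1$ lies in the kernel of the map precisely when $\sum_i\alpha_i\in\wp(k)$. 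Imposing this, the kernel is a $G_5$-submodule containing $\mathbf 1$, so it is $\langle\mathbf 1\rangle$ or all of $\F_2^5$, and it equals $\langle\mathbf 1\rangle$ exactly when $\alpha_1\notin\wp(M)$. In that case the image is $\cong W$, the classes $[\alpha_1],\dots,[\alpha_4]$ are independent (as $\mathbf 1$ has nonzero last coordinate), so $\Gal(L'/M)\cong C_2^4$ with $G_5$-action $W$. Finally the extension $1\to C_2^4\to\Gal(L'/k)\to G_5\to 1$ splits: the rule $\phi(\alpha_i)\mapsto\phi(\alpha_{\sigma(i)})$, with the convention $\phi(\alpha_5)=\sum_{j\leq 4}\phi(\alpha_j)$, is consistent (the relation $\sum_{i}\phi(\alpha_i)=0$ is $\Sym_5$-stable) and defines a section $G_5\to\Gal(L'/k)$, giving $\Gal(L'/k)\cong C_2^4\rtimes G_5\cong G$.

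The main obstacle, and the only remaining point, is to produce $f$ whose roots satisfy $\sum_i\alpha_i\in\wp(k)$ and $\alpha_1\notin\wp(M)$ at once. Since $\sum_i\alpha_i=\mathrm{Tr}_{K/k}(\alpha_1)$, and since $\mathrm{Tr}_{K/k}(c)=5c=c$ for $c\in k$ forces $\ker(\mathrm{Tr}_{K/k})\cap k=0$ and hence $K=k\oplus\ker(\mathrm{Tr}_{K/k})$, every nonzero trace-zero element of $K$ lies outside $k$ and is therefore primitive (recall $K/k$ has no intermediate field). Thus it suffices to find $\alpha_1\in\ker(\mathrm{Tr}_{K/k})$ with $\alpha_1\notin\wp(M)$, i.e.\ to prove $\ker(\mathrm{Tr}_{K/k})\not\subseteq\wp(M)$. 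This is where the hypotheses enter: the presence of the quotient $C_2^4\trianglelefteq G$ forces $k$ to admit a nontrivial Artin--Schreier extension, so $\wp(k)\neq k$, and with $\wp(M)\cap k=\wp(k)$ this already gives $k\not\subseteq\wp(M)$. I expect the hardest step to be upgrading this to the statement that the $k$-hyperplane $\ker(\mathrm{Tr}_{K/k})$ is not absorbed by the $\F_2$-subspace $\wp(M)\cap K$; the plan is a genericity argument over the infinite field $k$, using the $\F_2$-semilinearity identity $\wp(dv)=d^2\wp(v)+\wp(d)v$ to show that no $k$-line can lie entirely inside $\wp(M)$ (so that $\wp(M)\cap\ker(\mathrm{Tr}_{K/k})$ is a proper $\F_2$-subspace), and then selecting $\alpha_1\in\ker(\mathrm{Tr}_{K/k})$ off this subspace. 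Should a direct argument prove delicate, a fallback is to replace the given $G_5$-extension by a twist by a suitable Artin--Schreier extension of $k$, which enlarges $\ker(\mathrm{Tr}_{K/k})$ modulo $\wp(M)$ without disturbing the module-theoretic conclusions above.
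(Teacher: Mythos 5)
Your module-theoretic superstructure is correct and genuinely different from the paper's argument: the decomposition $\F_2^5=\langle\mathbf{1}\rangle\oplus W$ with $W$ the irreducible, self-dual sum-zero module, the kernel analysis of $\F_2^5\to M/\wp(M)$, and the explicit splitting section (the zero-sign lift exists and is unique because $\Gal(L'/M)\to\F_2^5$ has image exactly $W$) all work. But there is a genuine gap precisely where you flag it: you never prove that $\ker(\mathrm{Tr}_{K/k})\not\subseteq\wp(M)$, and the one concrete input you offer towards it is false. You claim that ``the presence of the quotient $C_2^4$'' forces $\wp(k)\neq k$; however $C_2^4$ is a normal \emph{subgroup} of $G$, not a quotient. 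Since $W$ is a nontrivial irreducible $G_5$-module, one has $G^{\mathrm{ab}}\cong C_5$ (resp.\ $G$ perfect) for $G_5=C_5$ (resp.\ $A_5$), so $G$ has no index-$2$ subgroups, and the existence of a $G$-extension of $k$ yields no quadratic extension of $k$ at all. This undermines both your genericity plan (whose starting point is $k\not\subseteq\wp(M)$) and your fallback (twisting by an Artin--Schreier extension of $k$, which may not exist). As written, the pivotal element $\alpha_1$ is never produced.

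The missing idea — and the heart of the paper's proof — is to use the full given extension $L$, not merely $M$. By the Galois correspondence inside $L$, the field $K=k(\alpha_1)$ admits a quadratic extension $k''\subseteq L$ with $k''\not\subseteq M$: for $G_5=C_5$ take any index-$2$ subgroup of $H'=C_2^4$, and for $G_5=A_5$ the index-$2$ subgroup $C_2^3\rtimes A_4$ of $C_2^4\rtimes A_4$. Applying Lemma~\ref{lem:quadchar2} over $K$ writes $k''=K(\phi(\beta))$ with $\beta=\sum_{i=0}^4 a_i\alpha_1^i\in K$; then $\beta$ is a primitive element (so its minimal polynomial is the new quintic $f$), and $\phi(\beta)\in L\setminus M$ gives $\beta\notin\wp(M)$ \emph{for free} — no genericity needed. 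The paper then bypasses your section argument entirely, identifying $L'$ as the Galois closure of $k(\phi(\beta))$ inside $L$ and checking that the relevant subgroup has trivial core in $G$ (irreducibility of $W$ again). If you prefer to keep your splitting-section route, note that your trace condition also follows from $L$: all conjugates $\phi(\beta_i)$ lie in $L$, the map $\sigma\mapsto\sigma\bigl(\sum_i\phi(\beta_i)\bigr)-\sum_i\phi(\beta_i)$ is a homomorphism $G\to\F_2$, hence trivial, so $\sum_i\phi(\beta_i)\in k$ and $\mathrm{Tr}_{K/k}(\beta)=\wp\bigl(\sum_i\phi(\beta_i)\bigr)\in\wp(k)$. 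With $\beta$ in place of your $\alpha_1$ your argument closes; without extracting it from a quadratic subextension of $L/K$ lying outside $M$, it does not.
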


\begin{proof}
Let $L/k$ be a Galois extension with $\Gal(L/k) \cong C_2^4 \rtimes G_5$. Since $G$ has a normal subgroup $H = C_2^4 \rtimes \{1\} \cong C_2^4$ with quotient isomorphic to $G_5$, there exists a subextension $L/M/k$ with $\Gal(L/M) \cong C_2^4$ and $\Gal(M/k) \cong G_5$.

Further, note that $H$ is contained in a subgroup $H'$ of the form $C_2^4 \rtimes H_5$, where $H_5 \leq G_5$ is index-5 (so $H_5 = \{1\}$ for $G_5 = C_5$ and $H_5 = A_4$ for $G_5 = A_5$), and $H'$ corresponds to an extension $k(\alpha_1)/k$, where $\alpha_1$ is one of the five distinct roots $\alpha_1,\dots,\alpha_5$ of a quintic polynomial $f(x) \in k[x]$ (the distinctness of the $\alpha_i$ follows from separability of $k(\alpha_1)/k$ as a subextension of a separable extension).

Note that $L' := k(\phi(\alpha_1),\dots,\phi(\alpha_5))$ is the splitting field of $f(x^2 + x)$, hence it is Galois over $k$. We will show that, in either case, $\Gal(L'/k) \cong G$.

First, note that $H' = C_2^4 \rtimes H_5$ contains an index-2 subgroup $H''$ but $H_5$ does not: in the case $G_5 = C_5$, we may take $H'' = C_2^3 \rtimes \{1\}$, and in the case $G_5 = A_5$, we may take $H'' = C_2^3 \rtimes A_4$, and the absence of index-2 subgroups of $H_5$ itself is clear since $H_5$ is simple. Then $k(\alpha_1)/k$ permits a quadratic extension $k''/k(\alpha_1)$ which is a subextension of $L/k$ but not of $M/k$. By Lemma~\ref{lem:quadchar2}, we have $k'' = k(\alpha_1)\left(\phi\left(\sum_{i=0}^4 a_i \alpha_1^i\right)\right)$ for some $a_0,\dots,a_4 \in k$. Note that $k\left(\sum_{i=0}^4 a_i \alpha_1^i\right)$ is a subextension of a degree-5 extension, hence it is either trivial or of degree 5, but it cannot be trivial since $k''$ is a degree-10 extension of $k$, hence it is of degree 5. Then, without loss of generality, we have $k'' = k(\phi(\alpha_1))$. Then $L'$ is a subfield of $L$ (as the Galois closure of $k(\phi(\alpha_1)) \subset L$), and the Galois group of $L'$ is isomorphic to a normal subgroup of $G$ containing $H''$, but the only such subgroup is $G$ itself, as may be verified in each case, hence we are done.
\end{proof}

\begin{thm} \label{thm:ExplicitChar2IGP}
Let $k$ be an infinite field of characteristic $2$ and let $G \in \{C_2^4 \rtimes C_5,C_2^4 \rtimes A_5\}$. Then a $G$-extension of $k$ exists if and only if there exists a degree-4 del Pezzo surface with splitting group $G$.
\end{thm}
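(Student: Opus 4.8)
The plan is to prove the two implications separately, the reverse being formal and the forward direction resting entirely on Proposition~\ref{prop:char2IGP}. For the implication $(\Leftarrow)$, suppose $X/k$ is a degree-$4$ del Pezzo surface with splitting group $H_X = G$. By the fundamental theorem of Galois theory applied to $\rho_X$, as recorded in Section~\ref{sec:inverseproblems}, we have $G = H_X \cong \Gal(\ell_X/k)$, where $\ell_X$ is the fixed field of $\ker\rho_X$; this is a $G$-extension of $k$, so nothing beyond the definitions is needed here.

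For $(\Rightarrow)$, given a $G$-extension of $k$, I would first invoke Proposition~\ref{prop:char2IGP} to obtain a quintic $f(x)\in k[x]$ with distinct roots $\alpha_1,\dots,\alpha_5$ on which $G_k$ acts through $G_5\leq\Sym_5$, together with the Artin--Schreier generators $\phi(\alpha_i)$ (roots of $x^2+x+\alpha_i$) for which $L'=k(\phi(\alpha_1),\dots,\phi(\alpha_4))$ satisfies $\Gal(L'/k)\cong G$. The remaining task is to manufacture a degree-$4$ del Pezzo surface $X/k$ whose Galois action on $\Pic(\overline{X})$ realises exactly this data. It is worth noting \emph{why} a direct construction is forced upon us: since $G_5$ acts transitively on $\{\alpha_1,\dots,\alpha_5\}$, the group $G$ stabilises no conic class in $\Pic(\overline{X})$ (a conic-class stabiliser in $W(D_5)$ has order $192=2^6\cdot 3$, which $80=|C_2^4\rtimes C_5|$ does not divide, and $960=|C_2^4\rtimes A_5|$ exceeds), so by Remark~\ref{rem:line is line} a surface with splitting group $G$ carries no rational conic fibration. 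Hence the conic-bundle route of Theorem~\ref{thm:igpconic} and Lemma~\ref{lem:WDn}, which only produces subgroups of some $W(D_n)$, cannot reach these two groups; and the usual characteristic-$\neq 2$ model as a diagonal intersection $\sum x_i^2=\sum\alpha_i x_i^2=0$ collapses in characteristic $2$ because $\sum x_i^2=(\sum x_i)^2$ is degenerate.

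The construction I would use is an Artin--Schreier analogue of this diagonal intersection: take $X\subset\mathbb{P}^4_k$ to be an explicit pencil of quadrics with coefficients built from the symmetric functions of the $\alpha_i$ (so that $X$ is defined over $k$), arranged so that the pencil has exactly five degenerate members, indexed by $\alpha_1,\dots,\alpha_5$, and so that the two rulings of the degenerate quadric over $\alpha_i$ are interchanged precisely by the Artin--Schreier extension $k(\alpha_i,\phi(\alpha_i))$ rather than by a square root. One then checks directly that $X$ is a smooth complete intersection of two quadrics in $\mathbb{P}^4$, hence automatically a degree-$4$ del Pezzo surface. To finish, I would compute $H_X\subseteq W(D_5)$: the action of $G_k$ on the five degenerate quadrics is the action on $\{\alpha_1,\dots,\alpha_5\}$, namely $G_5$, which supplies the $\Sym_5$-quotient of $G$, while the selection of a ruling at each degenerate member records the class of $\phi(\alpha_i)$. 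Because the rulings over the five members are constrained to flip an even number of times, this yields the even part $C_2^4$ rather than $C_2^5$, placing $H_X$ inside $C_2^4\rtimes G_5=G$; surjectivity onto the full $C_2^4$ is exactly the independence of the classes $\phi(\alpha_1),\dots,\phi(\alpha_4)$ encoded in the equality $\Gal(L'/k)\cong G$ from Proposition~\ref{prop:char2IGP}.

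The hard part will be the last step in characteristic $2$: simultaneously verifying smoothness of the explicit pencil and showing that its configuration of $16$ lines realises the sign-change data through precisely the Artin--Schreier classes $\phi(\alpha_i)$ and nothing more. The two delicate points are establishing the even-flip constraint (which is what matches the ambient group to $W(D_5)=C_2^4\rtimes\Sym_5$ rather than to $W(B_5)$) and tracking how each $\phi(\alpha_i)$ acts on the lines, so that the image $H_X$ can be read off from Proposition~\ref{prop:char2IGP} rather than recomputed by hand. I expect the bookkeeping of the ruling/line correspondence in characteristic $2$, where the usual discriminant and square-class arguments are unavailable, to be where the genuine work lies.
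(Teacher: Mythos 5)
Your backward direction is fine and is exactly the paper's (it is immediate from the definition of the splitting group), and you correctly identify Proposition~\ref{prop:char2IGP} as the field-theoretic input for the forward direction. But the forward direction contains a genuine gap: you never actually construct anything. The entire content of the theorem is the explicit construction, and your proposal defers precisely that step --- you write that verifying smoothness, the even-flip constraint, and the matching of the ruling data to the classes $\phi(\alpha_i)$ is ``where the genuine work lies,'' which is an accurate self-assessment that what precedes it is a plan, not a proof. Worse, the plan rests on the pencil-of-quadrics picture in characteristic $2$, which is exactly where the classical theory fails: a quadratic form in five variables in characteristic $2$ has alternating (hence everywhere degenerate) polar bilinear form, so there is no diagonal normal form and no ordinary discriminant, and ``a pencil with exactly five degenerate members whose rulings are swapped by prescribed Artin--Schreier classes'' is not something one can arrange by analogy with $\sum x_i^2 = \sum \alpha_i x_i^2 = 0$. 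The paper itself flags adapting Flynn's pencil description of quartic del Pezzo surfaces to characteristic $2$ as an unrealised \emph{alternative} strategy (see the remark following Theorem~\ref{thm:dP4IGP}), not an available tool.

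Your structural claim that ``the conic-bundle route of Theorem~\ref{thm:igpconic} and Lemma~\ref{lem:WDn} cannot reach these two groups'' is also mistaken, and it steered you away from the argument that works. You correctly observe that $G$ stabilises no conic class, so the degree-4 surface itself carries no rational conic fibration (cf.\ Remark~\ref{rem:line is line}); but Lemma~\ref{lem:WDn} is applied to a \emph{birational model}, not to the del Pezzo surface. The paper's proof builds the explicit Artin--Schreier conic bundle $x^2 + xy + ty^2 + f(t)z^2 = 0$ over $\mathbb{A}^1_t$, whose five singular fibres lie over the roots of $f$ and are split exactly by $k(\alpha_i,\phi(\alpha_i))$; it glues a second patch over $t = \infty$, verifies smoothness of both patches by direct Jacobian computations (using separability of $f$ and of its reciprocal $g$), shows the fibre at infinity is split because $\phi(c_4) = \sum_{i=1}^5\phi(\alpha_i)$ lies in $k$ (as $G$ has no index-2 subgroups), and contracts that split fibre to obtain a conic bundle over $\mathbb{P}^1$ with five singular fibres and splitting group $G$, from which the degree-4 del Pezzo surface is deduced. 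Note that this handles your ``even-flip'' constraint additively --- the sum of the five Artin--Schreier classes is trivial in $k$ --- with no bookkeeping of the $16$ lines at all. What is unavailable in characteristic $2$ is only the KST statement of Theorem~\ref{thm:igpconic}, not the conic-bundle method; replacing square roots by Artin--Schreier roots within that method is the whole point of the paper's proof.
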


\begin{proof}
It suffices to prove sufficiency. By Proposition~\ref{prop:char2IGP}, we see that there exists a degree-5 polynomial $f(x) \in k[x]$ such that $f(x)$ has Galois group $C_5$ or $A_5$ and $f(x^2+x)$ has Galois group $G$. 

Consider the surface 
\[
Z_0=\{x^2 + xy + ty^2 + f(t)z^2 = 0\} \subset \mathbb{P}^2_{x,y,z} \times \mathbb{A}^1_t.
\]

One can easily check that $Z_0$ is smooth. Indeed, smoothness in the patches $x\neq 0$, $y\neq 0$ is trivial, and any singular point $([x:y:z],t)$ in the patch $z\neq 0$ would imply $f(t) = f'(t) = 0$, which contradicts the separability of $f$.

We now seek to patch this affine conic bundle with another to produce a conic bundle over $\mathbb{P}^1$. First, consider the assignments
\[
u = \frac{1}{t}, \ \ \  X = x, \ \ \  Y = \frac{y}{u}, \ \ \  Z = \frac{z}{u^3}.
\]
Substituting these into the equation for $Z_0$, we arrive at the surface
\[
V=\{X^2 + uXY + uY^2 + ug(u)Z^2 = 0 \}\subset \mathbb{P}^2_{X,Y,Z} \times \mathbb{A}^1_u,
\]
where $g(T):=T^5f(1/T)$. That is, if $f(t) = t^5 + c_4t^4 + \dots + c_1 t + c_0$, then $g(t) = c_0 t^5 + c_1 t^4 + \dots + c_4 t + 1$.

This surface patches with $Z_0$, but it is singular at the point $([0:1:1],0)$. We are thus motivated to transform the equation by moving this singular point to the origin in the affine patch $z \neq 0$, blowing up and taking the strict transform. This amounts to the following additional change of variables:
\[
X'= \frac{X}{u}, \ \ \  Y' = \frac{Y+Z}{u}, \ \ \  Z' = Z.
\]
Substituting these new variables in for the previous ones, we arrive at the surface
\[
Z_\infty=\{X'^2 + uX'Y' + X'Z' + uY'^2 + \left(c_4 + \dots + c_0 u^4\right)Z'^2 = 0\} \subset \mathbb{P}^2_{X',Y',Z'} \times \mathbb{A}^1_u.
\]

It is easily seen from the above that $Z_0 \cap \{t \neq 0\} \cong Z_\infty \cap \{u \neq 0\}$; we claim that $Z_\infty$ is, moreover, smooth. Again, we check in affine coordinate patches.

First consider the case $X' \neq 0$, and normalise so that $X' = 1$. Then we have
\[
\begin{aligned}
    F & = 1 + uY' + Z' + uY'^2 + \left(c_4 + \dots + c_0 u^4\right)Z'^2, \\
    J & = \left(u, 1, Y' + Y'^2 + (c_3 + c_1u^2)Z'^2\right).
\end{aligned}
\]
Since the second entry of $J$ is never zero, there are no singularities in this patch.

Next consider the case $Y' \neq 0$, and normalise so that $Y' = 1$. Then we have
\[
\begin{aligned}
    F & = X'^2 + uX' + X'Z' + u + \left(c_4 + \dots + c_0 u^4\right)Z'^2, \\
    J & = \left(u + Z', X', X' + 1 + (c_3 + c_1 u^2)Z'^2\right).
\end{aligned}
\]
Any singularity must have $X' = 0$, but then the first entry of $J$ gives $u = Z'$, and then the equation gives $u + \left(c_4 + \dots + c_0 u^4\right)u^2 = 0$ while the third entry of the Jacobian gives $1 + c_3u^2 + c_1u^4 = 0$. These imply $g(u) = g'(u) = 0$, but it is easily seen that $g$ is separable. Then there are no singularities in this patch.

Finally consider the case $Z' \neq 0$, and normalise so that $Z' = 1$. Then we have
\[
\begin{aligned}
    F & = X'^2 + uX'Y'  + X' + uY'^2 + \left(c_4 + \dots + c_0 u^4\right), \\
    J & = \left(uY' + 1, uX', X'Y' + Y'^2 + c_3 + c_1u^2\right).
\end{aligned}
\]
The second entry of $J$ gives $u = 0$ or $X' = 0$. If $u = 0$, then the first entry of $J$ gives $1 = 0$, a contradiction, so we must have $X' = 0$. Then the first entry of $J$ gives $Y' = \frac{1}{u}$, and the third entry of $J$ and the equation then give $g(u) = g'(u) = 0$, which is impossible by separability of $g$, hence there are no singularities in this patch. We deduce that $Z_\infty$ is smooth.

Lastly, note that the fibre of $Z_\infty$ over $u = 0$ is given by
\[
X'^2 + X'Z' + c_4 Z'^2 = 0.
\]
We claim that this fibre is split over $k$, i.e.\ that $\phi(c_4) \in k$. To see this, note that $\phi(\alpha_5) = \phi(c_4) + \sum_{i=1}^4 \phi(\alpha_i)$, but $\phi(\alpha_5) \in k(\phi(\alpha_1),\dots,\phi(\alpha_4))$, hence $k(\phi(c_4)) \subset L$, but $L$ contains no index-2 subgroups, hence $k(\phi(c_4)) = k$.

Denote by $Z$ the surface obtained by gluing $Z_0$ and $Z_\infty$ along $\{t \neq 0\}$ and $\{u \neq 0\}$ and contracting the fibre over $u = 0$: we see that $Z$ is a conic bundle over $\mathbb{P}^1$ with five singular fibres and splitting group $G$, and we are done. \qedhere

\end{proof}

Before we prove Theorem~\ref{thm:icp}, we need one more auxiliary result.

\begin{prop} \label{prop:ExplicitChar2ConicICP}
Let $k$ be an infinite field with $\Char(k) = 2$ admitting a separable extension of degree $2$. Then there exists a degree-4 del Pezzo surface $X/k$ of type $S_{4,\II}$ with splitting group $C_2$.
\end{prop}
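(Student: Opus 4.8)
The plan is to construct the required surface explicitly as an Artin--Schreier conic bundle over $\mathbb{P}^1$, in the spirit of the proof of Theorem~\ref{thm:ExplicitChar2IGP} but producing a surface of type $S_{4,\II}^{(1^4)}$ rather than one of type $S_{4,I}$. Such a surface is a relatively minimal conic bundle with four singular fibres, each lying over a $k$-rational point and each splitting over the \emph{same} quadratic extension of $k$; the Galois group then acts on $\Pic(\overline{X})$ through the order-two subgroup of $W(D_4)\leq W(D_5)=G_4$ interchanging the two components of all four singular fibres simultaneously, so that $H_X\cong C_2$. By Lemma~\ref{lem:quadchar2} I may write the given separable quadratic extension as $M=k(\phi(\alpha))$ for some $\alpha\in k^\times$, with $\phi(\alpha)^2+\phi(\alpha)=\alpha$. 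Using that $k$ is infinite, I would then choose $c_1,\dots,c_4\in k$ whose values $c_i^2+c_i$ are pairwise distinct, and set $t_i:=\alpha+c_i^2+c_i$ and $f(t):=\prod_{i=1}^4(t-t_i)$, a separable monic quartic over $k$.

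The key point is the identity $\phi(t_i)=\phi(\alpha)+c_i$, which shows that each $t_i$ lies in the Artin--Schreier class of $\alpha$, so that $k(\phi(t_i))=M$ for every $i$, while the \emph{slopes} $\phi(t_i)$ are pairwise distinct. I would then take the affine conic bundle
\[
Z_0=\{x^2+xy+ty^2+f(t)z^2=0\}\subset\mathbb{P}^2_{x,y,z}\times\mathbb{A}^1_t,
\]
whose fibre over a root $t_i$ is the pair of lines $\{x=\phi(t_i)y\}\cup\{x=(\phi(t_i)+1)y\}$, conjugate over $M$ and meeting at the node $[0:0:1]$, and whose remaining fibres are smooth. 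As in Theorem~\ref{thm:ExplicitChar2IGP}, $Z_0$ is smooth because a singular point would force $f(t)=f'(t)=0$, contradicting separability of $f$. To obtain a conic bundle over all of $\mathbb{P}^1$ I would glue $Z_0$ to a second chart via $u=1/t$, after rescaling $y$ and $z$ and a blowup resolving the (wild, non-reduced) fibre over $u=0$, exactly as in the proof of Theorem~\ref{thm:ExplicitChar2IGP}, arranging the compactification so that the only singular fibres are the four tame fibres over $t_1,\dots,t_4$.

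It remains to identify the resulting smooth projective surface $X$. Base-changing to $k_s$ and applying $x\mapsto x+\phi(\alpha)y$ splits each singular fibre, exhibiting $X_{k_s}$ as the blowup of a Hirzebruch surface at the four nodes; these lie over the four distinct points $t_i$ and, because the component slopes $\phi(t_i)$ are pairwise distinct, no two lie on a common section, so the four points are in general position and $X_{k_s}$ is a degree-$4$ del Pezzo surface. As this is a geometric property, $X/k$ is a degree-$4$ del Pezzo surface. Since $X$ becomes split over $M$ (all singular-fibre components, and hence all lines, are defined over $M$), the representation $\rho_X$ factors through $\Gal(M/k)\cong C_2$, and it is nontrivial because the two components of each singular fibre are genuinely conjugate over $M$; hence $H_X\cong C_2$. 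The $C_2$-invariant part of $\Pic(\overline{X})$ is spanned by $-K_X$ and the fibre class, so $\Pic(X)$ has rank $2$ and contains a conic class, whence by Theorem~\ref{thm:isk} the surface $X$ is relatively minimal of type $S_{4,\II}$, as desired.

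The main obstacle is the analysis at infinity together with the verification that $X$ is genuinely del Pezzo: in characteristic $2$ the natural compactification of $Z_0$ acquires a non-reduced fibre at $t=\infty$, which must be resolved as in Theorem~\ref{thm:ExplicitChar2IGP} without creating extra singular fibres or curves of self-intersection $\leq -2$, and one must confirm that the four nodes remain in general position (equivalently, that $-K_X$ is ample). The two design features that make the construction succeed are the \emph{varying} coefficient $ty^2$, which forces the slopes, and hence the node positions, to vary and so guarantees general position, and the common Artin--Schreier class of the $t_i$, which forces all four fibres to split over the single extension $M$ and thereby pins the splitting group down to $C_2$.
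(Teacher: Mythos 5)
Your overall strategy is the same as the paper's: build an explicit Artin--Schreier conic bundle $x^2+xy+ty^2+f(t)z^2=0$ over $\mathbb{A}^1_t$, compactify it as in the proof of Theorem~\ref{thm:ExplicitChar2IGP}, and read off the splitting group from the singular fibres. Your choice of the quartic, however, is genuinely different and is in fact the arithmetically correct one: taking $t_i=\alpha+c_i^2+c_i$ forces all four degenerate fibres to lie over \emph{$k$-rational} points while the slopes $\phi(t_i)=\phi(\alpha)+c_i$ all generate the single extension $M=k(\phi(\alpha))$, and it also makes the fibre at infinity harmless, since the sum of the Artin--Schreier classes of the $t_i$ is $4\alpha\equiv 0$, which is exactly the analogue of the $\phi(c_4)\in k$ step in Theorem~\ref{thm:ExplicitChar2IGP}. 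By contrast, the quartic $(t^2+t+\alpha)(t^2+t+\alpha+\beta)$ printed in the paper has its roots at $\phi(\alpha),\phi(\alpha)+1,\phi(\alpha+\beta),\phi(\alpha+\beta)+1$, which are not $k$-rational, so your ``same Artin--Schreier class'' device is a real repair of the key step, not a cosmetic variant. Your smoothness check, the identification of the Galois action through $\Gal(M/k)$, and the rank-$2$/relative-minimality argument via Theorem~\ref{thm:isk} are all fine (conditional on the surface being del Pezzo).

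The genuine gap is in your final geometric step, where you claim that because the slopes $\phi(t_i)$ are pairwise distinct, ``no two [nodes] lie on a common section, so the four points are in general position and $X_{k_s}$ is a degree-$4$ del Pezzo surface.'' Distinctness of the slopes is equivalent to distinctness of the $t_i$, i.e.\ to separability of $f$; it guarantees four singular fibres over four distinct base points and nothing more. It is also not the right condition: after contracting one component in each singular fibre one lands on $\mathbb{F}_1$ (or $\mathbb{F}_0$, $\mathbb{F}_2$), and on $\mathbb{F}_1$ \emph{any} two points on distinct fibres lie on a common section of the ruling, so ``no two on a common section'' can never hold. The correct condition for $-K$ to be ample is the absence of irreducible curves of self-intersection at most $-2$; concretely, in the $\Bl_5\mathbb{P}^2$ model, that no three of the five blown-down points are collinear, equivalently that the conic bundle admits no section of self-intersection $-2$. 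This is not automatic for a smooth relatively minimal conic bundle with four singular fibres: such a surface can instead be an Iskovskikh-type surface (a weak del Pezzo surface containing a $(-2)$-curve), a degeneration the paper itself warns about in the remark following Theorem~\ref{thm:igpconic} for precisely this kind of construction. So your argument, as written, produces a relatively minimal conic bundle over $\mathbb{P}^1$ with splitting group $C_2$, but does not yet prove it is a del Pezzo surface of type $S_{4,\II}$.

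To close the gap you need an actual verification for your explicit equation, for instance: exhibit the sections $[A_S(t):B_S(t):1]$ coming from factorisations $\prod_{i\in S}(\phi(t)+\phi(t_i))\prod_{i\notin S}(\phi(t)+\phi(t_i)+1)=A_S+B_S\phi(t)$ (whose norms equal $f$), compute their self-intersections and incidences (choosing the $c_i$ so that, e.g., $c_1+c_2+c_3+c_4\neq 0$, to control the behaviour at infinity), and deduce that every negative curve is a $(-1)$-curve; or rule out $(-2)$-sections by a Galois/Brauer argument, noting that a Galois-stable $(-2)$-section would give a $k(t)$-point of the generic fibre, contradicting the nontriviality of the class $[t,f(t))$, and then separately excluding a conjugate pair of $(-2)$-sections. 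Either way, some such argument must be supplied before the proposition follows.
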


\begin{proof}
Let $L = k(\phi(\alpha))$ be a separable extension of degree $2$ for some $\alpha \in k^\times$. Choose $\beta \in k^\times \setminus \{\alpha\}$, and consider
\[
Z_0:=\{x^2 + xy + ty^2 + (t^2+t+\alpha)(t^2 + t+ \alpha + \beta)z^2 = 0\} \subset \mathbb{P}^2_{x,y,z} \times \mathbb{A}^1_t.
\]
Set $f(t):=(t^2+t+\alpha)(t^2 + t+ \alpha + \beta)$ and $g(t):=t^4f(1/t)$. Analogously to the proof of Theorem~\ref{thm:ExplicitChar2IGP}, we consider first the change of variables
\[
u:=\frac{1}{t}, \quad X:=x, \quad Y:=\frac{y}{u}, \quad Z:=z,
\]
and upon finding singularities on the associated surface, further consider
\[
X':=\frac{Y}{u}, \quad Y':=\frac{X+Z}{u}, \quad Z':=Z,
\]
obtaining the surface
\[
Z_\infty:=\{Y'^2 + uX'Y' + X'Z' + h(u)Z'^2 = 0\} \subset \mathbb{P}^2_{X',Y',Z'} \times \mathbb{A}^1_u.
\]
It is easily seen that this surface is smooth and glues with $Z_0$. Further, note that the singular fibres of $Z_0$ as a conic bundle under projection to $\mathbb{A}^1_t$ are over the rational points $\alpha,\alpha+1,\beta,\beta+1$ and are non-split over $k$, each with splitting field $k(\alpha)$. Further, $Z_\infty$ contributes no new singular fibres, hence the gluing $Z$ is a relatively minimal conic bundle with splitting group $C_2$. As in the proof of Theorem~\ref{thm:ExplicitChar2IGP}, we deduce the result.
\end{proof}

\begin{cor}[Theorem~\ref{thm:icp}]
The inverse line and conic problems $\ICP_{4,e}(k)$ admit surjective solutions for all infinite fields $k$.
\end{cor}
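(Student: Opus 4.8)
The plan is to split on the characteristic: when $\Char(k)\neq 2$ the statement is essentially immediate, so the whole substance lies in characteristic $2$, where I would reduce via Algorithm~\ref{alg:ICPalg} to realising a short list of counts that are pinned to relatively minimal degree-$4$ surfaces, and then feed in the two Artin--Schreier constructions proved just above. First, if $\Char(k)\neq 2$ then Theorem~\ref{thm:dP4IGP} (Kunyavski\u{\i}--Skorobogatov--Tsfasman) gives a surjective solution to $\IGP_4(k)$, and by Lemma~\ref{lem:basis for pic} a surjective solution of the inverse Galois problem upgrades to one of $\ICP_{4,e}(k)$ for both $e\in\{1,2\}$. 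So I would assume $\Char(k)=2$ throughout the rest, where $\IGP_4(k)$ is unavailable but the coarser curve counts can still be matched one at a time.

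For $\Char(k)=2$ I would run Algorithm~\ref{alg:ICPalg} over each $n\in\mathcal{C}_{4,e}(k)$. Any count realised by a blowup type whose relatively minimal model has degree at least $5$ is handled directly: membership $n\in\mathcal{C}_{4,e}(k)$ forces $G_k$ to admit the separable extensions encoding the relevant orbit data, whereupon the degree-$\geq 5$ relatively minimal surface exists over $k$ (Theorem~\ref{thm:Minimal5and6} together with the trivial degree $8,9$ cases), and Proposition~\ref{prop:general position dP} supplies a $G_k$-stable set of points in general position on it, so blowing up realises $n$. Reading the classification tables produced by Algorithm~\ref{alg:class}, the counts not settled this way are exactly those tied to a relatively minimal degree-$4$ surface, namely the pair $(L_X,C_X)=(0,0)$ of $S_{4,I}$ and $(L_X,C_X)=(0,2)$ of $S_{4,\II}$; these account for the line count $0$ and the conic counts $0$ and $2$.

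I would then realise these forced counts with the explicit surfaces built above, keyed to the extension that makes the count permitted. Proposition~\ref{prop:ExplicitChar2ConicICP} furnishes an $S_{4,\II}$ of splitting group $C_2$, hence line count $0$ and conic count $2$, whenever $k$ has a separable quadratic extension; since minimality of a degree-$4$ conic bundle already requires non-split or permuted fibres, this is precisely the condition under which $(0,2)$ is permitted. Theorem~\ref{thm:ExplicitChar2IGP} furnishes an $S_{4,I}$ of splitting group $C_2^4\rtimes C_5$ or $C_2^4\rtimes A_5$, hence line count $0$ and conic count $0$, whenever $k$ has such a Galois extension; these are the minimal rank-$1$ types whose abelianisation $C_5$ rules out any quadratic subextension, so they genuinely require the new construction rather than the conic-bundle approach.

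The hard part will be the bookkeeping that closes the argument: verifying that these two families of surfaces between them cover every permitted occurrence of the forced counts. Here Algorithm~\ref{alg:ICPalg}'s quotient step offers no relief for the line count $0$, since no blowup of a degree-$\geq 5$ surface attains it, so one cannot route it through a fixed good model; instead I would argue directly from the tables that whenever a subgroup of $W(D_5)$ with the fixed-lattice rank of $S_{4,I}$ or $S_{4,\II}$ is realised as a Galois group over $k$, the field $k$ already admits one of $C_2$, $C_2^4\rtimes C_5$, $C_2^4\rtimes A_5$ as a Galois group, so that one of the two constructions applies. This matching --- which replaces the characteristic-zero conic-bundle machinery of \cite{KST89} that breaks down in characteristic $2$ --- is the crux, and I expect it to be the most delicate step.
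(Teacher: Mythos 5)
Your skeleton matches the paper's proof closely: the characteristic split (with $\Char(k)\neq 2$ dispatched by Theorem~\ref{thm:dP4IGP}), the identification of the problematic counts as line count $0$ and conic counts $0$ and $2$, the use of Proposition~\ref{prop:ExplicitChar2ConicICP} for conic count $2$, and — for line count $0$ — exactly the paper's dichotomy: every permitted splitting group either contains an index-$2$ subgroup, whence a separable quadratic extension and a zero-line surface (the paper uses $\Bl_{2^2}S_{8,I}$; your $S_{4,\II}$ from Proposition~\ref{prop:ExplicitChar2ConicICP} works equally well for lines), or is one of the two groups of Proposition~\ref{prop:char2IGP}, which Theorem~\ref{thm:ExplicitChar2IGP} handles. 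Up to that point you have reconstructed the paper's argument, modulo a small slip: the abelianisation of $C_2^4\rtimes\Alt_5$ is trivial (the group is perfect), not $C_5$, though your actual point — no index-$2$ subgroups — stands for both groups.

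The genuine gap is conic count $0$. Your closing "matching" step funnels every permitted occurrence of the forced counts through the alternative "$k$ admits $C_2$, or $C_2^4\rtimes C_5$, or $C_2^4\rtimes\Alt_5$ as a Galois group, so one of the two constructions applies." But in the $C_2$ branch both surfaces you can build — $\Bl_{2^2}S_{8,I}$ and the $S_{4,\II}$ of Proposition~\ref{prop:ExplicitChar2ConicICP} — have conic count $2$, not $0$, so this alternative realises line count $0$ but \emph{not} conic count $0$. Concretely: a characteristic-$2$ field whose absolute Galois group surjects onto $C_2\times T_4$ (the splitting group of $\Bl_4 S_{8,I}$, which has $(L_X,C_X)=(0,0)$) has $0\in\mathcal{C}_{4,2}(k)$, yet need not admit either of the two special groups, and your plan then produces no surface with zero conic fibrations. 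The paper closes this case with a different group-theoretic observation that your proposal lacks: every minimal subgroup of $W(D_5)$ contains a subgroup of index $4$ or $5$, so whenever conic count $0$ is permitted, $k$ has a separable quartic or quintic extension, and then a blowup of a relatively minimal surface of degree at least $5$ (such as $\Bl_4 S_{8,I}$ or $\Bl_5 S_9$) realises conic count $0$ — with no appeal to the characteristic-$2$ constructions in that branch. Relatedly, your assertion that the leftover counts are "pinned" to minimal degree-$4$ types is imprecise: conic count $0$ (and line count $0$) are also attained by blowups from degree $\geq 5$; the difficulty is only that their permission via the minimal degree-$4$ splitting groups does not obviously supply the extensions those blowups need — which is precisely what the index-$4$-or-$5$ argument repairs.
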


\begin{proof}
We first consider $\ICP_{4,1}(k)$. It is easily seen from the tables in Section~\ref{sec:Tables} that a degree-4 del Pezzo surface with $n \in \{1,2,4,8,16\}$ lines exists as soon as one is permitted by Galois theory, since these counts correspond exclusively to blowups of del Pezzo surfaces of degree $5$ or greater, for which the inverse Galois problem is known. Then the only count left to deal with is $0$.

Assume that the Galois theory of $k$ permits the existence of a degree-4 del Pezzo surface with zero lines. Since the blowup $\Bl_{2^2}S_{8,I}$ is a degree-4 del Pezzo surface with $0$ lines, we may assume without loss of generality that $k$ admits no separable extensions of degree 2.

By explicitly analysing the list of possible splitting groups for a degree-4 del Pezzo surface with zero lines, we see that all except the two groups in Proposition~\ref{prop:char2IGP} contain subgroups of index 2 and so imply a separable quadratic extension. Then we reduce to assuming that we know only that $k$ admits a Galois extension with one of these two groups as its Galois group. The result now follows from Theorem~\ref{thm:ExplicitChar2IGP}.

It remains to consider $\ICP_{4,2}(k)$. Here, the only problematic conic counts are $0$ and $2$. For a conic count of $2$, we see from Proposition~\ref{prop:ExplicitChar2ConicICP} that a quadratic extension is sufficient, as is a quartic extension; all minimal subgroups of $W(D_4)$ admit normal subgroups of index 2 or 4, hence this count can be dealt with. For the count $0$, we see that a separable extension of degree $4$ or $5$ is sufficient; all minimal subgroups of $W(D_5)$ contain a subgroup of index either $4$ or $5$, so we are done.
\end{proof}

\subsubsection{Cubic surfaces}

Applying Algorithm~\ref{alg:ICPalg} for $d = 3$ (recalling $\ICP_{3,1}(k)$ and $\ICP_{3,2}(k)$ are equivalent), we obtain a surjective solution for each positive count. For the count $0$, our algorithm terminates in $U = \{G\}$ for $G$ the unique index-2 subgroup in $W(E_6)$, and $B$ contains $C_3$, $\Dih_6$ and $\Sym_3$. Since $G$ is simple, our reduction strategy cannot be employed; further, none of the results in Section~\ref{sec:igpresults} are able to handle this case. We thus obtain the following result.

\begin{prop}
For $k$ an infinite field with a separable cubic extension, $\ICP_{3,1}(k)$ and $\ICP_{3,2}(k)$ admit positive solutions.
\end{prop}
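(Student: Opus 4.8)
The plan is to reduce the problem to the single unresolved count $0$ and to realise it not through the stubborn relatively minimal surface with splitting group $G$, but through a concrete \emph{non-minimal} cubic surface with no $k$-rational lines. First I would record that $\ICP_{3,1}(k)$ and $\ICP_{3,2}(k)$ are genuinely the same problem: for a cubic surface the involution $D \mapsto -K_X - D$ is a $G_k$-equivariant bijection between classes of lines and classes of conic fibrations, so the $k$-rational line count equals the $k$-rational conic count. Every positive count is already handled by Algorithm~\ref{alg:ICPalg}, as recorded immediately above the statement; thus it suffices to produce, under the hypothesis on $k$, a degree-$3$ del Pezzo surface with line count $0$.

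The surface I would use is $X = \Bl_{3,3}\mathbb{P}^2$, the blowup of $\mathbb{P}^2$ at two closed points $P,P'$ of degree $3$ in general position. This configuration exists over any infinite $k$ admitting a separable cubic extension: apply Theorem~\ref{thm:gen position P2} to the partition with $n_3 = 2$ (and all other $n_i = 0$), whose only requirement is a separable extension of degree $3$. By the classical criterion following Definition~\ref{def:general position plane}, $X$ is then a degree-$3$ del Pezzo surface. The crucial conceptual point is that $X$ is not relatively minimal (its two exceptional triples are $G_k$-orbits of pairwise skew lines that can be contracted), yet it will still have no rational lines, so having $0$ rational lines does not force one into the unreachable minimal case.

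The remaining step is to check that $X$ carries no $k$-rational line, which I would do by an orbit count. Over $k_s$ the $27$ lines of $X$ are the six exceptional curves $E_i$, the fifteen strict transforms $L_{ij}$ of lines through pairs of the six geometric points underlying $P$ and $P'$, and the six strict transforms $Q_i$ of conics through five of them; $G_k$ acts via its permutation of these six points, which it splits into the two size-$3$ orbits given by $P$ and $P'$. No $E_i$ or $Q_i$ is $G_k$-fixed, since each is fixed precisely when its associated point is $k$-rational, and none of the six points is rational. For the $L_{ij}$ I would argue: a pair lying inside one orbit is never globally fixed, because $G_k$ acts transitively on the three points of a degree-$3$ closed point and hence transitively on the three $2$-subsets; a pair splitting across the two orbits is fixed only if both of its points are, which is impossible. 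Hence all $27$ lines lie in orbits of size at least $2$, so $L_X = 0$, and by the equivalence above $C_X = 0$ as well. This realises the count $0$ and, together with the positive counts, yields the surjective solution.

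The main obstacle is conceptual rather than computational. Algorithm~\ref{alg:ICPalg} isolates the splitting group $G$, the simple index-$2$ subgroup of $W(E_6)$; being simple, it has no index-$2$ subgroup (blocking twist-type reductions) and no non-trivial quotient matching a type of degree at least $5$ (blocking the quotient reduction of the algorithm), so it cannot be reached by the general machinery. The whole force of the argument is to avoid $G$ altogether: a separable cubic extension is far too small to produce a $G$-extension, but it suffices to produce two degree-$3$ points, and the resulting non-minimal cubic surface already achieves count $0$. The only part of the verification needing genuine care is the orbit analysis of the $L_{ij}$, where one must invoke the transitivity of the Galois action on the three points of a degree-$3$ place, not merely the absence of rational points among the six.
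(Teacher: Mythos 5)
Your proposal is correct and is essentially the paper's own argument: the paper likewise notes that all positive counts follow from Algorithm~\ref{alg:ICPalg} and realises the count $0$ by $\Bl_{3^2}S_9$, whose existence is granted by a separable cubic extension (via Theorem~\ref{thm:gen position P2}) and whose line/conic count $(0,0)$ appears in the degree-$3$ table. Your explicit Galois-orbit verification that $L_X=0$ (transitivity on the three points of each degree-$3$ place, hence on the $2$-subsets) is a correct, self-contained substitute for the paper's appeal to its tables.
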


\begin{proof}
As discussed above, it suffices to consider the count $0$; a cubic extension permits the existence of $\Bl_{3^2}S_9$, which has no lines.
\end{proof}

\subsection{Proof of Theorem~\ref{thm:ModestICP}}

\begin{proof}[Proof of Theorem~\ref{thm:ModestICP}]
Let $k$ be an infinite field admitting separable extensions of degrees $1$ to $8$. Proposition~\ref{prop:general position dP} implies any blowup type coming from a relatively minimal del Pezzo surface of degree at least $5$ exists over $k$. Inspection of the tables in Section~\ref{sec:Tables} reveals that the only line or conic count unobtainable from such blowups is $24$ in degree $1$, both as a line count and a conic count, which can only be obtained from the type $\Bl_{1^3}S_{4,\II}$. Then one deduces $\ICP_d(k)$ for all $d$ immediately from Theorem~\ref{thm:dP4IGP} for $\Char(k) \neq 2$ and Proposition~\ref{prop:ExplicitChar2ConicICP} otherwise.

Now assume that $\Char(k) \neq 2$ and $k$ admits a Galois extension of degree $3$. Inspection of the tables in Section~\ref{sec:Tables} reveals that the only pairs of line and conic counts $(L_X,C_X)$ unobtainable from these blowup types are as follows:
\begin{enumerate}
    \item $d \geq 3$: None.
    \item $d = 2$: $(0,2)$ (from $S_{2,\II}$), $(0,4)$ (from $\Bl_2 S_{4,\II}$) and $(8,6)$ (from $\Bl_{1^2}S_{4,\II}$).
    \item $d = 1$: $(2,2)$ (from $\Bl_{2,1}S_{4,I}$), $(12,6)$ (from $\Bl_{1^3}S_{4,I}$), $(0,6)$ (from $\Bl_3 S_{4,\II}$), $(6,12)$ (from $\Bl_{2,1}S_{4,\II}$), $(24,24)$ (from $\Bl_{1^3}S_{4,\II}$) and $(4,4)$ (from $\Bl_1 S_{2,\II}$).
\end{enumerate}
We immediately deduce the result for $d \geq 3$. In the case $d = 2$, note that all of the blowup types have rational conic fibrations, among which some have splitting group $C_2$ in each degree, hence the result follows from Corollary~\ref{cor:igpconic2}. Finally we consider $d = 1$. Of the six listed blowup types, note that all except $\Bl_3 S_{4,\II}$ arise as the blowup of a degree-2 del Pezzo surface $Y$ with a rational conic fibration. Further, it is easily seen that these surfaces can be assumed unirational by Theorem~\ref{thm:dP4IGP}, hence $Y$ has a rational point in general position, hence the blowup type exists. It only remains to deal with $\Bl_3 S_{4,\II}$. Note that here, one may construct such a surface with splitting group $C_6$ (see type 75 in the degree-1 table for finite fields). Note then that one may Bertini twist to obtain a surface of the form $\Bl_{1^3}S_{4,\II}$, hence the blowup type exists.
\end{proof}

This result applies in particular to non-archimedean local fields.

\begin{defn}
A field $k$ is a \emph{local field} if it is isomorphic to one of:
\begin{enumerate}
    \item $\mathbb{R}$, $\mathbb{C}$ ($k$ is \emph{archimedean}).
    \item A finite extension of $\mathbb{Q}_p$, $\mathbb{F}_q((u))$ for some prime $p$, prime power $q$ ($k$ is \emph{non-archimedean}).
\end{enumerate}
\end{defn}

Since a non-archimedean local field has a cyclic (unramified) Galois extension of every degree, we obtain Corollary~\ref{cor:LocalICP}.

\section{Hilbertian fields} \label{sec:hilb}
In this section, we give background on Hilbertian fields and prove Theorem~\ref{thm:hilb}.

\begin{defn} \label{def:hilbertian}
A field $k$ is \emph{Hilbertian} if $\Char(k) = 0$ and, for any finite collection of finite morphisms $\phi_i: V_i \rightarrow \mathbb{P}^1$, $i =1 ,\dots,n$, the set $\mathbb{P}^1(k) \setminus \bigcup_{i=1}^n \phi_i(V_i(k))$ is Zariski-dense.
\end{defn}

\begin{ex}
It essentially follows from Hilbert's Irreducibility Theorem that any number field $K$ is Hilbertian. Further, fields finitely generated over their prime field and function fields are Hilbertian; see \cite[\S13]{FriedJarden} for more on this. On the other hand, algebraically closed fields, finite fields and local fields are not Hilbertian.
\end{ex}

\begin{lem}
Let $k$ be a Hilbertian field and let $d \in \{1,\dots,9\}$.
\begin{enumerate}
    \item A relatively minimal degree-$d$ del Pezzo surface with splitting group $G_d$ exists over $k$.
    \item If $d \in \{2,4\}$, relatively minimal conic and non-conic degree-$d$ del Pezzo surfaces with maximal Galois action exist over $k$.
\end{enumerate}
\end{lem}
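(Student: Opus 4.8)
The plan is to realise in each degree a del Pezzo surface whose Galois action on $\Pic(\overline{X})$ is as large as possible, reading off relative minimality and conic type from the invariant sublattice. The structural point is that for $d\leq 6$, where $\overline{X}=\Bl_{9-d}\mathbb{P}^2$ and $K_X^{\perp}$ is the root lattice $R_{9-d}$, a splitting group equal to $G_d=W(R_{9-d})$ forces $\Pic(X)=\Pic(\overline{X})^{G_d}=\Z\langle K_X\rangle$, since the Weyl group of each (possibly reducible) root system fixes no nonzero vector of its root lattice; by Theorem~\ref{thm:isk} such an $X$ has Picard rank $1$, so is relatively minimal and carries no rational conic class. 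Part~(1) thus reduces to producing a surface with full splitting group $G_d$ in each degree. For $d\in\{5,6,8,9\}$ I would invoke Theorem~\ref{thm:dP5IGP}: it solves $\IGP_d(k)$ surjectively, so one needs only that $G_d$ (trivial, $C_2$, $\Sym_5$, $\Dih_6$) is a Galois group over the Hilbertian, hence infinite characteristic-$0$, field $k$, whence $[G_d]\in C(G_d,k)$ is realised (for $d\in\{8,9\}$ the realising quadric/plane is relatively minimal directly by Theorem~\ref{thm:isk}). For $d=4$, Theorem~\ref{thm:dP4IGP} solves $\IGP_4(k)$ since $\Char(k)=0\neq 2$, and $G_4=W(D_5)$ is a Galois group over $k$, so $[G_4]$ is realised. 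The degree $d=7$ needs no argument, as no relatively minimal degree-$7$ del Pezzo surface exists (Theorem~\ref{thm:isk}).

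The substantive cases are $d\in\{1,2,3\}$, where no general solution of the inverse Galois problem is on record; here I would argue via a generic anticanonical model and Hilbert irreducibility. Take the $k$-rational parameter space of cubic forms in $\mathbb{P}^3$ for $d=3$, of plane quartic forms (branch curves of the anticanonical double cover) for $d=2$, and of sextic Weierstrass models in $\mathbb{P}(1,1,2,3)$ for $d=1$. It is classical (see \cite{Man86}) that the monodromy of the lines on the generic member is the full group $W(E_{9-d})$, i.e.\ over the function field $k(\underline{t})$ of the parameter space the splitting group equals $G_d$. Since $k$ is Hilbertian, Hilbert irreducibility \cite{FriedJarden} yields a Zariski-dense set of $k$-points at which the form remains smooth (the smooth locus being a nonempty open subset) and the splitting group remains $G_d$; any such specialisation is the desired relatively minimal, non-conic surface.

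For part~(2), the non-conic maximal surfaces in degrees $2$ and $4$ are precisely those constructed in part~(1). For the conic maximal surfaces, Lemma~\ref{lem:WDn} identifies the largest splitting group of a relatively minimal conic del Pezzo surface of degree $d$ with the group $W(D_{8-d})$ attached to its $8-d$ singular fibres; the corresponding invariant lattice $\Pic(X)=\langle K_X,C\rangle$ has rank $2$, making $X$ relatively minimal of conic type by Theorem~\ref{thm:isk}. In degree $4$ the class $[W(D_4)]$ is realised by the surjective solution of $\IGP_4(k)$ (Theorem~\ref{thm:dP4IGP}), and in degree $2$ the class $[W(D_6)]$ is realised by the surjective solution for conic degree-$2$ del Pezzo surfaces (Corollary~\ref{cor:igpconic2}); in both cases $W(D_{8-d})$ is a Galois group over the Hilbertian field $k$, so the class lies in $C(G_d,k)$.

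The main obstacle is the $d\leq 3$ part of~(1): one needs the full-monodromy statements $W(E_6),W(E_7),W(E_8)$ for the generic cubic surface, plane quartic and degree-$1$ del Pezzo surface, and one must apply Hilbert irreducibility while simultaneously controlling smoothness and the persistence of the splitting group under specialisation. These monodromy computations are classical but constitute the genuine technical core. The remaining bookkeeping, that each $W(R_{9-d})$ and each $W(D_{8-d})$ is a Galois group over a characteristic-$0$ Hilbertian field, is routine: it follows by realising the group over $k(\underline{t})$ through the same generic families and invoking the Hilbertian property.
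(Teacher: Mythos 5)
Your argument is correct, but it is organised rather differently from the paper's, whose entire proof is a citation: part (1) is \cite[Prop.~4.6]{JL15} (the Hilbert scheme of anticanonically embedded degree-$d$ del Pezzo surfaces is rational, the finite \'etale cover of lines has full monodromy $G_d$, and Hilbert irreducibility yields smooth specialisations with splitting group $G_d$, the number-field hypothesis there being harmless over any Hilbertian field), and part (2) is said to follow ``in the same way'' using Theorem~\ref{thm:dP4IGP} and Corollary~\ref{cor:KulkarniExtensionZero}. Your treatment of $d\leq 3$ is in substance a reconstruction of that cited argument (rational parameter spaces of cubic forms, quartic branch curves and Weierstrass models, full generic monodromy $W(E_{9-d})$, specialisation preserving smoothness and the splitting group), so the genuinely hard content coincides; where you deviate is in shortcutting $4\leq d\leq 9$ through the solved inverse Galois problems (Theorems~\ref{thm:dP5IGP} and~\ref{thm:dP4IGP}) and, for the conic case in degree $2$, using Corollary~\ref{cor:igpconic2} instead of the paper's appeal to Kulkarni's degree-$1$ result. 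Both substitutions are valid and arguably more self-contained within the paper, and your structural observation --- that a splitting group equal to the full $G_d$ forces $\Pic(X)\otimes\mathbb{Q}=\mathbb{Q}K_X$ because a spanning root system has no nonzero Weyl-invariant vector, whence relative minimality and the absence of a rational conic class via Theorem~\ref{thm:isk} --- makes explicit a point the paper leaves implicit. Two details you should not dismiss as routine. First, it is \emph{not} true that every finite group is a Galois group over every Hilbertian field, so the memberships $[G_d]\in C(G_d,k)$ and $[W(D_{8-d})]\in C(G_d,k)$ need a reason; here it is that every group in sight is a finite reflection group, so in characteristic $0$ the invariant field $k(V)^W$ is purely transcendental by Chevalley and Hilbert irreducibility realises $W$ over $k$ (equivalently, your realisation over $k(\underline{t})$ via the generic families followed by specialisation) --- your sketch gestures at exactly this, but it is the point that makes the ``bookkeeping'' legitimate. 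Second, in part (2) the deduction of relative minimality from Theorem~\ref{thm:isk} requires the equality $\Pic(X)=\langle -K_X,C\rangle$, not merely that the invariant lattice has rank $2$: one gets $C\in\Pic(X)$ from Remark~\ref{rem:line is line}(3), and one should check that the $W(D_{8-d})$-invariant sublattice of $\Pic(\overline{X})$ is exactly $\langle K_X,C\rangle$ (a finite lattice computation, consistent with the intersection invariant $I_X=-4$ recorded for $S_{d,\II}$). With those two points made explicit, your proof is complete.
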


\begin{proof}
Part (i) follows from \cite[Prop.~4.6]{JL15} (while the cited result is given for number fields, its proof holds for arbitrary Hilbertian fields). The extension to conic del Pezzo surfaces follows in the same way, noting that the existence of such surfaces with maximal action over number fields follows from Theorem~\ref{thm:dP4IGP} and Corollary~\ref{cor:KulkarniExtensionZero}.
\end{proof}

\begin{lem}
A relatively minimal conic del Pezzo surface of degree $1$ with maximal action (splitting group $W(D_7)$) exists over $\mathbb{Q}$.
\end{lem}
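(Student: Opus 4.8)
The plan is to deduce this from two ingredients: the realizability of the Weyl group $W(D_7)$ as a Galois group over $\mathbb{Q}$, and the solution of the inverse Galois problem for conic del Pezzo surfaces of degree $1$ recorded in Corollary~\ref{cor:ConicdP1IGPQ} (equivalently Theorem~\ref{thm:kulkarni}). First I would realize $W(D_7)$ over $\mathbb{Q}$. Since $\mathbb{Q}$ is Hilbertian, it suffices to produce a $W(D_7)$-extension of a rational function field and specialize. The reflection representation of $W(D_7)\cong C_2^6\rtimes\Sym_7$ is defined over $\mathbb{Q}$ (even signed permutations of $x_1,\dots,x_7$), and by Chevalley--Shephard--Todd its ring of invariants $\mathbb{Q}[x_1,\dots,x_7]^{W(D_7)}$ is polynomial, generated by the first six elementary symmetric polynomials in $x_1^2,\dots,x_7^2$ together with $x_1\cdots x_7$. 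Hence the invariant field is purely transcendental over $\mathbb{Q}$, so $\mathbb{Q}(x_1,\dots,x_7)/\mathbb{Q}(x_1,\dots,x_7)^{W(D_7)}$ is a $W(D_7)$-extension over a rational base, and Hilbert's irreducibility theorem yields a Galois extension $L/\mathbb{Q}$ with $\Gal(L/\mathbb{Q})\cong W(D_7)$. In particular $[W(D_7)]\in C(W(D_7),\mathbb{Q})$.

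Next I would produce the surface. Viewing $W(D_7)\leq W(D_8)\leq W(E_8)=G_1$, we have $[W(D_7)]\in C(W(D_8),\mathbb{Q})$, so Theorem~\ref{thm:kulkarni} (as packaged in Corollary~\ref{cor:ConicdP1IGPQ}) furnishes a degree-$1$ del Pezzo surface $X/\mathbb{Q}$ with $H_X$ conjugate to $W(D_7)$. By Lemma~\ref{lem:WDn}, $W(D_7)$ is precisely the stabilizer in $\Aut(\Pic(\overline{X}))$ of $K_X$ together with a fibre class $F$ of a conic bundle with seven singular fibres; since $H_X=W(D_7)$ fixes $F$, the class $F$ lies in $\Pic(X)$, and Remark~\ref{rem:line is line}(3) shows that $X$ carries a $\mathbb{Q}$-rational conic fibration, i.e.\ $X$ is a conic del Pezzo surface.

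For relative minimality and maximality, the action is maximal by construction ($H_X=W(D_7)$, the full group of Lemma~\ref{lem:WDn}). To establish relative minimality I would compute $\Pic(X)=\Pic(\overline{X})^{W(D_7)}$: on $K_X^{\perp}\cong E_8$ the group $W(D_7)$ acts as its reflection representation on the rank-$7$ span of the $D_7$ roots (no nonzero invariants) and trivially on the rank-$1$ orthogonal complement, so the invariant sublattice of $K_X^\perp$ has rank $1$; together with $-K_X$ this gives $\rank\Pic(X)=2$, the second generator being the conic class $F$ (with Gram matrix $\left(\begin{smallmatrix}1&2\\2&0\end{smallmatrix}\right)$ relative to $\{-K_X,F\}$, of determinant $-4$). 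By Iskovskih's classification (Theorem~\ref{thm:isk}, case $d=1$), a degree-$1$ del Pezzo surface with $\Pic(X)=\langle -K_X,F\rangle$ of rank $2$ and $F$ a conic class is relatively minimal of type $S_{1,\II}$, as required.

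The main obstacle is the realizability of $W(D_7)$ over $\mathbb{Q}$; everything else is a bookkeeping consequence of Kulkarni's theorem and the lattice description of conic bundles. That realizability is classical, resting on rationality of the invariant field and Hilbertianity of $\mathbb{Q}$, so in practice the work is to assemble the citations and to confirm that the group delivered by Theorem~\ref{thm:kulkarni} is the full $W(D_7)$ rather than a proper subgroup, which is guaranteed because we feed in exactly the class $[W(D_7)]$.
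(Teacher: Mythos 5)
Your proposal is correct and follows essentially the same route as the paper, whose entire proof is ``Follows directly from Corollary~\ref{cor:ConicdP1IGPQ}'' (i.e.\ Kulkarni's Theorem~\ref{thm:kulkarni} applied to the class $[W(D_7)]\leq W(D_8)$, together with Lemma~\ref{lem:WDn}). The extra material you supply --- realizing $W(D_7)$ over $\mathbb{Q}$ via Chevalley--Shephard--Todd plus Hilbert irreducibility, and the rank-$2$ invariant-lattice computation with Gram determinant $-4$ feeding into Theorem~\ref{thm:isk} --- is precisely the bookkeeping the paper leaves implicit, and it checks out (note that $\Pic(X)=\Pic(\overline{X})^{G_k}$ here because a degree-$1$ del Pezzo surface always has a rational point, namely the anticanonical base point).
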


\begin{proof}
Follows directly from Corollary~\ref{cor:ConicdP1IGPQ}.
\end{proof}

\begin{lem}
For each $n \in \mathbb{Z}_{\geq 1}$ and $e \in \mathbb{Z}_{\geq 1}$, there exists a general position degree-$e$ closed point in $\mathbb{P}^2_k$ for $k$ a Hilbertian field.
\end{lem}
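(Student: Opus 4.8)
The plan is to recognise the claim as the single–point instance of the general–position result already established for infinite fields, and to supply the only genuinely arithmetic input. A degree-$e$ closed point of $\mathbb{P}^2_k$ is precisely a $G_k$-orbit of $e$ distinct geometric points, so a \emph{general position} degree-$e$ closed point is exactly a set of closed points corresponding to the partition $\lambda=(n_1,\ldots,n_8)$ with $n_e=1$ and $n_i=0$ for $i\neq e$; in particular this forces $e\leq 8$, as required by Definition~\ref{def:general position plane}. Thus the statement will follow from Theorem~\ref{thm:gen position P2} applied to this $\lambda$, once I verify its two hypotheses for such a field: that $k$ is infinite, and that $k$ admits a separable extension of degree $e$.

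First I would note that a Hilbertian field has $\Char(k)=0$ by Definition~\ref{def:hilbertian}, hence is perfect and infinite; this disposes of the infiniteness hypothesis and renders ``separable'' automatic. The remaining point is the existence of a degree-$e$ extension. Here I would invoke the Hilbert property itself: the generic monic polynomial of degree $e$ has splitting field over $k(t_1,\ldots,t_e)$ with Galois group $\Sym_e$, so specialising the coefficients along a suitable thin complement yields an $\Sym_e$-extension $M/k$, and the fixed field of a point stabiliser $\Sym_{e-1}\leq\Sym_e$ is then a separable extension of $k$ of degree exactly $e$. With both hypotheses verified, Theorem~\ref{thm:gen position P2} produces the desired general-position degree-$e$ closed point, completing the proof. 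I expect essentially no obstacle on this route: the geometric placement of the $e$ conjugate points in general position is entirely absorbed into Theorem~\ref{thm:gen position P2}, and the realisation of degree-$e$ extensions is a routine consequence of Hilbertianity.

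As a sanity check I would also sketch a self-contained argument within the Hilbertian framework. The symmetric power $\mathrm{Sym}^e\mathbb{P}^2$ is a rational $k$-variety, so one may fix a dominant rational $k$-map $\psi$ from $\mathbb{A}^{2e}$ to $\mathrm{Sym}^e\mathbb{P}^2$. The locus parametrising $e$ distinct points in general position is Zariski-open and nonempty over $\overline{k}$ (since $e\leq 8$ points in general position exist over the infinite field $\overline{k}$), while the locus where the corresponding degree-$e$ divisor fails to be a single irreducible closed point --- equivalently, where an associated degree-$e$ resolvent factors --- is thin. Pulling both loci back along $\psi$ and applying the Hilbert property of $k$ to $\mathbb{A}^{2e}$ yields a $k$-point lying in the open general-position locus and outside the thin reducibility locus, which is exactly a general-position degree-$e$ closed point. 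In this route the main obstacle is checking that ``fails to be irreducible of degree $e$'' is genuinely thin and that the general-position locus meets the image of $\psi$; both reduce to the nonemptiness over $\overline{k}$ together with the Hilbert property, and since Theorem~\ref{thm:gen position P2} already encapsulates precisely this content, I would present the first route as the actual proof.
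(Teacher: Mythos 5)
Your proposal is correct; in fact your two routes bracket the paper's proof, with your \emph{secondary} route being the one the paper actually takes. The paper's argument is: (a) Hilbertian fields admit Galois extensions with any prescribed abelian group \cite[Prop.~16.3.5]{FriedJarden}, so $\mathbb{P}^1_k$, hence $\mathbb{P}^2_k$, has closed points of every degree $e$; (b) general position is expressible as the non-vanishing of polynomial conditions on the coefficients of the underlying geometric points (as in \cite[Lem.~2.5]{BFL19}); (c) $\mathrm{Sym}^e(\mathbb{P}^2)$ is rational and therefore has the Hilbert property, so one may pick a $k$-point avoiding both the thin locus of non-irreducible cycles and the bad non-general-position locus --- precisely your ``sanity check'' argument. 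Your primary route instead feeds the single-part partition into Theorem~\ref{thm:gen position P2} and uses Hilbertianity only to manufacture one separable degree-$e$ extension (your $\Sym_e$-specialization argument is fine, though citing the same \cite[Prop.~16.3.5]{FriedJarden} for a $C_e$-extension would be quicker). This is a legitimate and arguably more economical reduction given that Theorem~\ref{thm:gen position P2} is already established: it isolates the only arithmetic input, and it actually proves the lemma over \emph{any} infinite field admitting a separable degree-$e$ extension, not just Hilbertian ones, while avoiding the thinness bookkeeping entirely. The trade-off is the restriction $e \leq 8$ inherited from Theorem~\ref{thm:gen position P2}; your reading that Definition~\ref{def:general position plane} forces this anyway is reasonable (the paper is silent about what ``general position'' would even mean for $e > 8$), but note that the paper's symmetric-power argument is uniform in $e$ and serves as the template reused in Proposition~\ref{prop:HilbertianGenPos} and in the open question on Hilbert schemes of points in Appendix~\ref{sec:Questions}.
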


\begin{proof}
First note that, for every abelian group $A$, there exists a Galois extension $l/k$ with $\Gal(l/k) \cong A$ \cite[Prop.~16.3.5]{FriedJarden}. In particular, $\mathbb{P}^1_k$ admits closed points of degree $e$ for all $e \in \mathbb{Z}_{\geq 1}$, thus so does $\mathbb{P}^2_k$. Then observe that the condition that the underlying points are in general position can be phrased in terms of non-linear polynomial equations in the coefficients of the underlying geometric points as in \cite[Lem.~2.5]{BFL19}. The result then follows upon noting that $\text{Sym}^e(\mathbb{P}^2)$ is a rational variety, thus has the Hilbert property.
\end{proof}

\begin{prop}
Let $k$ be a Hilbertian field and let $d \in \{1,\dots,9\} \setminus \{7\}$.
\begin{enumerate}
    \item A relatively minimal degree-$d$ del Pezzo surface with splitting group $G_d$ and a $k$-rational point exists over $k$.
    \item For $d \geq 3$, a surface as in part (i) exists with a rational point in general position for all $e \geq 1$.
\end{enumerate}
\end{prop}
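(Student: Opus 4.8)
The plan is to combine the maximal-action construction of \cite[Prop.~4.6]{JL15} (invoked in the preceding lemma) with the Hilbert property, applied to the surface itself in part (i) and to symmetric powers in part (ii). For part (i) I would first dispatch the degrees in which a rational point is automatic. When $d=9$ the surface $\mathbb{P}^2$ realises the trivial group $G_9$ and has rational points; when $d=8$ a quadratic extension $L/k$ (which exists, $k$ being Hilbertian with separable extensions of every degree) yields the non-split quadric $R_{L/k}(\mathbb{P}^1_L)$, a relatively minimal surface with splitting group $C_2=G_8$ and $R_{L/k}(\mathbb{P}^1_L)(k)=\mathbb{P}^1(L)\neq\varnothing$. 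For $d=1$ the base point of $|-K_X|$ is rational, and for $d=5$ every such surface has a rational point classically, so in both cases one appeals to the preceding lemma for the maximal splitting group.

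For the remaining degrees $d\in\{2,3,4,6\}$ I would run the family construction of \cite[Prop.~4.6]{JL15} inside the subfamily of models constrained to contain a fixed $k$-rational point: cubics through $p_0\in\mathbb{P}^3(k)$ for $d=3$, intersections of two quadrics through $p_0\in\mathbb{P}^4(k)$ for $d=4$, double covers branched over a quartic through a fixed $p_0\in\mathbb{P}^2(k)$ (so that the ramification point above $p_0$ is rational) for $d=2$, and the corresponding constrained family for $d=6$. Each base is an open subvariety of a projective space, hence rational and so Hilbertian. One checks that the generic member is smooth with full monodromy $W(R_{9-d})=G_d$ on its lines, the only subtlety being that imposing passage through a single point does not shrink the monodromy group; this follows from irreducibility of the relevant incidence variety. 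The Hilbert property then produces a $k$-rational member $S$ with splitting group $G_d$, and since $G_d$ fixes only the multiples of $K_S$ in $\Pic(\overline{S})$, the surface $S$ has Picard rank $1$ and is relatively minimal by Theorem~\ref{thm:isk}, while carrying $p_0$ by construction.

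For part (ii), fix $S$ from part (i) and let $1\leq e\leq d-1$. When $d\geq 5$, the rational point on $S$ makes $S$ rational by \cite[Thm.~9.4.8]{Poo17}, so $\mathrm{Sym}^e(S)$ is birational to the rational variety $\mathrm{Sym}^e(\mathbb{P}^2)$ and hence rational, thus has the Hilbert property. Arguing as in Theorem~\ref{thm:gen position P2} and Proposition~\ref{prop:general position dP}, the general-position conditions of Lemma~\ref{lem:def of gen position} cut out a dense open subset, while the $k$-points of $\mathrm{Sym}^e(S)$ not corresponding to a single closed point of degree $e$ form a thin set; the Hilbert property then yields a general-position closed point of degree $e$. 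When $d\in\{3,4\}$ the surface $S$ is no longer rational but is unirational, as it has a rational point (\cite{Man86}, \cite[Thm.~1.1]{Kol02}), giving a dominant map $\phi\colon\mathbb{P}^2\dashrightarrow S$ and hence a dominant map $\mathrm{Sym}^e(\phi)\colon\mathrm{Sym}^e(\mathbb{P}^2)\dashrightarrow\mathrm{Sym}^e(S)$. I would transfer the general-position degree-$e$ points constructed on $\mathbb{P}^2$ in Theorem~\ref{thm:gen position P2} through $\mathrm{Sym}^e(\phi)$: the requirements that the image be a single closed point of degree exactly $e$ in general position on $S$ pull back to the complement of a thin set in the rational, hence Hilbertian, variety $\mathrm{Sym}^e(\mathbb{P}^2)$, and pushing a suitable $k$-point forward gives the point sought on $S$. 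The case $e=1$ also follows directly from Lemma~\ref{lem:rational points in general position}.

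The hard part is precisely this $d\in\{3,4\}$ case of part (ii): lacking rationality of $S$, one cannot rationalise $\mathrm{Sym}^e(S)$ directly and must instead control the unirational parametrisation $\phi$. The delicate points are that a generic degree-$e$ closed point of $\mathbb{P}^2$ maps to a closed point of $S$ of the \emph{same} degree $e$ (i.e.\ that $\phi$ does not collapse the residue field of a generic Galois orbit) and that the general-position locus pulls back to the complement of a thin set; establishing these through the geometric and arithmetic properties of the symmetric powers is where the real work lies.
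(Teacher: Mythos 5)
Your shortcuts for $d\in\{1,5,8,9\}$ are fine (indeed more economical than the paper, which treats all $d\leq 6$ uniformly), and your Picard-rank-1 deduction from full splitting group plus a rational point is correct. But part (i) for $d\in\{2,3,4,6\}$ has a genuine gap at exactly the point you flag: full monodromy over the constrained family does \emph{not} follow from irreducibility of the incidence variety $\{(S,L): p_0\in S,\ L\subset S\}$. Irreducibility only gives that the monodromy group $M\leq W(R_{9-d})$ acts \emph{transitively} on the lines, and the Weyl groups have proper subgroups transitive on lines. For $d=3$, the unique index-$2$ subgroup of $W(E_6)$ (the kernel of the sign character, which this paper itself meets when running Algorithm~2 for cubic surfaces) acts transitively on the $27$ lines, because the line stabiliser $W(D_5)$ contains reflections and so is not contained in it. Hence your Hilbert-irreducibility argument over the constrained base could a priori only produce that proper subgroup as splitting group. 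To repair the approach one needs a second ingredient, e.g.\ that the discriminant inside the linear system of surfaces through $p_0$ is irreducible with generic member one-nodal, so that Picard--Lefschetz supplies a reflection in $M$; a reflection subgroup acting transitively on the lines is then forced to be all of $W(R_{9-d})$. None of this is automatic, since the hyperplane of surfaces through $p_0$ is a very non-generic hyperplane in the full parameter space, so Zariski-type hyperplane theorems do not apply off the shelf.

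The paper sidesteps this entirely by never constraining the family: it works over the full Hilbert scheme $\mathcal{H}_{d,k}$ of anticanonically embedded surfaces from \cite{JL15}, where full monodromy outside a thin locus $T$ is available, and instead proves that the sublocus of parameters whose surface has a $k$-point is \emph{non-thin}. The mechanism is the universal family $\pi_d:X_d\to\mathcal{H}_{d,k}$: a second (``coordinate'') fibration of $X_d$ over a projective space with linear-hypersurface fibres shows $X_d(k)$ is non-thin by the fibration theorem of \cite{BarySorokerFehmPetersen}, and since $\pi_d$ has geometrically integral generic fibre, $\pi_d(X_d(k))$ is non-thin; intersecting with the complement of $T$ gives (i), and general position of the rational point follows from a dimension count (the bad points form a curve in each fibre) plus Zariski density of $X_d(k)$. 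On part (ii), your symmetric-power transfer through a unirational parametrisation can be made to work, but the phrase ``pull back to the complement of a thin set'' hides the real issue: preimages of thin sets under dominant maps need not be thin. Your argument survives only because the relevant bad loci can be handled as proper closed subvarieties (the locus where $\phi$ identifies two points of the cycle has positive codimension in $\operatorname{Sym}^e(\mathbb{P}^2)$, and the non-general-position locus on the $S$-side pulls back to a proper closed subset), while Galois-transitivity of the cycle is imposed directly on the source; this needs to be spelled out rather than asserted.
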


\begin{proof}

First we prove (i). As the result is trivial for degree $d \geq 8$, we assume henceforth that $d \leq 6$. Consider the Hilbert scheme $\mathcal{H}_{d,k}$ parametrising anticanonically embedded degree-$d$ del Pezzo surfaces over $k$ as in \cite[\S4]{JL15}; by \cite[Proof~of~Lem.~4.1]{JL15}, this scheme is a rational quasi-projective variety, thus $\mathcal{H}_{d,k}(k)$ is non-thin. Meanwhile, the scheme $\mathcal{L}_{d,k}$ parametrising the lines on these surfaces is a finite \'etale cover of $\mathcal{H}_{d,k}$ of degree at least $6$; the Galois action on the finite fibres corresponds to the Galois action on lines of the associated surfaces, and an application of Hilbert's Irreducibility Theorem ensures that this action is by the full $G_d$ outside of a thin locus $T$.

Next we prove (ii). First we show that a non-thin collection of surfaces contain a rational point. Note that we may form a fibration $\pi_d: X_d \rightarrow \mathcal{H}_{d,k}$ with the fibre above $P \in \mathcal{H}_{d,k}$ isomorphic to the associated del Pezzo surface. Note further that $\pi_d$ has geometrically integral generic fibre, thus the image $\pi_d(A) \subset \mathcal{H}_{d,K}$ of a non-thin set $A \subset X_d(k)$ is non-thin. We first observe that $\pi_d(X_d(k))$ is itself non-thin. To see this, note that $X_d$ has a second fibration (the ``coordinate fibration'') with base a projective space and fibres isomorphic to linear hypersurfaces. Then by a standard fibration argument for the Hilbert property \cite[Thm.~1.1]{BarySorokerFehmPetersen}, $X_d(k)$ is non-thin, thus $\pi_d(X_d(k))$ is non-thin, so $\pi_d(X_d(k)) \setminus T$ is non-thin. It remains only to consider the general position requirement, which is non-vacuous only for $d = 2$. It is enough to observe that all rational points not in general position are contained in a one-dimensional subspace of each fibre; by density of rational points in the total space, it follows that there exists at least one fibre, hence surface, with a rational point in general position. \qedhere

\end{proof}

We now set our sights on proving that closed points in general position exist on del Pezzo surfaces with a rational point over Hilbertian fields.

Our approach hinges on working along \emph{tangent sections}, which we now define. In what follows, we denote birational equivalence by $\sim$.

\begin{defn}
Let $X$ be a del Pezzo surface of degree $3$ or $4$ and $P \in X(k)$ not on a line of $X$. A \emph{tangent section} $C_P \subset X$ is the intersection of a hyperplane $T_P X$ tangent to $X$ at $P$ with $X$, which is a singular curve. This is an irreducible curve of geometric genus zero with a rational point, hence $C_P \sim \mathbb{P}^1$.
\end{defn}

Note that $C_P$ is unique if $d = 3$ and a member of a one-dimensional family if $d = 4$.

\begin{lem}
For $P \in X(k)$ a rational point on a del Pezzo surface $X$ of degree $3$ or $4$, we have $\text{Sym}^2(C_P) \sim \mathbb{P}^2_k$ and $\text{Sym}^3(C_P) \sim \mathbb{P}^3$.     
\end{lem}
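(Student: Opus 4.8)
The plan is to deduce the statement from two facts: first, that forming the $n$-th symmetric power respects birational equivalence over $k$; and second, that $\text{Sym}^n(\mathbb{P}^1_k) \cong \mathbb{P}^n_k$. Granting these, recall from the definition of the tangent section that $C_P$ is an integral curve of geometric genus zero carrying a $k$-rational point, so that $C_P \sim \mathbb{P}^1_k$. It then follows that $\text{Sym}^n(C_P) \sim \text{Sym}^n(\mathbb{P}^1_k) \cong \mathbb{P}^n_k$, and taking $n = 2$ and $n = 3$ yields the two claimed equivalences.

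For the birational invariance, let $f \colon C \dashrightarrow C'$ be a birational map of geometrically integral $k$-varieties, realised as an isomorphism $f|_U \colon U \xrightarrow{\sim} U'$ between dense open subschemes $U \subseteq C$ and $U' \subseteq C'$. Since $\Sym_n$ is finite and $C, C'$ are quasi-projective, the geometric quotients $\pi_C \colon C^{\times n} \to \text{Sym}^n(C)$ and $\pi_{C'} \colon C'^{\times n} \to \text{Sym}^n(C')$ exist and are finite and surjective. The quotient map by a finite group is moreover open, since the preimage of the image of any open $V$ is the orbit $\bigcup_{\sigma \in \Sym_n} \sigma V$, which is open; hence $\pi_C(U^{\times n})$ and $\pi_{C'}(U'^{\times n})$ are dense open subschemes of $\text{Sym}^n(C)$ and $\text{Sym}^n(C')$ respectively. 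The product map $f^{\times n} \colon U^{\times n} \xrightarrow{\sim} U'^{\times n}$ is an $\Sym_n$-equivariant isomorphism, so it descends to an isomorphism $\text{Sym}^n(U) \xrightarrow{\sim} \text{Sym}^n(U')$ identifying these two dense opens. This gives $\text{Sym}^n(C) \sim \text{Sym}^n(C')$ over $k$, as needed.

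For the second fact, the complete linear system $|\mathcal{O}_{\mathbb{P}^1}(n)|$ identifies an effective degree-$n$ divisor on $\mathbb{P}^1$ with the corresponding binary $n$-form up to scalar, giving an isomorphism $\text{Sym}^n(\mathbb{P}^1_k) \cong \mathbb{P}\left(H^0(\mathbb{P}^1, \mathcal{O}(n))\right) = \mathbb{P}^n_k$ defined over $k$; on the affine chart this is the fundamental theorem of symmetric functions, $k[x_1,\dots,x_n]^{\Sym_n} = k[e_1,\dots,e_n]$, which holds over any field. Combining this with the previous paragraph completes the proof. The only non-formal ingredient is the birational invariance of $\text{Sym}^n$, and within it the sole mild obstacle is justifying that the finite-group quotient map is open and that the equivariant birational map descends to the quotients; both are standard and insensitive to any imperfection of $k$, so no perfectness hypothesis on the ground field is required.
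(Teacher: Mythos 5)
Your proof is correct and follows essentially the same route as the paper: reduce via $C_P \sim \mathbb{P}^1_k$ to the identification $\text{Sym}^n(\mathbb{P}^1_k) \cong \mathbb{P}^n_k$, established through elementary symmetric functions (the paper works on the affine chart, you phrase it via $|\mathcal{O}_{\mathbb{P}^1}(n)|$ and binary forms). The only difference is that you spell out the birational invariance of $\text{Sym}^n$, which the paper leaves implicit in the phrase ``it suffices to show''; this is a welcome addition but not a change of method.
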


\begin{proof}
Since $C_P$ is birational to $\mathbb{P}^1$, it suffices to show that $\text{Sym}^2(\mathbb{P}^1_k) \cong \mathbb{P}^2_k$ and \newline $\text{Sym}^3(\mathbb{P}^1_k) \cong \mathbb{P}^3_k$.
Note that we have the maps
\[
\begin{aligned}
\text{Sym}^2(\mathbb{A}^1_k) & \xrightarrow{\sim} \mathbb{A}^2_k, \quad \{x,y\} & \mapsto & \quad (x+y,xy), \\
\text{Sym}^3(\mathbb{A}^1_k) & \xrightarrow{\sim} \mathbb{A}^3_k, \quad \{x,y,z\} & \mapsto & \quad (x+y+z,xy + yz + zx,xyz). \\
\end{aligned}
\]
By interpretation of the coefficients of a univariate polynomial in terms of symmetric functions on its roots, it is clear that these two maps are isomorphisms. These isomorphisms induce the desired ones.
\end{proof}

We are now ready to prove the existence of closed points in general position.

\begin{prop} \label{prop:HilbertianGenPos}
Suppose given a positive integer $1 \leq d \leq 9$, positive integers $e_1,\dots,e_r$ such that $\sum_{i=1}^r e_i \leq d-1$ and a Hilbertian field $k$. Then on any degree-$d$ del Pezzo surface $X/k$ with $X(k) \neq \emptyset$ (and a rational point in general position for $d = 2$), there closed points $P_1,\dots,P_r$ on $X$ with $\deg(P_i) = e_i$ in general position. 
\end{prop}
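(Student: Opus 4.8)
The plan is to dispose of the degenerate cases immediately and concentrate the real work in degrees $3$ and $4$, where the \emph{tangent section} construction above is the key tool. When $d=1$ the hypothesis $\sum_i e_i\le d-1=0$ forces $r=0$, so there is nothing to prove; when $d=2$ it forces $r\le 1$ with $e_1=1$, i.e.\ a single rational point in general position, which is exactly the extra hypothesis made for $d=2$. For $d\ge 5$, such $X$ is rational by \cite[Thm.~9.4.8]{Poo17}, so I would fix a birational map $\mathbb{P}^2\dashrightarrow X$ and transport a set of closed points in general position produced by Theorem~\ref{thm:gen position P2} (a Hilbertian field admits separable extensions of every degree), imposing in addition that the image lie in general position on $X$; this is an open condition, and it is non-empty because over $k_s$ the surface $X$ is a blow-up of $\mathbb{P}^2$ on which general position is satisfied generically. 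Thus the substance is the case $d\in\{3,4\}$.

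Here, since $X(k)\neq\emptyset$ and $d\ge 3$ the surface $X$ is unirational, and as $k$ is Hilbertian (hence of characteristic $0$, so infinite) the set $X(k)$ is Zariski-dense; I would therefore fix a rational point $P$ lying on none of the finitely many lines of $X$ and form its tangent section $C_P\sim\mathbb{P}^1$. The crucial numerical point is that every $e_i\le d-1\le 3$, so the lemma computing $\text{Sym}^2(C_P)$ and $\text{Sym}^3(C_P)$ gives $\text{Sym}^{e_i}(C_P)\sim\mathbb{P}^{e_i}$ for each $i$. Consequently the parameter space $\mathcal{P}:=\prod_{i=1}^r\text{Sym}^{e_i}(C_P)$ is rational and hence has the Hilbert property. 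A $k$-point of $\mathcal{P}$ is an $r$-tuple of effective $k$-divisors of degrees $e_1,\dots,e_r$ on $C_P$, and I want one for which each such divisor is a single irreducible closed point of degree exactly $e_i$, the supports are pairwise disjoint and lie on the smooth locus of $C_P$, and the resulting $\sum_i e_i\le d-1$ geometric points are in general position on $X$ in the sense of Lemma~\ref{lem:def of gen position}.

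The plan is then to show that each way of failing these requirements carves out a thin subset of $\mathcal{P}$. Reducibility of some $P_i$ is thin by Hilbert irreducibility, since the generic degree-$e_i$ binary form is irreducible; disjointness and avoidance of the node of $C_P$ likewise exclude only proper closed loci. For general position, the decisive observation is that $C_P$ is an anticanonical curve, so by adjunction a genus-$0$ curve $D$ of self-intersection $r'-2$ satisfies $-K_X\cdot D=r'$ and hence meets $C_P$ in only $r'$ points (here $r'$ denotes the level in condition (i) of Lemma~\ref{lem:def of gen position}). Therefore the configurations violating condition (i) at level $r'$ require $r'$ of the geometric points to be exactly $D\cap C_P$ for some such $D$, and they are the image of the relevant family of curves under $D\mapsto D\cap C_P\in\text{Sym}^{r'}(C_P)$. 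A dimension count shows each image is proper: lines form a finite set, conics move in finitely many pencils, and the self-intersection-$1$ genus-$0$ classes on a degree-$4$ surface form $2$-dimensional linear systems, so in each case the image has dimension strictly below $r'$, pulling back to a proper closed, hence thin, subset of $\mathcal{P}$. The analogous estimate for condition (ii), using that a node contributes intersection multiplicity $2$ with $C_P$ and that the nodal anticanonical curves form a suitably low-dimensional family, handles the remaining constraint. Since a finite union of thin sets is thin and $\mathcal{P}$ has the Hilbert property, a $k$-point avoiding all of them exists, producing the desired $P_1,\dots,P_r$ on $C_P$.

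The step I expect to be the main obstacle is the final paragraph: verifying that every general-position locus is \emph{proper} in $\mathcal{P}$ rather than all of it. This rests on the adjunction identity $-K_X\cdot D=r'$ together with the dimension bounds for the families of the curves $D$ and of the nodal anticanonical curves relevant to condition (ii), the latter requiring the most care because of the double-point condition. All of these are ultimately genericity statements, but assembling them into a clean verification that the union of bad loci is thin is where the bookkeeping concentrates.
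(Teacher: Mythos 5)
Your proposal is correct, and its engine is the same as the paper's: fix $P\in X(k)$ off the lines, pass to the tangent section $C_P\sim\mathbb{P}^1$, identify $\mathrm{Sym}^{e}(C_P)$ birationally with $\mathbb{P}^{e}$, and use the Hilbert property to dodge the thin locus of decomposable divisors together with the loci violating Lemma~\ref{lem:def of gen position}. Where you genuinely differ is in the packaging, and the differences mostly work in your favour. The paper first strips out the all-rational tuples via Lemma~\ref{lem:rational points in general position} and then treats the remaining possibilities one at a time ($d=4$ with $E=\{2\},\{2,1\},\{3\}$ and $d=3$ with $E=\{2\}$), listing the bad curves in each symmetric power by hand; for $E=\{2,1\}$ it even leaves the tangent section, blowing up the conjugate pair and choosing the rational point by density on the resulting degree-$2$ surface. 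You instead work once and for all in $\mathcal{P}=\prod_i\mathrm{Sym}^{e_i}(C_P)$ and enumerate the bad loci uniformly through the adjunction identity $-K_X\cdot D=r'$ (so $C_P\cdot D=r'$, as $C_P$ is anticanonical). This subsumes the paper's case division and in fact catches constraints that its write-up passes over: for $d=3$, $n=2$, condition (i) at level $r'=2$ (the pencils of conics) is not mentioned in the paper's case (4), and for $d=4$, $n=3$, condition (i) at level $r'=3$ (the finitely many $2$-dimensional linear systems of genus-$0$ classes with $D^2=1$) is absent from its case (3); your dimension counts ($0<1$ for lines, $1<2$ for conics, $2<3$ for the cubic classes, and, for condition (ii), node position plus the at-most-one-parameter family of tangent sections at that node, giving $\leq 2<3$ when $d=4$ and $\leq 1<2$ when $d=3$) are all correct. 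Two points of hygiene for the write-up: since the $r'$ offending geometric points may be spread across several factors of $\mathcal{P}$ (and need not form Galois orbits), the bad set should be realised as the image of an incidence variety --- the family of curves $D$ times the free remaining points --- rather than as a literal pullback, after which your bounds give dimension strictly less than $\dim\mathcal{P}$ and hence a proper closed, so thin, locus; and decomposability of some $P_i$ is thin of type II via the addition maps $\mathrm{Sym}^{a}(C_P)\times\mathrm{Sym}^{b}(C_P)\to\mathrm{Sym}^{e_i}(C_P)$, which is exactly the paper's ``pairs of rational points''/``reducible cubic points'' loci.
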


\begin{proof}
First assume that $d \geq 5$. Then such $X$ is $k$-rational, and the proof is immediate from the existence of points in general position in the plane.

Now assume that $d \leq 4$. We separate into cases depending on $d,E:=\{e_i\}$. Note that, by Lemma~\ref{lem:rational points in general position}, we can (and do) assume that at least one $e_i$ is greater than $1$.

\begin{enumerate}
    \item If $d=4,E = \{2\}$, take $P \in X(k)$ and consider a tangent section $C_P \subset X$, which is birational to $\mathbb{P}^1$. Note that $\text{Sym}^2(C_P) \sim \mathbb{P}^2$. Note that each of the $16$ lines on $X$ intersects $C_P$ in 1 point. Then there are at most $16$ points on $C_P$ lying on a line on $X$, thus at most $16$ curves in $\text{Sym}^2(C_P) \sim \mathbb{P}^2$ corresponding to a pair of points such that one of them is on a line. Denote the $16$ associated curves in $\mathbb{P}^2$ by $L_1,\dots,L_{16}$. Next, for each of the $10$ families of conics $\pi_i: X \rightarrow \mathbb{P}^1$, a general member intersects $C_P$ in $2$ points; we thus obtain morphisms $\phi_i: \mathbb{P}^1 \rightarrow \text{Sym}^2(C_P)$, $i = 1,\dots,10$, sending $Q \in \mathbb{P}^1$ to the set of points $\pi_i^{-1}(Q) \cap C_P$. We obtain $10$ associated curves $C_1,\dots,C_{10} \subset \mathbb{P}^2$. Lastly, denote by $T$ the locus of points in $\text{Sym}^2(C_P)(k) \sim \mathbb{P}^2(k)$ corresponding to pairs of rational points, which is thin, coming from the degree-2 cover $C_P \times C_P \rightarrow \text{Sym}^2(C_P)$. Choosing $R \in \mathbb{P}^2(k) \setminus \left(\bigcup_{i=1}^{16} L_i(k) \cup \bigcup_{j=1}^{10} C_j(k) \cup T\right)$ (which we may do since $\mathbb{P}^2(k)$ is non-thin), the associated quadratic point on $X$ is in general position.
    \item If $d=4,E = \{2,1\}$, choose a conjugate pair of points $Q_1,Q_2$ as in case (1) and blow up to a degree-2 del Pezzo surface. Again, by density of rational points, we may choose a rational point $R'$ not on any line and not on the ramification divisor. Denoting by $R \in X(k)$ the associated point, $\{Q_1,Q_2,R\}$ is in general position.
    \item If $d=4,E = \{3\}$, take $P \in X(k)$ and a tangent section $C_P \subset X$, and consider $Z:=\text{Sym}^3(C_P) \sim \mathbb{P}^3$. There are 16 curves in $Z$ corresponding to a point on a line and two more points, 10 surfaces from two points on a conic in a particular family and one other point, and one surface corresponding to intersection with a tangent section of a point on $C_P$ (the intersection multiplicity is $4$, but the underlying intersection is always at most $3$ points away from $P$). Choose a rational point away from these subvarieties in $\mathbb{P}^3$ and away from the thin locus corresponding to reducible cubic points; then the associated triple of conjugate points $\{Q_1,Q_2,Q_3\}$ are in general position.
    \item If $d =3,E=\{2\}$, choose $P \in X(k)$ and construct the tangent section $C_P$. Again, consider $Y:= \text{Sym}^2(C_P) \sim \mathbb{P}^2$. Each of the 27 lines intersects $C_P$ in  $1$ points, so pairs with a point on a line are parametrised by $27$ curves. Further, pairs of points both contained in a singular anticanonical curve with a singularity at one of them are parametrised by the curve $C_P$, hence by a curve in $\text{Sym}^2 C_P \sim \mathbb{P}^2$. Choose a rational point away from these curves and away from the thin locus corresponding to pairs of rational points. \qedhere
\end{enumerate}
\end{proof}

\begin{cor}[Theorem~\ref{thm:hilb}]
Over a Hilbertian field, all blowup types exist in all degrees. Thus the inverse line and conic problems admit simultaneous surjective solutions.
\end{cor}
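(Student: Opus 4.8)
The plan is to reduce the corollary to the single assertion that every relatively minimal type exists over $k$ with a suitable rational point, and then to feed this into Proposition~\ref{prop:HilbertianGenPos}. By Theorem~\ref{thm:isk}, every degree-$d$ del Pezzo surface is a blowup $\Bl_{d_1,\dots,d_r}S$ of a relatively minimal surface $S$ of degree $\delta=d+\sum_i d_i$, with $S$ one of $S_9,S_{8,I},S_{8,\II},S_6,S_5,S_{4,I},S_{4,\II},S_3,S_{2,I},S_{2,\II},S_{1,I},S_{1,\II}$ (none in degree $7$); a blowup type is pinned down by the Iskovskikh type of $S$ together with the multiset $\{d_i\}$. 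Since $\sum_i d_i=\delta-d\le\delta-1$, once $S$ is known to exist over $k$ with a $k$-rational point (in general position when $\delta=2$), Proposition~\ref{prop:HilbertianGenPos} supplies closed points $P_1,\dots,P_r$ of degrees $d_1,\dots,d_r$ in general position on $S$, and $\Bl_{P_1,\dots,P_r}S$ realises the blowup type. Thus the whole statement collapses to producing each relatively minimal type with such a point.

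I would organise the input by Picard rank. For the rank-one types $S_\delta$ and $S_{\delta,I}$ (all of which have $(L_X,C_X)=(0,0)$, so a single representative suffices), the existence results established above already furnish a relatively minimal surface with splitting group $G_\delta$ and a $k$-rational point, in general position once $\delta\ge 3$. For the rank-two conic types I would use the existence of relatively minimal conic del Pezzo surfaces with maximal action established above, available over Hilbertian (hence characteristic-$0$) fields through Theorem~\ref{thm:dP4IGP} and Corollary~\ref{cor:igpconic2} for $S_{4,\II}$ and $S_{2,\II}$, and through Corollary~\ref{cor:KulkarniExtensionZero} for $S_{1,\II}$; note that $S_{1,I}$ and $S_{1,\II}$ enter only with empty blowup data, so for these mere existence suffices.

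The main obstacle will be upgrading these conic (and the degree-two) surfaces to ones carrying a $k$-rational point, in general position when $\delta=2$, since the cited Galois-theoretic results control the splitting group but not rationality. I would resolve this exactly as in Lemma~\ref{lem:rational points in general position}: a del Pezzo surface of degree $\ge 2$ possessing a single rational point off all lines (and, in degree $2$, off the ramification divisor) is $k$-unirational, so its rational points are Zariski-dense, and density then lets me displace the point into general position — the conditions of Lemma~\ref{lem:def of gen position} for one point being precisely that it avoid the finitely many geometric lines and the ramification locus. Securing that first good rational point is the delicate step; I expect to obtain it either from the explicit conic-bundle models underlying the cited results, or, more uniformly, by rerunning the Hilbert-scheme and Hilbert-irreducibility argument used above for the conic families and intersecting the non-thin locus of surfaces with maximal conic action against the non-thin image of the total-space fibration, producing a surface that simultaneously has the prescribed action and a rational point.

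Finally I would read off the inverse curve problems. Line and conic counts are determined by the blowup type (as observed following Algorithm~\ref{alg:class}, and compatibly with Theorem~\ref{thm:lci}), so realising every blowup type realises every count-pair $(L_X,C_X)$ attained by any blowup type, and it does so through honest surfaces, which yields simultaneity automatically. As a Hilbertian field realises the relevant Galois groups, every count-pair permitted by $G_k$ already arises from some blowup type; hence $\left(\delta_{d,1}(k),\delta_{d,2}(k)\right)\circ\gamma_d(k)$ is surjective for every $d$, i.e.\ the problems $\ICP_{d,e}(k)$ admit simultaneous surjective solutions.
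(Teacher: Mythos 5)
Your proposal is correct and follows essentially the same route as the paper: reduce every blowup type to its relatively minimal base, obtain that base over the Hilbertian field with a rational point (in general position when the base has degree $2$) from the lemmas and the cited inverse Galois results for the conic types, then invoke Proposition~\ref{prop:HilbertianGenPos} to produce closed points of the prescribed degrees in general position and read off the count-pairs, which are determined by blowup type. The one step you single out as delicate --- securing a seed rational point on the conic-type minimal surfaces before upgrading via unirationality and density of rational points --- is exactly the point the paper also leaves implicit, and your suggested resolutions (explicit conic-bundle models, or rerunning the Hilbert-scheme and irreducibility argument of Section~\ref{sec:hilb} for the conic locus) match how the paper's unnumbered Proposition handles the rank-one case.
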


\appendix
\section{Tables} \label{sec:Tables}

\subsection{General fields} \label{sec:GenFieldTables}
In this section, we give tables detailing properties of del Pezzo surfaces of each degree.

Denoting by $X$ a del Pezzo surface:
\begin{itemize}
    \item The ``Type'' column gives its description as blowups of relatively minimal surfaces.
    \item The ``$(L_X,C_X)$'' column gives its line and conic counts respectively.
    \item The ``$I_X$'' column gives the intersection invariant.
    \item The ``$G_X$'' column gives its possible splitting groups.
    \item The ``$\#C$'' column gives the number of associated conjugacy classes in $G_{\deg(X)}$.
\end{itemize}
We denote by $T_n \leq \Sym_n$ a transitive subgroup of $\Sym_n$. We denote by $M_3 \leq W(E_6)$ and $M_6 \leq W(A_2 \times A_1)$ the splitting groups of  relatively minimal degree-$3$ and degree-$6$ del Pezzo surfaces respectively. For $d \in \{1,2,4\}$, we denote by $M_{d,I} \leq W(R_d)$ and $M_{d,\II} \leq W(D_{8-d})$ the splitting groups of relatively minimal degree-$d$ del Pezzo surfaces of Picard ranks $1$ and $2$ respectively. Note that:
\begin{enumerate}
\item $M_6 \in \{C_6,\Sym_3,\Dih_6\}$.
\item $T_3 \in \{C_3,\Sym_3\}$.
\item $T_4 \in \{C_2^2,C_4,\Dih_8,\Alt_4,\Sym_4\}$.
\item $T_5 \in \{C_5,\Dih_{10},\text{GA}(1,5),\Alt_5,\Sym_5\}$.
\end{enumerate}

\subsubsection{Degree 9}

1 relatively minimal class, 0 non-relatively minimal classes; 1 class in total.

\begin{longtable}{|l|l|l|l|l|l|}\caption{Relatively minimal del Pezzo surfaces of degree 9}\\
\hline
Type & $(L_X,C_X)$ & $I_X$ & $G_X$ & $\#C$ \\ 
\hline
$S_9$ & $(0,0)$ & 1 & 1 & 1 \\
\hline
\end{longtable}

\subsubsection{Degree 8}

2 relatively minimal classes, 1 non-relatively minimal class; 3 classes in total.

\begin{longtable}{|l|l|l|l|l|l|}\caption{Relatively minimal del Pezzo surfaces of degree 8}\\
\hline
Type & $(L_X,C_X)$ & $I_X$ & $G_X$ & $\#C$ \\ 
\hline
$S_{8,\II}$ & (0,2) & $-1$ & $1$ & $1$ \\
\hline
$S_{8,I}$ & (0,0) & 2 & $C_2$ & $1$ \\
\hline
\end{longtable}
\pagebreak
\begin{longtable}{|l|l|l|l|l|l|}\caption{Non-relatively minimal del Pezzo surfaces of degree 8}\\
\hline
Type & $(L_X,C_X)$ & $I_X$ & $G_X$ & $\#C$ \\ 
\hline
$\Bl_1S_9$ & $(1,1)$ & $-1$ & $1$ & $1$ \\
\hline
\end{longtable}

\subsubsection{Degree 7}

0 relatively minimal classes, 2 non-relatively minimal classes; 2 classes in total.

\begin{longtable}{|l|l|l|l|l|l|}\caption{Non-relatively minimal del Pezzo surfaces of degree 7}\\
\hline
Type & $(L_X,C_X)$ & $I_X$ & $G_X$ & $\#C$ \\ 
\hline
$\Bl_1S_{8,\II}$, $\Bl_{1,1}S_9$ & $(3,2)$ & 1 & 1 & 1  \\
\hline
$\Bl_1S_{8,I}$, $\Bl_2 S_9$ & $(1,0)$ & $-2$ & $C_2$ & 1 \\
\hline
\end{longtable}

\subsubsection{Degree 6}

3 relatively minimal classes, 7 non-relatively minimal classes; 10 classes in total.

\begin{longtable}{|l|l|l|l|l|l|}\caption{Relatively minimal del Pezzo surfaces of degree 6}\\
\hline
Type & $(L_X,C_X)$ & $I_X$ & $G_X$ & $\#C$ \\ 
\hline
$S_{6}$ & $(0,0)$ & 6 & $M_6$ &  $3$ \\
\hline
\end{longtable}

\begin{longtable}{|l|l|l|l|l|l|}\caption{Non-relatively minimal del Pezzo surfaces of degree 6}\\
\hline
Type & $(L_X,C_X)$ & $I_X$ & $G_X$ & $\#C$ \\ 
\hline
$\Bl_3S_{9}$ & $(0,0)$ & $-3$ & $T_3$ & $2$ \\
\hline
\makecell[l]{$\Bl_{2,1}S_9$ \\
$ \Bl_{1^2}S_{8,I}$
}
& $(2,1)$ & $2$ & $C_2$ & 1 \\
\hline
\makecell[l]{$\Bl_{1^3}S_9 $ \\
$ \Bl_{1^2}S_{8,\II}$} & $(6,3)$ & $-1$ & 1 & 1 \\
\hline
$\Bl_2S_{8,I}$ & $(0,1)$ & $-4$ & $C_2$ or $C_2^2$ & $2$ \\
\hline
$\Bl_2S_{8,\II}$ & $(0,3)$ & $2$ & $C_2$ & 1 \\
\hline
\end{longtable}

\subsubsection{Degree 5}

5 relatively minimal classes, 14 non-relatively minimal classes; 19 classes in total.

\begin{longtable}{|l|l|l|l|l|l|}\caption{Relatively minimal del Pezzo surfaces of degree 5}\\
\hline
Type & $(L_X,C_X)$ & $I_X$ & $G_X$ & $\#C$ \\ 
\hline
$S_5$ & $(0,0)$ & 5 & $T_5$ & 5 \\
\hline
\end{longtable}

\begin{longtable}{|l|l|l|l|l|}\caption{Non-relatively minimal del Pezzo surfaces of degree 5}\\
\hline
Type & $(L_X,C_X)$ & $I_X$ & $G_X$ & $\#C$ \\ 
\hline
$\Bl_4 S_9$ & $(0,1)$ & $-4$ & $T_4$ & 5 \\
\hline
\makecell[l]{$\Bl_{3,1}S_9 $ \\
$ \Bl_3 S_{8,\II}$} & $(1,2)$ & $3$ & $T_3$ & 2 \\
\hline
\makecell[l]{$\Bl_{2^2}S_9 $ \\
$\Bl_{2,1} S_{8,I}$} & $(2,1)$ & 4 & $C_2$ or $C_2^2$ & 2 \\
\hline
\makecell[l]{$\Bl_{2,1^2}S_9 $ \\
$\Bl_{1^3} S_{8,I}$ \\
$\Bl_{2,1}S_{8,\II}$} & $(4,3)$ & $-2$ & $C_2$ & 1 \\
\hline
\makecell[l]{$\Bl_{1^4}S_9 $ \\
$ \Bl_{1^3} S_{8,\II}$} & $(10,5)$ & 1 & 1 & 1 \\
\hline
\makecell[l]{$\Bl_1 S_6 $ \\
$\Bl_3S_{8,I}$} & $(1,0)$ & $-6$ & $M_6$ & 3 \\
\hline
\end{longtable}

\subsubsection{Degree 4}

148 relatively minimal classes, 49 non-relatively minimal classes; 197 classes in total.

\begin{longtable}{|l|l|l|l|l|}\caption{Relatively minimal del Pezzo surfaces of degree 4}\\
\hline
Type & $(L_X,C_X)$ & $I_X$ & $G_X$ & $\#C$ \\
\hline
$S_{4,I}$ & $(0,0)$ & $4$ & $M_{4,I} \leq W(D_5)$ & $98$ \\
\hline
$S_{4,\II}$ & $(0,2)$ & $-4$ & $M_{4,\II} \leq W(D_4)$ & $50$ \\
\hline
\end{longtable}

\begin{longtable}{|l|l|l|l|l|l|}\caption{Non-relatively minimal del Pezzo surfaces of degree 4}\\
\hline
Type & $(L_X,C_X)$ & $I_X$ & $G_X$ & $\#C$ \\ 
\hline
\makecell[l]{$\Bl_5 S_9 $ \\
$ \Bl_1 S_5$} & $(1,0)$ & $-5$ & $T_5$ & 5 \\
\hline
$\Bl_{4,1} S_9$ & $(2,2)$ & $4$ & $T_4$ & 5 \\
\hline
\makecell[l]{$\Bl_{3,2}S_9 $ \\
$ \Bl_{3,1}S_{8,I} $ \\
$ \Bl_{1,1}S_6$} & $(2,0)$ & $6$ & $M_6$ & 3  \\
\hline
\makecell[l]{$\Bl_{3,1^2}S_9 $ \\
$\Bl_{3,1}S_{8,\II}$} & $(4,4)$ & $-3$ & $T_3$ & 2 \\
\hline
\makecell[l]{$\Bl_{2^2,1}S_9 $ \\
$\Bl_{2,1^2}S_{8,I}$} & $(4,2)$& $-4$ & $C_2$ or $C_2^2$ &  2 \\
\hline
\makecell[l]{$\Bl_{2,1^3}S_9$ \\
$\Bl_{1^4}S_{8,I}$ \\
$\Bl_{2,1^2}S_{8,\II}$} & $(8,6)$ & 2 & $C_2$ & 1 \\
\hline
\makecell[l]{$\Bl_{1^5}S_9$ \\
$\Bl_{1^4}S_{8,\II}$} & $(16,10)$ & $-1$ & 1 &  1\\
\hline
$\Bl_4 S_{8,I}$ & $(0,0)$ & $-8$ & $T_4$ or $C_2\times T_4$ & 11 \\
\hline
$\Bl_{2^2}S_{8,I}$ & $(0,2)$ & 8 & $C_2$, $C_2^2$ or $C_2^3$ & 5 \\
\hline
$\Bl_{4}S_{8,\II}$ & $(0,4)$ & 4 & $T_4$ & 5 \\
\hline
$\Bl_{2^2}S_{8,\II}$ & $(0,6)$ & $-4$ & $C_2$ or $C_2^2$ & 2 \\
\hline
$\Bl_2 S_6$ & $(0,0)$ & $-12$ & $M_6$ or $M_6 \times C_2$ & 7 \\
\hline
\end{longtable}

\subsubsection{Degree 3}

137 relatively minimal classes, 213 non-relatively minimal classes; 350 classes in total. 

\begin{longtable}{|l|l|l|l|l|l|}\caption{Relatively minimal del Pezzo surfaces of degree 3}\\
\hline
Type & $L_X = C_X$ & $I_X$ & $G_X$ & $\#C$ \\ 
\hline
$S_3$ & 0 & 3 & $M_3 \leq W(E_6)$ & 137 \\ 
\hline
\end{longtable}
\pagebreak
\begin{longtable}{|l|l|l|l|l|l|}\caption{Non-relatively minimal del Pezzo surfaces of degree 3}\\
\hline
Type & $L_X = C_X$ & $I_X$ & $G_X$ & $\#C$ \\ 
\hline
$\Bl_6 S_9$ & $0$ & $-6$ & $T_6$ &  $16$ \\
\hline
\makecell[l]{$\Bl_{5,1} S_9$ \\
$\Bl_5 S_{8,\II}$ \\
$\Bl_{1^2}S_5$} & $2$ & $5$ & $T_5$ & $5$ \\
\hline
\makecell[l]{$\Bl_{4,2}S_9$ \\
$ \Bl_{4,1}S_{8,I}$} & $1$ & 8 & $T_4$ or $C_2 \times T_4$ & $11$ \\
\hline
\makecell[l]{$\Bl_{4,1^2}S_9 $ \\
$\Bl_{4,1}S_{8,\II}$} & $5$ & $-4$ & $T_4$ & $5$ \\
\hline
$\Bl_{3^2} S_9$ & $0$ & 9 & \makecell[l]{$T_3$, $T_3 \times C_3$ \\ or $C_3 \ltimes \Sym_3$} & $6$ \\
\hline
\makecell[l]{$\Bl_{3,2,1}S_9 $ \\
$\Bl_{3,1^2}S_{8,I} $ \\
$\Bl_{3,2}S_{8,\II}$ \\
$\Bl_{1^3}S_6$} & $3$ & $-6$ & $M_6$ & $3$ \\
\hline
\makecell[l]{$\Bl_{3,1^3}S_9 $ \\
$\Bl_{3,1^2}S_{8,\II}$} & $9$ & 3 & $T_3$ & $2$ \\
\hline
\makecell[l]{$\Bl_{2^3}S_9 $ \\
$\Bl_{2^2,1}S_{8,I}$} & $3$ & $-8$ & $C_2$, $C_2^2$ or $C_2^3$ & $4$ \\
\hline
\makecell[l]{$\Bl_{2^2,1^2}S_9 $ \\
$\Bl_{2,1^3}S_{8,I}$ \\
$ \Bl_{2^2,1}S_{8,\II}$} & $7$ & 4 & $C_2$ or $C_2^2$ & $2$ \\
\hline
\makecell[l]{$\Bl_{2,1^4}S_9$ \\
$\Bl_{1^5}S_{8,I}$ \\
$ \Bl_{2,1^3}S_{8,\II}$} & $15$ & $-2$ & $C_2$ & 1 \\
\hline
\makecell[l]{$\Bl_{1^6}S_9$ \\
$\Bl_{1^5}S_{8,\II}$} & $27$ & 1 & 1 & 1 \\
\hline
\makecell[l]{$\Bl_5S_{8,I}$ \\
$\Bl_2 S_5$} & $0$ & $-10$ & \makecell[l]{$T_5$ or $T_5 \times C_2$ \\ (except $C_5$)} & $8$ \\
\hline
\makecell[l]{$\Bl_{3,2}S_{8,I}$ \\
$\Bl_{2,1}S_6$} & $1$ & $12$ & $M_6$ or $M_6 \times C_2$ & $7$ \\
\hline
$\Bl_3 S_6$ & $0$ & $-18$ & \makecell[l]{$M_6$, $M_6 \times C_3$ \\ or $M_6 \times \Sym_3$} & $11$ \\
\hline
$\Bl_1 S_{4,I}$ & $1$ & $-4$ & $M_{4,I}$ & $98$  \\
\hline
$\Bl_1 S_{4,\II}$ & $3$ & $4$ & $M_{4,\II}$ & $33$ \\
\hline
\end{longtable}

\subsubsection{Degree 2}

6877 relatively minimal classes, 1197 non-relatively minimal classes; 8074 classes in total.

\begin{longtable}{|l|l|l|l|l|l|}\caption{Relatively minimal del Pezzo surfaces of degree 2}\\
\hline
Type & $(L_X,C_X)$ & $I_X$ & $G_X$ & $\#C$ \\ 
\hline
$S_{2,I}$ & $(0,0)$ &  $2$ & $M_{2,I} \leq W(E_7)$ & 5565 \\
\hline
$S_{2,\II}$ & $(0,2)$ & $-4$ & $M_{2,\II} \leq W(D_7)$ & 1312 \\
\hline
\end{longtable}
\pagebreak
Number of relatively minimal classes: 6877.

\begin{longtable}{|l|l|l|l|l|l|}\caption{Non-relatively minimal del Pezzo surfaces of degree 2}\\
\hline
Type & $(L_X,C_X)$ & $I_X$ & $G_X$ & $\#C$ \\ 
\hline
$\Bl_7 S_9$ & $(0,0)$ & $-7$ & $T_7$ & $7$ \\
\hline
$\Bl_{6,1} S_9$ & $(2,2)$ & $6$ & $T_6$ & $16$ \\
\hline
\makecell[l]{$\Bl_{5,2} S_9$ \\
$\Bl_{5,1}S_{8,I}$ \\
$\Bl_{2,1}S_5$} & $(2,0)$ & $10$ & \makecell[l]{$T_5$ or $T_5 \times C_2$ \\ (except $C_5$)} & $8$ \\
\hline
\makecell[l]{$\Bl_{5,1^2} S_9$ \\
$\Bl_{5,1}S_{8,\II}$ \\
$\Bl_{1^3}S_5$} & $(6,6)$ & $-5$ & $T_5$ & $5$ \\
\hline
$\Bl_{4,3} S_9$ & $(0,2)$ & $12$ & \makecell[l]{$T_4 \times T_3$, \\ $T_4 \times_{C_3} T_3$ or \\ $T_4 \times_{\Sym_3} T_3$} & $18$ \\
\hline
\makecell[l]{$\Bl_{4,2,1} S_9$ \\
$\Bl_{4,1^2}S_{8,I}$} & $(4,4)$ & $-8$ & $T_4$ or $T_4 \times C_2$ & $11$ \\
\hline
\makecell[l]{$\Bl_{4,1^3} S_9$ \\
$\Bl_{4,1^2}S_{8,\II}$} & $(12,14)$ & 4 & $T_4$ & $5$ \\
\hline
\makecell[l]{$\Bl_{3^2,1} S_9$ \\
$\Bl_{3^2}S_{8,\II}$} & $(2,6)$ & $-9$ & \makecell[l]{$T_3$, $T_3 \times C_2$ \\ or $C_3 \ltimes \Sym_3$} & $6$ \\
\hline
\makecell[l]{$\Bl_{3,2^2} S_9$ \\
$\Bl_{3,2,1}S_{8,I}$ \\
$\Bl_{2,1^2}S_6$} & $(4,2)$ & $-12$ & $M_6$ or $M_6 \times C_2$ & $7$ \\
\hline
\makecell[l]{$\Bl_{3,2,1^2} S_9$ \\
$\Bl_{3,1^3}S_{8,I}$ \\
$\Bl_{3,2,1}S_{8,\II}$ \\
$\Bl_{1^4}S_6$} & $(8,12)$ & $6$ & $M_6$ & $3$\\
\hline
\makecell[l]{$\Bl_{3,1^4} S_9$ \\
$\Bl_{3,1^3}S_{8,\II}$} & $(20,30)$ & $-3$ & $T_3$ & $2$ \\
\hline
\makecell[l]{$\Bl_{2^3,1} S_9$ \\
$\Bl_{2^2,1^2}S_{8,I}$} & $(8,8)$ & $8$ & $C_2$, $C_2^2$ or $C_2^3$ & $4$ \\
\hline
\makecell[l]{$\Bl_{2^2,1^3} S_9$ \\
$\Bl_{2,1^4}S_{8,I}$ \\
$\Bl_{2^2,1^2}S_{8,\II}$} & $(16,26)$ & $-4$ & $C_2$ or $C_2^2$ & $2$ \\
\hline
\makecell[l]{$\Bl_{2,1^5} S_9$ \\
$\Bl_{1^6}S_{8,I}$ \\
$\Bl_{2,1^4}S_{8,\II}$} & $(32,60)$ & $2$ & $C_2$ & $1$ \\
\hline
\makecell[l]{$\Bl_{1^7} S_9$ \\
$\Bl_{1^6}S_{8,\II}$} & $(56, 126)$ & $-1$ & $1$ & $1$ \\
\hline
$\Bl_6 S_{8,I}$ & $(0,0)$ & $-12$ & \makecell[l]{$T_6 \times C_2$ or \\ $T_6 \times_{C_2} C_2$} & $36$ \\
\hline
$\Bl_6 S_{8,\II}$ & $(0,6)$ & $6$ & $T_6$ & $16$ \\
\hline
$\Bl_{4,2} S_{8,I}$ & $(0,2)$ & $16$ & \makecell[l]{$T_4$, $C_2\times T_4$ \\ or $(T_4 \times_{C_2} C_2) \times_{C_2} C_2$ } & $41$ \\
\hline
$\Bl_{4,2} S_{8,\II}$ & $(0,12)$ & $-8$ & \makecell[l]{$T_4 \times C_2$ or \\ $T_4 \times_{C_2} C_2$} & $11$ \\
\hline
\makecell[l]{$\Bl_{3^2} S_{8,I}$ \\
$\Bl_{3,1}S_6$} & $(2,0)$ & $18$ & \makecell[l]{$M_6$, $M_6 \times C_3$ \\ or $M_6 \times \Sym_3$} & $11$ \\
\hline
$\Bl_{2^3} S_{8,I}$ & $(0,6)$ & $-16$ & \makecell[l]{$C_2$, $C_2^2$, \\ $C_2^3$ or $C_2^4$} & $12$ \\
\hline
$\Bl_{2^3} S_{8,\II}$ & $(0,24)$ & $8$ & \makecell[l]{$C_2$, $C_2^2$ \\ or $C_2^3$} & $4$ \\
\hline
$\Bl_4 S_6$ & $(0,0)$ & $-24$ & \makecell[l]{$T_4 \times M_6$ or \\ $T_4 \times_{Q_6} M_6$, \\ $Q_6 \in \{C_6,\Sym_3,\Dih_6\}$} & $55$ \\
\hline
$\Bl_{2^2} S_6$ & $(0,0)$ & $24$ & \makecell[l]{$C_6$, $\Sym_3$, $\Dih_6$, \\ $C_6 \times C_2$, $\Dih_6 \times C_2$, \\
$C_2^2 \times C_6$, $\Dih_6 \times C_2^2$} & $16$ \\
\hline
$\Bl_3 S_5$ & $(0,0)$ & $-15$ & \makecell[l]{$T_5 \times T_3$, \\ $T_5 \times_{C_3} T_3$ or \\ $T_5 \times_{\Sym_3} \Sym_3$} & 13 \\
\hline
$\Bl_2 S_{4,I}$ & $(0,0)$ & $-8$ & \makecell[l]{$M_{4,I} \times C_2$ or \\ $M_{4,I} \times_{C_2} C_2$} & 435 \\
\hline
$\Bl_2 S_{4,\II}$ & $(0,4)$ & $8$ & \makecell[l]{$M_{4,\II} \times C_2$ or \\ $M_{4,\II} \times_{C_2} C_2$} & $183$ \\
\hline
$\Bl_{1^2} S_{4,I}$ & $(4,2)$ & $4$ & $M_{4,I}$ & $98$ \\
\hline
$\Bl_{1^2} S_{4,\II}$ & $(8,6)$ & $-4$ & $M_{4,\II}$ & $33$ \\
\hline
$\Bl_1 S_{3,M_3}$ & $(2,0)$ & $-3$ & $M_3$ & $137$ \\
\hline
\end{longtable}

\subsubsection{Degree 1}
53016 relatively minimal classes, 9076 non-relatively minimal classes; 62092 classes in total.

\begin{longtable}{|l|l|l|l|l|l|}\caption{Relatively minimal del Pezzo surfaces of degree 1}\\
\hline
Type & $(L_X,C_X)$ & $I_X$ & $G_X$ & $\#C$ \\ 
\hline
$S_{1,I}$ & $(0,0)$ & $1$ & $M_{1,I} \leq W(E_8)$ & $48797$ \\
\hline
$S_{1,\II}$ & $(0,2)$ & $-4$ & $M_{1,\II} \leq W(D_8)$ & $4219$ \\
\hline
\end{longtable}

Number of relatively minimal classes: $53016$.

\begin{longtable}{|l|l|l|l|l|l|}\caption{Non-relatively minimal del Pezzo surfaces of degree 1}\\
\hline
Type & $(L_X,C_X)$ & $I_X$ & $G_X$ & $\#C$ \\ 
\hline
$\Bl_8S_9$ & $(0,0)$ & $-8$ & $T_8$ & $50$ \\
\hline
\makecell[l]{$\Bl_{7,1} S_9$ \\
$ \Bl_7 S_{8,\II}$} & $(2,4)$ & 7 & $T_7$ & $7$ \\
\hline
\makecell[l]{$\Bl_{6,2} S_9$ \\
$ \Bl_{6,1}S_{8,I}$} & $(2,0)$ & 12 & \makecell[l]{$T_6 \times C_2$ or \\
$T_6 \times_{C_2} C_2$} & $36$ \\
\hline
\makecell[l]{$\Bl_{6,1^2} S_9$ \\
$\Bl_{6,1}S_{8,\II}$} & $(8,12)$ & $-6$ & $T_6$ & $16$ \\
\hline
\makecell[l]{$\Bl_{5,3} S_9$ \\
$\Bl_{3,1}S_5$} & $(2,0)$ & $15$ & \makecell[l]{$T_5 \times T_3$, \\ $T_5 \times_{C_3} T_3$ or \\ $T_5 \times_{\Sym_3} \Sym_3$} & $13$ \\
\hline
\makecell[l]{$\Bl_{5,2,1} S_9$ \\
$\Bl_{5,1^2}S_{8,I}$ \\
$\Bl_{2,1^2}S_5$} & $(6,6)$ & $-10$ & \makecell[l]{$T_5$ or $T_5 \times C_2$ \\ (except $C_5$)} & $8$ \\
\hline
\makecell[l]{$\Bl_{5,1^3}S_9$ \\
$\Bl_{5,1^2}S_{8,\II}$ \\
$\Bl_{1^4}S_5$} & $(20,30)$ & $5$ & $T_5$ & $5$ \\
\hline
$\Bl_{4^2} S_9$ & $(0,4)$ & $16$ & \makecell[l]{$T_4 \times_{Q_4} T_4'$, \\
$A_4,T_4' \in \{C_2^2,C_4,\Dih_8,\Alt_4,\Sym_4\}$} & $50$ \\
\hline
\makecell[l]{$\Bl_{4,3,1}S_9$ \\
$\Bl_{4,3}S_{8,\II}$} & $(4,12)$ & $-12$ & \makecell[l]{$T_4 \times T_3$, \\ $T_4 \times_{C_3} T_3$ or \\ $T_4 \times_{\Sym_3} \Sym_3$} & $18$ \\
\hline
$\Bl_{4,2^2} S_9$ & $(4,6)$ & $-16$ & \makecell[l]{$T_4$, $C_2\times T_4$ \\ or $(T_4 \times_{C_2} C_2) \times_{C_2} C_2$ } & 32 \\
\hline
\makecell[l]{$\Bl_{4,2,1^2}S_9$ \\
$\Bl_{4,1^3}S_{8,I}$} & $(14,30)$ & $8$ & $T_4$ or $T_4 \times C_2$ & $11$ \\
\hline
\makecell[l]{$\Bl_{4,1^4}S_9$ \\
$\Bl_{4,1^3}S_{8,\II}$} & $(40,90)$ & $-4$ & $T_4$ & $5$ \\
\hline
\makecell[l]{$\Bl_{3^2,2}S_9$ \\
$\Bl_{3^2,1}S_{8,I}$ \\
$\Bl_{3,1^2}S_6$} & $(6,0)$ & $-18$ & \makecell[l]{$M_6$, $M_6 \times C_3$ \\ or $M_6 \times \Sym_3$} & $11$ \\
\hline
\makecell[l]{$\Bl_{3^2,1^2}S_9$ \\
$\Bl_{3^2,1}S_{8,\II}$} & $(12,36)$ & $9$ & \makecell[l]{$T_3$, $T_3 \times C_2$ \\ or $C_3 \ltimes \Sym_3$} & $6$ \\
\hline
\makecell[l]{$\Bl_{3,2^2,1}S_9$ \\
$\Bl_{3,2,1^2}S_{8,I}$ \\
$\Bl_{2,1^3}S_6$} & $(12,18)$ & $12$ & $M_6$ or $M_6 \times C_2$ & $7$ \\
\hline
\makecell[l]{$\Bl_{3,2,1^3}S_9$ \\
$\Bl_{3,1^4}S_{8,I}$ \\
$\Bl_{3,2,1^2}S_{8,\II}$ \\
$ \Bl_{1^5}S_6$} & $(30,90)$ & $-6$ & $M_6$ & $3$ \\
\hline
\makecell[l]{$\Bl_{3,1^5}S_9$ \\
$\Bl_{3,1^4}S_{8,\II}$} & $(72,270)$ & $3$ & $T_3$ & $2$ \\
\hline
\makecell[l]{$\Bl_{2^4}S_9$ \\
$\Bl_{2^3,1}S_{8,I}$} & $(8,24)$ & $16$ & \makecell[l]{$C_2$, $C_2^2$, \\ $C_2^3$ or $C_2^4$} & $8$ \\
\hline
\makecell[l]{$\Bl_{2^3,1^2}S_9$ \\
$\Bl_{2^2,1^3}S_{8,I}$ \\
$\Bl_{2^3,1}S_{8,\II}$} & $(26,72)$ & $-8$ & $C_2$, $C_2^2$ or $C_2^3$ & $4$ \\
\hline
\makecell[l]{$\Bl_{2^2,1^4}S_9$ \\
$\Bl_{2,1^5}S_{8,I}$ \\
$\Bl_{2^2,1^3}S_{8,\II}$} & $(60,252)$ & $4$ & $C_2$ or $C_2^2$ & $2$ \\
\hline
\makecell[l]{$\Bl_{2,1^6}S_9$ \\
$\Bl_{1^7}S_{8,I}$ \\
$\Bl_{2,1^5}S_{8,\II}$} & $(126,756)$ & $-2$ & $C_2$ & $1$ \\
\hline
\makecell[l]{$\Bl_{1^8}S_9$ \\
$\Bl_{1^7}S_{8,\II}$} & $(240, 2160)$ & $1$ & $1$ & $1$ \\
\hline
$\Bl_{7} S_{8,I}$ & $(0,0)$ & $-14$ & \makecell[l]{$T_7 \times C_2$ or \\ $T_7 \times_{C_2} C_2$} & $10$  \\
\hline
\makecell[l]{$\Bl_{5,2}S_{8,I}$ \\
$\Bl_{2^2} S_5$} &  $(0,2)$ & $20$ & \makecell[l]{ $G \times C_2$ or $G \times_{C_2} C_2$, \\
$G = T_5$ (not $C_5$) or $G = T_5 \times C_2$ } & $19$ \\
\hline
\makecell[l]{$\Bl_{4,3}S_{8,I}$ \\
$\Bl_{4,1}S_6$} &  $(2,0)$ & $24$ & \makecell[l]{$T_4 \times M_6$ or \\ $T_4 \times_{Q_6} M_6$, \\ $Q_6 \in \{C_6,\Sym_3,\Dih_6\}$} & $55$ \\
\hline
\makecell[l]{$\Bl_{3,2^2}S_{8,I}$ \\
$\Bl_{2^2,1}S_6$} & $(2,6)$ & $-24$ & \makecell[l]{$C_6$, $\Sym_3$, $\Dih_6$, \\ $C_6 \times C_2$, $\Dih_6 \times C_2$, \\
$C_2^2 \times C_6$, $\Dih_6 \times C_2^2$} & $16$ \\
\hline
$\Bl_{5} S_6$ & $(0,0)$ & $-30$ & \makecell[l]{$T_5 \times M_6$ or \\ $T_5 \times_{Q_6} M_6$, \\ $Q_6 \in \{C_6,\Sym_3,\Dih_6\}$} & $30$ \\
\hline
$\Bl_{3,2} S_6$ & $(0,0)$ & $36$ & \makecell[l]{ $G \times C_2$ or $G \times_{C_2} C_2$, \\ $G \in \{M_6, M_6 \times C_3, M_6 \times \Sym_3\}$ } & $29$ \\
\hline
$\Bl_{4} S_5$ & $(0,0)$ & $-20$ & \makecell[l]{$T_4 \times_{Q_4} T_5$, \\ $q_4 \in \{C_2^2,C_4,\Dih_8,\Alt_4,\Sym_4\}$} & $44$ \\
\hline
$\Bl_{3} S_{4,I}$ & $(0,0)$ & $-12$ & \makecell[l]{$M_{4,I} \times T_3$, \\ $M_{4,I} \times_{C_3} T_3$ or \\ $M_{4,I} \times_{\Sym_3} \Sym_3$} & $568$ \\
\hline
$\Bl_{2,1} S_{4,I}$ & $(2,2)$ & $8$ & \makecell[l]{$M_{4,I} \times C_2$ or \\ $M_{4,I} \times_{C_2} C_2$} & $435$ \\
\hline
$\Bl_{1^3} S_{4,I}$ & $(12,6)$ & $-4$ & $M_{4,I}$ & $98$ \\
\hline
$\Bl_{3} S_{4,\II}$ & $(0,6)$ & $12$ & \makecell[l]{$M_{4,\II} \times T_3$, \\ $M_{4,\II} \times_{C_3} T_3$ or \\ $M_{4,\II} \times_{\Sym_3} \Sym_3$} & $150$ \\
\hline
$\Bl_{2,1} S_{4,\II}$ & $(6,12)$& $-8$ & \makecell[l]{$M_{4,\II} \times C_2$ or \\ $M_{4,\II} \times_{C_2} C_2$} & $108$  \\
\hline
$\Bl_{1^3} S_{4,\II}$ & $(24,24)$ & $4$ & $M_{4,\II}$ & $33$ \\
\hline
$\Bl_{2} S_{3,M_3}$ & $(0,0)$  & $-6$ & \makecell[l]{$M_3 \times C_2$ or \\ $M_3 \times_{C_2} C_2$} & $391$ \\
\hline
$\Bl_{1^2} S_{3,M_3}$ & $(6,0)$ & $3$ & $M_3$ & $137$ \\
\hline
$\Bl_{1} S_{2,I}$ & $(2,0)$ & $-2$ & $M_{2,I}$ & $5565$ \\
\hline
$\Bl_{1} S_{2,\II}$ & $(4,4)$ & $4$ & $M_{2,\II}$ & $1092$ \\
\hline
\end{longtable}

\subsection{Finite fields} \label{sec:TablesFinite}

In this section, we provide tables detailing the possible Galois actions on del Pezzo surfaces in each degree, together with associated properties.

Denoting by $X$ a del Pezzo surface over the finite field $\mathbb{F}_{q}$:
\begin{itemize}
    \item The ``Class'' column gives the conjugacy class in the Weyl group. For degree $d \geq 4$, we use our own numbering. For degree $3$, we use Manin's numbering from \cite[Table~1]{Man86}. For degrees $2$ and $1$, we use Urabe's numbering from \cite{Urabe96}.
    \item The ``Type'' column gives all descriptions as blowups of minimal surfaces.
    \item The ``$(L_X,C_X)$'' column lists the line and conic counts respectively.
    \item The ``$I_X$'' column lists the intersection invariant.
    \item The ``Order'' column gives the order $n$ of the cyclic splitting group $C_n$.
    \item The ``$a(X)$'' column gives the trace of Frobenius.
    \item The ``Restrictions'' column gives a necessary and sufficient condition on $q$ for existence coming from either \cite[\S5.2]{BFL19} or Section~\ref{sec:FiniteFieldsProofs}.
\end{itemize}

Lastly, in tables for degrees 1 and 2, we add columns giving the class and blowup type of the Bertini/Geiser twist respectively, leaving these entries blank when the class is self-dual.

\subsubsection{Degree 9} 1 relatively minimal class, 0 non-relatively minimal classes, 1 class in total.

\begin{longtable}{|l|l|l|l|l|l|l|}\caption{Relatively minimal del Pezzo surfaces of degree 9}\\
\hline
Class & Type & $(L_X,C_X)$ & $I_X$ & Order & $a(X)$ & Restrictions \\ 
\hline
1 & $S_9$ & $(0,0)$ & 1 & 1 & 1 & All $q$ \\
\hline
\end{longtable}

 \subsubsection{Degree 8} 2 relatively minimal classes, 1 non-relatively minimal class, 3 classes in total.

\begin{longtable}{|l|l|l|l|l|l|l|}\caption{Relatively minimal del Pezzo surfaces of degree 8}\\
\hline
Class & Type & $(L_X,C_X)$ & $I_X$ & Order & $a(X)$ & Restrictions \\ 
\hline
1 & $S_{8,I}$ & (0,0) & 2 & 2 & $0$ & All $q$ \\
\hline
2 & $S_{8,\II}$ & (0,2) & $-1$ & $1$ & 2 & All $q$ \\
\hline
\end{longtable}

\begin{longtable}{|l|l|l|l|l|l|l|}\caption{Non-relatively minimal del Pezzo surfaces of degree 8}\\
\hline
Class & Type & $(L_X,C_X)$ & $I_X$ & Order & $a(X)$ & Restrictions \\ 
\hline
3 & $\Bl_1S_9$ & $(1,1)$ & $-1$ & $1$ & $1$ & All $q$ \\
\hline
\end{longtable}

 \subsubsection{Degree 7} 0 relatively minimal classes, 2 non-relatively minimal classes, 2 classes in total.
 
\begin{longtable}{|l|l|l|l|l|l|l|}\caption{Non-relatively minimal del Pezzo surfaces of degree 7}\\
\hline
Class & Type & $(L_X,C_X)$ & $I_X$ & Order & $a(X)$ & Restrictions \\ 
\hline
1 & $\Bl_1S_{8,I}$, $\Bl_2 S_9$ & $(1,0)$ & $-2$ & 2 & 1 & All $q$ \\
\hline
2 & $\Bl_1S_{8,\II}$, $\Bl_{1,1}S_9$ & $(3,2)$ & 1 & 1 & 3 & All $q$  \\
\hline
\end{longtable}

 \subsubsection{Degree 6} 1 relatively minimal class, 5 non-relatively minimal classes, 6 classes in total.

\begin{longtable}{|l|l|l|l|l|l|l|}\caption{Relatively minimal del Pezzo surfaces of degree 6}\\
\hline
Class & Type & $(L_X,C_X)$ & $I_X$ & Order & $a(X)$ & Restrictions \\
\hline
1 & $S_6$ & $(0,0)$ & 6 & $6$ & $-1$ & All $q$ \\
\hline
\end{longtable}

\begin{longtable}{|l|l|l|l|l|l|l|}\caption{Non-relatively minimal del Pezzo surfaces of degree 6}\\
\hline
Class & Type & $(L_X,C_X)$ & $I_X$ & Order & $a(X)$ & Restrictions \\ 
\hline
2 & $\Bl_2S_{8,I}$ & $(0,1)$ & $-4$ & $2$ & 0 & All $q$ \\
\hline
3 & $\Bl_2S_{8,\II}$ & $(0,3)$ & $2$ & $2$ & 2 & All $q$ \\
\hline
4 & $\Bl_3S_{9}$ & $(0,0)$ & $-3$ & $3$ & 1 &  All $q$ \\
\hline
5 & \makecell[l]{$\Bl_{2,1}S_9$ \\
$ \Bl_{1^2}S_{8,I}$
}
& $(2,1)$ & $2$ & $2$ & $2$ & All $q$ \\
\hline
6 & \makecell[l]{$\Bl_{1^3}S_9 $ \\
$ \Bl_{1^2}S_{8,\II}$} & $(6,3)$ & $-1$ & 1 & 4 & All $q$ \\
\hline
\end{longtable}

 \subsubsection{Degree 5} 1 relatively minimal class, 6 non-relatively minimal classes, 7 classes in total.

\begin{longtable}{|l|l|l|l|l|l|l|}\caption{Relatively minimal del Pezzo surfaces of degree 5}\\
\hline
Class & Type & $(L_X,C_X)$ & $I_X$ & Order & $a(X)$ & Restrictions \\ 
\hline
1 & $S_5$ & $(0,0)$ & 5 & $5$ & 0 & All $q$ \\
\hline
\end{longtable}

\begin{longtable}{|l|l|l|l|l|l|l|}\caption{Non-relatively minimal del Pezzo surfaces of degree 5}\\
\hline
Class & Type & $(L_X,C_X)$ & $I_X$ & Order & $a(X)$ & Restrictions \\ 
\hline
2 & \makecell[l]{$\Bl_1 S_6 $ \\
$\Bl_3S_{8,I}$} & $(1,0)$ & $-6$ & $6$ & 0 & All $q$ \\
\hline
3 & $\Bl_4 S_9$ & $(0,1)$ & $-4$ & $4$ & 1 & All $q$ \\
\hline
4 & \makecell[l]{$\Bl_{3,1}S_9 $ \\
$ \Bl_3 S_{8,\II}$} & $(1,2)$ & $3$ & $3$ & 2 & All $q$ \\
\hline
5 & \makecell[l]{$\Bl_{2^2}S_9 $ \\
$\Bl_{2,1} S_{8,I}$} & $(2,1)$ & 4 & $2$ & 1 & All $q$ \\
\hline
6 & \makecell[l]{$\Bl_{2,1^2}S_9 $ \\
$\Bl_{1^3} S_{8,I}$ \\
$\Bl_{2,1}S_{8,\II}$} & $(4,3)$ & $-2$ & $2$ & 3 & All $q$ \\
\hline
7 & \makecell[l]{$\Bl_{1^4}S_9 $ \\
$ \Bl_{1^3} S_{8,\II}$} & $(10,5)$ & 1 & 1 & 5 & All $q$ \\
\hline
\end{longtable}

 \subsubsection{Degree 4} 6 relatively minimal classes, 12 non-relatively minimal classes, 18 classes in total.

\begin{longtable}{|l|l|l|l|l|l|l|}\caption{Relatively minimal del Pezzo surfaces of degree 4}\\
\hline
Class & Type & $(L_X,C_X)$ & $I_X$ & Order & $a(X)$ &  Restrictions \\
\hline
1 & $S_{4,I}^{(4,1)}$ & $(0,0)$ & $4$ & $8$ & $0$ & All $q$ \\
\hline
2 & $S_{4,I}^{(3,2)}$ & $(0,0)$ & $4$ & $12$ & $1$ & All $q$ \\
\hline
3 & $S_{4,I}^{(2,1^3)}$ & $(0,0)$ & $4$ & $4$ & $-2$ & All $q$ \\
\hline
4 & $S_{4,\II}^{(3,1)}$ & $(0,2)$ & $-4$ & $6$ & $1$ & All $q$ \\
\hline
5 & $S_{4,\II}^{(2^2)}$ & $(0,2)$ & $-4$ & $4$ & $2$ & $q \geq 3$ \\
\hline
6 & $S_{4,\II}^{(1^4)}$ & $(0,2)$ & $-4$ & $2$ & $-2$ & $q \geq 4$ \\
\hline
\end{longtable}

\begin{longtable}{|l|l|l|l|l|l|l|}\caption{Non-relatively minimal del Pezzo surfaces of degree 4}\\
\hline
Class & Type & $(L_X,C_X)$ & $I_X$ & Order & $a(X)$ & Restrictions \\ 
\hline
7 & $\Bl_2 S_6$ & $(0,0)$ & $-12$ & 6 & -1 & All $q$ \\
\hline
8 & $\Bl_4 S_{8,I}$ & $(0,0)$ & $-8$ & 4 & 0 & $q \geq 3$ \\
\hline
9 & $\Bl_{2^2}S_{8,I}$ & $(0,2)$ & 8 & $2$ & 0 & All $q$ \\
\hline
10 & $\Bl_{4}S_{8,\II}$ & $(0,4)$ & 4 & 4 & 2 & All $q$ \\
\hline
11 & $\Bl_{2^2}S_{8,\II}$ & $(0,6)$ & $-4$ & $2$ & 2 & $q \geq 4$ \\
\hline
12 & \makecell[l]{$\Bl_5 S_9 $ \\
$ \Bl_1 S_5$} & $(1,0)$ & $-5$ & $5$ & 1 & All $q$ \\
\hline
13 & $\Bl_{4,1} S_9$ & $(2,2)$ & $4$ & $4$ & 2 & All $q$ \\
\hline
14 & \makecell[l]{$\Bl_{3,2}S_9 $ \\
$ \Bl_{3,1}S_{8,I} $ \\
$ \Bl_{1,1}S_{6,M_6}$} & $(2,0)$ & $6$ & $6$ & 1 & All $q$ \\
\hline
15 & \makecell[l]{$\Bl_{3,1^2}S_9 $ \\
$\Bl_{3,1}S_{8,\II}$} & $(4,4)$ & $-3$ & $3$ & 3 & All $q$ \\
\hline
16 & \makecell[l]{$\Bl_{2^2,1}S_9 $ \\
$\Bl_{2,1^2}S_{8,I}$} & $(4,2)$& $-4$ & $2$ &  2 & $q \geq 3$ \\
\hline
17 & \makecell[l]{$\Bl_{2,1^3}S_9$ \\
$\Bl_{1^4}S_{8,I}$ \\
$\Bl_{2,1^2}S_{8,\II}$} & $(8,6)$ & 2 & $2$ & 4 & All $q$ \\
\hline
18 & \makecell[l]{$\Bl_{1^5}S_9$ \\
$\Bl_{1^4}S_{8,\II}$} & $(16,10)$ & $-1$ & 1 &  6 & $q \geq 4$ \\
\hline
\end{longtable}

 \subsubsection{Degree 3} 5 relatively minimal classes, 20 non-relatively minimal classes, 25 classes in total.

\begin{longtable}{|l|l|l|l|l|l|l|}\caption{Relatively minimal del Pezzo surfaces of degree 3}\\
\hline
Class & Type & $L_X = C_X$ & $I_X$ & Order & $a(X)$ & Restrictions \\ 
\hline
1 & $S_3^1$ & $0$ & $3$ & $12$ & 0 & All $q$ \\
\hline
2 & $S_3^2$ & $0$ & $3$ & $6$ & 2 & All $q$ \\
\hline
3 & $S_3^3$ & $0$ & $3$ & $3$ & -2 & All $q$ \\
\hline
4 & $S_3^4$ & $0$ & $3$ & $9$ & 1 & All $q$ \\
\hline
5 & $S_3^5$ & $0$ & $3$ & $6$ & -1 & $q \geq 3$ \\
\hline
\end{longtable}

\begin{longtable}{|l|l|l|l|l|l|l|}\caption{Non-relatively minimal del Pezzo surfaces of degree 3}\\
\hline
Class & Type & $L_X = C_X$ & $I_X$ & Order & $a(X)$ & Restrictions \\ 
\hline
6 & $\Bl_1 S_{4,I}^{(3,2)}$ & $1$ & $-4$ & $12$  & $2$ & All $q$ \\
\hline
7 & $\Bl_1 S_{4,I}^{(4,1)}$ & $1$ & $-4$ & $8$ & $1$ & All $q$ \\
\hline
8 & $\Bl_1 S_{4,\II}^{(3,1)}$ & $3$ & $4$ & $6$ & $2$ & All $q$ \\
\hline
9 & $\Bl_1 S_{4,I}^{(2,1^3)}$ & $1$ & $-4$ & $4$ & $-1$ & All $q$ \\
\hline
10 & $\Bl_1 S_{4,\II}^{(2^2)}$ & $3$ & $4$ & $4$ & $3$ & $q \geq 3$ \\
\hline
11 & $\Bl_1 S_{4,\II}^{(1^4)}$ & $3$ & $4$ & $2$ & $-1$ & $q \geq 4$ \\
\hline
12 & \makecell[l]{$\Bl_5S_{8,I}$ \\
$\Bl_2 S_5$} & $0$ & $-10$ & $10$ & $0$ & All $q$ \\
\hline
13 & $\Bl_3 S_6$ & $0$ & $-18$ & $6$ & $-1$ & All $q$ \\
\hline
14 & \makecell[l]{$\Bl_{3,2}S_{8,I}$ \\
$\Bl_{2,1}S_6$} & $1$ & $12$ & $6$ & $0$ & All $q$ \\
\hline
15 & $\Bl_6 S_9$ & $0$ & $-6$ & $6$ & $1$ & All $q$ \\
\hline
16 & \makecell[l]{$\Bl_{5,1} S_9$ \\
$\Bl_5 S_{8,\II}$ \\
$\Bl_{1^2}S_5$} & $2$ & $5$ & $5$ & $2$ & All $q$ \\
\hline
17 & \makecell[l]{$\Bl_{4,2}S_9$ \\
$ \Bl_{4,1}S_{8,I}$} & $1$ & 8 & $4$ & $1$ & $q \geq 3$ \\
\hline
18 & $\Bl_{3^2} S_9$ & $0$ & 9 & $3$ & $1$ & $q \geq 3$ \\
\hline
19 & \makecell[l]{$\Bl_{4,1^2}S_9 $ \\
$\Bl_{4,1}S_{8,\II}$} & $5$ & $-4$ & $4$ & $3$ & All $q$ \\
\hline
20 & \makecell[l]{$\Bl_{3,2,1}S_9 $ \\
$\Bl_{3,1^2}S_{8,I} $ \\
$\Bl_{3,2}S_{8,\II}$ \\
$\Bl_{1^3}S_6$} & $3$ & $-6$ & $6$ & $2$ & All $q$ \\
\hline
21 & \makecell[l]{$\Bl_{2^3}S_9 $ \\
$\Bl_{2^2,1}S_{8,I}$} & $3$ & $-8$ & $2$ & $1$ & All $q$ \\
\hline
22 & \makecell[l]{$\Bl_{3,1^3}S_9 $ \\
$\Bl_{3,1^2}S_{8,\II}$} & $9$ & $3$ & $3$ & $4$ & All $q$ \\
\hline
23 & \makecell[l]{$\Bl_{2^2,1^2}S_9 $ \\
$\Bl_{2,1^3}S_{8,I}$ \\
$ \Bl_{2^2,1}S_{8,\II}$} & $7$ & 4 & $2$ & $3$ & $q \geq 4$ \\
\hline
24 & \makecell[l]{$\Bl_{2,1^4}S_9$ \\
$\Bl_{1^5}S_{8,I}$ \\
$ \Bl_{2,1^3}S_{8,\II}$} & $15$ & $-2$ & $2$ & $5$ & All $q$ \\
\hline
25 & \makecell[l]{$\Bl_{1^6}S_9$ \\
$\Bl_{1^5}S_{8,\II}$} & $27$ & $1$ & $1$ & $7$ & $q \geq 7$ \\
\hline
\end{longtable}

 \subsubsection{Degree 2} 18 relatively minimal classes, 42 non-relatively minimal classes, 60 classes in total.

\begin{longtable}{|l|l|l|l|l|l|l|l|l|}\caption{Relatively minimal del Pezzo surfaces of degree 2}\\
\hline
Class & Type & \makecell[l]{Geiser \\ Class} & \makecell[l]{Geiser \\ Type} & $(L_X,C_X)$ & $I_X$ & Order & $a(X)$ & Restrictions \\ 
\hline
2 & $S_{2,\II}^{(1^6)}$ & $47$ & \makecell[l]{$\Bl_{2,1^5} S_9$ \\
$\Bl_{1^6}S_{8,I}$ \\
$\Bl_{2,1^4}S_{8,\II}$} & $(0,2)$ & $-4$ & 2 & $-4$ & $q \geq 5$ \\
\hline
3 & $S_{2,\II}^{(2^2,1^2)}$ & $31$ & $\Bl_2 S_{4,\II}^{(2^2)}$ & $(0,2)$ & $-4$ & 4 & $0$ & $q \geq 3$ \\
\hline
4 & $S_{2,\II}^{(3,1^3)}$ & $51$ & \makecell[l]{$\Bl_{3,2,1^2} S_9$ \\
$\Bl_{3,1^3}S_{8,I}$ \\
$\Bl_{3,2,1}S_{8,\II}$ \\
$\Bl_{1^4}S_6$} & $(0,2)$ & $-4$ & 6 & $-1$ & $q \geq 3$  \\
\hline
5 & $S_{2,\II}^{(5,1)}$ & $58$ & \makecell[l]{$\Bl_{5,2} S_9$ \\
$\Bl_{5,1}S_{8,I}$ \\
$\Bl_{2,1}S_5$} & $(0,2)$ & $-4$ & 10 & 1 & All $q$ \\
\hline
6 & $S_{2,\II}^{(4,2)}$ & $11$ & $S_{2,I}^4$ & $(0,2)$ & $-4$ & 8 & 2 & All $q$ \\
\hline
7 & $S_{2,\II}^{(3,3)}$ & $42$ & \makecell[l]{$\Bl_{3^2} S_{8,I}$ \\
$\Bl_{3,1}S_6$} & $(0,2)$ & $-4$ & 6 & 2 & All $q$  \\
\hline
8 & $S_{2,I}^1$ & $46$ & \makecell[l]{$\Bl_{1^7} S_9$ \\
$\Bl_{1^6}S_{8,\II}$} & $(0,0)$ & $2$ & 2 & $-6$ & $q 
\geq 9$ \\
\hline
9 & $S_{2,I}^2$ & $27$ & $\Bl_{1^2} S_{4,\II}^{(2^2)}$ & $(0,0)$ & $2$ & 4 & $-2$ & $q \geq 3$ \\
\hline
10 & $S_{2,I}^3$ & $24$ & $\Bl_1 S_{3}^{2}$ & $(0,0)$ & $2$ & 6 & $-1$ & All $q$ \\
\hline
11 & $S_{2,I}^4$ & $6$ & $S_{2,\II}^{(4,2)}$ & $(0,0)$ & $2$ & 8 & $0$ & All $q$ \\
\hline
12 & $S_{2,I}^5$ & $49$ & \makecell[l]{$\Bl_{3,1^4} S_9$ \\
$\Bl_{3,1^3}S_{8,\II}$} & $(0,0)$ & $2$ & 6 & $-3$ & $q \geq 3$  \\
\hline
13 & $S_{2,I}^6$ & $56$ & \makecell[l]{$\Bl_{5,1^2} S_9$ \\
$\Bl_{5,1}S_{8,\II}$ \\
$\Bl_{1^3}S_5$} & $(0,0)$ & $2$ & 10 & $-1$ & All $q$  \\
\hline
14 & $S_{2,I}^7$ & $54$ & \makecell[l]{$\Bl_{3^2,1} S_9$ \\
$\Bl_{3^2}S_{8,\II}$} & $(0,0)$ & $2$ & 6 & $0$ & $q \geq 3$  \\
\hline
15 & $S_{2,I}^8$ & $23$ & $\Bl_1 S_{3}^{4}$ & $(0,0)$ & $2$ & 18 & $0$ & All $q$ \\
\hline
16 & $S_{2,I}^9$ & $60$ & $\Bl_7 S_9$ & $(0,0)$ & $2$ & 14 & $1$ & All $q$ \\
\hline
17 & $S_{2,I}^{10}$ & $22$ & $\Bl_1 S_{3}^{1}$ & $(0,0)$ & $2$ & 12 & $1$ & All $q$ \\
\hline
18 & $S_{2,I}^{11}$ & $37$ & $\Bl_3 S_5$ & $(0,0)$ & $2$ & 30 & $2$ & All $q$ \\
\hline
19 & $S_{2,I}^{12}$ & $20$ & $\Bl_1 S_{3}^{3}$ & $(0,0)$ & $2$ & 6 & $3$ & $q \geq 3$ \\
\hline
\end{longtable}

\begin{longtable}{|l|l|l|l|l|l|l|l|l|}\caption{Non-relatively minimal del Pezzo surfaces of degree 2}\\
\hline
Class & Type & \makecell[l]{Geiser \\ Class} & \makecell[l]{Geiser \\ Type} & $(L_X,C_X)$ & $I_X$ & Order & $a(X)$ & Restrictions \\ 
\hline
1 \footnote{This class was incorrectly identified by Urabe \cite{Urabe96} as corresponding to a minimal surface, hence the numbering.} & $\Bl_{2^3}S_{8,I}$ & 50 & \makecell[l]{$\Bl_{2^3,1} S_9$ \\
$\Bl_{2^2,1^2}S_{8,I}$} & $(0,6)$ & $-16$ & 2 & $0$ & $q \geq 3$ \\
\hline
20 & $\Bl_1 S_{3}^{3}$ & 19 & $S_{2,I}^{12}$ & $(2,0)$ & $-3$ & $3$ & $-1$ & $q \geq 3$ \\
\hline
21 & $\Bl_1 S_{3}^{5}$ & 45 & $\Bl_6 S_{8,\II}$ & $(2,0)$ & $-3$ & 6 & $0$ & $q \geq 3$ \\
\hline
22 & $\Bl_1 S_{3}^{1}$ & 17 & $S_{2,I}^{10}$ & $(2,0)$ & $-3$ & 12 & $1$ & All $q$ \\
\hline
23 & $\Bl_1 S_{3}^{4}$ & 15 & $S_{2,I}^8$ & $(2,0)$ & $-3$ & 9 & $2$ & All $q$ \\
\hline
24 & $\Bl_1 S_{3}^{2}$ & 10 & $S_{2,I}^3$ & $(2,0)$ & $-3$ & 6 & $3$ & All $q$ \\
\hline
25 & $\Bl_{1^2} S_{4,\II}^{(1^4)}$ & 40 & $\Bl_{2^3} S_{8,\II}$ & $(8,6)$ & $-4$ & 2 & $0$ & $q \geq 7$ \\
\hline
26 & $\Bl_{1^2} S_{4,\II}^{(3,1)}$ & 38 & $\Bl_{2^2} S_6$ & $(8,6)$ & $-4$ & 6 & $3$ & All $q$ \\
\hline
27 & $\Bl_{1^2} S_{4,\II}^{(2^2)}$ & 9 & $S_{2,I}^2$ & $(8,6)$ & $-4$ & 4 & $4$ & $q \geq 3$ \\
\hline
28 & $\Bl_2 S_{4,\II}^{(1^4)}$ & 48 & \makecell[l]{$\Bl_{2^2,1^3} S_9$ \\
$\Bl_{2,1^4}S_{8,I}$ \\
$\Bl_{2^2,1^2}S_{8,\II}$} & $(0,4)$ & $8$ & 2 & $-2$ & $q \geq 5$ \\
\hline
29 & $\Bl_{1^2} S_{4,I}^{(2,1^3)}$ & 41 & $\Bl_{4,2} S_{8,\II}$ & $(4,2)$ & $4$ & 4 & $0$ & $q \geq 3$ \\
\hline
30 & $\Bl_2 S_{4,\II}^{(3,1)}$ & 53 & \makecell[l]{$\Bl_{3,2^2} S_9$ \\
$\Bl_{3,2,1}S_{8,I}$ \\
$\Bl_{2,1^2}S_6$} & $(0,4)$ & $8$ & 6 & $1$ & All $q$ \\
\hline
31 & $\Bl_2 S_{4,\II}^{(2^2)}$ & 3 & $S_{2,\II}^{(2^2,1^2)}$ & $(0,4)$ & $8$ & 4 & $2$ & $q \geq 3$ \\
\hline
32 & $\Bl_{1^2} S_{4,I}^{(4,1)}$ & 35 & $\Bl_2 S_{4,I}^{(4,1)}$ & $(4,2)$ & $4$ & 8 & $2$ & All $q$ \\
\hline
33 & $\Bl_{1^2} S_{4,I}^{(3,2)}$ & 39 & $\Bl_{4} S_6$ & $(4,2)$ & $4$ & 12 & $3$ & All $q$ \\
\hline
34 & $\Bl_2 S_{4,I}^{(2,1^3)}$ & 52 & \makecell[l]{$\Bl_{4,1^3} S_9$ \\
$\Bl_{4,1^2}S_{8,\II}$} & $(0,0)$ & $-8$ & 4 & $-2$ & $q \geq 3$ \\
\hline
35 & $\Bl_2 S_{4,I}^{(4,1)}$ & 32 & $\Bl_{1^2} S_{4,I}^{(4,1)}$ & $(0,0)$ & $-8$ & 8 & 0 & All $q$ \\
\hline
36 & $\Bl_2 S_{4,I}^{(3,2)}$ & 57 & $\Bl_{4,3} S_9$ & $(0,0)$ & $-8$ & 12 & 1 & All $q$ \\
\hline
37 & $\Bl_3 S_5$ & 18 & $S_{2,I}^{11}$ & $(0,0)$ & $-15$ & 15 & $0$ & All $q$ \\
\hline
38 & $\Bl_{2^2} S_6$ & 26 & $\Bl_{1^2} S_{4,\II}^{(3,1)}$ & $(0,0)$ & $-24$ & 6 & $-1$ & All $q$ \\
\hline
39 & $\Bl_{4} S_6$ & 33 & $\Bl_{1^2} S_{4,I}^{(3,2)}$ & $(0,0)$ & $24$ & 12 & $-1$ & All $q$ \\
\hline
40 & $\Bl_{2^3} S_{8,\II}$ & 25 & $\Bl_{1^2} S_{4,\II}^{(1^4)}$ & $(0,24)$ & $8$ & 2 & $-1$ & $q \geq 7$ \\
\hline
41 & $\Bl_{4,2} S_{8,\II}$ & 29 & $\Bl_{1^2} S_{4,I}^{(2,1^3)}$ & $(0,12)$ & $-8$ & 4 & $-1$ & $q \geq 3$ \\
\hline
42 & \makecell[l]{$\Bl_{3^2} S_{8,I}$ \\
$\Bl_{3,1}S_6$} & 7 & $S_{2,\II}^{(3,3)}$ & $(2,0)$ & $18$ & 6 & $2$ & All $q$ \\
\hline
43 & $\Bl_{4,2} S_{8,I}$ & 55 & \makecell[l]{$\Bl_{4,2,1} S_9$ \\
$\Bl_{4,1^2}S_{8,I}$} & $(0,2)$ & $16$ & 4 & $2$ & $q \geq 3$ \\
\hline
44 & $\Bl_6 S_{8,\II}$ & 59 & $\Bl_{6,1} S_9$ & $(0,6)$ & $6$ & 6 & 0 & All $q$ \\
\hline
45 & $\Bl_6 S_{8,I}$ & 21 & $\Bl_1 S_{3}^{5}$ & $(0,0)$ & $-12$ & 6 & 0 & $q \geq 3$ \\
\hline
46 & \makecell[l]{$\Bl_{1^7} S_9$ \\
$\Bl_{1^6}S_{8,\II}$} & 8 & $S_{2,I}^1$ & $(56, 126)$ & $-1$ & 1 & $8$ & $q \geq 9$ \\
\hline
47 & \makecell[l]{$\Bl_{2,1^5} S_9$ \\
$\Bl_{1^6}S_{8,I}$ \\
$\Bl_{2,1^4}S_{8,\II}$} & 2 & $S_{2,\II}^{(1^6)}$ & $(32,60)$ & $2$ & 2 & $6$ & $q \geq 5$ \\
\hline
48 & \makecell[l]{$\Bl_{2^2,1^3} S_9$ \\
$\Bl_{2,1^4}S_{8,I}$ \\
$\Bl_{2^2,1^2}S_{8,\II}$} & 28 & $\Bl_2 S_{4,\II}^{(1^4)}$ & $(16,26)$ & $-4$ & 2 & $4$ & $q \geq 5$ \\
\hline
49 & \makecell[l]{$\Bl_{3,1^4} S_9$ \\
$\Bl_{3,1^3}S_{8,\II}$} & 12 & $S_{2,I}^5$ & $(20,30)$ & $-3$ & 3 & $5$ & $q \geq 3$ \\
\hline
50 & \makecell[l]{$\Bl_{2^3,1} S_9$ \\
$\Bl_{2^2,1^2}S_{8,I}$} & 1 & $\Bl_{2^3}S_{8,I}$ & $(8,8)$ & $8$ & 2 & $2$ & $q \geq 3$ \\
\hline
51 & \makecell[l]{$\Bl_{3,2,1^2} S_9$ \\
$\Bl_{3,1^3}S_{8,I}$ \\
$\Bl_{3,2,1}S_{8,\II}$ \\
$\Bl_{1^4}S_6$} & 4 & $S_{2,\II}^{(3,1^3)}$ & $(8,12)$ & $6$ & 6 & $3$ & $q \geq 3$ \\
\hline
52 & \makecell[l]{$\Bl_{4,1^3} S_9$ \\
$\Bl_{4,1^2}S_{8,\II}$} & 34 & $\Bl_2 S_{4,I}^{(2,1^3)}$ & $(12,14)$ & 4 & 4 & 4 & $q \geq 3$ \\
\hline
53 & \makecell[l]{$\Bl_{3,2^2} S_9$ \\
$\Bl_{3,2,1}S_{8,I}$ \\
$\Bl_{2,1^2}S_6$} & 30 & $\Bl_2 S_{4,\II}^{(3,1)}$ & $(4,2)$ & $-12$ & 6 & 1 & All $q$ \\
\hline
54 & \makecell[l]{$\Bl_{3^2,1} S_9$ \\
$\Bl_{3^2}S_{8,\II}$} & 14 & $S_{2,I}^7$ & $(2,6)$ & $-9$ & 3 & $2$ & $q \geq 3$ \\
\hline
55 & \makecell[l]{$\Bl_{4,2,1} S_9$ \\
$\Bl_{4,1^2}S_{8,I}$} & 43 & $\Bl_{4,2} S_{8,I}$ & $(4,4)$ & $-8$ & 4 & 2 & $q \geq 3$ \\
\hline
56 & \makecell[l]{$\Bl_{5,1^2} S_9$ \\
$\Bl_{5,1}S_{8,\II}$ \\
$\Bl_{1^3}S_5$} & 13 & $S_{2,I}^6$ & $(6,6)$ & $-5$ & 5 & 3 & All $q$ \\
\hline
57 & $\Bl_{4,3} S_9$ & 36 & $\Bl_2 S_{4,I}^{(3,2)}$ & $(0,2)$ & $12$ & 12 & 1 & All $q$ \\
\hline
58 & \makecell[l]{$\Bl_{5,2} S_9$ \\
$\Bl_{5,1}S_{8,I}$ \\
$\Bl_{2,1}S_5$} & 5 & $S_{2,\II}^{(5,1)}$ & $(2,0)$ & $10$ & 10 & 1 & All $q$ \\
\hline
59 & $\Bl_{6,1} S_9$ & 44 & $\Bl_6 S_{8,I}$ & $(2,2)$ & $6$ & 6 & 2 & All $q$ \\
\hline
60 & $\Bl_7 S_9$ & 16 & $S_{2,I}^9$ & $(0,0)$ & $-7$ & 7 & 1 & All $q$ \\
\hline
\end{longtable}

 \subsubsection{Degree 1} 37 relatively minimal classes, 83 non-relatively minimal classes, 120 classes in total.

\begin{longtable}{|l|l|l|l|l|l|l|l|l|}\caption{Relatively minimal del Pezzo surfaces of degree 1}\\
\hline
Class & Type & \makecell[l]{Bertini \\ Class} & \makecell[l]{Bertini \\ Type} &  $(L_X,C_X)$ & $I_X$ & Order & $a(X)$ & Restrictions \\ 
\hline
1 & $S_{1,\II}^{(2,1^5)}$ & 97 & \makecell[l]{$\Bl_{4,1^4}S_9$ \\
$\Bl_{4,1^3}S_{8,\II}$} & $(0,2)$ & $-4$ & 4 & $-3$ & $q \geq 5$ \\
\hline
2 & $S_{1,\II}^{(3,2,1^2)}$ & 105 & \makecell[l]{$\Bl_{4,3,1}S_9$ \\
$\Bl_{4,3}S_{8,\II}$} & $(0,2)$ & $-4$ & 12 & $0$ & \\
\hline
3 & $S_{1,\II}^{(2^3,1)}$ & -- & -- & $(0,2)$ & $-4$ & 4 & $1$ & \\
\hline
4 & $S_{1,\II}^{(4,1^3)}$ & 73 & $\Bl_{1^3} S_{4,I}^{(4,1)}$ & $(0,2)$ & $-4$ & 8 & $-1$ & \\
\hline
5 & $S_{1,\II}^{(6,1)}$ & -- & -- & $(0,2)$ & $-4$ & 12 & $1$ & \\
\hline
6 & $S_{1,\II}^{(5,2)}$ & 84 & $\Bl_{4} S_5$ & $(0,2)$ & $-4$ & 20 & $2$ & \\
\hline
7 & $S_{1,\II}^{(4,3)}$ & 82 & $\Bl_{3} S_{4,I}^{(4,1)}$ & $(0,2)$ & $-4$ & 24 & $2$ & \\
\hline
8 & $S_{1,I}^1$ & 91 & \makecell[l]{$\Bl_{1^8}S_9$ \\
$\Bl_{1^7}S_{8,\II}$} & $(0,0)$ & $1$ & 2 & $-7$ & \makecell[l]{$q = 16$ \\ or \\ $q \geq 19$} \\
\hline
9 & $S_{1,I}^2$ & 37 & $S_{1,I}^{30}$ & $(0,0)$ & $1$ & 3 & $-3$ & \\
\hline
10 & $S_{1,I}^3$ & 68 & $\Bl_{1^3} S_{4,\II}^{(2^2)}$ & $(0,0)$ & $1$ & 4 & $-3$ & \\
\hline
11 & $S_{1,I}^4$ & 35 & $S_{1,I}^{28}$ & $(0,0)$ & $1$ & 5 & $-1$ & \\
\hline
12 & $S_{1,I}^5$ & 60 & $\Bl_{1^2} S_3^2$ & $(0,0)$ & $1$ & 6 & $-2$ & \\
\hline
13 & $S_{1,I}^6$ & 42 & $\Bl_1 S_{2,\II}^{(4,2)}$ & $(0,0)$ & $1$ & 8 & $-1$ & \\
\hline
14 & $S_{1,I}^7$ & 33 & $S_{1,I}^{26}$ & $(0,0)$ & $1$ & 9 & $0$ & \\
\hline
15 & $S_{1,I}^8$ & 94 & \makecell[l]{$\Bl_{3,1^5}S_9$ \\
$\Bl_{3,1^4}S_{8,\II}$} & $(0,0)$ & $1$ & 6 & $-4$ & $q \geq 7$ \\
\hline
16 & $S_{1,I}^9$ & 100 & \makecell[l]{$\Bl_{3^2,1^2}S_9$ \\
$\Bl_{3^2,1}S_{8,\II}$} & $(0,0)$ & $1$ & 6 & $-1$ & \\
\hline
17 & $S_{1,I}^{10}$ & -- & -- & $(0,0)$ & $1$ & 4 & $1$ & \\
\hline
18 & $S_{1,I}^{11}$ & 78 & $\Bl_{3} S_{4,\II}^{(2^2)}$ & $(0,0)$ & $1$ & 12 & $0$ & \\
\hline
19 & $S_{1,I}^{12}$ & 102 & \makecell[l]{$\Bl_{5,1^3}S_9$ \\
$\Bl_{5,1^2}S_{8,\II}$ \\
$\Bl_{1^4}S_5$} & $(0,0)$ & $1$ & 10 & $-2$ & \\
\hline
20 & $S_{1,I}^{13}$ & 111 & \makecell[l]{$\Bl_{7,1} S_9$ \\
$ \Bl_7 S_{8,\II}$} & $(0,0)$ & $1$ & 14 & $0$ & All $q$ \\
\hline
21 & $S_{1,I}^{14}$ & -- & -- & $(0,0)$ & $1$ & 12 & $1$ & \\
\hline
22 & $S_{1,I}^{15}$ & 109 & \makecell[l]{$\Bl_{5,3} S_9$ \\
$\Bl_{3,1}S_5$} & $(0,0)$ & $1$ & 30 & $1$ & \\
\hline
23 & $S_{1,I}^{16}$ & -- & -- & $(0,0)$ & $1$ & 8 & $1$ & \\
\hline
24 & $S_{1,I}^{17}$ & 36 & $S_{1,I}^{29}$ & $(0,0)$ & $1$ & 12 & $-1$ & \\
\hline
25 & $S_{1,I}^{18}$ & -- & -- & $(0,0)$ & $1$ & 6 & $1$ & \\
\hline
26 & $S_{1,I}^{19}$ & 59 & $\Bl_{1^2} S_3^4$ & $(0,0)$ & $1$ & 18 & $-1$ & \\
\hline
27 & $S_{1,I}^{20}$ & 58 & $\Bl_{1^2} S_3^1$ & $(0,0)$ & $1$ & 12 & $0$ & \\
\hline
28 & $S_{1,I}^{21}$ & 56 & $\Bl_{1^2} S_3^3$ & $(0,0)$ & $1$ & 6 & $2$ & \\
\hline
29 & $S_{1,I}^{22}$ & 34 & $S_{1,I}^{27}$ & $(0,0)$ & $1$ & 30 & $0$ & \\
\hline
30 & $S_{1,I}^{23}$ & -- & -- & $(0,0)$ & $1$ & 24 & $1$ & \\
\hline
31 & $S_{1,I}^{24}$ & -- & -- & $(0,0)$ & $1$ & 20 & $1$ & \\
\hline
32 & $S_{1,I}^{25}$ & -- & -- & $(0,0)$ & $1$ & 12 & $1$ & \\
\hline
33 & $S_{1,I}^{26}$ & 14 & $S_{1,I}^7$ & $(0,0)$ & $1$ & 18 & $2$ & \\
\hline
34 & $S_{1,I}^{27}$ & 29 & $S_{1,I}^{22}$ & $(0,0)$ & $1$ & 15 & $2$ & \\
\hline
35 & $S_{1,I}^{28}$ & 11 & $S_{1,I}^4$ & $(0,0)$ & $1$ & 10 & $3$ & \\
\hline
36 & $S_{1,I}^{29}$ & 24 & $S_{1,I}^{17}$ & $(0,0)$ & $1$ & 12 & $3$ & \\
\hline
37 & $S_{1,I}^{30}$ & 9 & $S_{1,I}^2$ & $(0,0)$ & $1$ & 6 & $5$ & \\
\hline
\end{longtable}

\begin{longtable}{|l|l|l|l|l|l|l|l|l|}\caption{Non-relatively minimal del Pezzo surfaces of degree 1}\\
\hline
Class & Type & \makecell[l]{Bertini 
\\Class} & \makecell[l]{Bertini \\ Type} & $(L_X,C_X)$ & $I_X$ & Order & $a(X)$ & Restrictions \\ 
\hline
38 & $\Bl_1 S_{2,\II}^{(1^6)}$ & 93 & \makecell[l]{$\Bl_{2^2,1^4}S_9$ \\
$\Bl_{2,1^5}S_{8,I}$ \\
$\Bl_{2^2,1^3}S_{8,\II}$} &  $(4,4)$ & $4$ & 2 & $-3$ & \\
\hline
39 & $\Bl_1 S_{2,\II}^{(2^2,1^2)}$ & -- & -- &  $(4,4)$ & $4$ & 4 & $1$ & \\
\hline
40 & $\Bl_1 S_{2,\II}^{(3,1^3)}$ & 99 & \makecell[l]{$\Bl_{3,2^2,1}S_9$ \\
$\Bl_{3,2,1^2}S_{8,I}$ \\
$\Bl_{2,1^3}S_6$} &  $(4,4)$ & $4$ & 6 & $0$ & \\
\hline
41 & $\Bl_1 S_{2,\II}^{(5,1)}$ & 89 & \makecell[l]{$\Bl_{5,2}S_{8,I}$ \\
$\Bl_{2^2}S_5$} &  $(4,4)$ & $4$ & 10 & $2$ & \\
\hline
42 & $\Bl_1 S_{2,\II}^{(4,2)}$ & 13 & $S_{1,I}^6$ &  $(4,4)$ & $4$ & 8 & $3$ & \\
\hline
43 & $\Bl_1 S_{2,\II}^{(3,3)}$ & 85 & $\Bl_{3,2} S_6$  &  $(4,4)$ & $4$ & 6 & $3$ & \\
\hline
44 & $\Bl_1 S_{2,I}^1$ & 92 & \makecell[l]{$\Bl_{2,1^6}S_9$ \\
$\Bl_{1^7}S_{8,I}$ \\
$\Bl_{2,1^5}S_{8,\II}$} &  $(2,0)$ & $-2$ & 2 & $-5$ & $q \geq 11$ \\
\hline
45 & $\Bl_1 S_{2,I}^2$ & 72 & $\Bl_{2,1} S_{4,\II}^{(2^2)}$ &  $(2,0)$ & $-2$ & 4 & $-1$ & \\
\hline
46 & $\Bl_1 S_{2,I}^3$ & 65 & $\Bl_{2} S_3^2$ &  $(2,0)$ & $-2$ & 6 & $0$ & \\
\hline
47 & $\Bl_1 S_{2,I}^4$ & 112 & $\Bl_8S_9$ &  $(2,0)$ & $-2$ & 8 & $1$ & All $q$ \\
\hline
48 & $\Bl_1 S_{2,I}^5$ & 96 & \makecell[l]{$\Bl_{3,2,1^3}S_9$ \\
$\Bl_{3,1^4}S_{8,I}$ \\
$\Bl_{3,2,1^2}S_{8,\II}$ \\
$ \Bl_{1^5}S_6$} &  $(2,0)$ & $-2$ & 6 & $-2$ & \\
\hline
49 & $\Bl_1 S_{2,I}^6$ & 106 & \makecell[l]{$\Bl_{5,2,1} S_9$ \\
$\Bl_{5,1^2}S_{8,I}$ \\
$\Bl_{2,1^2}S_5$} &  $(2,0)$ & $-2$ & 10 & $0$ & \\
\hline
50 & $\Bl_1 S_{2,I}^7$ & 103 & \makecell[l]{$\Bl_{3^2,2}S_9$ \\
$\Bl_{3^2,1}S_{8,I}$ \\
$\Bl_{3,1^2}S_6$} &  $(2,0)$ & $-2$ & 6 & $1$ & \\
\hline
51 & $\Bl_1 S_{2,I}^8$ & 64 & $\Bl_{2} S_3^4$ &  $(2,0)$ & $-2$ & 18 & $1$ & \\
\hline
52 & $\Bl_1 S_{2,I}^9$ & 90 & $\Bl_{7} S_{8,I}$ &  $(2,0)$ & $-2$ & 14 & $2$ &  \\
\hline
53 & $\Bl_1 S_{2,I}^{10}$ & 63 & $\Bl_{2} S_3^1$ &  $(2,0)$ & $-2$ & 12 & $2$ & \\
\hline
54 & $\Bl_1 S_{2,I}^{11}$ & 86 & $\Bl_{5} S_6$ &  $(2,0)$ & $-2$ & 30 & $3$ & \\
\hline
55 & $\Bl_1 S_{2,I}^{12}$ & 61 & $\Bl_{2} S_3^3$ &  $(2,0)$ & $-2$ & 6 & $4$ & \\
\hline
56 & $\Bl_{1^2} S_3^3$ & 28 & $S_{1,I}^{21}$ &  $(6,0)$ & $3$ & 3 & $0$ & \\
\hline
57 & $\Bl_{1^2} S_3^5$ & -- & -- &  $(6,0)$ & $3$ & 6 & $1$ & \\
\hline
58 & $\Bl_{1^2} S_3^1$ & 27 & $S_{1,I}^{20}$ &  $(6,0)$ & $3$ & 12 & $2$ & \\
\hline
59 & $\Bl_{1^2} S_3^4$ & 26 & $S_{1,I}^{19}$ &  $(6,0)$ & $3$ & 9 & $3$ & \\
\hline
60 & $\Bl_{1^2} S_3^2$ & 12 & $S_{1,I}^5$ &  $(6,0)$ & $3$ & 6 & $4$ & \\
\hline
61 & $\Bl_{2} S_3^3$ & 55 & $\Bl_1 S_{2,I}^{12}$ &  $(0,0)$ & $3$ & 6 & $-2$ & \\
\hline
62 & $\Bl_{2} S_3^5$ & 107 & \makecell[l]{$\Bl_{6,1^2} S_9$ \\
$\Bl_{6,1}S_{8,\II}$} &  $(0,0)$ & $3$ & 6 & $-1$ & $q \geq 3$ \\
\hline
63 & $\Bl_{2} S_3^1$ & 53 & $\Bl_1 S_{2,I}^{10}$ &  $(0,0)$ & $3$ & 12 & $0$ & \\
\hline
64 & $\Bl_{2} S_3^4$ & 51 & $\Bl_1 S_{2,I}^8$ &  $(0,0)$ & $3$ & 18 & $1$ & \\
\hline
65 & $\Bl_{2} S_3^2$ & 46 & $\Bl_1 S_{2,I}^3$ &  $(0,0)$ & $3$ & 6 & $2$ & \\
\hline
66 & $\Bl_{1^3} S_{4,\II}^{(1^4)}$ & -- & -- &  $(24,24)$ & $4$ & 2 & $1$ & \\
\hline
67 & $\Bl_{1^3} S_{4,\II}^{(3,1)}$ & 75 & $\Bl_{3} S_{4,\II}^{(1^4)}$ &  $(24,24)$ & $4$ & 6 & $4$ & \\
\hline
68 & $\Bl_{1^3} S_{4,\II}^{(2^2)}$ & 10 & $S_{1,I}^3$ &  $(24,24)$ & $4$ & 4 & $5$ & \\
\hline
69 & $\Bl_{2,1} S_{4,\II}^{(1^4)}$ & 95 & \makecell[l]{$\Bl_{2^3,1^2}S_9$ \\
$\Bl_{2^2,1^3}S_{8,I}$ \\
$\Bl_{2^3,1}S_{8,\II}$} &  $(6,12)$ & $-8$ & 2 & $-1$ & \\
\hline
70 & $\Bl_{1^3} S_{4,I}^{(2,1^3)}$ & -- & -- &  $(12,6)$ & $-4$ & 4 & $1$ & \\
\hline
71 & $\Bl_{2,1} S_{4,\II}^{(3,1)}$ & 87 & \makecell[l]{$\Bl_{3,2^2}S_{8,I}$ \\
$\Bl_{2^2,1}S_6$} &  $(6,12)$ & $-8$ & 6 & $2$ & \\
\hline
72 & $\Bl_{2,1} S_{4,\II}^{(2^2)}$ & 45 & $\Bl_1 S_{2,I}^2$ &  $(6,12)$ & $-8$ & 4 & $3$ & \\
\hline
73 & $\Bl_{1^3} S_{4,I}^{(4,1)}$ & 4 & $S_{1,\II}^{(4,1^3)}$ &  $(12,6)$ & $-4$ & 8 & $3$ & \\
\hline
74 & $\Bl_{1^3} S_{4,I}^{(3,2)}$ & 81 & $\Bl_{3} S_{4,I}^{(2,1^3)}$ &  $(12,6)$ & $-4$ & 12 & $4$ & \\
\hline
75 & $\Bl_{3} S_{4,\II}^{(1^4)}$ & 67 & $\Bl_{1^3} S_{4,\II}^{(3,1)}$ &  $(0,6)$ & $12$ & 6 & $-2$ & \\
\hline
76 & $\Bl_{2,1} S_{4,I}^{(2,1^3)}$ & 101 & \makecell[l]{$\Bl_{4,2,1^2}S_9$ \\
$\Bl_{4,1^3}S_{8,I}$} &  $(2,2)$ & $8$ & 4 & $-1$ & \\
\hline
77 & $\Bl_{3} S_{4,\II}^{(3,1)}$ & -- & -- &  $(0,6)$ & $12$ & 6 & $1$ & \\
\hline
78 & $\Bl_{3} S_{4,\II}^{(2^2)}$ & 18 & $S_{1,I}^{11}$ & $(0,6)$ & $12$ & 12 & $2$ & \\
\hline
79 & $\Bl_{2,1} S_{4,I}^{(4,1)}$ & -- & -- &  $(2,2)$ & $8$ & 8 & $1$ & \\
\hline
80 & $\Bl_{2,1} S_{4,I}^{(3,2)}$ & 88 & \makecell[l]{$\Bl_{4,3}S_{8,I}$ \\
$\Bl_{4,1}S_6$} &  $(2,2)$ & $8$ & 12 & $2$ & \\
\hline
81 & $\Bl_{3} S_{4,I}^{(2,1^3)}$ & 74 & $\Bl_{1^3} S_{4,I}^{(3,2)}$ &  $(0,0)$ & $-12$ & 12 & $-2$ & \\
\hline
82 & $\Bl_{3} S_{4,I}^{(4,1)}$ & 7 & $S_{1,\II}^{(4,3)}$ &  $(0,0)$ & $-12$ & 24 & $0$ & \\
\hline
83 & $\Bl_{3} S_{4,I}^{(3,2)}$ & -- & -- &  $(0,0)$ & $-12$ & 12 & $1$ & \\
\hline
84 & $\Bl_{4} S_5$ & 6 & $S_{1,\II}^{(5,2)}$ &  $(0,0)$ & $-20$ & 20 & $0$ & \\
\hline
85 & $\Bl_{3,2} S_6$ & 43 & $\Bl_1 S_{2,\II}^{(3,3)}$ &  $(0,0)$ & $36$ & 6 & $-1$ & \\
\hline
86 & $\Bl_{5} S_6$ & 54 & $\Bl_1 S_{2,I}^{11}$ &  $(0,0)$ & $-30$ & 30 & $-1$ & \\
\hline
87 & \makecell[l]{$\Bl_{3,2^2}S_{8,I}$ \\
$\Bl_{2^2,1}S_6$} & 71 & $\Bl_{2,1} S_{4,\II}^{(3,1)}$ &  $(2,6)$ & $-24$ & 6 & 0 & \\
\hline
88 & \makecell[l]{$\Bl_{4,3}S_{8,I}$ \\
$\Bl_{4,1}S_6$} & 80 & $\Bl_{2,1} S_{4,I}^{(3,2)}$ &  $(2,0)$ & $24$ & 12 & 0 &  \\
\hline
89 & \makecell[l]{$\Bl_{5,2}S_{8,I}$ \\
$\Bl_{2^2}S_5$} & 41 & $\Bl_1 S_{2,\II}^{(5,1)}$ &  $(0,2)$ & $20$ & 10 & 0 & \\
\hline
90 & $\Bl_{7} S_{8,I}$ & 52 & $\Bl_1 S_{2,I}^9$ &  $(0,0)$ & $-14$ & 14 & 0 & \\
\hline
91 & \makecell[l]{$\Bl_{1^8}S_9$ \\
$\Bl_{1^7}S_{8,\II}$} & 8 & $S_{1,I}^1$ &  $(240, 2160)$ & $1$ &  1 & 9 & \makecell[l]{$q =16$ \\ or \\ $q \geq 19$} \\
\hline
92 & \makecell[l]{$\Bl_{2,1^6}S_9$ \\
$\Bl_{1^7}S_{8,I}$ \\
$\Bl_{2,1^5}S_{8,\II}$} & 44 & $\Bl_1 S_{2,I}^1$ &  $(126,756)$ & $-2$ & 2 & 7 & $q \geq 11$ \\
\hline
93 & \makecell[l]{$\Bl_{2^2,1^4}S_9$ \\
$\Bl_{2,1^5}S_{8,I}$ \\
$\Bl_{2^2,1^3}S_{8,\II}$} & 38 & $\Bl_1 S_{2,\II}^{(1^6)}$ &  $(60,252)$ & $4$ & 2 & 5 & \\
\hline
94 & \makecell[l]{$\Bl_{3,1^5}S_9$ \\
$\Bl_{3,1^4}S_{8,\II}$} & 15 & $S_{1,I}^8$ &  $(72,270)$ & $3$ & 3 & 6 & $q \geq 7$ \\
\hline
95 & \makecell[l]{$\Bl_{2^3,1^2}S_9$ \\
$\Bl_{2^2,1^3}S_{8,I}$ \\
$\Bl_{2^3,1}S_{8,\II}$} & 69 & $\Bl_{2,1} S_{4,\II}^{(1^4)}$ &  $(26,72)$ & $-8$ & 2 & 3 & \\
\hline
96 & \makecell[l]{$\Bl_{3,2,1^3}S_9$ \\
$\Bl_{3,1^4}S_{8,I}$ \\
$\Bl_{3,2,1^2}S_{8,\II}$ \\
$ \Bl_{1^5}S_6$} & 48 & $\Bl_1 S_{2,I}^5$ &  $(30,90)$ & $-6$ &  6 & 4 & \\
\hline
97 & \makecell[l]{$\Bl_{4,1^4}S_9$ \\
$\Bl_{4,1^3}S_{8,\II}$} & 1 & $S_{1,\II}^{(2,1^5)}$ &  $(40,90)$ & $-4$ & 4 & 5 & $q \geq 5$  \\
\hline
98 & \makecell[l]{$\Bl_{2^4}S_9$ \\
$\Bl_{2^3,1}S_{8,I}$} & -- & -- &  $(8,24)$ & $16$ & 2 & 1 & \\
\hline
99 & \makecell[l]{$\Bl_{3,2^2,1}S_9$ \\
$\Bl_{3,2,1^2}S_{8,I}$ \\
$\Bl_{2,1^3}S_6$} & 40 & $\Bl_1 S_{2,\II}^{(3,1^3)}$ &  $(12,18)$ & $12$ & 6 & 2 & \\
\hline
100 & \makecell[l]{$\Bl_{3^2,1^2}S_9$ \\
$\Bl_{3^2,1}S_{8,\II}$} & 16 & $S_{1,I}^9$ &  $(12,36)$ & $9$ & 3 & 3 & \\
\hline
101 & \makecell[l]{$\Bl_{4,2,1^2}S_9$ \\
$\Bl_{4,1^3}S_{8,I}$} & 76 & $\Bl_{2,1} S_{4,I}^{(2,1^3)}$ &  $(14,30)$ & $8$ & 4 & 3 & \\
\hline
102 & \makecell[l]{$\Bl_{5,1^3}S_9$ \\
$\Bl_{5,1^2}S_{8,\II}$ \\
$\Bl_{1^4}S_5$} & 19 & $S_{1,I}^{12}$ &  $(20,30)$ & $5$ & 5 & 4 & $q \geq 3$ \\
\hline
103 & \makecell[l]{$\Bl_{3^2,2}S_9$ \\
$\Bl_{3^2,1}S_{8,I}$ \\
$\Bl_{3,1^2}S_6$} & 50 & $\Bl_1 S_{2,I}^7$ &  $(6,0)$ & $-18$ & 6 & 1 & \\
\hline
104 & $\Bl_{4,2^2} S_9$ & -- & -- &  $(4,6)$ & $-16$ & 4 & 1 & \\
\hline
105 & \makecell[l]{$\Bl_{4,3,1}S_9$ \\
$\Bl_{4,3}S_{8,\II}$} & 2 & $S_{1,\II}^{(3,2,1^2)}$ &  $(4,12)$ & $-12$ & 12 & 2 & \\
\hline
106 & \makecell[l]{$\Bl_{5,2,1} S_9$ \\
$\Bl_{5,1^2}S_{8,I}$ \\
$\Bl_{2,1^2}S_5$} & 49 & $\Bl_1 S_{2,I}^6$ &  $(6,6)$ & $-10$ & 10 & 2 &  \\
\hline
107 & \makecell[l]{$\Bl_{6,1^2} S_9$ \\
$\Bl_{6,1}S_{8,\II}$} & 62 & $\Bl_{2} S_3^5$ &  $(8,12)$ & $-6$ & 6 & 3 & $q \geq 3$ \\
\hline
108 & $\Bl_{4^2} S_9$ & -- & -- &  $(0,4)$ & $16$ & 4 & 1 &  \\
\hline
109 & \makecell[l]{$\Bl_{5,3} S_9$ \\
$\Bl_{3,1}S_5$} & 22 & $S_{1,I}^{15}$ &  $(2,0)$ & $15$ & 15 & 1 & \\
\hline
110 & \makecell[l]{$\Bl_{6,2} S_9$ \\
$ \Bl_{6,1}S_{8,I}$} & -- & -- &  $(2,0)$ & $12$ & 6 & 1 & \\
\hline
111 & \makecell[l]{$\Bl_{7,1} S_9$ \\
$ \Bl_7 S_{8,\II}$} & 20 & $S_{1,I}^{13}$ &  $(2,4)$ & $7$ & 7 & 2 & All $q$ \\
\hline
112 & $\Bl_8S_9$ & 47 & $\Bl_1 S_{2,I}^4$ &  $(0,0)$ & $-8$ & 8 & 1 & All $q$  \\
\hline
\end{longtable}

\section{Points in general position}\label{sec:general position appendix}
In this appendix, we prove Theorem~\ref{thm:gen position P2}. Before we do so, we need a lemma that states that we can find irreducible polynomials with prescribed penultimate coefficients. Over finite fields, this is a special case of the Hansen--Mullen conjecture, which was proved by Wan \cite{Wan97} and Ham--Mullen \cite{HM98}. Over infinite fields, the proof is straightforward, albeit a little tedious \cite[Lem.~4.3]{McK22}.

\begin{lem}\label{lem:prescribed coeff}
Let $k$ be a field. Assume that $k$ admits a separable extension of degree $n\geq 2$. Pick $a\in k$, which we assume is non-zero if $\Char(k)=n=2$. Then there is a monic, separable, irreducible polynomial $f(t)\in k[t]$ of degree $n$ whose degree $n-1$ coefficient is $a$.
\end{lem}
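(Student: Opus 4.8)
The plan is to reduce the statement to a question about traces. If $\alpha$ generates a degree-$n$ separable extension $L=k(\alpha)$, then its minimal polynomial $f$ is monic, separable and irreducible of degree $n$, and the coefficient of $t^{n-1}$ in $f$ equals $-\operatorname{Tr}_{L/k}(\alpha)$. Thus it suffices to produce, inside some fixed degree-$n$ separable extension $L/k$, a \emph{primitive} element $\alpha$ with $\operatorname{Tr}_{L/k}(\alpha)=-a$. I would treat finite and infinite $k$ separately, since the mechanism for producing such an $\alpha$ is completely different in the two cases.

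For $k$ finite, the genericity argument below fails (a vector space over a finite field can be a union of proper subspaces), and one must appeal to the resolution of the Hansen--Mullen conjecture by Wan \cite{Wan97} and Ham--Mullen \cite{HM98}: for every prime power $q$ and every $n\geq 2$ there is a monic irreducible polynomial of degree $n$ over $\F_q$ with prescribed coefficient of $t^{n-1}$, outside the single exceptional case recorded in the statement. Over a finite field irreducible polynomials are automatically separable, so this immediately yields the required $f$.

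For $k$ infinite I would argue as follows. Fix a separable $L/k$ of degree $n$; by the primitive element theorem $L=k(\theta)$ is simple, and by Steinitz's theorem it has only finitely many intermediate fields $k\subseteq F_1,\dots,F_m\subsetneq L$, each a proper $k$-subspace of $L$. Since $L/k$ is separable the trace form is nondegenerate, so $\operatorname{Tr}_{L/k}\colon L\to k$ is a nonzero $k$-linear functional and $V:=\operatorname{Tr}_{L/k}^{-1}(-a)$ is a nonempty affine hyperplane of dimension $n-1\geq 1$. The set of primitive elements is exactly $L\setminus\bigcup_i F_i$, so I must find a point of $V$ avoiding $\bigcup_i F_i$. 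Each $V\cap F_i$ is an affine subspace of $V$, proper unless $V\subseteq F_i$; and $V\subseteq F_i$ forces the affine hyperplane $V$ to coincide with the linear subspace $F_i$, hence $a=0$, $F_i=\ker\operatorname{Tr}_{L/k}$ and $\dim_k F_i=n-1$. A subfield contains $1$, so $F_i=\ker\operatorname{Tr}_{L/k}$ would give $0=\operatorname{Tr}_{L/k}(1)=n\cdot 1_k$, i.e.\ $\Char(k)\mid n$, while $\dim_k F_i=n-1$ forces $[L:F_i]=n/(n-1)\in\Z$, i.e.\ $n=2$; together these say $\Char(k)=n=2$ and $a=0$. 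Outside this excluded case every $V\cap F_i$ is a proper affine subspace of $V$, and since a finite union of proper affine subspaces cannot cover an affine space over an infinite field, there exists a primitive $\alpha\in V$. Its minimal polynomial is the desired $f$.

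The main obstacle is the finite-field input: the clean genericity argument is unavailable there, and one genuinely needs the (deep) Hansen--Mullen theorem. A secondary technical point is the characteristic bookkeeping in the infinite case, which must be arranged so that the only obstruction to $V\not\subseteq\bigcup_i F_i$ is precisely the case $\Char(k)=n=2$, $a=0$ excluded in the hypotheses --- reflecting the genuine nonexistence of a separable irreducible $t^2+c$ in characteristic $2$.
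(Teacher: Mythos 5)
Your proof is correct. It is worth knowing that the paper gives no in-house proof of this lemma at all: both halves are dispatched by citation. For finite fields the paper invokes the Hansen--Mullen conjecture as proved by Wan \cite{Wan97} and Ham--Mullen \cite{HM98}, exactly as you do; for infinite fields it defers to \cite[Lem.~4.3]{McK22}, described there as ``straightforward, albeit a little tedious.'' Your infinite-field argument is therefore the genuinely different part. The cited proof works directly with polynomials: the substitution $t\mapsto t+c$ shifts the penultimate coefficient of a monic degree-$n$ polynomial by $nc$, which settles the case $\Char(k)\nmid n$ at once, and the case $\Char(k)\mid n$ is then handled by reciprocal/M\"obius-type substitutions --- very much in the spirit of the $f^*_\alpha$ manipulations one sees in Appendix~\ref{sec:general position appendix} of this paper. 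You instead linearise the problem: the penultimate coefficient is $-\operatorname{Tr}_{L/k}(\alpha)$, the trace fiber $V$ is a nonempty affine hyperplane by separability, and primitivity amounts to avoiding the finitely many proper intermediate fields, which is possible over an infinite field unless some $F_i$ contains $V$; your dimension-and-divisibility analysis correctly pins down that degenerate situation as precisely the excluded case $\Char(k)=n=2$, $a=0$, where $\ker\operatorname{Tr}_{L/k}=k$. What your route buys is conceptual transparency (the exception falls out of the linear algebra rather than appearing as the failure of a substitution trick) and the absence of any case split on whether $\Char(k)$ divides $n$; what the substitution proof buys is that it never needs to fix an extension $L/k$ or discuss intermediate fields. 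One further remark: for this particular coefficient, even the finite-field case does not require the full strength of Hansen--Mullen --- prescribing the trace coefficient of an irreducible polynomial over $\F_q$ is classical (Carlitz), and your trace reduction combined with a counting argument on trace fibers versus proper subfields recovers it, with the even-$q$, $n=2$, $a=0$ exception again visible as the coincidence of the trace-zero fiber with $\F_q$. So your proof is, if anything, less dependent on deep input than the paper's citations.
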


We are now ready to prove Theorem~\ref{thm:gen position P2}.

\begin{proof}[Proof of Theorem~\ref{thm:gen position P2}]
For $n\leq 6$ the argument follows from \cite[\S 4.2]{McK22}. In \textit{loc.~cit.}, the author uses the parameterisation $\{[1:t:t^3]\}$ of the cuspidal cubic $C=\mb{V}(y^3-x^2z)\subset\mb{P}^2_k$ to construct a set of closed points in general position for each partition of 6. The idea is that three distinct geometric points $\{[1:t_i:t_i^3]\}_{i=1}^3$ lie on a line $\mb{V}(ax+by+cz)$ if and only if each $t_i$ is a root of
\[a+bt+ct^3,\] 
which occurs only if $t_1+t_2+t_3=0$. Similarly, six distinct geometric points $\{[1:t_i:t_i^3]\}_{i=1}^6$ lie on a conic $\mb{V}(ax^2 + bxy + cy^2 + dxz + eyz + fz^2)$
if and only if each $t_i$ is a root of 
\[a+bt+ct^2+dt^3+et^4+ft^6,\]
which occurs only if $\sum_{i=1}^6 t_i=0$. Sets of closed points in general position can therefore be obtained by constructing sextic polynomials with no repeated roots, a non-zero quintic term, and such that no three roots sum to 0. The degrees of the closed points arising from this construction correspond to the degrees of the irreducible factors of the sextic polynomial. A suitable sextic for each partition of 6 can be constructed when $|k|\geq 22$, but of course there is no field with precisely 22 elements.

Every partition of $1\leq n\leq 5$ can be obtained by some partition of 6 by omitting some terms. A set of closed points in general position stays in general position after omitting some closed points, so these cases follow from treating partitions of 6. A more direct treatment of these cases would yield a better bound on $|k|$; for example, any field will do for $n=1$ or 2.

We now address the remaining cases.

\begin{enumerate}[(i)]
\setcounter{enumi}{1}
\item $n=7$. Of the 15 partitions of 7, there are 11 that take the form $\lambda'=(1+n_1,\ldots,n_6,0)$, where $\lambda=(n_1,\ldots,n_6)$ is a partition of 6. By case (i), there is a Galois-invariant set $M$ of 6 geometric points (contained in the cuspidal cubic $C$) in general position corresponding to the partition $\lambda$. In order to construct a Galois-invariant set of 7 geometric points in general position corresponding to the partition $\lambda'$, it suffices to find a $k$-rational point on $C$ that does not lie on a line through any two points of $M$ or on a conic through any five points of $M$.

Since each pair of points determines a unique line, and each quintuple of points determines a unique conic, there are $\binom{6}{2}=15$ lines and $\binom{6}{5}=6$ conics to consider. Moreover, each line meets $C$ with multiplicity 3, and each conic meets $C$ with multiplicity 6, so we must avoid at most $3\cdot 6+6\cdot 6=54$ geometric points when selecting our rational point. This is possible if $C$ has at least 54 points in the open affine $\{[1:x:y]\}$, i.e.~if $|k|>54$.

The remaining partitions of 7, which are
\begin{equation}\label{eq:partitions of 7}
2+2+3,\quad 2+5,\quad 3+4, \quad 7,
\end{equation}

can all be treated by the same argument. We will construct degree 7 polynomials with irreducible factors of the degrees corresponding to each of these partitions. To see that no six roots sum to zero in each of these cases, note that the sum of all seven roots is an element of $k$ (in particular, the negative of the degree 6 coefficient). If any six roots summed to zero, the remaining root would have to be rational, but none of these polynomials have a rational root.

Fix a partition of 7 among those listed in Equation~\ref{eq:partitions of 7}. Let $f(t)$ be a monic, separable polynomial of degree 7 whose irreducible factors over $k$ have degrees corresponding to our fixed partition. By assumption, we are guaranteed at least one monic, separable, irreducible polynomial of the required degrees. For the partition $2+2+3$, we need two distinct quadratic polynomials. If $g(t)$ is a monic, separable, irreducible quadratic over $k$, then so is $g(t+x)$ for all $x\in k$. To ensure that $g(t)\neq g(t+x)$, it suffice to ensure that $\{r_1,r_2\}\neq\{r_1-x,r_2-x\}$, where $r_1,r_2$ are the roots of $g$. This can be done by picking $x\in k-\{0,\pm(r_1-r_2)\}$.

If no three roots of $f$ sum to 0, then we are done. Otherwise, let $\{s_1,\ldots,s_{35}\}$ be the set of sums of all $\binom{7}{3}$ subsets of three roots of $f$. Pick $x\in k-\{s_1,\ldots,s_{35}\}$. If $\Char(k)\neq 3$, then any three roots of $f(t+\frac{x}{3})$ sum to $s_i-x\neq 0$, so $f(t+\frac{x}{3})$ satisfies the desired criteria. 
    
If $\Char(k)=3$, let $f^*(t):=f(0)^{-1}\cdot t^7\cdot f(\frac{1}{t})$ be the scaled reciprocal of $f$, which is again monic, separable, and has irreducible factors of the same degrees as those of $f$. (Note that $f(0)\neq 0$, since $f$ has no linear factors over $k$.) Denote the roots of $f$ by $r_1,\ldots,r_7$. For any $\alpha\in k$, let $f_\alpha(t)=f(t-\alpha)$, so that the roots of $f^*_\alpha$ are given by $\frac{1}{r_1+\alpha},\ldots,\frac{1}{r_7+\alpha}$. The sum of any three roots of $f^*_\alpha$ is given by
\begin{align*}
    \frac{1}{r_i+\alpha}+\frac{1}{r_j+\alpha}+\frac{1}{r_\ell+\alpha}&=\frac{r_ir_j+r_ir_\ell+r_jr_\ell+2\alpha(r_i+r_j+r_\ell)+3\alpha^2}{(r_i+\alpha)(r_j+\alpha)(r_\ell+\alpha)}\\
    &=\frac{r_ir_j+r_ir_\ell+r_jr_\ell-\alpha(r_i+r_j+r_\ell)}{(r_i+\alpha)(r_j+\alpha)(r_\ell+\alpha)}.
\end{align*}
If $r_i+r_j+r_\ell=0$, then $r_i=-r_j-r_\ell$, so that
\begin{align*}
    r_ir_j+r_ir_\ell+r_jr_\ell&=-(r_j^2+2r_jr_\ell+r_\ell^2)+r_jr_\ell\\
    &=-(r_j^2-2r_jr_\ell+r_\ell^2)\\
    &=-(r_j-r_\ell)^2.
\end{align*}
Since $f$ is separable, we have $r_j\neq r_\ell$ and hence $r_ir_j+r_ir_\ell+r_jr_\ell\neq 0$. It follows that if $r_i+r_j+r_\ell=0$, then $\frac{1}{r_i+\alpha}+\frac{1}{r_j+\alpha}+\frac{1}{r_\ell+\alpha}\neq 0$.

If $r_i+r_j+r_\ell\neq 0$, then $r_ir_j+r_ir_\ell+r_jr_\ell-\alpha(r_i+r_j+r_\ell)\neq 0$ for all $\alpha\neq\frac{r_ir_j+r_ir_\ell+r_jr_\ell}{r_i+r_j+r_\ell}$. In particular, if we pick
\[\alpha\in k-\left\{\frac{r_ir_j+r_ir_\ell+r_jr_\ell}{r_i+r_j+r_\ell}:r_i+r_j+r_\ell\neq 0\right\},\]
then $f^*_\alpha$ will satisfy the desired criteria. There are at most 34 such triples of roots to consider (or else no three roots of $f$ sum to 0). We may thus find a suitable polynomial whenever $|k|\geq 37$.

\item $n=8$. We will follow the same strategy as in case (ii), but there is an extra criterion to worry about: we need to ensure that our eight geometric points do not lie on a singular cubic with one of the points at the singularity. We will prove in Lemma~\ref{lem:cayley-bacharach} that if none of our eight points lie on the cusp of $C$, then no cubic through these eight points has a singular point at one of the eight points.

As in the $n=7$ case, we would like to reduce most partitions of 8 to partitions of 7. In order to do this while still using Lemma~\ref{lem:cayley-bacharach}, we need to make sure that each set of six points corresponding to a partition of 6 avoids the cusp point of $C$; for this purpose, the partitions of 6 given in \cite{McK22} were constructed so as to avoid the cusp point of $C$ (see \cite[Rem.~4.2]{McK22}). The partitions of 7 of the form $(1+n_1,\ldots,n_6,0)$ can be made to avoid the cusp point by assuming $|k|>53+1$. The partitions of 7 of the form $(0,n_2,\ldots,n_7)$ avoid the cusp point. Given a partition $(n_1,\ldots,n_7)$ of 7, we can thus obtain a partition $(1+n_1,n_2,\ldots,n_7,0)$ of 8 that avoids the cusp point of $C$ by avoiding at most 8 rational points of $C$ in the open affine $\{[1:x:y]\}$ (the cusp point and the points from our partition of 7). This is always possible if $|k|\geq 9$.

It follows that the only remaining cases are partitions of 8 with $n_1=0$. As in case (ii), we begin with a monic, separable polynomial $f$ of degree 8 whose irreducible factors over $k$ correspond to one of the following partitions:
\begin{equation}\label{eq:partitions of 8}
 2+2+2+2,\quad  2+2+4,\quad 2+3+3,\quad 2+6,\quad 3+5,\quad 4+4,\quad 8. 
\end{equation}

In order to address the partitions with repeated summands, we again need to justify why having a single monic, separable, irreducible polynomial of a given degree over $k$ gives us more monic, separable, irreducible polynomials of the same degree.
\begin{itemize}
    \item Constructing two distinct quadratics proceeds exactly as in case (ii).
    \item For four distinct quadratics, let $g_1(t),g_2(t)$ be two distinct monic, separable, irreducible quadratics over $k$. Let $r_1,r_2$ be the roots of $g_1$ and $s_1,s_2$ be the roots of $g_2$. Pick $x\in k-\{0,\pm(r_1-r_2),r_1-s_1,r_1-s_2,r_2-s_1,r_2-s_2\}$. Then the roots of $g_3(t):=g_1(t+x)$ are disjoint from $\{r_1,r_2,s_1,s_2\}$, so $g_3$ is a monic, separable, irreducible quadratic distinct from $g_1$ and $g_2$. To obtain a fourth distinct quadratic, we take $g_4(t+y)$ with $y\in k-\{0,\pm(r_1-r_2)\}\cup\{r_i-s_j:i,j=1,2\}\cup\{r_i-w_j:i,j=1,2\}$, where $w_1,w_2$ are the roots of $g_3$. Now the roots of $g_4$ are distinct from those of $g_1,g_2,g_3$, so we have four distinct monic, separable, irreducible quadratics over $k$. This construction requires $|k|\geq 13$.
    \item Given a monic, separable, irreducible cubic $g(t)$ over $k$, we can construct another irreducible cubic by picking $x\in k-\{r_i-r_j:1\leq i,j\leq 3\}$, where $r_1,r_2,r_3$ are the roots of $g$, and considering $g(t+x)$. By construction, $g(r_i+x)\neq 0$ for $1\leq i\leq 3$, so $g$ and $g(t+x)$ are distinct monic, separable, irreducible cubics. This construction requires $|k|\geq 5$.
    \item We construct two distinct quartics in the same was as cubics: given an irreducible quartic $g(t)$, we consider $g(t+x)$ with $x\in k-\{r_i-r_j:1\leq i,j\leq 4\}$, where $r_1,\ldots,r_4$ are the roots of $g$. This requires $|k|\geq 8$.
\end{itemize}
We now know that given a partition among those listed in Equation~\ref{eq:partitions of 8}, we can construct a monic, separable polynomial $f(t)$ of degree 8 corresponding to this partition. Moreover, by repeating the arguments in step (ii), we can guarantee that no three roots of $f(t)$ sum to 0 as long as $|k|>\binom{8}{3}$. However, in contrast to case (ii), we need to verify that no six roots of $f(t)$ sum to 0. 
\begin{itemize}
\item If $\Char(k)\neq 2$ or $3$, let $\{s_1,\ldots,s_{56}\}$ and $\{u_1,\ldots,u_{28}\}$ denote the sets of sums of all $\binom{8}{3}$ subsets of three roots and $\binom{8}{6}$ subsets of six roots, respectively, of $f$. Pick $x\in k-\{\frac{s_1}{3},\ldots,\frac{s_{56}}{3},\frac{u_1}{6},\ldots,\frac{u_{28}}{6}\}$. Now any three roots of $f(t+x)$ sum to $s_i-3x\neq 0$, and any six roots of $f(t+x)$ sum to $u_i-6x\neq 0$. This requires $|k|\geq 89$.
\item If $\Char(k)=2$, any sum of three roots of $f^*_\alpha$ is given by $\frac{r_ir_j+r_ir_\ell+r_jr_\ell+\alpha^2}{(r_i+\alpha)(r_j+\alpha)(r_\ell+\alpha)}$, where $r_i,r_j,r_\ell$ are distinct roots of $f$. Similarly, the sum of six roots $r_1,\ldots,r_6$ of $f^*_\alpha$ is given by
\begin{equation}\label{eq:sum of 6}\frac{\sum_{i=1}^6\prod_{j\neq i}r_j+\alpha^2\sum_{i,j,\ell}r_ir_jr_\ell+\alpha^4(r_1+\ldots+r_6)}{\prod_{i=1}^6(r_i+\alpha)}.\end{equation}
In order to pick $\alpha$ such that no three or six roots of $f^*_\alpha$ sum to 0, it suffices to pick $\alpha$ not a root of the quadratic polynomials $r_ir_j+r_ir_\ell+r_jr_\ell+x^2$ or the quartic polynomials given by the numerators of Equation~\ref{eq:sum of 6}. We therefore have at most $2\binom{8}{3}+4\binom{8}{6}=224$ roots to avoid, so we can pick a suitable $\alpha$ if $|k|\geq 227$.
\item The $\Char(k)=3$ case is analogous to the $\Char(k)=2$ case. We again have a linear polynomial
\[r_ir_j+r_ir_\ell+r_jr_\ell+2\alpha(r_i+r_j+r_\ell)\]
and a quartic polynomial
\[\sum_{i=1}^6\prod_{j\neq i}r_j+2\alpha\sum_{i,j,\ell,n}r_ir_jr_\ell r_n+\alpha^3\sum_{i,j}r_ir_j+2\alpha^4(r_1+\ldots+r_6)\]
whose roots correspond to a sum of three or six roots, respectively, of $f^*_\alpha$ summing to 0. We thus have $\binom{8}{3}+4\binom{8}{6}=168$ potential roots to avoid. It follows that if $|k|\geq 169$, then there exists $\alpha\in k$ such that $f^*_\alpha$ satisfies the desired conditions.\qedhere
\end{itemize}
\end{enumerate}
\end{proof}

\begin{lem}\label{lem:cayley-bacharach}
Let $k$ be a field. Let $C\subset\mb{P}^2_k$ be a singular cubic. Let $P_1,\ldots,P_8$ be a set of distinct geometric points contained in the smooth locus of $C$. Then there is no singular cubic $C'$ through $P_1,\ldots,P_8$ such that some $P_i$ lies on the singularity of $C'$.
\end{lem}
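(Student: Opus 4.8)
The plan is to argue by contradiction and to reduce first to the case where $C$ is irreducible, so that $C$ has a single node or cusp and its smooth locus $C^{\mathrm{sm}}$ carries a group law (isomorphic to $\mathbb{G}_m$ in the nodal case and to $\mathbb{G}_a$ in the cuspidal case), under which a curve of degree $\delta$ meeting $C$ only in $C^{\mathrm{sm}}$ cuts out $3\delta$ points summing to the identity $O$. The reducible cases (conic plus line, three lines, line plus double line) I would treat separately, since there the real danger is that $C$ and $C'$ share a component; when they do not, the argument below still applies, and when they do one checks directly that points on the smooth locus cannot all lie on such a $C'$ singular at one of them. So suppose $C'$ is a singular cubic through $P_1,\dots,P_8$, singular at say $P_1$, and that $C,C'$ share no component. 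As $C$ is smooth at each $P_i$ while $C'$ is singular at $P_1$, we have $C\neq C'$, and $I_{P_1}(C,C')\geq m_{P_1}(C)\,m_{P_1}(C')\geq 2$, while $I_{P_i}(C,C')\geq 1$ for $i\geq 2$. B\'ezout gives $\sum_P I_P(C,C')=9$, so these inequalities are all equalities, there are no further intersection points, and in particular $C'$ avoids the singular point $S$ of $C$ (otherwise $S$ would contribute at least $m_S(C)\,m_S(C')\geq 2$, pushing the total past $9$). Hence $C\cap C'=2P_1+P_2+\cdots+P_8$ as a divisor supported on $C^{\mathrm{sm}}$.

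Next I would translate the existence of $C'$ into the group law on $C^{\mathrm{sm}}$. Fixing an inflection point as identity, the equality $C\cap C'=2P_1+P_2+\cdots+P_8$ forces $2P_1+P_2+\cdots+P_8=O$; for the cuspidal cubic with its standard parametrisation this reads $2\tau_1+\tau_2+\cdots+\tau_8=0$ in the parameters $\tau_i$, and analogously a multiplicative relation holds in the nodal case. The hoped-for contradiction is that this relation should be incompatible with $P_1,\dots,P_8$ being distinct points of $C^{\mathrm{sm}}$, and establishing that incompatibility is the crux. One would like to feed the length-$9$ complete intersection $C\cap C'$ into a Cayley--Bacharach theorem to rule out a cubic that is \emph{genuinely} singular — rather than merely tangent to $C$ — at $P_1$.

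The step I expect to be the main obstacle is precisely this last one. A naive dimension or B\'ezout count does not suffice: the divisor $2P_1+P_2+\cdots+P_8$ cut on $C$ records only $I_{P_1}(C,C')\geq 2$, i.e.\ tangency of $C'$ to $C$ at $P_1$, and does not by itself distinguish a smooth-but-tangent $C'$ from an honestly singular one (indeed, any cubic cutting this divisor can be made singular at $P_1$ after subtracting a suitable multiple of $C$, since $C$ is smooth there). The counting is therefore consistent with such a $C'$ existing, so the genuine content must come from controlling the special position of the points through the group law: the conditions that no three $P_i$ are collinear and no six lie on a conic correspond to no three and no six parameters summing to $O$, whereas the singular-cubic condition is a further non-vanishing constraint of the shape $\tau_i\neq -\sum_{j}\tau_j$. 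I would thus expect the proof to hinge on deriving this additional non-vanishing, and I would want to verify carefully whether it really follows from the standing hypotheses or must instead be secured when constructing the $P_i$, with the uniform treatment of the degenerate types of $C$ a secondary complication.
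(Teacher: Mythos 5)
Your proposal is not a proof, and you say so yourself: after B\'ezout pins down $C\cap C'=2P_1+P_2+\cdots+P_8$ and hence the relation $2\tau_1+\tau_2+\cdots+\tau_8=O$ in the group law on $C^{\mathrm{sm}}$, you explicitly leave the decisive step --- deriving a contradiction from that relation --- unestablished, so the argument terminates at exactly the point where the lemma's content lives. Your reduction to irreducible $C$ is also faulty as sketched: in the shared-component case the ``direct check'' you defer to fails. For instance, take $C=Q\cup L$ with $Q$ a smooth conic, $P_1,\dots,P_7\in Q$ and $P_8\in L$, all avoiding $Q\cap L$; then $C'=Q\cup \overline{P_1P_8}$ is a singular cubic through all eight points with $P_1$ in its singular locus, even though all eight points lie in the smooth locus of $C$. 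The paper's own proof takes a different, much shorter route with no group law and no case division: by the Cayley--Bacharach theorem (CB3 of Eisenbud--Green--Harris), the eight points determine a ninth point $P_9$, \emph{distinct} from them, through which every cubic containing $P_1,\dots,P_8$ must pass; a $C'$ singular at some $P_i$ would then meet $C$ with total multiplicity at least $2+7+1=10>9$, contradicting B\'ezout. The content you could not extract from the relation $2\tau_1+\sum_{i\geq 2}\tau_i=O$ is precisely this asserted distinctness of the residual point.

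Having said that, your scepticism at the crux is well placed, and your parenthetical observation --- that any cubic cutting the divisor $2P_1+P_2+\cdots+P_8$ on $C$ can be made genuinely singular at $P_1$ by subtracting a suitable multiple of $C$, since $C$ is smooth there --- is exactly the mechanism showing that the distinctness of $P_9$ does not come for free. The ninth point is the unique effective divisor in the class $3H-P_1-\cdots-P_8$, and nothing in the hypotheses (distinctness, smooth locus) prevents it from coinciding with some $P_j$. Concretely, on the cuspidal cubic parametrised by $[1:t:t^3]$, no cubic monomial restricts to $t^8$, so a cubic avoiding the cusp cuts out nine parameters summing to zero; the coincidence $P_9=P_j$ reads $2t_j+\sum_{i\neq j}t_i=0$, which distinct parameters can certainly satisfy (e.g.\ $t_i=1,\dots,7$ and $t_8=-14$ over $\mb{Q}$). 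For such a configuration the divisor $P_1+\cdots+P_8+P_j$ is cut on $C$ by an actual cubic $F$ (the restriction map on cubics is surjective onto $H^0(\mathcal{O}_C(3))$ with kernel spanned by $C$), and your subtraction trick applied to $F$ produces a cubic through all eight points that is honestly singular at $P_j$. So the needed non-vanishing is genuinely an extra constraint of the type you describe, to be secured when the points are constructed, rather than a consequence of the standing hypotheses; your sketch cannot be completed as written, and its gap sits at the same place that the paper's invocation of a ``distinct ninth point'' is doing all the work.
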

\begin{proof}
By the Cayley--Bacharach theorem (see, e.g.~\cite[Thm~CB3]{EGH96}), the points $P_1,\ldots,P_8$ determine a distinct ninth point $P_9$ such that any cubic through $P_1,\ldots,P_8$ must pass through $P_9$. In particular, if a singular cubic $C'$ passes through $P_1,\ldots,P_8$ with one of $P_i$ at its singular point, then $C$ and $C'$ meet with multiplicity 10 --- both curves must contain $P_1,\ldots,P_9$, and they meet with multiplicity 2 at $P_i$. This contradicts B\'ezout's theorem, so such a $C'$ cannot exist.
\end{proof}

\section{Open questions} \label{sec:Questions}
While preparing this article, we encountered a few questions that we were not able to answer. The first of these pertains to points in general position on del Pezzo surfaces. We showed in Proposition~\ref{prop:general position dP} that over an infinite field $k$ and for $d\geq 5$, every blowup type of degree-$d$ del Pezzo surfaces admits a surface with a set of points in general position for every possible partition (i.e.~those not obstructed by the Galois theory of $k$). The key was finding a rational surface of each blowup type.

For $d\leq 4$, this method breaks down, which is why we turned our attention to various characteristics of the base field. However, if one could show that every \emph{unirational} del Pezzo surface contains a set of points in general position for each partition, one could likely dispense with the assumptions on the base field and treat all infinite fields uniformly. This leads to the following question.

\begin{ques}
    Let $k$ be an infinite field, and let $X$ be a unirational del Pezzo surface of degree $d\geq 2$. Let $1\leq n\leq d-1$, and let $\lambda=(n_1,\ldots,n_8)$ be a partition of $n$. If $k$ admits a separable extension of degree $i$ for each $n_i>0$, does $X$ contain a set of closed points in general position corresponding to $\lambda$?
\end{ques}

It is known that for $d\geq 3$, degree-$d$ del Pezzo surfaces with a rational point are unirational. For unirationality of degree-2 del Pezzo surfaces, see \cite[Thm.~29.4]{Man86} and \cite{STVA14}.

Although we found a suitable way to construct points in general position on low degree del Pezzo surfaces over Hilbertian fields, we spent some time trying to prove a stronger statement that seems quite interesting.

\begin{ques}
    Let $X$ be a degree-$d$ del Pezzo surface over a Hilbertian field $k$. Let $n<d$. Let $X^{[n]}$ denote the Hilbert scheme of $n$ points on $X$. Does $X^{[n]}$ satisfy the Hilbert property? That is, is $X^{[n]}(k)$ not a thin set?
\end{ques}

Of course, $\mb{P}^1$ and $\mb{P}^2$, along with their blowups, satisfy the Hilbert property, so the remaining open cases are relatively minimal del Pezzo surfaces of degree at most 6, as well as their blowups.

As mentioned in Remark~\ref{rem:IGPfordP1s}, there are 17 classes of degree-1 del Pezzo surfaces over finite fields for which the surface and its Bertini twist are both of Picard rank 1. These are the surfaces currently obstructing the resolution of $\IGP_1(\mb{F}_q)$.

\begin{ques}
For any of the $17$ classes of degree-1 del Pezzo surfaces over finite fields with Picard rank 1 and Bertini twist of Picard rank 1, what are the exact bounds on $q$ for their existence over $\mathbb{F}_q$? 
\end{ques}

\bibliography{del-pezzo}{}
\bibliographystyle{alpha}
\end{document}